\documentclass{amsart}

\usepackage{mathrsfs}
\usepackage{amscd}
\usepackage{amsmath}
\usepackage{amssymb}
\usepackage{amsthm}
\usepackage{epsf}
\usepackage{latexsym}
\usepackage{verbatim}
\usepackage[all, cmtip]{xy}
\usepackage{tikz}
\usetikzlibrary{calc}
\usetikzlibrary{positioning}
\usetikzlibrary{matrix}
\usepackage{float}
\usepackage[hidelinks]{hyperref}
\usepackage{comment}
\usepackage{enumitem}
\usepackage{subcaption}
\usepackage{tikz-cd}
\usepackage[capitalise]{cleveref}

\tikzstyle{bsq}=[rectangle, draw, thick, minimum width=.5cm, minimum height=.5cm]
\tikzstyle{bver}=[rectangle, draw, thick, minimum width=1cm, minimum height=2cm]
\tikzstyle{bhor}=[rectangle, draw, thick, minimum width=2cm, minimum height=1cm]

\tikzstyle{divisor}=[circle,very thick,draw,scale=0.3,fill=white]
\tikzstyle{vertex}=[circle,draw,scale=0.3,fill=black]
\usetikzlibrary{patterns}

\usepackage[left=3.2cm, right=3.2cm]{geometry}

\newtheorem{theorem}{Theorem}[section]
\newtheorem{lemma}[theorem]{Lemma}
\newtheorem{conjecture}[theorem]{Conjecture}
\newtheorem{corollary}[theorem]{Corollary}
\newtheorem{proposition}[theorem]{Proposition}

\newtheorem{varexample}[theorem]{Example}

\theoremstyle{definition}
\newtheorem{remark}[theorem]{Remark}

\newtheorem{definition}[theorem]{Definition}

\def\PP{{\mathbb P}}

\def\cM{\mathcal{M}}

\def\cE{\mathcal{E}}

\newcommand{\cL}{\mathcal{L}}
\newcommand{\cO}{\mathcal{O}}

\newcommand{\cF}{\mathcal{F}}

\newcommand{\Z}{\mathbb{Z}}
\newcommand{\R}{\mathbb{R}}

\newcommand{\ord}{\operatorname{ord}}
\newcommand{\Trop}{\operatorname{Trop}}
\newcommand{\trop}{\operatorname{trop}}
\newcommand{\ddiv}{\operatorname{div}}
\newcommand{\Div}{\operatorname{Div}}

\newcommand{\PL}{\operatorname{PL}}

\newcommand{\Pic}{\operatorname{Pic}}

\newcommand{\rk}{\operatorname{rk}}

\newcommand{\WG}{\widetilde{\Gamma}}
\newcommand{\WC}{\widetilde{C}}

\newcommand{\Mod}[1]{\ (\mathrm{mod}\ #1)}

\newcommand{\an}{\mathrm{an}}

\newenvironment{example}{\begin{varexample}
\begin{normalfont}}{\end{normalfont}
\end{varexample}}

\begin{document}

\title{Prym-Brill-Noether Theory for General Covers}
\author{David Jensen}

\bibliographystyle{alpha}

\begin{abstract}
We bound the dimension of the Prym-Brill-Noether variety for an open subset of the moduli space of \'{e}tale double covers of $k$-elliptic curves.  We also obtain new bounds on the dimension of the Prym-Brill-Noether variety for general \'{e}tale double covers of $k$-gonal curves, disproving a conjecture of Creech, Len, Ritter, and Wu, and provide a new conjecture for its dimension.  To do this, we completely describe the Prym-Brill-Noether variety of a double cover of a certain tropical curve known as the loop of loops.  We use the combinatorics of Coxeter groups to establish several topological properties of these tropical Prym-Brill-Noether varieties, and prove a lifting result when the edge lengths are generic.
\end{abstract}

\maketitle

\setcounter{tocdepth}{1}

\section{Introduction}
\label{Sec:Intro}

\subsection{Prym-Brill-Noether Theory}
\label{Sec:IntroPBN}

Given an \'{e}tale double cover of curves $f \colon \WC \to C$, one defines the set $P(C,f)$ of \emph{Prym divisors} to be the set of divisor classes with canonical norm.  That is:
\[
P (C,f) := \{ [D] \in \Pic (\WC) \mid \mathrm{Nm}_f [D] = [K_C] \} .
\]
The set $P(C,f)$ has two connected components, $P^0 (C,f)$ and $P^1 (C,f)$, where:
\[
P^r (C,f) = \{ [D] \in P(C,f) \mid \rk (D) \equiv r \Mod{2} \}.
\]
Each component is a translate of the Prym variety $\mathrm{Prym}(C,f)$, and is thus isomorphic to a principally polarized abelian variety of dimension $g-1$, where $g$ is the genus of $C$.  For a nonnegative integer $r$, one defines the \emph{Prym-Brill-Noether variety}:
\[
V^r (C,f) := \{ [D] \in P^r (C,f) \mid \rk (D) \geq r \} .
\]

There is an irreducible moduli space $\mathcal{R}_g$ parameterizing \'{e}tale double covers $f \colon \WC \to C$, where the target curve has genus $g$.  In \cite{Welters85, Bertram87}, Bertram and Welters compute the dimension of $V^r(C,f)$ for a general element of $\mathcal{R}_g$, showing that
\[
\mathrm{dim} V^r (C,f) = g-1-{{r+1}\choose{2}}.
\]

More recently, Len and Ulirsch initiated the study of Prym-Brill-Noether theory for general covers \cite{LenUlirsch21}.  More precisely, for $k \geq 3$, there exists an irreducible moduli space of chains $\WC \xrightarrow{f} C \xrightarrow{h} \PP^1$, where $f$ is an \'{e}tale double cover, $C$ has genus $g$, and $h$ has degree $k$ \cite{BF86}.  The image of this moduli space in $\mathcal{R}_g$ is known as the $k$-\emph{gonal} locus.  Let $\ell = \lceil \frac{k}{2} \rceil$ and define:
\[
n(r,k) := {{\ell+1}\choose{2}} + \ell (r-\ell) .
\]
For a general chain in this moduli space, \cite{LenUlirsch21, CLRW20} show that
\[
\mathrm{dim} V^r (C,f) \leq \left\{ \begin{array}{ll}
g-1-n(r,k) &\text{if } \ell \leq r-1\\
g-1- {{r+1}\choose{2}} &\text{if } \ell \geq r.
\end{array} \right.
\]

In this note, we give a new proof of the theorem of Bertram and Welters (see Corollary~\ref{Cor:Welters}) and an improvement on the bound from \cite{LenUlirsch21, CLRW20} (see Theorem~\ref{Thm:KGonal} below).  In particular, we disprove \cite[Conjecture~3.9]{CLRW20}.  Our main result, however, is a bound on the dimension of the Prym-Brill-Noether variety for covers of elliptic curves.  We say that a curve $C$ is $k$-\emph{elliptic} if there exists a degree-$k$ map from $C$ to a curve of genus 1 that does not factor through an isogeny.  The $k$-elliptic locus in $\cM_g$ is a closed subvariety of dimension $2g-2$.  We define the $k$-elliptic locus in $\mathcal{R}_g$ to be the preimage of the $k$-elliptic locus in $\cM_g$.  In other words, an \'{e}tale double cover of curves $f \colon \WC \to C$ is $k$-elliptic if $C$ is $k$-elliptic.  When $k=2$, the $k$-elliptic locus in $\mathcal{R}_g$ has multiple components.  When $k \geq 2$, the $k$-elliptic locus in $\mathcal{R}_g$ is not known to be irreducible.  Our main result is the following.

\begin{theorem}
\label{Thm:MainThm}
Let $r \geq 1$ and $k \geq 2$.  There is a nonempty open subset of the $k$-elliptic locus in $\mathcal{R}_g$ such that, for every etale double cover $f \colon \WC\to C$ in this open subset, we have
\[
\mathrm{dim} V^r (C,f)  \leq \left\{ \begin{array}{ll}
g-1- \frac{k(r+1)}{2} &\text{if }  k < r\\
g-1- \frac{r(r+1)}{2} &\text{if } k \geq r,
\end{array} \right.
\]
with equality when $k \geq r$.
\end{theorem}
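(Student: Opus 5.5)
The approach is tropical degeneration.  We build a tropical double cover $\WG \to \Gamma$ where $\Gamma$ is a ``loop of loops'': a cycle of length-$g$-ish carrying $g-1$ (or $g$) attached loops, i.e.\ a genus-$g$ metric graph admitting a degree-$k$ harmonic morphism to a circle.  The double cover is chosen so that each attached loop is an unramified connected double cover (a loop of twice the length).  First we check that this tropical cover is the tropicalization of $k$-elliptic étale double covers.  The degree-$k$ map to a circle is realized by a degree-$k$ cover of a Tate elliptic curve degenerating appropriately, and the étale double cover lifts because étale double covers correspond to $H^1$ classes that can be chosen compatibly.  We also check that the set of such covers, with generic edge lengths, is Zariski dense in a component of the $k$-elliptic locus of dimension $2g-2$.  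Constraining the edge lengths to be $k$-torsion-compatible with the circle is where the $k$ enters.

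Next we use upper semicontinuity together with Baker's specialization inequality, in its Prym form.  Specialization sends $V^r(C,f)$ into the tropical Prym-Brill-Noether locus $V^r(\Gamma,\phi)$, consisting of divisor classes on $\WG$ with canonical norm and rank at least $r$.  The dimension of the algebraic variety is bounded by the dimension of this tropical locus (via Gubler's tropicalization of subvarieties of abelian varieties, $\dim \Trop = \dim$).  Since dimension is upper semicontinuous in families, the bound on one cover gives the bound on a nonempty open subset of the locus.

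The combinatorial core is computing $\dim V^r(\Gamma,\phi)$ for the loop of loops with generic lengths.  We follow the usual tableau method.  Prym divisors break-divisor-decompose loop by loop, and the rank condition is tracked by a lingering-lattice-path-type sequence of Coxeter-group elements (signed permutations, type $B$/$D$) as we move around the big cycle.  Closing up the cycle forces the composite of the transitions to be compatible with the $k$-fold monodromy.  Each loop that does not linger costs one dimension.  The count of forced non-lingering loops is $\binom{r+1}{2}$ when $k\ge r$, and $k(r+1)/2$ when $k<r$, because the cyclic condition only pins down $k$ of the $r$ ``columns'' per period.  I expect this monodromy-closure count to be the main obstacle: showing that the minimal number of non-lingering steps of a cyclic word in the Coxeter group of order dividing $k$ attains exactly these values.

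For the equality when $k\ge r$, we compare with the Bertram--Welters lower bound $\dim V^r \ge g-1-\binom{r+1}{2}$.  That lower bound holds for every cover by the degeneracy-locus (Lagrangian/orthogonal) description of each component of $V^r$ wherever it is nonempty.  For nonemptiness we lift a tropical divisor of the correct rank on a top-dimensional cell, using the generic-edge-length lifting result.
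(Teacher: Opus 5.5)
Your outer framework matches the paper's. You lift a $k$-uniform loop of loops together with its degree-$k$ harmonic morphism to a circle, and bound $\dim V^r(C,f)$ by $\dim V^r(\Gamma,\pi)$ using Baker's specialization lemma and Gubler's theorem. Semicontinuity then gives an open set. For $k\ge r$ you observe that the $k$-uniform loop of loops is $r$-generic, so Corollary~\ref{Cor:Dimension} (via the lifting theorem) gives equality.

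However, the step that actually produces $\frac{k(r+1)}{2}$ is not proved, and you flag it yourself as the main obstacle. The route you sketch toward it would not get there, for two reasons.

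First, your double cover is not the one whose Prym--Brill--Noether locus can be computed. You double each attached loop connectedly. The paper instead uses the antipodal quotient, in which the big cycle is doubled and each small loop $\gamma_j$ of $\Gamma$ has two disjoint isomorphic preimages $\gamma_j,\gamma_{g-1+j}$. That choice does three things:
\begin{enumerate}
\item it makes $\WG$ itself a loop of loops;
\item it gives each Prym class a canonical representative determined by one point $\langle\xi_j\rangle_j$ on each small loop;
\item it turns the Prym condition into $\xi_{g-1+j}=-1-\xi_j$.
\end{enumerate}
For your cover none of this is available, and you supply no substitute.

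Second, the rank condition is not a ``monodromy closure'' condition on signed permutations. It is a type $A_r$ condition: the half words of the Prym word $Q_D$ (which satisfies $s_{g-1+j}=\overline{s}_j$) must contain a reduced word for $w_0$. The resulting torus $T(Q)$ has dimension equal to the number of $\epsilon$ letters (Theorems~\ref{Thm:Generic} and~\ref{Thm:Arbitrary}).

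The way $k$ actually enters is through torsion on the small loops, not through closing up the big cycle. Harmonicity with dilation factors $1$ and $k-1$ on the two edges of $\gamma_j$ forces torsion order exactly $k$, so $\langle\xi\rangle_j=\langle\xi+k\rangle_j$. Hence fixing the single coordinate $\xi_j=s^r_j[i]$ simultaneously realizes every $\tau_{i'}$ with $i'\equiv i\pmod{k}$. In other words, one non-lingering loop contributes the letter $t_{k,i}$, a product of roughly $r/k$ commuting simple transpositions.

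The bound then reduces to a combinatorial claim: any word in the letters $t_{k,i}$ containing a reduced word for $w_0$ (of length $\binom{r+1}{2}$) has at least $\frac{k(r+1)}{2}$ non-identity letters.
\begin{itemize}
\item If $k\mid r$, this is just division.
\item If $r=qk+\rho$ with $0<\rho<k$, the naive count fails because some $t_{k,i}$ have $q+1$ transpositions. The paper instead restricts to a suitable window of $qk=r-\rho$ consecutive indices, where every $t_{k,i}$ has exactly $q$ transpositions. It then uses Lemma~\ref{Lem:EndCount} to show that a reduced word for $w_0$ has at least $\frac{(r-\rho)(r+1)}{2}$ letters in that window.
\end{itemize}
Your heuristic about pinning down ``$k$ of the $r$ columns per period'' does not lead to this argument.

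There are also smaller omissions:
\begin{itemize}
\item You must check that the lifted map $C\to E$ does not factor through an isogeny; this is part of the definition of $k$-elliptic.
\item Lifting the harmonic morphism needs tameness (characteristic prime to $2k$).
\item Your Zariski-density claim is unnecessary: a single cover satisfying the bound, plus semicontinuity, already gives a nonempty open set.
\end{itemize}
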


\noindent Theorem~\ref{Thm:MainThm} holds in characteristic zero, or when the characteristic is relatively prime to $2k$.

\subsection{Techniques and Further Results}
\label{Sec:Techniques}

Our approach is via tropical geometry.  In \cite{LenUlirsch21}, Len and Ulirsch construct a tropical analogue of the Prym-Brill-Noether variety.  They then compute the dimension of this variety for a double cover of a particular tropical curve, known as the \emph{chain of loops}.  The chain of loops has appeared in a number of research articles, and been used to great effect to prove results in Brill-Noether theory and related areas \cite{tropicalBN, tropicalGP, MRC, MRC2, JensenRanganathan17, CookPowellJensen22, CookPowellJensen22b, M23, R13}.

In this paper, we use a different tropical curve, known as the \emph{loop of loops}.  To our knowledge, the first appearance of this tropical curve in the literature is in \cite{LPP12}, where it is used as a pathological example of a tropical curve whose Brill-Noether theory is poorly behaved.  In subsequent appearances, it is typically mentioned only to highlight this pathological property \cite{Len16, TLS, Coppens25}.    As we will show, however, the Prym-Brill-Noether theory of the loop of loops is far more elegant.  Indeed, in Section~\ref{Sec:PrymDivisors} we show that, if $\pi \colon \WG \to \Gamma$ is a certain double cover of a loop of loops, then every divisor class $[D] \in P(\Gamma,\pi)$ has a canonical choice of representative $D \in \Div (\WG)$, giving a natural system of coordinates on $P(\Gamma,\pi)$.

In Section~\ref{Sec:Generic}, we completely describe the tropical Prym-Brill-Noether variety $V^r (\Gamma,\pi)$ in the case where $\Gamma$ has generic edge lengths.  This is partly a warmup for Section~\ref{Sec:Arbitrary}, where we describe $V^r (\Gamma,\pi)$ in the case where $\Gamma$ has arbitrary edge lengths.  Our description uses the combinatorics of Coxeter groups.  We define a \emph{lingering word} in a Coxeter group to be a word in which some of the terms may be the identity element in the group (see Section~\ref{Sec:Coxeter} for a precise definition).  In Theorem~\ref{Thm:Generic}, we show that, when $\Gamma$ has generic edge lengths, the Prym-Brill-Noether variety $V^r (\Gamma,\pi)$ is a union of tori, indexed by lingering words in the Coxeter group $A_r$ that contain a reduced subword for $w_0$, the maximal element with respect to Bruhat order.

The containment relations between these tori are encoded in a certain poset that we call the \emph{lingering subword poset}.  Combinatorial properties of this poset are explored in Section~\ref{Sec:Tropology}.  As consequences, we see that $V^r (\Gamma,\pi)$ is pure dimensional (Corollary~\ref{Cor:PureDimension}) and connected in codimension 1 (Corollary~\ref{Cor:Connected}).  When $g-1 = {{r+1}\choose{2}}$, the Prym-Brill-Noether variety $V^r (\Gamma,\pi)$ is finite, and its cardinality is equal to the number of reduced words for $w_0$ in the Coxeter group $A_r$.  In 1982, Stanley conjectured this number to be equal to the number of standard Young tableaux on the ``staircase'' partition $\Delta_r = r + (r-1) + \cdots + 1$.  This was subsequently proved by Stanley via an algebraic proof \cite{Stanley84}, and by Edelman and Greene via an explicit bijection \cite{EdelmanGreene87}.  This is also known to be the cardinality of $V^r(C,f)$ for a general \'{e}tale double cover $f \colon \WC \to C$ \cite{DCP}.  In Section~\ref{Sec:Lifting}, we leverage this to prove the following lifting theorem.

\begin{theorem}
\label{Thm:Lifting}
Let $\Gamma$ be an $r$-generic loop of loops of genus $g$ and $\pi \colon \WG \to \Gamma$ the quotient by the antipodal involution.  Let $K$ be an algebraically closed nonarchimedean field, let $C$ be a curve of genus $g$ over $K$, and $f \colon \WC \to C$ an \'{e}tale double cover that specializes to $\pi$.  Then every divisor class in $V^r (\Gamma,\pi)$ lifts to a divisor class in $V^r (C,f)$.
\end{theorem}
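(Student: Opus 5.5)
Let me think about how to prove the lifting theorem via a counting argument. The idea is to reduce to the case where $V^r(\Gamma,\pi)$ is finite, and then compare cardinalities. Specialization gives a map $\trop \colon V^r(C,f) \to V^r(\Gamma,\pi)$, by the Len--Ulirsch specialization inequality for Prym divisors. The two sides have the same count: the tropical side by Theorem~\ref{Thm:Generic} together with Stanley/Edelman--Greene, and the algebraic side by \cite{DCP}. If we also know the fibers are controlled, surjectivity follows. The general case $g-1 > {{r+1}\choose{2}}$ will then be handled by a dimension/induction argument over the torus components.

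\textbf{Finite case.} Suppose $g-1={{r+1}\choose{2}}$. By Theorem~\ref{Thm:Generic}, $V^r(\Gamma,\pi)$ is a finite set of points indexed by reduced words for $w_0$ in $A_r$. Its cardinality is the number of standard Young tableaux of shape $\Delta_r$.

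On the algebraic side, the tropical dimension bound (pure dimension $0$, Corollary~\ref{Cor:PureDimension}) together with upper semicontinuity shows that $V^r(C,f)$ has the expected dimension $0$. Since $\Gamma$ is $r$-generic, it is also reduced, so it has exactly the number of points given by \cite{DCP}.

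The specialization map sends each point of $V^r(C,f)$ into $V^r(\Gamma,\pi)$. The main obstacle is injectivity of this map, equivalently that each tropical point has at most one lift. I would first try a local deformation-theoretic count: compute the tropical multiplicity of each isolated point and show it equals $1$. This amounts to showing that the canonical representative from Section~\ref{Sec:PrymDivisors} is rigid, with the relevant $r+1$ sections having pairwise distinct slopes on every loop. Alternatively, one can use that the two sets are finite of equal size. A sum-of-multiplicities identity (via a compactified degeneration in which $V^r$ is flat and finite over the base) then gives that every tropical point has positive multiplicity, hence is in the image.

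\textbf{General case.} Now take $g-1>{{r+1}\choose{2}}$ and $D\in V^r(\Gamma,\pi)$, lying in a torus indexed by a lingering word containing a reduced word for $w_0$. The $g-1-{{r+1}\choose{2}}$ lingering positions contribute free coordinates. Following the usual strategy (as in lifting for chains of loops), I would impose additional incidence conditions: require the lift to contain prescribed points $\tilde p_i \in \WC$ specializing to points on the lingering loops. This cuts $V^r(C,f)$ down to a finite scheme whose tropicalization is the finite set of tropical divisors in $V^r(\Gamma,\pi)$ containing the tropicalized points. By Bertram--Welters, $\dim V^r(C,f)=g-1-{{r+1}\choose{2}}$, and its class in $P^r$ is known (De Concini--Pragacz). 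Intersecting with translates of the theta-type divisors $\{D: D-\tilde p\ \text{effective}\}$ yields an algebraic count, which I would match against the number of tropical points. That tropical count is the number of reduced words times the intersection degree on the torus.

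Equality of the counts again forces surjectivity onto these tropical points. Since $D$ was arbitrary (choose the $\tilde p_i$ so that $D$ is among them), every class lifts. Finally, a density argument extends the result from generic to arbitrary positions in the torus, since the image of $\trop$ is closed.
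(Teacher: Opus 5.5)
Your outline has the same skeleton as the paper's proof. You cut $V^r(C,f)$ down to a finite scheme $\mathcal{X}$ by intersecting with general translates of $V^1(C,f)$. You compare the length predicted by the class formula of \cite{DCP} with the tropical count from reduced words for $w_0$ (Lemma~\ref{Lem:Count}). You finish by density and closedness of $\Trop(V^r(C,f))$.

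The step that makes the count work is missing. The class formula only computes the \emph{length} of $\mathcal{X}$, and a priori you only have $\Trop(\mathcal{X}) \subseteq X$. Equality of that length with $\# X$ forces surjectivity only if each point of $X$ carries length at most one. You name this as the main obstacle in the finite case, but none of your suggestions supplies it:
\begin{itemize}
\item Pairwise distinct slopes of $\varphi_0,\ldots,\varphi_r$ is what shows $\rk(D)\geq r$ (Corollary~\ref{Cor:HowToComputeRank}); it says nothing about intersection multiplicities.
\item A sum-of-multiplicities identity in a flat degeneration does not make each tropical point's multiplicity positive. That positivity is exactly the theorem.
\item The claim that $V^r(C,f)$ is reduced ``since $\Gamma$ is $r$-generic'' is unjustified. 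In the paper, reducedness (Corollary~\ref{Cor:Reduced}) is a \emph{consequence} of lifting.
\item Even with reducedness, a map between finite sets of equal size need not be onto without injectivity.
\end{itemize}
In the general case you assert that ``equality of the counts again forces surjectivity,'' which is the same gap. Separately, Bertram--Welters concerns a general cover, not your particular $f$. Here the dimension has to come from Gubler's theorem together with the degeneracy-locus lower bound.

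The paper supplies the missing per-point bound with three ingredients:
\begin{itemize}
\item Lemma~\ref{Lem:Theta} shows every facet of $V^1(\Gamma,\pi)$ has multiplicity one. The argument uses $\Trop V^1(C,f) = V^1(\Gamma,\pi)$ and the fact that $g-1$ general tropical translates meet in $(g-1)!$ points. By Rabinoff this must match $\xi^{g-1} = (g-1)!$ for the principal polarization.
\item Proposition~\ref{Prop:TropicalLocalCI} shows that near a vertex avoiding $[D]$, $V^r(\Gamma,\pi)$ is cut out by the $\binom{r+1}{2}$ translates $V^1(\Gamma,\pi) + E_j - \overline{E}_j$. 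These are translates of coordinate hyperplanes, with $E_j$ effective of degree $r-1$.
\item Proposition~\ref{Prop:LocalCI} and Corollary~\ref{Cor:LocalCI} use the isotropic degeneracy-locus description to give a \emph{scheme-theoretic} inclusion $V^r(C,f) \subseteq V^1(C,f) + \mathcal{E}_j - \overline{\mathcal{E}}_j$ for lifts $\mathcal{E}_j$ of $E_j$.
\end{itemize}
Consequently, near $[D]$, $\mathcal{X}$ is a closed subscheme of a complete intersection of $g-1$ translates of $V^1(C,f)$. Their tropicalizations are multiplicity-one coordinate hyperplanes meeting transversally at $[D]$. By Rabinoff's theorem, at most one point, counted with multiplicity, lies over $[D]$. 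This per-point bound, combined with the total length from \cite{DCP}, is what forces $[D]$ to lift.
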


The proof of Theorem~\ref{Thm:Lifting} follows the same rough outline as that of the lifting result in \cite{CJP}.  Specifically, we use the description of $V^r (C,f)$ as a type-D degeneracy locus to prove that it is a local complete intersection.  We then apply combinatorial properties of $V^r (\Gamma,\pi)$ to prove an analogous tropical statement, and invoke Rabinoff's lifting theorem \cite{Rabinoff12} to complete the argument.  As consequences of Theorem~\ref{Thm:Lifting}, we obtain a new proof of the result of Welters and Bertram (Corollary~\ref{Cor:Welters}) as well as the cases where equality holds in Theorems~\ref{Thm:MainThm} and~\ref{Thm:KGonal}.  We also see that $V^r (C,f)$ is reduced (Corollary~\ref{Cor:Reduced}) and that every divisor class in $V^r (\Gamma,\pi)$ supports a tropical linear series of rank $r$ (Corollary~\ref{Cor:TLS}).

Finally, in Section~\ref{Sec:Arbitrary}, we describe $V^r (\Gamma,\pi)$ when $\Gamma$ has arbitrary edge lengths.  In Theorem~\ref{Thm:Arbitrary}, we again see that $V^r (\Gamma,\pi)$ is a union of tori, indexed by certain types of words in the Coxeter group $A_r$.  In Section~\ref{Sec:kUniform}, we consider the special case where $\Gamma$ is $k$-\emph{uniform} and prove Theorem~\ref{Thm:MainThm}.  The ``bielliptic'' case where $k=2$ is particularly interesting.  In Theorem~\ref{Thm:Bielliptic}, we show that in this case, the Prym-Brill-Noether variety $V^r (\Gamma,\pi)$ is a union of tori, indexed by lingering words in the Coxeter group $I_2 (r+1)$ that contain a reduced subword for $w_0$.  Because of this, in this case the Prym-Brill-Noether variety satisfies many of the same properties as in the case of generic edge lengths.  It is pure dimensional (Proposition~\ref{Prop:BiellipticPureDimension}) and connected in codimension 1 (Proposition~\ref{Prop:BiellipticConnected}).  When $g-1 = r+1$, the tropical Prym-Brill-Noether variety $V^r (\Gamma,\pi)$ is finite, of cardinality 2 (Lemma~\ref{Lem:BiellipticCardinality}).  In future work, we prove a lifting result for the bielliptic loop of loops analogous to Theorem~\ref{Thm:Lifting}, showing that the inequality in Theorem~\ref{Thm:MainThm} is an equality when $k=2$ \cite{ABDJLNRRL}.

\subsection{Covers of $\PP^1$}
\label{Sec:IntroGonal}

We now discuss the Prym-Brill-Noether theory of general $k$-gonal curves.  In this setting, we obtain the following improvement on the previously known upper bound for $\mathrm{dim} V^r (C,f)$.

\begin{theorem}
\label{Thm:KGonal}
Let $r \geq 1$ and $k \geq 3$.  For a general \'{e}tale double cover $f \colon \WC \to C$ in the $k$-gonal locus in $\mathcal{R}_g$, we have
\[
\mathrm{dim} V^r (C,f)  \leq \left\{ \begin{array}{ll}
g-1-n(r,k) &\text{if } \ell(\ell + 1) \leq r+1\\
g-1- \frac{(k-1)(r+1)}{2} &\text{if } k+1 \leq r+1 < \ell (\ell + 1) \\
g-1- {{r+1}\choose{2}} &\text{if } k \geq r+1,
\end{array} \right.
\]
with equality when $k \geq r+1$.
\end{theorem}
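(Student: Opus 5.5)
The plan is to argue tropically, as for Theorem~\ref{Thm:MainThm}, but to replace the $k$-uniform loop of loops by a tropical curve that is a specialization of general $k$-gonal double covers. Concretely, I would take a loop of loops $\Gamma$ of genus $g$ whose edge lengths are chosen so that $\Gamma$ carries a divisor of degree $k$ and rank $1$; this amounts to imposing a torsion-type condition of order $k$ on the torsion profile of the loops. I would also take $\pi\colon \WG\to\Gamma$ to be the antipodal double cover. First I would check that $\Gamma$ is the tropicalization of some chain $\WC\xrightarrow{f}C\xrightarrow{h}\PP^1$. The plan is to realize $\Gamma$, with its degree-$k$ harmonic map to a metric tree, as an admissible cover and then lift it, using that harmonic morphisms of tropical curves with the right local data lift (the characteristic hypotheses enter here). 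Since the moduli space of such chains is irreducible \cite{BF86}, upper semicontinuity gives the following: the tropical bound $\dim V^r(C,f)\le \dim V^r(\Gamma,\pi)$, which holds by Baker's specialization lemma and the compatibility of tropicalization with the Prym condition in \cite{LenUlirsch21}, transfers to a general cover in the $k$-gonal locus.

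The core step is to compute $\dim V^r(\Gamma,\pi)$ using Theorem~\ref{Thm:Arbitrary}. That theorem describes $V^r(\Gamma,\pi)$ as a union of tori indexed by certain words in $A_r$. The torsion condition forces some letters (the simple reflections $s_i$ with $i$ in a set determined by $k$) to be applied only on loops in prescribed residue classes, and the remaining loops may linger. The dimension of a torus equals the number of lingering positions, so I would minimize the number of non-lingering positions needed to produce a reduced word for $w_0$ subject to these constraints. There are three regimes. When $k\ge r+1$ the constraint is vacuous, and one recovers $\binom{r+1}{2}$ as in Theorem~\ref{Thm:Generic}. In the intermediate regime, counting gives that each of the $r+1$ ``strands'' must be moved with a forced waste; this yields $(k-1)(r+1)/2$ used loops. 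In the regime $\ell(\ell+1)\le r+1$, a more efficient word exploits the structure of $\ell=\lceil k/2\rceil$ blocks and gives $n(r,k)$. I would derive each count by exhibiting an explicit word achieving it, together with a matching lower bound. The lower bound would come from an inversion-counting argument: each allowed step creates at most a bounded number of new inversions of $w_0$ per window of $k$ loops.

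For equality when $k\ge r+1$, I would invoke Theorem~\ref{Thm:Lifting}-style reasoning, or more simply the lower bound $\dim V^r(C,f)\ge g-1-\binom{r+1}{2}$. That lower bound holds for every cover because $V^r$ is a type-D degeneracy locus of expected codimension $\binom{r+1}{2}$, and it is nonempty since the tropical variety is nonempty and Theorem~\ref{Thm:Lifting} or its analogue lifts points. The main obstacle is the combinatorial optimization in the middle step, especially matching lower bounds across the two small-$k$ regimes. I would first try to reduce it to a statement about words in $A_r$ in which letters are colored by residues mod $k$, and then bound the length of such a word by counting inversions contributed per period.
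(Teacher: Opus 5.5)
\textbf{The gap: the first case cannot come from this tropical curve.} You want all three bounds to come from one loop of loops. But a single tropical curve gives a single number $\dim V^r(\Gamma,\pi)$, and for the curve that actually works (see the next paragraph) that number is not $g-1-n(r,k)$ in the regime $\ell(\ell+1)\le r+1$.

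Take $k$ odd, so $k=2\ell-1$ and the torsion $k-1$ is even. In regime 1 one checks $k-1<r$. The odd--even sorting word $(u_0u_1)^{\cdots}$ from the proof of Theorem~\ref{Thm:kUniform} then has exactly $\frac{(k-1)(r+1)}{2}$ letters, so $\dim V^r(\Gamma,\pi)=g-1-\frac{(k-1)(r+1)}{2}$. On the other hand,
\[
n(r,k)-\tfrac{(k-1)(r+1)}{2}=\tfrac{2(r+1)-\ell(\ell+1)}{2}>0 \quad\text{when } \ell(\ell+1)\le r+1 .
\]
So no ``matching lower bound'' of $n(r,k)$ on word length exists. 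Your phrasing also has the direction reversed: in regime 1, $g-1-n(r,k)$ is the \emph{stronger} bound, and a ``more efficient'' (shorter) word would only weaken the tropical bound.

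The missing idea is that the first case is not new. It is the chain-of-loops bound of \cite{LenUlirsch21, CLRW20}, which already holds for a general cover in the irreducible $k$-gonal locus. The loop-of-loops bound also holds on a dense open set, so a general cover satisfies both. The three-case statement just records which bound is smaller; this is exactly how the paper argues.

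\textbf{The choice of curve: torsion $k-1$, not $k$.} The curve should be a $(k-1)$-uniform loop of loops with $(k-1)\ell(v_j,w_j)=\ell(\gamma_j)$, and with $\beta_1$ longer than all other edges combined.
\begin{enumerate}
\item Cutting $\beta_1$ gives a chain of loops. It maps with degree $k-1$ to an interval, with dilations $k-1$ on the bridges and $1$, $k-2$ on the two edges of each $\gamma_j$.
\item The edge $\beta_1$ adds one more sheet: dilation $1$ near $v_1$ and $k-1$ near $w_{g-1}$. So $kv_1$ is a fiber of a degree-$k$ tame harmonic morphism to a tree, which lifts by \cite[Lemma~7.15]{ABBR15a}.
\end{enumerate}
With torsion $k$ the same construction has degree $k+1$; torsion $k$ is instead the $k$-elliptic curve of Theorem~\ref{Thm:MainThm}. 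Your own threshold, ``vacuous when $k\ge r+1$,'' is the $r$-genericity condition $k-1\ge r$ for torsion $k-1$, not $k\ge r$ for torsion $k$.

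With the correct curve, the two remaining cases follow:
\begin{itemize}
\item The middle case is Theorem~\ref{Thm:kUniform} with $k-1$ in place of $k$. Only its lower bound on word length is needed; explicit optimal words are not.
\item The last case follows from $r$-genericity and Corollary~\ref{Cor:Dimension}.
\end{itemize}
Your type-D degeneracy-locus argument for the reverse inequality when $k\ge r+1$ is fine.
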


As with Theorem~\ref{Thm:MainThm}, Theorem~\ref{Thm:KGonal} holds in characteristic zero, or when the characteristic is relatively prime to $2k$.  Unfortunately, this upper bound is not optimal in general.  Indeed, our expectation is the following.

\begin{conjecture}
\label{Conj:KGonal}
Let $r \geq 1$ and $k \geq 3$.  For a general \'{e}tale double cover $f \colon \WC \to C$ in the $k$-gonal locus in $\mathcal{R}_g$, we have
\[
\mathrm{dim} V^r (C,f)  = \left\{ \begin{array}{ll}
g-1- (k-1)(r+1) + {{k}\choose{2}} &\text{if } k < r+1 \\
g-1- {{r+1}\choose{2}} &\text{if } k \geq r+1.
\end{array} \right.
\]
\end{conjecture}

We now explain the rationale behind Conjecture~\ref{Conj:KGonal}.  The idea is similar to that pioneered by H. Larson in \cite{Larson21}.  Given a curve $C$ with a fixed map to $\PP^1$, Larson defines the splitting type locus to be the set of line bundles on $C$ whose pushforward to $\PP^1$ is isomorphic to a given vector bundle on $\PP^1$.  She then proves that, for a general element of the Hurwitz space, the splitting type loci are smooth of the expected dimension, and satisfy certain natural containment relations.  In our setting, we will see that the pushforward of a line bundle with canonical norm from $\WC$ to $\PP^1$ is an orthogonal bundle, and we analogously define an orthogonal splitting type locus.

By \cite{Mumford71}, if $f \colon \WC \to C$ is an \'{e}tale double cover and $\cL \in \Pic (\WC)$ has canonical norm, then there exists a nondegenerate symmetric bilinear form $f_* \cL \otimes f_* \cL \to K_C$.  If $h \colon C \to \PP^1$ is a map of degree $k$, then define $\cE = h_* f_* \cL$.  Pushing forward, we obtain a nondegenerate symmetric bilinear form $\cE \otimes \cE \to h_* K_C$.  The vector bundle $h_* K_C$ has a unique summand isomorphic to $\cO_{\PP^1} (-2)$.  Composing with projection onto this summand, we obtain a nondegenerate symmetric bilinear form
\[
\cE \otimes \cE \to \cO_{\PP^1} (-2) .
\]
In other words, $\cE$ is an $\cO_{\PP^1} (-2)$-valued orthogonal bundle.  Equivalently, $\cE \otimes \cO_{\PP^1} (1)$ is an $\cO_{\PP^1}$-valued orthogonal bundle.  Any $\cO$-valued orthogonal bundle on $\PP^1$ is self-dual.  In other words, there exists a vector $\vec{a} = (a_1 , \ldots , a_k) \in \Z_{\geq 0}^k$ such that 
\begin{align}
\label{eq:split}
\cE \otimes \cO_{\PP^1} (1) \cong \cO_{\PP^1} (\vec{a}) \oplus \cO_{\PP^1} (-\vec{a}).
\end{align}

A theorem of Grothendieck \cite{Grothendieck57} says that, conversely, every vector bundle $\cF$ on $\PP^1$ such that $\cF^* \cong \cF$ admits a unique quadratic form $q \colon \cF \to \cF^*$ making it an $\cO$-valued orthogonal bundle.  Under the splitting \eqref{eq:split}, the quadratic form $q$ is given by the matrix
\[
q = \left( \begin{array}{ll}
0 & \mathrm{Id} \\
\mathrm{Id} & 0
\end{array} \right).
\]
The space of first-order deformations of an orthogonal bundle $\cF$, as described in \cite{Hulek81}, is $H^1 (\mathrm{End}^q (\cF))$, where $\mathrm{End}^q (\cF)$ is the sheaf defined by
\[
\mathrm{End}^q (\cF) (U) := \{ s \in \mathrm{End} (\cF) (U) \mid s^{T} q + qs = 0 \}.
\]
When $a_1 , \ldots , a_k$ are written in increasing order, one computes:
\[
h^1 (\mathrm{End}^q ( \cO_{\PP^1} (\vec{a}) \oplus \cO_{\PP^1} (-\vec{a}) )) = \sum_{i > j} \Big( \max \{ 0, a_i + a_j -1 \} + \max \{ 0, a_i - a_j - 1 \} \Big).
\]

For $\vec{a} = (a_1 , \ldots , a_k) \in \Z_{\geq 0}^k$, we define the \emph{orthogonal splitting type locus}
\[
U^{\vec{a}} (C,f,h) = \{ \cL \in P(C,f) \mid h_* f_* \cL \otimes \cO_{\PP^1} (1) \cong \cO_{\PP^1} (\vec{a}) \oplus \cO_{\PP^1} (-\vec{a}) \} .
\]
Conjecture~\ref{Conj:KGonal} is then implied by the following more general conjecture.

\begin{conjecture}
\label{Conj:OrthogonalSplitting}
Let $r \geq 1$ and $k \geq 3$.   For a general chain $\WC \xrightarrow{f} C \xrightarrow{h} \PP^1$, where $f$ is an \'{e}tale double cover, $C$ has genus $g$, and $h$ has degree $k$, we have
\begin{align*}
\mathrm{dim} U^{\vec{a}} (C,f,h) &= g-1-h^1(\mathrm{End}^q ( \cO_{\PP^1} (\vec{a}) \oplus \cO_{\PP^1} (-\vec{a})) \\
&= g-1 - \sum_{i > j} \Big( \max \{ 0, a_i + a_j -1 \} + \max \{ 0, a_i - a_j - 1 \} \Big).
\end{align*}
\end{conjecture}

To see why Conjecture~\ref{Conj:OrthogonalSplitting} implies Conjecture~\ref{Conj:KGonal}, note that, if $\cE \otimes \cO_{\PP^1} (1) \cong \cO_{\PP^1} (\vec{a}) \oplus \cO_{\PP^1} (-\vec{a})$, then
\[
h^0 (\cL) = h^0 (\cE) = \sum_{i=1}^k a_i .
\]
Now, consider the vector $\vec{a}$ where $a_i$ is equal to either $\lfloor \frac{r+1}{k} \rfloor$ or $\lceil \frac{r+1}{k} \rceil$ for all $i$.  Across all vectors $\vec{a}$ with $\sum a_i \geq r+1$, this one is most balanced.  In this case, one obtains
\begin{align*}
\sum_{i > j} \max \{ 0, a_i + a_j -1 \} + \max \{ 0, a_i - a_j - 1 \} &= \sum_{i > j} \max \{ 0, a_i + a_j - 1 \} \\
&= (\# \{ i \mid a_i > 0 \} -1) \sum_{i=1}^k a_i - {{\# \{ i \mid a_i > 0 \} }\choose{2}} \\
&= \left\{ \begin{array}{ll}
(k-1)(r+1) - {{k}\choose{2}} &\text{if } k < r+1 \\
{{r+1}\choose{2}} &\text{if } k \geq r+1.
\end{array} \right.
\end{align*}

\subsection*{Acknowledgements}

This work was supported in part by NSF grant DMS–2451533, as well as the Simons Travel Support for Mathematicians program.  Thank you to Yoav Len and Sam Payne for comments on an early draft.  I'd also like to thank my daughter, Sophronia Jensen, for providing me with ample time to think about mathematics while I took her for regular 3-hour walks.  I'd also like to thank Ham's House (aka Kenwick Table) for hosting me and Sophie during my research time from 7 to 9 each morning.  It may have looked like I was just snuggling a baby, but I was also thinking about tropical Prym-Brill-Noether theory.

\section{Preliminaries}
\label{Sec:Prelim}

\subsection{Divisors on Metric Graphs}
\label{Sec:Divisors}

We briefly recall the theory of divisors on metric graphs.  For a more detailed study of these topics, we refer the reader to the survey \cite{BakerJensen}, or to the notes on the author's website:
\[
\mbox{\url{https://www.ms.uky.edu/~dhje223/MA 764 Spring 2025.html}.}
\]
A \emph{metric graph} $\Gamma$ is the metric space obtained from a finite graph $G$ by identifying each edge $e$ in $G$ with an interval of positive real length $\ell(e)$.  The finite graph $G$ is called a \emph{model} for $\Gamma$.  The \emph{genus} of a metric graph is its first Betti number.  In other words, the genus is $\#E(G) - \#V(G) + 1$, where $E(G)$ and $V(G)$ denote the set of edges and the set of vertices, respectively, of a model $G$.

The \emph{tangent directions} at a point $p$ in a metric graph $\Gamma$ are the germs of isometric embeddings of the closed interval $[0,\epsilon]$ into $\Gamma$ with 0 mapping to $p$.  We write $T_p (\Gamma)$ for the set of tangent directions at $p$ in $\Gamma$.  The number $\# T_p (\Gamma)$ is the \emph{valence} of $p$.  All but finitely many points of a metric graph $\Gamma$ have valence 2, and the vertex set of any model of $\Gamma$ contains all points of valence different from 2.  

The \emph{divisor group} $\Div (\Gamma)$ of a metric graph $\Gamma$ is the free abelian group on points of $\Gamma$.  A \emph{divisor} on $\Gamma$ is an element of this group -- that is, a formal $\Z$-linear combination of points of $\Gamma$.  The \emph{degree} of a divisor $D = \sum D(p) \cdot p$ is the sum of the coefficients $\deg (D) = \sum D(p)$.  A divisor $D$ is \emph{effective}, denoted $D \geq 0$, if $D(p) \geq 0$ for all $p \in \Gamma$.  We write $\mathrm{Eff}^d (\Gamma)$ for the set of all effective divisors of degree $d$ on $\Gamma$.

We write $\PL (\Gamma)$ for the set of piecewise-linear, continuous functions $\varphi \colon \Gamma \to \R$ with integer slopes.  The semifield $\PL (\Gamma)$ is the tropical analogue of the function field of an algebraic curve.  The \emph{order of vanishing} $\ord_p (\varphi)$ of a function $\varphi \in \PL (\Gamma)$ at a point $p \in \Gamma$ is the sum of the incoming slopes of $\varphi$ along the tangent directions at $p$.  Given a function $\varphi \in \PL (\Gamma)$, the corresponding \emph{principal divisor} is $\ddiv (\varphi) = \sum \ord_p (\varphi) \cdot p$.  Two functions $\varphi, \psi \in \PL (\Gamma)$ satisfy $\ddiv (\varphi) = \ddiv (\psi)$ if and only if there exists a real number $c \in \R$ such that $\varphi = \psi + c$.  In other words, if $D$ is principal, then by definition there exists a function $\varphi \in \PL (\Gamma)$ such that $\ddiv (\varphi) = D$, and this function is unique up to translation.

Two divisors $D, D' \in \Div (\Gamma)$ are \emph{linearly equivalent}, denoted $D \sim D'$, if $D-D'$ is principal.  We write $[D]$ for the linear equivalence class of a divisor $D$.  The \emph{Picard group} $\Pic (\Gamma)$ is the set of linear equivalence classes of divisors on $\Gamma$.  The degree of every principal divisor is zero, so the degree of a divisor is invariant under linear equivalence.  Given a divisor $D$, we write
\[
R(D) := \{ \varphi \in \PL (\Gamma) \mid \ddiv (\varphi) + D \geq 0 \} .
\]
The set $R(D)$ is a \emph{tropical module}, meaning that it is closed under pointwise minimum and addition of scalars.  We write $\vert D \vert = \{ \ddiv (\varphi) + D \mid \varphi \in R(D) \}$.

The \emph{Baker-Norine rank} of $D$ is defined as follows:
\[
\rk (D) = \max \{ r \mid D-E \text{ is equivalent to an effective divisor for all } E \in \mathrm{Eff}^r (\Gamma) \} .
\]
The \emph{canonical divisor} of a metric graph $\Gamma$ is the divisor $K_{\Gamma} = \sum (\mathrm{val} (p) - 2) \cdot p$.  If $\Gamma$ has genus $g$, then $\deg (K_{\Gamma}) = 2g-2$.  The Baker-Norine rank satisfies the Riemann-Roch formula:
\[
\rk (D) - \rk (K_{\Gamma} - D) = \deg (D) - g + 1 \text{ \cite{BakerNorine07, MikhalkinZharkov08}}.
\]
As a consequence, we see that  every divisor $D$ with $\deg (D) > 2g-2$ satisfies $\rk (D) = \deg (D) - g$.  Moreover, if $\deg (D) = g-1$, then $\rk (D) = \rk (K_{\Gamma} - D)$.

\begin{example}
\label{Ex:Loop}
An important case to which we will frequently return is that where the metric graph $\gamma$ is a single loop.  Let $w \in \gamma$ be a point, and let $D$ be a divisor on $\gamma$ of degree $s+1$.  Since $D-sw$ has degree 1, by the tropical Riemann-Roch theorem, there exists a point $x \in \gamma$ such that $D \sim sw+x$.

We show that the point $x$ is unique.  If $sw+x \sim sw+x'$, then $x \sim x'$, hence there exists a function $\varphi \in \PL (\gamma)$ such that $\ddiv (\varphi) = x-x'$.  If $x \neq x'$, then $\varphi$ is nonconstant.  The region where $\varphi$ obtains its maximum is then a union of points and closed intervals, each of which has two outgoing tangent directions.  It follows that the sum of the positive coefficients of $\ddiv (\varphi)$ is at least 2, a contradiction.

An important special case is when $D = sv + u$ for points $u,v \in \gamma$.  By the above, there exists a unique point $x \in \gamma$ such that $D \sim sw + x$.  By definition, the point $x$ is equal to $w$ if and only if there exists an integer $a$ and a function $\varphi \in \PL (\gamma)$ with slopes pictured as in Figure~\ref{Fig:Loop}.  If $\gamma$ has circumference $\ell$ and $\ell (v,w), \ell (w,u)$ denote the counterclockwise distances from $v$ to $w$ and from $w$ to $u$, respectively, then a function with the pictured slopes is continuous if and only if
\[
(s-a) \ell (v,w) = (a+1) \ell (w,u) + a (\ell - \ell (v,w) - \ell (w,u)).
\]
Combining like terms, we obtain:
\[
s \ell (v,w) = \ell (w,u) + a \ell .
\]
Therefore, we see that the point $x$ is equal to $w$ if and only if $\ell (w,u) \equiv s \ell (v,w)$ in $\R/\ell\Z$.

\begin{figure}[H]
\begin{tikzpicture}

\draw (0,0) circle (1);
\draw [ball color=white] (-1,0) circle (0.15);
\draw [ball color=black] (1,0) circle (0.15);
\draw [ball color=white] (0,1) circle (0.15);
\draw (-1.5,0) node {$sv$};
\draw (1.8,0) node {$(s+1)w$};
\draw (0,1.3) node {$u$};
\draw (1.05,0.9) node {$a+1$};
\draw (-0.85,0.9) node {$a$};
\draw (0,-1.3) node {$s-a$};

\end{tikzpicture}
\caption{Rightward slopes of a function $\varphi$ with $\ddiv (\varphi) = (s+1)w - (sv+u)$.}
\label{Fig:Loop}
\end{figure}
\end{example}

\subsection{Orientable Divisors}
\label{Sec:Orientable}

In this section, we discuss the connection between graph orientations and divisors on graphs.  While all the results we use can be found in earlier sources, the most comprehensive treatment of this theory is \cite{Backman17}.

An \emph{orientation} of a finite graph $G$ is a choice of head vertex and tail vertex for each edge.  Given an orientation $\cO$ of $G$ and a vertex $v \in V(G)$, the \emph{indegree} of $\cO$ at $v$, denoted $\mathrm{indeg}_{\cO}(v)$, is the number of edges adjacent to $v$ with head $v$.  An orientation of a metric graph $\Gamma$ is an orientation of some model for $\Gamma$.  Given an orientation $\cO$ of $\Gamma$, we define the corresponding \emph{orientable divisor}
\[
D_{\mathcal{O}} := \sum_{p \in \Gamma} (\mathrm{indeg}_{\mathcal{O}}(p) -1)p.
\]
If $\Gamma$ has genus $g$, then every orientable divisor on $\Gamma$ has degree $g-1$.  Given a point $p_0 \in \Gamma$, we say that an orientation $\cO$ is $p_0$-\emph{connected} if, for all $p \in \Gamma$, there is a directed path from $p_0$ to $p$.  In Section~\ref{Sec:DivisorsOnLOL}, we will use the following result.

\begin{proposition} \cite[Theorem~4.18]{ABKS}
\label{Prop:ConnectedOrientation}
Let $\Gamma$ be a metric graph of genus $g$ and let $p_0 \in \Gamma$.  For every divisor $D$ of degree $g-1$ on $\Gamma$, there exists a unique $p_0$-connected orientation $\cO$ such that $D \sim D_{\cO}$.
\end{proposition}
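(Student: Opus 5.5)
This is a result about metric graphs, so I would prove it combinatorially, in two parts: existence and uniqueness. The plan is to fix a model $G$ of $\Gamma$, fine enough to contain $p_0$ and all points of valence other than $2$. Orientations of $\Gamma$ are then orientations of refinements of $G$. After further subdividing, any divisor can be supported on vertices, with points allowed to slide continuously along edges.

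\textbf{Existence.} I would use Dhar's burning algorithm, or equivalently $p_0$-reduced divisors. First I would show that every orientable divisor has degree $g-1$, by the count $\sum_p(\mathrm{indeg}(p)-1)=\#E-\#V=g-1$. Next, given $D$ of degree $g-1$, I would replace it by its $p_0$-reduced representative $D_0$. Then $D_0(p)\ge 0$ for $p\neq p_0$, and $D_0(p_0)$ is determined by the degree. I would then run Dhar's burning algorithm from $p_0$ on $D_0+p_0$, or more precisely on the divisor $D_0-D_0(p_0)p_0$ adjusted by the degree. Each time fire passes through an edge, I orient that edge in the direction of the burning. At a point $p$ with $D_0(p)=k$, fire arrives along $k+1$ directions before $p$ burns, giving indegree $k+1$. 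The remaining directions at $p$ are oriented outward, because $p$ then burns along them.

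Because $D_0$ is reduced, the whole graph burns, so every point is reached by a directed burning path from $p_0$, and the orientation is $p_0$-connected. What remains is to check that the total degree matches, i.e.\ that the coefficient at $p_0$ comes out as $\mathrm{indeg}(p_0)-1$. Here $p_0$ has indegree $0$ among edges burned outward from it; any further edges into $p_0$ must be accounted for, and the degree count $g-1$ forces this coefficient to work. The metric subtlety is that on an edge whose interior is burned from both ends, the two fire fronts meet at a point. I would subdivide at that point and orient both halves toward it, so that it has indegree $2$ and coefficient $1$. This is consistent with reducedness: it corresponds to a chip of $D_0$ located at that point.

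\textbf{Uniqueness.} Suppose $\cO,\cO'$ are both $p_0$-connected with $D_{\cO}\sim D_{\cO'}$. I would show that the orientable divisor of a $p_0$-connected orientation, shifted as $D_{\cO}+p_0$ or moved to $p_0$-reduced form, recovers the burning order. Concretely, the key claim is that for a $p_0$-connected orientation, $D_{\cO}-(\mathrm{indeg}(p_0)-1)p_0+\ldots$ is already $p_0$-reduced, up to the standard modification. Given that, uniqueness of reduced divisors forces $D_{\cO}=D_{\cO'}$. A divisor equality then determines the orientation, because each edge's direction is recovered from the burning process, which is deterministic given the divisor.

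I expect the main obstacle to be this last reducedness claim. I would attack it with Dhar's criterion. For any closed set $A\not\ni p_0$, $p_0$-connectivity gives an edge entering $A$ from outside. Counting out-degrees on $A$ then shows that some boundary point of $A$ has fewer chips than outgoing directions from $A$, which is exactly the condition that $A$ cannot fire. Handling this carefully in the metric setting, with sets of arbitrary shape, is where I would rely on the treatment in \cite{Backman17}.
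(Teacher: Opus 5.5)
The paper does not prove this proposition; it quotes \cite[Theorem~4.18]{ABKS}, where the result rests on break divisors. There, the divisors $D_{\cO}$ with $\cO$ $p_0$-connected are exactly $B-p_0$ for $B$ a break divisor, and every degree-$g$ class contains exactly one break divisor. Your sketch has genuine gaps in both halves.

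\emph{Existence.} Orienting edges in the direction the fire travels makes $p_0$ a source, so $D_{\cO}(p_0)=-1$. Away from $p_0$ it only gives $D_{\cO}\geq D_0$, because fire fronts also meet at points carrying no chip of $D_0$, and such points get indegree $2$. So your orientation satisfies $D_{\cO}=D_0-(D_0(p_0)+1)p_0+E$ with $E\geq 0$ of degree $D_0(p_0)+1$. The degree count does not force $E=0$; it only measures the excess.

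For example, take a circle with $D=0$, which is already $p_0$-reduced. The fronts meet at some $x\neq p_0$, and you get $D_{\cO}=x-p_0\not\sim 0$. The correct representative is $D_{\cO}=0$ for a directed cycle, which burning never produces.

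Your construction works exactly when $D_0(p_0)=-1$, i.e.\ when $\rk(D)=-1$. That is the acyclic-orientation construction behind Lemma~\ref{Lem:Acyclic}, not this proposition. Moreover, no construction aimed at $D_0$ itself can succeed, because the $p_0$-reduced representative need not be $p_0$-orientable (see the example below). You need a genuinely different way of choosing the representative, such as the break-divisor route.

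\emph{Uniqueness.} Your key claim that a $p_0$-connected orientable divisor is $p_0$-reduced is false. Join $p_0$ by a bridge to a point $x$ on a loop $\gamma$; orient the bridge toward $x$ and $\gamma$ as a directed cycle. This $\cO$ is $p_0$-connected with $D_{\cO}=x-p_0$, but $\gamma$ can fire. Its reduced representative $0$ is not $D_{\cO'}$ for any $p_0$-connected $\cO'$, since that would force the only edge at $p_0$ to point into $p_0$.

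Your count runs the wrong way. An edge entering $A$ adds indegree at a boundary point, which gives the lower bound $\deg(D_{\cO}\vert_A)=g(A)-1+\mathrm{in}_{\cO}(A)\geq g(A)$ for closed connected $A\not\ni p_0$. That lower bound is what actually proves uniqueness, as follows.
\begin{itemize}
\item Suppose $D_{\cO'}-D_{\cO}=\ddiv(f)$ with $f$ nonconstant.
\item Take a component $A$ of the maximum locus of $f$. After possibly swapping $\cO,\cO'$ and replacing $f$ by $-f$, you may assume $p_0\notin A$.
\item Then $\deg(\ddiv(f)\vert_A)\geq\mathrm{outdeg}(A)$.
\item For any orientation, $\deg(D_{\cO'}\vert_A)=g(A)-1+\mathrm{in}_{\cO'}(A)\leq g(A)-1+\mathrm{outdeg}(A)$.
\item Combining these gives $\deg(D_{\cO}\vert_A)\leq g(A)-1$, contradicting the lower bound for $\cO$.
\end{itemize}

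Finally, your last step is also false: the divisor does not determine the orientation. The two directed cycles on a circle are both $p_0$-connected with $D_{\cO}=0$. Only $D_{\cO}$ is unique, and that is what Corollary~\ref{Cor:2Tori} uses.
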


An orientation $\cO$ is \emph{acylic} if it does not contain a directed cycle.  The following result was first proved in \cite{BakerNorine07} for finite graphs, in the proof of the Riemman-Roch theorem for graphs, and then later for metric graphs in \cite{MikhalkinZharkov08}.  We note that \cite[Lemma~7.9]{MikhalkinZharkov08} does not explicitly state that the orientation $\cO$ is $p_0$-connected, but the orientation constructed in the proof has this property.

\begin{lemma} \cite[Lemma~7.9]{MikhalkinZharkov08}
\label{Lem:Acyclic}
Let $\Gamma$ be a metric graph and let $p_0 \in \Gamma$.  For every divisor $D$ on $\Gamma$, either $\rk (D) \geq 0$ or there exists a $p_0$-connected acyclic orientation $\cO$ such that $\rk(D_{\cO} - D) \geq 0$, but not both.
\end{lemma}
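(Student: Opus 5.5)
The plan is to work throughout with $p_0$-reduced divisors and Dhar's burning algorithm on a metric graph. I will use the standard facts that every divisor $D$ is linearly equivalent to a unique $p_0$-reduced divisor $D'$, and that $\rk(D) \geq 0$ if and only if $D'(p_0) \geq 0$. Recall that $D'$ is $p_0$-reduced when $D'(p) \geq 0$ for $p \neq p_0$ and a fire started at $p_0$ burns all of $\Gamma$. Here a point $p$ burns once the number of burning tangent directions at $p$ exceeds $D'(p)$. To keep everything finite, I fix a model $G$ of $\Gamma$ whose vertex set contains $p_0$, the support of $D'$, and all points of valence $\neq 2$. The fire then advances edge by edge through $G$.

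\emph{At most one alternative holds.} Let $\cO$ be a $p_0$-connected acyclic orientation. Then $p_0$ is the unique source: any other source could not be reached by a directed path from $p_0$. I claim that $D_{\cO} + p_0$ is $p_0$-reduced. Its coefficients away from $p_0$ are $\mathrm{indeg}_{\cO}(p) - 1 \geq 0$. To see that it burns, run the fire in a topological order of $\cO$, which exists because $\cO$ is acyclic. When we reach a vertex $p$, all edges into $p$ are already burnt, giving $\mathrm{indeg}_{\cO}(p) > \mathrm{indeg}_{\cO}(p) - 1$ burning directions, so $p$ burns. Next, $D_{\cO}$ itself is then the reduced form of its class, since subtracting a chip at $p_0$ preserves the burning condition. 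Its coefficient at $p_0$ is $-1$, so $\rk(D_{\cO}) = -1$. Now suppose both $D \sim E \geq 0$ and $D_{\cO} - D \sim F \geq 0$. Then $D_{\cO} \sim E + F \geq 0$, a contradiction.

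\emph{At least one alternative holds.} Suppose $\rk(D) = -1$, and let $D'$ be the $p_0$-reduced representative, so $D'(p_0) \leq -1$. Run Dhar's burning from $p_0$ on $D'$; it burns all of $\Gamma$. Orient each edge of $G$ in the direction in which the fire traverses it. Where two fronts meet in the interior of an edge, I subdivide at the meeting point and orient both halves toward it. Interior points of an edge where the fire passes through get indegree $1$, consistent with $D'(p) = 0$ there.

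This orientation is acyclic, because orientation follows burning time. It is $p_0$-connected, because every point was reached by the fire spreading from $p_0$. At a vertex $p \neq p_0$, the vertex ignites exactly when the number of burnt incoming directions exceeds $D'(p)$. Hence $\mathrm{indeg}_{\cO}(p) \geq D'(p) + 1$, with extra incoming edges only increasing the indegree. A meeting point $q$ receives indegree $2$ while $D'(q) = 0$. At $p_0$ the indegree is $0$, so $D_{\cO}(p_0) = -1 \geq D'(p_0)$. Therefore $D_{\cO} - D' \geq 0$, so $\rk(D_{\cO} - D) \geq 0$.

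\emph{Expected obstacle.} I expect the main care to lie in the metric-graph burning step. One must check that the fire process on a metric graph stops at finitely many points and can be encoded by an orientation of a finite model. This includes handling fronts that meet inside an edge, and vertices that ignite only after several fronts arrive. The cleanest approach is to fix the model first and observe that the fire stops only at points of the support of $D'$. Hence on each edge it either passes through entirely or meets an opposing front.
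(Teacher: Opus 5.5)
Your proof is correct. It is essentially the standard reduced-divisor and Dhar-burning argument of Baker--Norine and Mikhalkin--Zharkov; the paper gives no proof of its own and only cites that argument, adding that the orientation it builds is $p_0$-connected, which your fire-orientation (with $p_0$ as the unique source) verifies explicitly. For the ``not both'' direction, you show directly via a topological order that $D_{\cO}$ is $p_0$-reduced with coefficient $-1$ at $p_0$, which neatly replaces the usual maximum-set argument.
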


\begin{corollary}
\label{Cor:Acyclic}
Let $\Gamma$ be a metric graph, let $p_0 \in \Gamma$, and let $\cO$ be a $p_0$-connected orientation on $\Gamma$.  Then $\rk (D_{\cO}) = -1$ if and only if $\cO$ is acyclic.
\end{corollary}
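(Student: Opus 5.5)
The plan is to prove the two implications separately, using Lemma~\ref{Lem:Acyclic} for one direction and a direct construction of an effective representative for the other.

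\emph{Acyclic implies rank $-1$.} Suppose $\cO$ is acyclic (and $p_0$-connected). Apply Lemma~\ref{Lem:Acyclic} to the divisor $D = D_{\cO}$. The orientation $\cO$ itself is a $p_0$-connected acyclic orientation, and $D_{\cO} - D = 0$ has rank $0 \geq 0$. So the second alternative of the lemma holds. Because the lemma says the two alternatives are mutually exclusive, $\rk(D_{\cO}) \geq 0$ fails, i.e.\ $\rk(D_{\cO}) = -1$.

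\emph{Not acyclic implies rank $\geq 0$.} Suppose $\cO$ contains a directed cycle $C$. We show $D_{\cO}$ is equivalent to an effective divisor. Reversing the cycle preserves linear equivalence: if $\cO'$ is obtained by reversing the edges of $C$, then $D_{\cO'} = D_{\cO}$. This holds because each vertex on $C$ gains one incoming edge and loses one, so the indegree is unchanged. That alone does not produce an effective divisor, so the stronger argument is the following.

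Suppose $\rk(D_{\cO}) = -1$. By Lemma~\ref{Lem:Acyclic} there is a $p_0$-connected acyclic orientation $\cO'$ with $D_{\cO'} - D_{\cO}$ equivalent to an effective divisor. Both divisors have degree $g-1$, so the difference has degree $0$. An effective divisor of degree $0$ is zero, so $D_{\cO} \sim D_{\cO'}$. Proposition~\ref{Prop:ConnectedOrientation} gives uniqueness of the $p_0$-connected orientation in each class of degree $g-1$, so $\cO = \cO'$. Hence $\cO$ is acyclic. Taking the contrapositive, a non-acyclic $\cO$ has $\rk(D_{\cO}) \geq 0$.

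The main subtlety is that "orientation" is defined via a model, so $\cO$ and $\cO'$ may live on different models. The uniqueness statement in Proposition~\ref{Prop:ConnectedOrientation} should be read up to common refinement: after subdividing both models to a common one, the orientations agree. Acyclicity and $p_0$-connectedness are both unaffected by subdivision, so the argument goes through.
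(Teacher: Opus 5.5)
Your proposal is correct and follows the paper's proof essentially verbatim. For the forward direction, you use the exclusivity in Lemma~\ref{Lem:Acyclic}, with $\cO$ itself serving as the acyclic orientation. For the converse, you use the lemma to produce an acyclic $\cO'$, get $D_{\cO'} \sim D_{\cO}$ from the degree-zero argument, and then invoke uniqueness from Proposition~\ref{Prop:ConnectedOrientation} to conclude $\cO = \cO'$.

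The cycle-reversal paragraph is never used and can be deleted. Your remark about comparing orientations on a common refinement is a harmless clarification that the paper leaves implicit.
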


\begin{proof}
If $\cO$ is acyclic, then since $\rk (D_{\cO} - D_{\cO}) = \rk(0) = 0$, by Lemma~\ref{Lem:Acyclic} we see that $\rk (D_{\cO}) = -1$.  Conversely, if $\rk (D_{\cO}) = -1$, then by Lemma~\ref{Lem:Acyclic}, there exists a $p_0$-connected acyclic orientation $\cO'$ such that $\rk (D_{\cO'} - D_{\cO}) \geq 0$.  Since $\deg (D_{\cO'} - D_{\cO}) = 0$ and the trivial divisor class is the only divisor class of degree zero with nonnegative rank, it follows that $D_{\cO'} - D_{\cO} \sim 0$, or $D_{\cO'} \sim D_{\cO}$.  By Proposition~\ref{Prop:ConnectedOrientation}, however, this implies that $\cO = \cO'$, hence $\cO$ is acyclic.
\end{proof}

\subsection{Tropical Covers and Prym Divisors}
\label{Sec:Covers}

In this section, we survey the results we will need from tropical Prym-Brill-Noether theory.  The reader is encouraged to check out \cite{JensenLen18, LenUlirsch21, GhoshZakharov} if they want to know more.

Let $\WG$ and $\Gamma$ be metric graphs.  A \emph{free double cover} is a degree 2 covering space $\pi \colon \WG \to \Gamma$ whose restriction to each closed interval in $\WG$ is an isometry onto its image.  If $\pi \colon \WG \to \Gamma$ is a free double cover, then there is a natural map $\mathrm{Nm}_{\pi} \colon \Pic (\WG) \to \Pic (\Gamma)$.  We define the set of \emph{Prym divisors} on $\WG$ to be:
\[
P (\Gamma,\pi) := \{ [D] \in \Pic (\WG) \mid \mathrm{Nm}_{\pi} [D] = [K_{\Gamma}] \}.
\]
The literature contains two distinct definitions of the tropical Prym variety \cite{JensenLen18, GhoshZakharov}.  For \emph{free} double covers, however, the two definitions coincide.  Since the only double covers we consider in this paper are free, we will not go into detail here.  We note, however, that the Prym varieties defined in \cite{GhoshZakharov} vary continuously in families, whereas the Prym varieties defines in \cite{JensenLen18} do not, so \cite{GhoshZakharov} is preferable.

If $\Gamma$ has genus $g$, then by \cite[Proposition~3.0.2]{LenUlirsch21}, $P (\Gamma,\pi)$ is the disjoint union of two real tori, each of dimension $g-1$.  For a nonnegative integer $r$, we define the \emph{tropical Prym-Brill-Noether variety}:
\[
V^r (\Gamma,\pi) := \{ [D] \in P (\Gamma,\pi) \mid \rk(D) \geq r \text{ and } \rk(D) \equiv r \Mod{2} \} .
\]

Nearly every map between metric graphs that appears in this paper is a free double cover.  Towards the end, however, we will need the more general theory of harmonic morphisms.  A continuous map $\pi \colon \WG \to \Gamma$ is a \emph{finite morphism} if there exist models $\widetilde{G}$ for $\WG$ and $G$ for $\Gamma$ such that the image of every vertex in $\widetilde{G}$ is a vertex in $G$, the preimage of every edge in $G$ is a set of edges in $\WG$, and the restriction of $\pi$ to any edge $\widetilde{e}$ of $\widetilde{G}$ is dilation by a positive integer $d_{\widetilde{e}} (\pi)$.

A finite morphism $\pi \colon \WG \to \Gamma$ is \emph{harmonic} at $\widetilde{p} \in \WG$ if, for every tangent vector $\eta \in T_{\pi (\widetilde{p})} (\Gamma)$, the sum
\[
d_{\widetilde{p}} (\pi) := \sum_{\widetilde{\eta} \in T_p (\WG), \pi (\widetilde{\eta}) =\eta} d_{\eta} (\pi)
\]
is independent of the choice of $\eta$.  The morphism $\pi$ is \emph{harmonic} if it is surjective and harmonic at every point $\widetilde{p} \in \WG$.  In this case, the integer $\deg (\pi) = \sum_{\widetilde{p} \in \pi^{-1} (p)} d_{\widetilde{p}} (\pi)$ is independent of the choice of point $p \in \Gamma$, and is called the \emph{degree} of the harmonic morphism.

If $\pi \colon \WG \to \Gamma$ is a harmonic morphism, then there are natural maps $\pi^* \colon \Div (\Gamma) \to \Div (\WG)$ and $\mathrm{Nm}_{\pi} \colon \Div (\WG) \to \Div (\Gamma)$, given by
\begin{align*}
\pi^* D &= \sum_{p \in \Gamma} \sum_{\widetilde{p} \in \pi^{-1} (p)} d_{\widetilde{p}} (\pi) D(\widetilde{p}) \widetilde{p} \\
\mathrm{Nm}_{\pi} \widetilde{D} &= \sum_{\widetilde{p} \in \WG} d_{\widetilde{p}} (\pi) D(\widetilde{p}) \pi (\widetilde{p}) .
\end{align*}
Both maps respect linear equivalence, and thus descend to maps between the corresponding Picard groups.

\subsection{Combinatorics of Coxeter Groups}
\label{Sec:Coxeter}

In this section, we review the basic combinatorics of Coxeter groups.  Everything in this section can be found in the textbook of Bjorner and Brenti \cite{BjornerBrenti}, except for the definition of lingering words, which is terminology specific to this paper.

Throughout, we let $(W,S)$ be a Coxeter group.  For a positive integer $n$, let $[n]$ denote the set of the first $n$ positive integers.  A \emph{word} $Q$ of \emph{length} $n$ in $(W,S)$ is a function $s \colon [n] \to S$, typically denoted $Q = s_1 s_2 \cdots s_n$.  We let $S^{\ast}$ denote the set of words in $(W,S)$.  A \emph{subword} of $Q$ is the restriction of $s$ to a subset of $[n]$.  A word $Q$ \emph{represents} an element $w \in W$ if $w$ is equal to the ordered product $\prod_{i=1}^n s_i$.  A word $Q$ of length $n$ is \emph{reduced} if no word of length less than $n$ represents the element $w = \prod_{i=1}^n s_i$.  The \emph{length} $\ell(w)$ of an element $w$ is the length of a reduced word that represents $w$.  We say that a word $Q$ \emph{contains a reduced word for} $w \in W$ if a subword of $Q$ represents $w$ and is reduced.

Let $\epsilon \in W$ be the identity element.  In this paper, we say that a \emph{lingering word} $Q$ of length $n$ is a function $s \colon [n] \to S \cup \{ \epsilon \}$.  The terminology is chosen to mirror that of ``lingering lattice paths'' from \cite{tropicalBN}.  Given a lingering word $Q =s_1 s_2 \cdots s_n$, a \emph{lingering subword} is a lingering word $Q' = s'_1 s'_2 \cdots s'_n$ of the same length such that, for all $j$, either $s'_j  = s_j$ or $s'_j = \epsilon$.  We say that a lingering word is \emph{reduced} if the subword consisting of non-identity elements is reduced.

The \emph{(strong) Bruhat order} is a partial order on $W$, defined as follows.  For $v, w \in W$, we write $v \leq w$ if some (equivalently, every) reduced word for $w$ contains a reduced word for $v$.  If $W$ is finite, there is a unique element $w_0 \in W$ that is maximal with respect to Bruhat order.  The element $w_0$ has order 2, so conjugation by $w_0$ defines an involution on $W$.  We write $\overline{w} = w_0 w w_0$ for the image of $w$ under this involution.

Given a word $Q$, there exists a unique element $\delta (Q)$ of $W$, known as the \emph{Demazure product} of $Q$, that is maximal among elements represented by subwords of $Q$ with respect to Bruhat order.  In other words, $Q$ contains a reduced subword for the Demazure product $\delta (Q)$, and if $Q$ contains a reduced word for $\sigma \in W$, then $\sigma \leq \delta (Q)$.

Most, though not all, of this article is focused on Coxeter groups of type A.  Let $S_{r+1}$ denote the symmetric group acting on the set $\{ 0, 1, \ldots , r \}$.  This indexing is somewhat nonstandard; in the literature on Coxeter groups, it is more typical for $S_{r+1}$ to act on the set $\{1, \ldots, r+1 \}$, but in our setting, the group will act on the terms in a ramification sequence, which are typically indexed starting with zero.  The group $S_{r+1}$ is generated by the simple transpositions $\tau_0 , \ldots , \tau_{r-1}$, where
\[
\tau_i (j) := \left\{ \begin{array}{ll}
i+1 &\text{if } j=i \\
i &\text{if } j=i+1 \\
j &\text{otherwise} . 
\end{array} \right.
\]
The Coxeter group $A_r$ is the group $W = S_{r+1}$ with generating set $S = \{ \tau_0 , \ldots \tau_{r-1} \}$.

An \emph{inversion} of a permutation $\sigma \in S_{r+1}$ is a pair $\{ i,j \} \subseteq \{ 0, \ldots , r \}$ such that $i<j$ and $\sigma (i) > \sigma (j)$.  The length of $\sigma$ is equal to the number of inversions.  More precisely, in a reduced word $Q = s_1 \cdots s_{\ell}$ for $\sigma$, each term $s_j$ inverts a pair of elements that were not inverted by the ordered product $\prod_{i=1}^{j-1} s_i$.

The permutation $w_0 \in S_{r+1}$ given by $w_0 (j) := r-j$ inverts every pair of elements, hence it is maximal with respect to Bruhat order.  Note that $\overline{\tau}_i = \tau_{r-1-i}$ for all $i$.  We write $w_1$ for the maximal element, with respect to Bruhat order, in the subgroup $S_r \subset S_{r+1}$ of permutations that fix $r$.  In other words,
\[
w_1 (j) := \left\{ \begin{array}{ll}
r-1-j &\text{if } j < r \\
r &\text{if } j=r . 
\end{array} \right.
\]
Note that $\overline{w}_1$ is the maximal element, with respect to Bruhat order, in the subgroup $S_r \subset S_{r+1}$ of permutations that fix 0.

For the Coxeter groups of type A, there is a simple, effective criterion for determining whether two permutations are comparable in Bruhat order.  Given a permutation $\sigma$, define the function
\[
\sigma [x,y] := \# \{ i \leq x \mid \sigma (i) \geq y \} .
\]
By \cite[Theorem~2.1.5]{BjornerBrenti}, one has $\sigma \leq \sigma'$ if and only if $\sigma [x,y] \leq \sigma' [x,y]$ for all $x, y \in \{ 0, \ldots , r \}$.

\section{Prym Divisors on a Loop of Loops}
\label{Sec:PrymDivisors}

\subsection{The Loop of Loops}
\label{Sec:LOL}

A \emph{loop of loops of genus $g$} is a metric graph $\Gamma$ with $2g-2$ vertices, labeled $w_j$ and $v_j$ for $j \in \Z/(g-1)\Z$.  For all $j$, there are two edges connecting $v_j$ to $w_j$ and one edge connecting $w_{j-1}$ to $v_j$, see Figure~\ref{Fig:LOL}.  We write $\gamma_j$ for the $j$th loop, consisting of the two edges connecting $v_j$ and $w_j$, and we write $\beta_j$ for the edge connecting $w_{j-1}$ to $v_j$.  We will always orient our figures so that the edge $\beta_1$ appears on the righthand side, and the loops $\gamma_j$ appear in order in the counterclockwise direction.

\begin{figure}[H]
\begin{tikzpicture}

\draw (0,0) circle (2);
\draw [fill=white] (0,2) circle (0.5);
\draw (0.65,2) node {$v_2$};
\draw (0,2) node{$\gamma_2$};
\draw [fill=white] (1.7,1) circle (0.5);
\draw (1.7,0.35) node {$v_1$};
\draw (1.5,1.6) node {$w_1$};
\draw (1.7,1) node{$\gamma_1$};
\draw [fill=white] (1.7,-1) circle (0.5);
\draw (1.7,-0.35) node {$w_6$};
\draw [fill=white] (0,-2) circle (0.5);
\draw [fill=white] (-1.7,-1) circle (0.5);
\draw [fill=white] (-1.7,1) circle (0.5);
\draw (2.2,0) node {$\beta_1$};
\draw (0.9,1.55) node {$\beta_2$};

\end{tikzpicture}
\caption{A loop of loops of genus 7, with several vertices, edges, and loops labeled.}
\label{Fig:LOL}
\end{figure}

Following \cite{Pflueger17a}, we write $\ell (\beta_j)$ for the length of the edge $\beta_j$, $\ell (\gamma_j)$ for the length of $\gamma_j$, and $\ell (v_j,w_j)$ for the length of the clockwise edge from $v_j$ to $w_j$.  If $\ell (v_j,w_j)/\ell(\gamma_j)$ is irrational, we define the \emph{torsion order} $m_j$ of $\gamma_j$ to be 0.  Otherwise, we define the \emph{torsion order} of $\gamma_j$ to be the minimum positive integer $m_j$ such that $m_j \ell(v_j,w_j)$ is an integer multiple of $\ell(\gamma_j)$.  We refer to the sequence $\vec{m} = (m_1 , \ldots , m_{g-1})$ as the \emph{torsion profile} of $\Gamma$.  We say that the loop of loops $\Gamma$ is $r$-\emph{generic} if, for all $j$, either $m_j =0$ or $m_j \geq r$.  We say that the loop of loops $\Gamma$ is $k$-\emph{uniform} if $m_j = k$ for all $j$.  We note for future reference that the space of $r$-generic loops of loops has dimension $3g-3$, the same as that of the moduli space $\mathcal{M}_g$, and the space of $k$-uniform loops of loops has dimension $2g-2$, the same as that of the $k$-elliptic locus in $\mathcal{M}_g$.

The metric graph $\Gamma$ has a free double cover that is itself a loop of loops.  Throughout, we let $\WG$ denote a loop of loops of genus $2g-1$, with $\ell(\beta_j) = \ell(\beta_{g-1+j})$, $\ell(\gamma_j) = \ell(\gamma_{g-1+j})$, and $\ell(v_j,w_j) = \ell(v_{g-1+j},w_{g-1+j})$ for all $j \in \Z/(2g-2)\Z$.  Then $\WG$ admits an antipodal involution, and the quotient is a loop of loops $\Gamma$ of genus $g$.  For $p \in \WG$, we write $\overline{p}$ for its image under the antipodal involution.  See Figure~\ref{Fig:Involution}.  We write $\pi \colon \WG \to \Gamma$ for the quotient map.

\begin{figure}[H]
\begin{tikzpicture}

\draw (0,0) circle (2);
\draw [fill=white] (0,2) circle (0.5);
\draw [fill=white] (1.7,1) circle (0.5);
\draw [fill=white] (1.7,-1) circle (0.5);
\draw [fill=white] (0,-2) circle (0.5);
\draw [fill=white] (-1.7,-1) circle (0.5);
\draw [fill=white] (-1.7,1) circle (0.5);
\draw [ball color=black] (2,0) circle (0.55mm);
\draw [ball color=black] (-2,0) circle (0.55mm);
\draw (2.25,0) node {$p_0$};
\draw (-2.25,0) node {$\overline{p}_0$};
\draw [ball color=black] (0.35,2.35) circle (0.55mm);
\draw [ball color=black] (-0.35,-2.35) circle (0.55mm);
\draw (0.55,2.55) node {$p_1$};
\draw (-0.55,-2.55) node {$\overline{p}_1$};

\end{tikzpicture}
\caption{Some points in $\WG$ and their images under the antipodal involution.}
\label{Fig:Involution}
\end{figure}

\subsection{Prym Divisors on the Loop of Loops}
\label{Sec:DivisorsOnLOL}

In this section, we completely describe the set of Prym divisor classes $P(\Gamma,\pi)$.  Again following \cite{Pflueger17a}, let $\langle \xi \rangle_j$ denote the point on $\gamma_j$ located $\xi \ell(v_j,w_j)$ units from $w_j$ in the counterclockwise direction.  Note that $\langle \xi \rangle_j = \langle \eta \rangle_j$ if and only if $\xi \equiv \eta$ in $\R/m_j\Z$.  By definition, $w_j = \langle 0 \rangle_j$ and $v_j = \langle -1 \rangle_j$.

We begin by defining a real torus of dimension $2g-2$:
\[
\widehat{\mathbb{T}}^0 := \Big\{ D \in \mathrm{Eff}^{2g-2} (\WG) \mid \deg (D_{\vert \gamma_j}) = 1 \text{ for all } j \} .
\]
For every divisor $D \in \widehat{\mathbb{T}}^0$, there exist unique elements $\xi_j (D) \in \R/m_j\Z$ such that
\[
D = \sum_{j=1}^{2g-2} \langle \xi_j (D) \rangle_j .
\]
We think of the functions $\xi_j$ as coordinates on the torus $\widehat{\mathbb{T}}^0$.  

Now, fix a basepoint $p_0 \in \WG$.  For simplicity, we will always assume that the basepoint $p_0$ is in the interior of the edge $\beta_1$, but we will see in Lemma~\ref{Lem:Bijection} below that our construction is independent of the choice of basepoint.  We define a second torus:
\[
\widehat{\mathbb{T}}^1 := \Big\{ D - p_0 + \overline{p}_0 \mid D \in \mathbb{T}^0 \} .
\]
A key observation is the following.

\begin{lemma}
\label{Lem:Orientable}
If $D \in \widehat{\mathbb{T}}^0 \cup \widehat{\mathbb{T}}^1$, then there exists a $p_0$-connected orientation $\cO$ of $\WG$ such that $D = D_{\cO}$.
\end{lemma}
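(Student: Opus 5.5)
The plan is to write down the orientation explicitly, first for divisors in $\widehat{\mathbb{T}}^0$, and then obtain the case $\widehat{\mathbb{T}}^1$ by reversing a directed path.

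\emph{Case $D \in \widehat{\mathbb{T}}^0$.} Write $D = \sum_j x_j$ with $x_j = \langle \xi_j(D) \rangle_j \in \gamma_j$. Use the model of $\WG$ whose vertices are the $v_j$, the $w_j$, the points $x_j$, and $p_0$. Define $\cO$ as follows.
\begin{enumerate}
\item Orient every bridge $\beta_j$ from $w_{j-1}$ to $v_j$, i.e.\ counterclockwise; $p_0$ is then an interior point of $\beta_1$ with indegree $1$.
\item If $x_j$ lies in the interior of one of the two arcs of $\gamma_j$, orient the other arc from $v_j$ to $w_j$. Split the arc containing $x_j$ at $x_j$, and orient both halves toward $x_j$, one from $v_j$ and one from $w_j$.
\item If $x_j = w_j$, orient both arcs from $v_j$ to $w_j$.
\item If $x_j = v_j$, orient one arc from $w_j$ to $v_j$ and the other from $v_j$ to $w_j$.
\end{enumerate}
Then check indegrees. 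Every valence-$2$ point other than the $x_j$ has indegree $1$. Each trivalent vertex $v_j, w_j$ that is not an $x_j$ has indegree $1$, and each $x_j$ has indegree $2$ (including the degenerate cases $x_j = v_j$ or $x_j = w_j$). Hence $D_{\cO} = \sum_j x_j = D$.

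For $p_0$-connectivity, follow the bridges counterclockwise starting at $p_0$. Whenever the walk reaches $v_j$, there is a directed path from $v_j$ to $w_j$ through $\gamma_j$: use the arc oriented $v_j \to w_j$, which exists in every case above. Every point of $\gamma_j$ is reachable from $v_j$ or from $w_j$. Going all the way around covers the remaining part of $\beta_1$ up to $p_0$, so every point of $\WG$ is reached.

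\emph{Case $D \in \widehat{\mathbb{T}}^1$.} Here $D = D' - p_0 + \overline{p}_0$ with $D' \in \widehat{\mathbb{T}}^0$. Let $\cO'$ be the orientation just built for $D'$. Choose a directed path $P$ in $\cO'$ from $\overline{p}_0$ counterclockwise to $p_0$. It runs along the bridges and, through each loop $\gamma_j$ it passes, along an arc oriented $v_j \to w_j$. Reverse the orientation along $P$. Reversing a directed path lowers the indegree of its start by one and raises the indegree of its end by one, and it leaves all interior points unchanged. So the new orientation $\cO$ has indegree $0$ at $p_0$ and indegree $2$ at $\overline{p}_0$ (still $1$ at $\overline{p}_0$ if $\overline{p}_0$ coincides with nothing else, since it is an interior point of $\beta_g$). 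Thus $D_{\cO} = D' - p_0 + \overline{p}_0 = D$.

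Connectivity must now be rechecked, and I expect this to be the only step needing care. From $p_0$, the counterclockwise half of the big cycle is still directed forward up to $\overline{p}_0$, reaching all loops in that half as before. The clockwise half is reachable by walking along the reversed path $P$ from $p_0$ back to $\overline{p}_0$. On each loop $\gamma_j$ of that half, this walk passes $w_j$ and then $v_j$. The other arc of $\gamma_j$ has its original orientation: either $v_j \to w_j$, or split as $v_j \to x_j \leftarrow w_j$. In both cases it is reachable from $v_j$ or $w_j$. Hence $\cO$ is $p_0$-connected, which completes the plan.
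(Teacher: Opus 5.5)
Your proposal is correct and follows the paper's proof essentially verbatim. For $\widehat{\mathbb{T}}^0$ you use the same explicit orientation: bridges $w_{j-1} \to v_j$, one arc of $\gamma_j$ oriented $v_j \to w_j$, and the other arc split and oriented toward $\langle \xi_j(D) \rangle_j$. For $\widehat{\mathbb{T}}^1$ you use the same reversal of a directed path from $\overline{p}_0$ to $p_0$, followed by the same recheck of $p_0$-connectivity on the two halves.

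There are two small slips to fix. First, reversing a directed path \emph{raises} the indegree at its start and \emph{lowers} it at its end; you state the opposite, although your resulting values ($0$ at $p_0$ and $2$ at $\overline{p}_0$) are the correct ones. Second, your parenthetical should say that the indegree at $\overline{p}_0$ was \emph{originally} $1$, and you should add $\overline{p}_0$ as a vertex of the model.
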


\begin{proof}
We prove this by construction.  First, suppose that $D \in \widehat{\mathbb{T}}^0$.  Orient each edge $\beta_j$ from $w_{j-1}$ to $v_j$.  Next, there is at least one edge of $\gamma_j$ that does not contain $\langle \xi_j (D) \rangle_j$ in its interior.  Orient this edge from $v_j$ to $w_j$.  For the other edge of $\gamma_j$, orient the interval from $v_j$ to $\langle \xi_j (D) \rangle_j$ toward $\langle \xi_j (D) \rangle_j$, and the interval from $w_j$ to $\langle \xi_j (D) \rangle_j$ toward $\langle \xi_j (D) \rangle_j$.  The resulting orientation is $p_0$-connected because for all $j$ there is a directed path proceeding counterclockwise from $p_0$ to $v_j$, and the only point of $\gamma_j$ that could be a source of $\cO_{\vert \gamma_j}$ is $v_j$.  It is straightforward to check that $D = D_{\cO}$.  (See Figure~\ref{Fig:Orientation}.)

Next, suppose that $D \in \widehat{\mathbb{T}}^1$.  Then $D + p_0 - \overline{p}_0 \in \widehat{\mathbb{T}}^0$, hence by the above, there exists a $p_0$-connected orientation $\cO'$ of $\WG$ such that $D + p_0 - \overline{p}_0 = D_{\cO'}$.  Let $\cO$ be the orientation obtained from $\cO'$ by reversing a directed path from $\overline{p}_0$ to $p_0$ (again see Figure~\ref{Fig:Orientation}).  The resulting orientation is $p_0$-connected because for all $j \leq g-1$ there is a directed path proceeding counterclockwise from $p_0$ to $v_j$ and for all $j \geq g$ there is a directed path proceeding clockwise from $p_0$ to $w_j$, and again the restriction of the orientation to each loop $\gamma_j$ has at most one source.  Since $D + p_0 - \overline{p}_0 = D_{\cO'}$ and the indegree of $\cO$ differs from that of $\cO'$ only at $p_0$ and $\overline{p}_0$, we see that $D = D_{\cO}$.

\begin{figure}[H]
\begin{tikzpicture}

\draw (0,0) circle (2);
\draw [fill=white] (1.4,1.4) circle (0.5);
\draw [fill=white] (1.4,-1.4) circle (0.5);
\draw [fill=white] (-1.4,-1.4) circle (0.5);
\draw [fill=white] (-1.4,1.4) circle (0.5);
\draw [ball color=black] (1.76,1.76) circle (0.15);
\draw [ball color=black] (-1.76,-1.76) circle (0.15);
\draw [ball color=black] (-0.9,1.7) circle (0.15);
\draw [ball color=black] (1.7,-0.9) circle (0.15);
\draw [<->] (2,-0.001)--(2,0);
\draw [<->] (0.001,2)--(0,2);
\draw [<->] (-2,0.001)--(-2,0);
\draw [<->] (-0.001,-2)--(0,-2);
\draw [<->] (1.061,1.059)--(1.06,1.06);
\draw [<->] (-1.059,1.061)--(-1.06,1.06);
\draw [<->] (-1.061,-1.059)--(-1.06,-1.06);
\draw [<->] (1.059,-1.061)--(1.06,-1.06);
\draw [<->] (-1.761,1.759)--(-1.76,1.76);
\draw [<->] (1.76,-1.76)--(1.761,-1.759);
\draw [<->] (1.9,1.399)--(1.9,1.4);
\draw [<->] (1.399,1.9)--(1.4,1.9);
\draw [<->] (-1.9,-1.399)--(-1.9,-1.4);
\draw [<->] (-1.399,-1.9)--(-1.4,-1.9);

\draw (5,0) circle (2);
\draw [fill=white] (6.4,1.4) circle (0.5);
\draw [fill=white] (6.4,-1.4) circle (0.5);
\draw [fill=white] (3.6,-1.4) circle (0.5);
\draw [fill=white] (3.6,1.4) circle (0.5);
\draw [ball color=black] (6.76,1.76) circle (0.15);
\draw [ball color=black] (3.24,-1.76) circle (0.15);
\draw [ball color=black] (4.1,1.7) circle (0.15);
\draw [ball color=black] (6.7,-0.9) circle (0.15);
\draw [ball color=white] (7,0) circle (0.15);
\draw [ball color=black] (3,0) circle (0.15);
\draw [<->] (5.001,2)--(5,2);
\draw [<->] (5.001,-2)--(5,-2);
\draw [<->] (6.061,1.059)--(6.06,1.06);
\draw [<->] (3.941,1.061)--(3.94,1.06);
\draw [<->] (3.941,-1.061)--(3.94,-1.06);
\draw [<->] (6.059,-1.061)--(6.06,-1.06);
\draw [<->] (3.239,1.759)--(3.24,1.76);
\draw [<->] (6.761,-1.759)--(6.76,-1.76);
\draw [<->] (6.93,0.519)--(6.93,0.52);
\draw [<->] (3.07,0.521)--(3.07,0.52);
\draw [<->] (6.93,-0.519)--(6.93,-0.52);
\draw [<->] (3.07,-0.521)--(3.07,-0.52);
\draw [<->] (6.9,1.399)--(6.9,1.4);
\draw [<->] (6.399,1.9)--(6.4,1.9);
\draw [<->] (3.1,-1.399)--(3.1,-1.4);
\draw [<->] (3.601,-1.9)--(3.6,-1.9);

\end{tikzpicture}
\caption{The two orientations described in the proof of Lemma~\ref{Lem:Orientable} for divisors in $\widehat{\mathbb{T}}^0$ (left) and divisors in $\widehat{\mathbb{T}}^1$ (right).}
\label{Fig:Orientation}
\end{figure}
\end{proof}

We write $\mathbb{T}^r$ for the image of $\widehat{\mathbb{T}}^r$ in $\Pic (\WG)$.  A consequence of Lemma~\ref{Lem:Orientable} is that every divisor class in $\mathbb{T}^r$ has a unique representative in $\widehat{\mathbb{T}}^r$.

\begin{corollary}
\label{Cor:2Tori}
The map from $\widehat{\mathbb{T}}^0 \sqcup \widehat{\mathbb{T}}^1$ to $\mathbb{T}^0 \cup \mathbb{T}^1$ is a homeomorphism.
\end{corollary}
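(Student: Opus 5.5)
The map $\Phi \colon \widehat{\mathbb{T}}^0 \sqcup \widehat{\mathbb{T}}^1 \to \mathbb{T}^0 \cup \mathbb{T}^1$ sends a divisor to its linear equivalence class. It is continuous, since the Abel--Jacobi type map from effective divisors of fixed degree (and its translate by $-p_0+\overline{p}_0$) to $\Pic(\WG)$ is continuous. It is surjective by the definition of $\mathbb{T}^r$. The domain is compact, being a disjoint union of two real tori, and $\Pic(\WG)$ is Hausdorff. So once I show $\Phi$ is injective, it is a continuous bijection from a compact space onto a Hausdorff space, hence a homeomorphism. The whole proof therefore reduces to injectivity.

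For injectivity I will use Lemma~\ref{Lem:Orientable} together with the uniqueness statement in Proposition~\ref{Prop:ConnectedOrientation}. Every $D \in \widehat{\mathbb{T}}^0 \cup \widehat{\mathbb{T}}^1$ has degree $2g-2 = g(\WG)-1$, since $\WG$ has genus $2g-1$. Suppose $D, D' \in \widehat{\mathbb{T}}^0 \sqcup \widehat{\mathbb{T}}^1$ satisfy $D \sim D'$. By Lemma~\ref{Lem:Orientable} there are $p_0$-connected orientations $\cO, \cO'$ with $D = D_{\cO}$ and $D' = D_{\cO'}$. Both orientations are $p_0$-connected and represent the same divisor class, so Proposition~\ref{Prop:ConnectedOrientation} gives $\cO = \cO'$, and hence $D = D_{\cO} = D_{\cO'} = D'$ as divisors.

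The only subtlety is where $D$ and $D'$ lie. If they are in the same torus, the argument above shows they are the same point. If $D \in \widehat{\mathbb{T}}^0$ and $D' \in \widehat{\mathbb{T}}^1$, equality of divisors is impossible: a divisor in $\widehat{\mathbb{T}}^0$ is effective with no support on the interior of $\beta_1$, while every divisor in $\widehat{\mathbb{T}}^1$ has coefficient $-1$ at $p_0$. Hence $D \not\sim D'$ in this case, and the union $\mathbb{T}^0 \cup \mathbb{T}^1$ is in fact disjoint.

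I expect the main point needing care to be the compatibility of orientations across models. Proposition~\ref{Prop:ConnectedOrientation} is about orientations of the metric graph, whereas the orientations produced in Lemma~\ref{Lem:Orientable} live on a model subdivided at the support of $D$. Two such orientations should be compared on a common refinement, where they agree as orientations of $\Gamma$ and $D_{\cO}$ is well defined. Once that is settled, the topological step is routine.
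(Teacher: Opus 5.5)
Your proof is correct and follows the paper's argument: the paper likewise reduces to injectivity and gets it from \cite[Theorem~4.18]{ABKS} (a $p_0$-connected orientable divisor is the unique such representative of its class), applied through Lemma~\ref{Lem:Orientable}. Your compact-to-Hausdorff argument for why injectivity suffices, and your check via the coefficient at $p_0$ that $\widehat{\mathbb{T}}^0$ and $\widehat{\mathbb{T}}^1$ are disjoint, just spell out details the paper leaves implicit.
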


\begin{proof}
It suffices to show that the map is injective.  This follows immediately from \cite[Theorem~4.18]{ABKS}, which says that two $p_0$-connected orientable divisors are linearly equivalent if and only if they are equal.
\end{proof}

By Corollary~\ref{Cor:2Tori}, the functions $\xi_j$ descend to well-defined coordinate functions on $\mathbb{T}^r$.  We now define two real subtori of dimension $g-1$:
\begin{align*}
P^0 &= \Big\{ [D] \in \mathbb{T}^0 \mid \xi_{g-1+j} (D) = -1 - \xi_j (D) \text{ for all } j \Big\} \\
P^1 &= \Big\{ [D] \in \mathbb{T}^1 \mid \xi_{g-1+j} (D) = -1 - \xi_j (D) \text{ for all } j \Big\}.
\end{align*}

The superscripts on these sets are to be understood as elements of $\Z/2\Z$, so $P^r$ is equal to $P^0$ when $r$ is even, and equal to $P^1$ when $r$ is odd.  By Corollary~\ref{Cor:2Tori}, both $P^0$ and $P^1$ are real tori of dimension $g-1$.  Specifically, each $P^r$ is homeomorphic to $\prod_{j=1}^{g-1} \R/m_j\Z$ via the coordinate functions $\xi_j$.

\begin{proposition}
\label{Prop:PrymDivisors}
We have
\(
P(\Gamma,\pi) = P^0 \sqcup P^1 .
\)
\end{proposition}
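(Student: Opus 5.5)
The plan is to prove the two inclusions $P^0 \sqcup P^1 \subseteq P(\Gamma,\pi)$ and $P(\Gamma,\pi) \subseteq P^0 \sqcup P^1$. The first is a direct norm computation. The second follows from a topological argument using the fact that $P(\Gamma,\pi)$ is a disjoint union of two real tori of dimension $g-1$ \cite[Proposition~3.0.2]{LenUlirsch21}.

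\emph{Step 1: $P^0 \cup P^1 \subseteq P(\Gamma,\pi)$.} First I write down the canonical divisor. Every vertex $v_j, w_j$ of $\Gamma$ is trivalent and all other points are bivalent, so $K_\Gamma = \sum_{j=1}^{g-1} (v_j + w_j)$.

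Take $[D] \in P^0$ with its canonical representative $D = \sum_{j=1}^{2g-2} \langle \xi_j(D) \rangle_j$. The loops $\gamma_j$ and $\gamma_{g-1+j}$ of $\WG$ both map isometrically onto the loop $\gamma_j$ of $\Gamma$. Hence
\[
\mathrm{Nm}_\pi D = \sum_{j=1}^{g-1} \big( \langle \xi_j \rangle_j + \langle -1-\xi_j \rangle_j \big).
\]
It then suffices to check on each single loop $\gamma_j$ that $\langle \xi \rangle_j + \langle -1-\xi \rangle_j \sim \langle 0 \rangle_j + \langle -1 \rangle_j = w_j + v_j$. These two points are placed symmetrically about the midpoint of the arc from $v_j$ to $w_j$.

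The check follows from Example~\ref{Ex:Loop} with $s=1$. Put $v = \langle -1-\xi \rangle_j$, $u = \langle \xi \rangle_j$, and target point $w = w_j$. The criterion $\ell(w,u) \equiv \ell(v,w)$ in $\R/\ell(\gamma_j)\Z$ holds, since both distances equal $\xi \,\ell(v_j,w_j)$, respectively $(1+\xi)\,\ell(v_j,w_j)$, measured appropriately. This gives $v + u \sim w_j + x$ with $x = w_j$ exactly when the symmetry holds. The bookkeeping of orientation conventions here is the only fiddly point: I will fix that $\langle \xi \rangle$ is measured counterclockwise from $w_j$ and that $v_j = \langle -1 \rangle$. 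If the direct application is awkward, I will instead exhibit the piecewise-linear function with slopes $0$ and $\pm1$ explicitly. Since linear equivalence on a single loop extends to $\Gamma$ by a function that is constant off the loop, it follows that $\mathrm{Nm}_\pi D \sim K_\Gamma$.

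For $P^1$, the representatives have the form $D - p_0 + \overline{p}_0$ with $D$ as above. Since $\mathrm{Nm}_\pi(\overline{p}_0 - p_0) = \pi(\overline{p}_0) - \pi(p_0) = 0$, the same conclusion holds.

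\emph{Step 2: equality.} By Corollary~\ref{Cor:2Tori}, $P^0$ and $P^1$ are disjoint, and each is homeomorphic to the compact connected torus $\prod_j \R/m_j\Z$ of dimension $g-1$. (I read the $m_j=0$ case as $\R/\Z$-type circles, so the torus is compact; I will check this convention.) Each $P^r$ embeds continuously into $P(\Gamma,\pi)$, whose components are tori of the same dimension $g-1$.

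By invariance of domain, the image of $P^r$ is open in its component. Being compact, it is also closed, so each $P^r$ is an entire connected component of $P(\Gamma,\pi)$. Since $P^0$ and $P^1$ are disjoint, they are two distinct components. As $P(\Gamma,\pi)$ has exactly two components, $P(\Gamma,\pi) = P^0 \sqcup P^1$.

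\emph{Main obstacle.} I expect the main obstacle to be verifying that the maps $P^r \to \Pic(\WG)$ are continuous embeddings compatible with the manifold structure of the Prym components, so that invariance of domain applies. I would handle this by noting that the coordinates $\xi_j$ come from the continuous Abel–Jacobi-type map $\widehat{\mathbb{T}}^r \to \Pic(\WG)$, which is injective by Corollary~\ref{Cor:2Tori}, and then using compactness.
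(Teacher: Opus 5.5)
Your proposal is correct and is essentially the paper's proof: the same norm computation gives $P^0\cup P^1\subseteq P(\Gamma,\pi)$, and the same two-tori argument gives equality, with your invariance-of-domain plus compactness reasoning just making explicit the paper's claim that a connected manifold cannot properly contain a closed submanifold of the same dimension. One slip to fix in Step 1: Example~\ref{Ex:Loop} with target $w=w_j$ tests whether $u+v\sim 2w_j$, which is false (your own two distances $\xi\,\ell(v_j,w_j)$ and $(1+\xi)\,\ell(v_j,w_j)$ do not agree), so instead take as target a fixed point $c$ of the reflection $\langle\eta\rangle_j\mapsto\langle -1-\eta\rangle_j$, which gives $u+v\sim 2c\sim v_j+w_j$, or use your explicit slope-$0,\pm1$ function.
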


\begin{proof}
We first show that $P^0 \cup P^1 \subseteq P(\Gamma,\pi)$.  Let $[D]  \in P^0 \cup P^1$.  Then 
\begin{align*}
\mathrm{Nm}_{\pi} (D) &= \sum_{j=1}^{g-1} \langle \xi_j (D) \rangle_j + \xi_{g-1+j} (D) \rangle_j \\
&= \sum_{j=1}^{g-1} \langle \xi_j (D) \rangle_j + \langle -1 - \xi_j (D) \rangle_j \\
& \sim \sum_{j=1}^{g-1} \langle 0 \rangle_j + \langle -1 \rangle_j = \sum_{j=1}^{g-1} w_j + v_j = K_{\Gamma}.
\end{align*}
Hence, $[D] \in P(\Gamma,\pi)$.

Now, by \cite[Theorem~1.5.7]{LenUlirsch21}, $P(\Gamma,\pi)$ is a union of two $(g-1)$-dimensional real tori, and by Corollary~\ref{Cor:2Tori}, each of $P^0$ and $P^1$ is a $(g-1)$-dimensional real torus.  Since a connected manifold cannot properly contain a closed submanifold (without boundary) of the same dimension, it follows that $P(\Gamma,\pi) = P^0 \cup P^1$.

\end{proof}

The following lemma will often be used implicitly.

\begin{lemma}
\label{Lem:Bijection}
For any point $p \in \WG$, the map $F_p \colon P(\Gamma,\pi) \to P(\Gamma,\pi)$ given by $F_p ([D]) = [D-p+\overline{p}]$ is a bijection.  Under this bijection, the image of $P^0$ is $P^1$ and the image of $P^1$ is $P^0$.  In particular, $P^1$ is independent of the choice of basepoint $p_0$.
\end{lemma}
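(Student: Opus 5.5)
The plan is to establish three things in turn. First, $F_p$ maps $P(\Gamma,\pi)$ into itself. Second, it is a bijection. Third, it swaps the two components.

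For the first point, note that $\pi(p) = \pi(\overline{p})$. Hence
\[
\mathrm{Nm}_\pi(D - p + \overline{p}) = \mathrm{Nm}_\pi(D),
\]
so $F_p$ preserves the condition of having canonical norm. Bijectivity is immediate because $F_p$ is an involution: $F_p(F_p[D]) = [D - p + \overline{p} - p + \overline{p}]$. Here $2\overline{p} - 2p = \pi^*\pi(\cdot)$-type cancellation does not literally apply, so I would argue more carefully. The inverse of $F_p$ is $[D] \mapsto [D + p - \overline{p}] = F_{\overline{p}}([D])$, and $F_{\overline{p}}$ also preserves $P(\Gamma,\pi)$ by the same norm computation. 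Thus $F_p$ is a bijection with inverse $F_{\overline{p}}$. It is also continuous, being translation by a fixed class in $\Pic(\WG)$, so it is a homeomorphism of $P(\Gamma,\pi)$.

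For the component statement, I first check the basepoint itself. By definition $\widehat{\mathbb{T}}^1 = \widehat{\mathbb{T}}^0 - p_0 + \overline{p}_0$. The antipodal condition $\xi_{g-1+j} = -1 - \xi_j$ is expressed in terms of the coordinates $\xi_j$, and these are literally the same for $D$ and $D - p_0 + \overline{p}_0$. It follows that $F_{p_0}(P^0) = P^1$ exactly, and hence $F_{p_0}(P^1) = F_{p_0}^{-1}(P^1) = P^0$ (using $F_{\overline{p}_0} = F_{p_0}^{-1}$ and the identity $F_{p_0}\circ F_{\overline{p}_0} = \mathrm{id}$).

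For general $p$, I would use connectedness. As $p$ moves along a path in $\WG$ from $p_0$, the map $F_p$ varies continuously, since $p \mapsto [\overline{p} - p]$ is a continuous map into $\Pic^0(\WG)$. By Proposition~\ref{Prop:PrymDivisors}, $P^0$ and $P^1$ are the two connected components of $P(\Gamma,\pi)$, and each $F_p$ is a homeomorphism of $P(\Gamma,\pi)$. Therefore $F_p(P^0)$ is a connected component, and by continuity in $p$ together with the connectedness of $\WG$, it must equal the component $F_{p_0}(P^0) = P^1$. Similarly $F_p(P^1) = P^0$.

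Finally, for basepoint independence, suppose $P^1$ is defined using a different basepoint $p_0'$. That component is $F_{p_0'}(P^0)$, and by the above this equals $P^1$. Moreover, $P^0$ is defined without reference to any basepoint.

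The main subtlety is the continuity/component argument. In particular, one must ensure that the decomposition of Proposition~\ref{Prop:PrymDivisors} consists of connected components (tori), so that a continuously varying family of homeomorphisms cannot jump between them.
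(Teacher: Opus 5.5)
Your proof is correct and is essentially the paper's argument. The inverse is $F_{\overline{p}}$, continuity makes $F_p$ permute the two components $P^0,P^1$ of Proposition~\ref{Prop:PrymDivisors}, and for fixed $[D]\in P^0$ the connected image of $\WG$ under $p\mapsto[D-p+\overline{p}]$ contains $[D-p_0+\overline{p}_0]\in P^1$; this is how your ``continuity in $p$'' step should be made precise. The only slip is the claim $F_{p_0}(P^1)=F_{p_0}^{-1}(P^1)$, which does not follow because $F_{p_0}$ is not an involution. Instead, $F_p(P^1)=P^0$ is immediate, since a bijection of $P^0\sqcup P^1$ carrying $P^0$ onto $P^1$ must carry $P^1$ onto $P^0$.
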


\begin{proof}
The inverse of $F_p$ is $F_{\overline{p}}$, so $F_p$ is a bijection.  The map $F_p$ is continuous, so it sends connected components to connected components.  Therefore, by Proposition~\ref{Prop:PrymDivisors}, the image of $P^0$ is either $P^0$ or $P^1$.  Now, for a fixed divisor class $[D] \in P^0$, the map from $\WG$ to $P(\Gamma,\pi)$ sending $p$ to $[D-p+\overline{p}]$ is continuous, hence its image is connected.  Since $[D-p_0+\overline{p}_0]$ is in $P^1$ by definition, it follows that $[D+p-\overline{p}]$ is in $P^1$ for all $p \in \WG$, and the result follows.
\end{proof}

In later sections, we will identify all divisors of rank $r$ in $P(\Gamma,\pi)$.  The proof will be by induction on $r$, and the following will form the base case for the induction.

\begin{lemma}
\label{Lem:BaseCase}
A divisor class $[D] \in P(\Gamma,\pi)$ satisfies $\rk(D) \geq 0$ if and only if either:
\begin{enumerate}
\item  $[D] \in P^0$, or
\item  $[D] \in P^1$, and $\xi_j (D) = -1$ for some $j \in \{ 1, 2, \ldots , g-1 \} \subset \Z/(2g-2)\Z$.
\end{enumerate}
\end{lemma}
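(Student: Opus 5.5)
The plan is to reduce everything to orientations. By Lemma~\ref{Lem:Orientable}, each class in $P^0 \cup P^1 \subseteq \mathbb{T}^0 \cup \mathbb{T}^1$ has a representative $D = D_{\cO}$, where $\cO$ is the explicit $p_0$-connected orientation built in that proof. By Corollary~\ref{Cor:Acyclic}, $\rk(D) = -1$ exactly when $\cO$ is acyclic. Proposition~\ref{Prop:PrymDivisors} says $P(\Gamma,\pi) = P^0 \sqcup P^1$. So the lemma reduces to deciding, for each of the two constructions, when the orientation contains a directed cycle.

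\textbf{Case $[D] \in P^0$.} Every bridge $\beta_j$ is oriented counterclockwise from $w_{j-1}$ to $v_j$. In every loop $\gamma_j$, some edge not containing $\langle \xi_j(D)\rangle_j$ in its interior is oriented from $v_j$ to $w_j$. Concatenating these bridges and loop edges gives a directed cycle going once around $\WG$ counterclockwise. Hence $\cO$ is not acyclic, and $\rk(D) \geq 0$.

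\textbf{Case $[D] \in P^1$, locating possible cycles.} Take $\cO$ as in the proof of Lemma~\ref{Lem:Orientable}: start from the $\widehat{\mathbb{T}}^0$-orientation $\cO'$ of $D + p_0 - \overline{p}_0$ and reverse the counterclockwise directed path from $\overline{p}_0$ to $p_0$. That path runs through the bridges $\beta_j$ and a $v_j \to w_j$ edge of $\gamma_j$ for $j \in \{g, \ldots, 2g-2\}$. After reversal, the bridges in the half $j \leq g-1$ point counterclockwise and those in the half $j \geq g$ point clockwise. So no directed cycle can wind around the big circle, since it would have to cross both halves in the same rotational direction. Any directed cycle must therefore lie inside a single loop $\gamma_j$, and I check each loop separately.

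\textbf{Case $[D] \in P^1$, checking each loop.}
\begin{itemize}
\item If $\langle \xi_j \rangle_j$ is not a vertex, one edge is $v_j \to w_j$ (reversed to $w_j \to v_j$ when $j \geq g$). The other edge flows into the chip from both ends. There is no cycle.
\item If $j \leq g-1$ and $\xi_j(D) = -1$, the chip sits at $v_j$. Then one edge is $v_j \to w_j$ and the other is oriented entirely toward $v_j$, i.e.\ $w_j \to v_j$. This is a directed cycle.
\item If $j \geq g$ and $\xi_j(D) = 0$, the chip sits at $w_j$, and in $\cO'$ both edges are $v_j \to w_j$. Reversing the one used by the path creates a cycle. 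By the Prym relation $\xi_{g-1+j} = -1 - \xi_j$, this occurs exactly when $\xi_{j-g+1}(D) = -1$, so it is the same condition as the previous item.
\item In the remaining vertex cases ($\xi_j = 0$ for $j \leq g-1$, or $\xi_j = -1$ for $j \geq g$), both edges end up pointing the same way and there is no cycle.
\end{itemize}
Hence $\cO$ has a directed cycle iff $\xi_j(D) = -1$ for some $j \in \{1, \ldots, g-1\}$, which is statement (2).

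\textbf{Main obstacle.} The delicate step is the degenerate vertex cases: when the chip lies at $v_j$ or $w_j$, the proof of Lemma~\ref{Lem:Orientable} allows a choice of which edge receives which orientation. I will fix the choices explicitly, namely that the reversed path uses the edge carrying $v_j \to w_j$. Uniqueness of the $p_0$-connected orientation in Proposition~\ref{Prop:ConnectedOrientation} then guarantees that the conclusion does not depend on these choices. I also need to verify that the orientation obtained still has the property needed to apply Corollary~\ref{Cor:Acyclic}, namely $p_0$-connectedness.
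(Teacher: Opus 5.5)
Your proof is correct and follows the paper's argument. For $P^1$ the paper likewise takes the orientation from Lemma~\ref{Lem:Orientable}, applies Corollary~\ref{Cor:Acyclic}, excludes cycles around the big circle by noting that $p_0$ is a source (equivalent to your two-halves observation), and checks the loops $\gamma_j$; for $P^0$ it simply notes that the representative in $\widehat{\mathbb{T}}^0$ is effective, which is quicker than your directed-cycle argument. Your ``main obstacle'' is not really one: each loop-by-loop check gives the same answer for either choice of edge, $p_0$-connectedness is already part of Lemma~\ref{Lem:Orientable}, and Corollary~\ref{Cor:Acyclic} applies to any $p_0$-connected orientation with $D_{\cO}=D$, so you do not need to appeal to uniqueness.
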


\begin{proof}
By definition, every divisor in $\widehat{\mathbb{T}}^0$ is effective, hence every divisor class in $P^0$ has nonnegative rank.  Now, let $D \in P^1$.  We show that the orientation $\cO$ defined in the proof of Lemma~\ref{Lem:Orientable} is acyclic if and only if $\xi_j (D) \neq -1$ for all $ j\in \{ 1, 2, \ldots , g-1 \} \subset \Z/(2g-2)\Z$.  The result will then follow from Corollary~\ref{Cor:Acyclic}.

To see this, note that $p_0$ is a source in $\cO$.  It follows that no directed cycle in $\WG$ can contain $p_0$, hence $\WG$ contains a directed cycle if and only if $\cO_{\vert \gamma_j}$ is a directed cycle for some $j$.  By construction, if $\cO_{\vert \gamma_j}$ is a directed cycle for $j \in \{ 1, 2, \ldots , g-1 \}$, then $\xi_j (D) = -1$.  On the other hand, if $\cO_{\vert \gamma_j}$ is a directed cycle for $j \in \{ g , g+1, \ldots , 2g-2 \}$, then $\xi_j (D) = 0$, hence $\xi_{j-(g-1)} (D) = -1$.
\end{proof}

\section{The Prym-Brill-Noether Variety for $r$-Generic Loops of Loops}
\label{Sec:Generic}

\subsection{Prym Words}
\label{Sec:PrymWords}
Fix an integer $r \geq 1$.  Recall that a loop of loops $\Gamma$ is $r$-\emph{generic} if, for all $j$, either $m_j =0$ or $m_j \geq r$.  In this section, we let $\Gamma$ be an $r$-generic loop of loops, and identify the Prym-Brill-Noether variety $V^r(\Gamma,\pi)$.

First, we define the function $s^r_j [i] : \{ 0, \ldots , r \} \times \Z/(2g-2)\Z \to \Z$ by
\[
s^r_j [i] := \left\{ \begin{array}{ll}
i - \frac{r}{2} &\text{if } r \text{ is even }\\
i - \frac{r+1}{2} &\text{if } r \text{ is odd and } j \in \{ 1, \ldots , g-1 \} \subset \Z/(2g-2)\Z \\
i - \frac{r-1}{2} &\text{if } r \text{ is odd and } j \in \{ g, \ldots, 2g-2 \} \subset \Z/(2g-2)\Z . 
\end{array} \right.
\]
This notation is chosen to mirror \cite[Definition~9.2]{M23}, where $s_j [i]$ denotes the incoming slope at $v_j$ of a certain function in a tropical linear series on a chain of loops.  We will define analogous functions in Proposition~\ref{Prop:Fns} below, and the connection to tropical linear series will appear in Corollary~\ref{Cor:TLS}.  Note that, when $r$ is even, the values of $s^r_j$ are the $r+1$ consecutive integers $-\frac{r}{2} , \ldots , \frac{r}{2}$ centered at zero.  When $r$ is odd, the values are $-\frac{r+1}{2} , \ldots , \frac{r-1}{2}$ for $j \leq g-1$ and $-\frac{r-1}{2} , \ldots , \frac{r+1}{2}$ for $j \geq g$.  We similarly define
\[
s'^r_j [i]:= \left\{ \begin{array}{ll}
s^r_{j+1} [i] &\text{if } r \text{ is odd and } j \neq g-1, 2g-2 \text{ or } r \text{ is even}\\
s^r_{j+1} [i]+1 &\text{if } r \text{ is odd and } j = 2g-2 \\
s^r_{j+1} [i]-1 &\text{if } r \text{ is odd and } j = g-1 .
\end{array} \right.
\]
The integer $s'^r_j [i]$ denotes the outgoing slope at $w_j$ of a certain function on a loop of loops.

Let $(W,S)$ be the Coxeter group $A_r$.  To each divisor class $[D] \in P^r$, we associate a function $s \colon \Z/(2g-2)\Z \to S \cup \{ \epsilon \}$ as follows:
\[
s_j := \left\{ \begin{array}{ll}
\tau_i &\text{if } \xi_j (D) = s^r_j [i] \\
\epsilon &\text{otherwise.}
\end{array} \right.
\]
The assumption that $\Gamma$ is $r$-generic implies that, if $s^r_j [i] \equiv s^r_j [i'] \in \R/m_j\Z$ for some $i,i' \in \{0, \ldots , r-1 \}$, then $i=i'$.  It follows that $s$ is well-defined.  One should think of the function $s$ as a lingering word $Q_D$, indexed by $\Z/(2g-2)\Z$.  The lingering word $Q_D$ satisfies an important property.

\begin{definition}
\label{Def:PrymWord}
Let $(W,S)$ be a finite Coxeter group.  A \emph{Prym word} $Q$ of length $2g-2$ in $(W,S)$ is a function $s \colon \Z/(2g-2)\Z \to S \cup \{ \epsilon \}$ such that $s_{g-1+j} = \overline{s}_j$ for all $j \in \Z/(2g-2)\Z$. 
\end{definition}

\begin{lemma}
\label{Lem:PrymWord}
If $[D] \in P^r$, then $Q_D$ is a Prym word.
\end{lemma}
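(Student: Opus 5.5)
This is a direct check from the definitions. We verify, for each $j$, that $s_{g-1+j} = \overline{s}_j$, where $\overline{\tau}_i = \tau_{r-1-i}$ and $\overline{\epsilon} = \epsilon$. The input is the defining relation of $P^r$: $\xi_{g-1+j}(D) = -1 - \xi_j(D)$ in $\R/m_j\Z$. Note that $m_{g-1+j} = m_j$, since the edge lengths of $\WG$ are invariant under the antipodal involution. Because $\overline{\overline{s}} = s$ and the relation is symmetric under $j \mapsto g-1+j$ (up to indices mod $2g-2$), it is enough to treat $j \in \{1,\ldots,g-1\}$, provided the slope identity below is checked for both halves. It is also enough to prove one implication: $s_j = \tau_i$ implies $s_{g-1+j} = \tau_{r-1-i}$. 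Applying the same implication with the roles of $j$ and $g-1+j$ exchanged gives the converse, and then the $\epsilon$ cases follow automatically.

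The key step is a slope identity. For $i \in \{0,\ldots,r-1\}$ we need
\[
s^r_{g-1+j}[r-1-i] = -1 - s^r_j[i].
\]
If $r$ is even, the left side is $r-1-i-\frac{r}{2} = \frac{r}{2}-1-i$. The right side is $-1-i+\frac{r}{2}$, so the two agree.

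If $r$ is odd and $j \le g-1$, then $g-1+j \ge g$. The left side is $r-1-i-\frac{r-1}{2} = \frac{r-1}{2}-i$, and the right side is $-1-i+\frac{r+1}{2} = \frac{r-1}{2}-i$, so they agree.

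If $r$ is odd and $j \ge g$, the half-indices swap. The left side is $r-1-i-\frac{r+1}{2} = \frac{r-3}{2}-i$, and the right side is $-1-i+\frac{r-1}{2}$, which is the same.

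With the identity in hand, suppose $s_j = \tau_i$, i.e.\ $\xi_j(D) = s^r_j[i]$ in $\R/m_j\Z$. Then $\xi_{g-1+j}(D) = -1 - s^r_j[i] = s^r_{g-1+j}[r-1-i]$, so $s_{g-1+j} = \tau_{r-1-i} = \overline{\tau}_i$. By the symmetry argument above, this yields $s_{g-1+j} = \epsilon$ exactly when $s_j = \epsilon$.

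The only point needing care is well-definedness: the index $i$ for which $\xi_j(D) = s^r_j[i]$ must be unique. This holds by the $r$-genericity remark preceding the definition. One subtlety deserves checking: the remark is stated for $i, i' \in \{0,\ldots,r-1\}$. Those indices give $r$ consecutive integers, so they are distinct modulo $m_j$ whenever $m_j = 0$ or $m_j \ge r$. Hence the map is consistent on both sides.

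I expect the main obstacle to be nothing more than the bookkeeping of the odd-$r$ parity shift in $s^r_j$ across the two halves of $\Z/(2g-2)\Z$.
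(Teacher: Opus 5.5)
Your proof is correct and follows the paper's argument. The paper likewise combines the defining relation $\xi_{g-1+j}(D) = -1-\xi_j(D)$ with the identity $-1-s^r_j[i] = s^r_{g-1+j}[r-1-i]$, but phrases it as a single ``if and only if'' (since $\xi \mapsto -1-\xi$ is a bijection), which handles the converse and the $\epsilon$ case at once rather than through your symmetry step; your explicit case-by-case verification of the slope identity, including the odd-$r$ parity shift that the paper leaves unchecked, is accurate.
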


\begin{proof}
By definition, if $[D] \in P^r$, then $\xi_{g-1+j}(D) = -1 - \xi_j (D)$ for all $j$.  Thus, $\xi_j (D) = s^r_j [i]$ if and only if $\xi_{g-1+j}(D) = -1-s^r_j [i] = s^r_{g-1+j} [r-1-i]$, and the result follows.
\end{proof}

\begin{definition}
\label{Def:HalfWord}
Let $Q = s_1 s_2 \cdots s_{2g-2}$ be a Prym word.  For $j \in \Z/(2g-2)\Z$, the $j$th \emph{half word} of $Q$ is 
\[
Q^j = s_j s_{j+1} \cdots s_{g-2+j}.
\]
\end{definition}

The map $Q \mapsto Q^j$ sends a Prym word of length $2g-2$ to a lingering word of length $g-1$.  This map has an inverse.  If $Q = s_1 s_2 \cdots s_{g-1}$ is a lingering word of length $g-1$, we define $Q_j = s'_1 s'_2 \cdots s'_{2g-2}$ by 
\[
s'_i := \left\{ \begin{array}{ll}
s_{i+j-1} &\text{if } i \in \{ j, \ldots , j+g-2 \} \\
\overline{s}_{g-2+i+j} &\text{if } i \in \{ j+g-1, \ldots , j-1 \} .
\end{array} \right.
\]
Prym words of length $2g-2$ are therefore in bijection with lingering words of length $g-1$.

Another important property of Prym words is the following.

\begin{lemma}
\label{Lem:Containsw0}
In a Prym word, if some half word contains a reduced subword for $w_0$, then every half word contains a reduced subword for $w_0$.
\end{lemma}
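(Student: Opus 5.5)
It suffices to show that if $Q^j$ contains a reduced subword for $w_0$, then so does $Q^{j+1}$. Iterating this around the cyclic index set $\Z/(2g-2)\Z$ then gives the statement for every half word.

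The half words are related by
\[
Q^j = s_j s_{j+1} \cdots s_{g-2+j}, \qquad Q^{j+1} = s_{j+1} \cdots s_{g-2+j} s_{g-1+j},
\]
and by the Prym condition $s_{g-1+j} = \overline{s}_j$. So $Q^{j+1}$ is obtained from $Q^j$ by deleting the first letter $s_j$ and appending $\overline{s}_j = w_0 s_j w_0$ at the end. If $s_j = \epsilon$, then $\overline{s}_j = \epsilon$ as well, and removing or adding identity letters does not change which reduced subwords are present. So we may assume $s_j = s \in S$.

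Choose a reduced subword $R$ of $Q^j$ representing $w_0$. If $R$ does not use the first position, it is already a subword of $Q^{j+1}$, and we are done. Otherwise $R = s R'$, where $R'$ is a reduced word (a subword of $s_{j+1}\cdots s_{g-2+j}$) representing $s w_0$, of length $\ell(w_0) - 1$. The key identity is
\[
s w_0 = w_0 (w_0 s w_0) = w_0 \overline{s},
\]
so that
\[
R' \overline{s} \text{ represents } s w_0 \overline{s} = w_0 \overline{s}\, \overline{s} = w_0 .
\]
The word $R'\overline{s}$ has length $\ell(w_0)$, hence it is a reduced word for $w_0$. It is a subword of $Q^{j+1}$: it consists of the letters of $R'$ in their positions among $s_{j+1}, \ldots, s_{g-2+j}$, followed by the final letter $s_{g-1+j} = \overline{s}$. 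This completes the inductive step.

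There is no real obstacle. The only points needing care are the indexing conventions (the half word has length $g-1$, and the appended letter is exactly $s_{g-1+j}$) and the identity $s w_0 = w_0 \overline{s}$, which follows immediately from $w_0^2 = \epsilon$.
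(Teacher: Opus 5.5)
Your proof is correct and is essentially the paper's argument: reduce to passing from $Q^j$ to $Q^{j+1}$, and when the reduced subword for $w_0$ uses the first letter $s_j$, drop it and append $s_{g-1+j} = \overline{s}_j$, using $s w_0 \overline{s} = w_0$. Your explicit length count showing the new subword is reduced, and your separate treatment of the case $s_j = \epsilon$, spell out points the paper leaves implicit.
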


\begin{proof}
Let $Q = s_1 \cdots s_{2g-2}$ be a Prym word, and suppose that $Q^j$ contains a reduced subword for $w_0$.  It suffices to show that $Q^{j+1}$ contains a reduced subword for $w_0$.  Let $s_{i_1} \cdots s_{i_\ell}$ be a subword of $Q^j$ that represents $w_0$.  If $i_1 \neq j$, then $s_{i_1} \cdots s_{i_\ell}$ is also a subword of $Q^{j+1}$, and the result follows.  If $i_1 = j$, then $Q^{j+1}$ contains the subword $s_{i_2} \cdots s_{i_\ell} s_{g-1+j}$.  The element of $W$ represented by this word is
\begin{align*}
s_{i_2} \cdots s_{i_\ell} s_{g-1+j} & = s_{i_2} \cdots s_{i_\ell} \overline{s}_j \\
&= s_{i_2} \cdots s_{i_\ell} w_0 s_j w_0 \\
&= s_j s_j s_{i_2} \cdots s_{i_\ell} w_0 s_j w_0 \\
&= s_j w_0^2 s_j w_0 = s_j^2 w_0 = w_0.
\end{align*}
\end{proof}

The Prym word of the Serre dual is easy to describe.

\begin{lemma}
\label{Lem:SerreDual}
For $[D] \in P (\Gamma,\pi)$, we have $K_{\WG} - D \sim \overline{D}$.  Moreover, if $Q_D = s_1 \cdots s_{2g-2}$, then $Q_{\overline{D}} = \overline{s}_1 \cdots \overline{s}_{2g-2}$. 
\end{lemma}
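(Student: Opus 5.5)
The plan has two parts, one for each assertion of Lemma~\ref{Lem:SerreDual}.

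\emph{The linear equivalence.} I would use that $\pi$ is a free double cover, so it is a local isometry and preserves valence. Hence $\pi^* K_{\Gamma} = K_{\WG}$, since both sides equal $\sum (\mathrm{val}(p)-2)p$. For any divisor $D$ on $\WG$ one has $\pi^*\mathrm{Nm}_{\pi} D = D + \overline{D}$, because the fibre of $\pi$ over $\pi(p)$ is $\{p,\overline{p}\}$. Pullback respects linear equivalence, so $\mathrm{Nm}_\pi [D] = [K_\Gamma]$ gives $D + \overline{D} \sim K_{\WG}$, that is, $K_{\WG} - D \sim \overline{D}$.

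Since the involution commutes with $\pi$, we also get $\mathrm{Nm}_\pi \overline{D} = \mathrm{Nm}_\pi D$, so $[\overline{D}] \in P(\Gamma,\pi)$. Moreover $\deg D = 2g-2$, which is the genus of $\WG$ minus one, so Riemann--Roch gives $\rk(\overline{D}) = \rk(K_{\WG}-D) = \rk(D)$. By Proposition~\ref{Prop:PrymDivisors}, $[\overline{D}]$ therefore lies in the same torus $P^r$ as $[D]$.

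\emph{The word.} Next I compute the coordinates $\xi_j(\overline{D})$. The involution is a rotation of $\WG$: it sends $\gamma_j$ to $\gamma_{g-1+j}$ preserving counterclockwise orientation, $w_j$ to $w_{g-1+j}$, and $\langle \xi \rangle_j$ to $\langle \xi \rangle_{g-1+j}$.
\begin{itemize}
\item If $[D] \in P^0$, write $D \in \widehat{\mathbb{T}}^0$. Then $\overline{D} \in \widehat{\mathbb{T}}^0$ as well, and
\[
\xi_j(\overline{D}) = \xi_{g-1+j}(D) = -1-\xi_j(D).
\]
\item If $[D] \in P^1$, write $D = E - p_0 + \overline{p}_0$ with $E \in \widehat{\mathbb{T}}^0$. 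Then $\overline{D} = \overline{E} - \overline{p}_0 + p_0$, which is not in normal form. To fix this I move a chip from $p_0$ to $\overline{p}_0$ (twice, to change $+p_0-\overline{p}_0$ into $-p_0+\overline{p}_0$) along the half of $\WG$ through $\gamma_1,\dots,\gamma_{g-1}$.
\end{itemize}
The chip-moving rests on the one-loop computation of Example~\ref{Ex:Loop}: on $\gamma_j$,
\[
v_j + \langle \xi \rangle_j \sim w_j + \langle \xi - 1 \rangle_j ,
\]
since $\langle -1\rangle + \langle \xi\rangle$ and $\langle 0 \rangle + \langle \xi - 1\rangle$ have the same sum on the loop. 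So pushing a chip across $\gamma_j$ shifts $\xi_j$ by exactly $\pm 1$ and leaves the other coordinates alone. Pushing the two chips through $\gamma_1,\dots,\gamma_{g-1}$ should therefore give
\[
\xi_j(\overline{D}) = -1-\xi_j(D) + c_j ,
\]
where $c_j \in \{0, \pm1\}$ depends only on whether $j \le g-1$ or $j \ge g$. This uniform shift is exactly the discrepancy between $s^r_j$ on the two halves when $r$ is odd.

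Finally I check the words. Using $s^r_j[i] + s^r_j[r-1-i] = -1$ when $r$ is even, and its odd-$r$ analogue with the half-dependent offsets $\pm 1$, we get $\xi_j(D) = s^r_j[i]$ if and only if $\xi_j(\overline{D}) = s^r_j[r-1-i]$. By $r$-genericity this index is unique, so $s_j(\overline{D}) = \tau_{r-1-i} = \overline{\tau}_i$; and $s_j = \epsilon$ maps to $\epsilon$.

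The main obstacle is the $P^1$ case. There I have to track the chip-firing bookkeeping exactly, with the correct sign of the shift on each half, and confirm that it matches the asymmetric definition of $s^r_j$ for odd $r$ rather than being off by one. I would settle this first by working the case $g=2$ explicitly.
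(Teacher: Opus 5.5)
Your proof of $K_{\WG}-D\sim\overline{D}$ and your $P^0$ case are correct and agree with the paper. The gap is in the $P^1$ case, at the step you flagged, and the mechanism you propose there gives the wrong answer.

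The relation $v_j+\langle\xi\rangle_j\sim w_j+\langle\xi-1\rangle_j$ holds on the loop $\gamma_j$ alone. The function realizing it takes different values at $v_j$ and $w_j$, and these two points are also joined through the rest of the big cycle $\beta_1\gamma_1\beta_2\gamma_2\cdots$. So a single push is not a linear equivalence on $\WG$. The claim that it ``leaves the other coordinates alone'' is only true of a combination of pushes whose total function closes up around the big cycle.

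Routing both chips through $\gamma_1,\dots,\gamma_{g-1}$ does not close up. The function would be constant on the other half, so it would take equal values at $p_0$ and $\overline{p}_0$. But along the first half it has slope $2$ on $\beta_2,\dots,\beta_{g-1}$ and does not decrease across the loops in the direction of flow.

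Your route would also give $c_j=-2$ for $j\le g-1$ and $c_j=0$ for $j\ge g$, not values in the set $\{0,\pm1\}$ you wrote. That is a uniform error of $-1$, so you would obtain $\tau_{r-2-i}$ instead of $\overline{\tau}_i=\tau_{r-1-i}$. Since normal forms are unique (Corollary~\ref{Cor:2Tori}) and $m_j\neq 1$, this cannot be the normal form of $\overline{D}$.

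The missing idea is that the correction must be antipodally balanced: one chip travels from $p_0$ to $\overline{p}_0$ through each half. This is the paper's function, with slope $1$ along $\beta_2,\dots,\beta_{g-1}$ and slope $-1$ along $\beta_{g+1},\dots,\beta_{2g-2}$. It closes up for two reasons:
\begin{itemize}
\item The two halves have the same total edge length.
\item The configuration on $\gamma_{g-1+j}$ (chip of $\overline{E}$ at $\langle -1-\eta\rangle$, crossed clockwise) is the mirror image, under the reflection swapping $v$ and $w$, of the configuration on $\gamma_j$ (chip at $\langle\eta\rangle$, crossed counterclockwise).
\end{itemize}
Hence the rise of the function along the flow is the same on both halves, and the two rises cancel around the big cycle.

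A counterclockwise crossing of $\gamma_j$ shifts $\xi_j$ by $-1$ and a clockwise crossing shifts it by $+1$. This gives $\xi_j(\overline{D})=-2-\xi_j(D)$ for $j\le g-1$ and $\xi_j(\overline{D})=-\xi_j(D)$ for $j\ge g$, i.e.\ $c_j=-1$ and $c_j=+1$. These match $s^r_j[i]+s^r_j[r-1-i]=-2$ and $0$ respectively for odd $r$, which finishes the argument.

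Separately, your inference that $[\overline{D}]$ lies in the same $P^r$ ``by Proposition~\ref{Prop:PrymDivisors}'' because the ranks agree does not follow. Here $P^0$ and $P^1$ are defined by normal forms, not by rank parity; that link is Corollary~\ref{Cor:RankParity}, which is proved later via Theorem~\ref{Thm:Generic}. You do not need it anyway, because the normal-form computation places $\overline{D}$ directly.
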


\begin{proof}
To see the first statement, note that
\[
D + \overline{D} = \pi^* \mathrm{Nm}_{\pi} D = \pi^* K_{\Gamma} = K_{\WG} .
\]
Now, if $[D] \in P^0$, then $\xi_j (\overline{D}) = \xi_{g-1+j} (D) = -1 - \xi_j (D)$ for all $j$.  If $[D] \in P^1$, then by Lemma~\ref{Lem:Bijection}, there exists a function $\varphi \in R(D)$, unique up to translation,  such that $\overline{D} + \ddiv (\varphi) \in \widehat{\mathbb{T}}^1$.  The function $\varphi$ has constant slope 1 along each edge $\beta_j$ for $j \in \{ 2, \ldots , g-1 \}$ and constant slope $-1$ along each edge $\beta_j$ for $j \in \{ g+1, \ldots, 2g-2 \}$.  From this it follows that $\xi_j (\overline{D}) = -1 + \xi_{g-1+j} (D) = -2 - \xi_j (D)$ for all $j \in \{ 1, \ldots , g-1 \}$ and $\xi_j (\overline{D}) = 1 + \xi_{g-1+j} (D) = - \xi_j (D)$ for all $j \in \{ g, \ldots , 2g-2 \}$.  In each case, we see that $\xi_j (D) = s^r_j [i]$ if and only if $\xi_j (\overline{D}) s^r_j [r-1-i]$, and the result follows.
\end{proof}

Given a lingering word $Q = s_1 \cdots s_{g-1}$ of length $g-1$ in the Coxeter group $A_r$, we define the set
\begin{align*}
T(Q) :&= \{ [D] \in P^r \mid Q \text{ is a lingering subword of } Q_D^1 \} \\
& = \{ [D] \in P^r \mid \xi_j (D) = s^r_j [i] \text{ whenever } s_j = \tau_i \} .
\end{align*}

The following is the main result of this section.

\begin{theorem}
\label{Thm:Generic}
Let $\Gamma$ be an $r$-generic loop of loops and $\pi \colon \WG \to \Gamma$ the quotient by the antipodal involution.  Then
\[
V^r (\Gamma,\pi) = \bigcup T(Q),
\]
where the union is over all lingering words of length $g-1$ in the Coxeter group $A_r$ that contain a reduced subword for $w_0$.
\end{theorem}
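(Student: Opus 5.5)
We argue by induction on $r$, in the spirit of the chain-of-loops arguments of \cite{tropicalBN, Pflueger17a}, with Lemma~\ref{Lem:BaseCase} as the base case. The key intermediate statement is a slope-tracking result, which the paper anticipates as Proposition~\ref{Prop:Fns}. For $[D] \in P^r$ and $i \in \{0,\ldots,r\}$, we aim to build functions $\varphi_i \in R(D)$ by walking counterclockwise around $\WG$ from the basepoint $p_0 \in \beta_1$. The function $\varphi_i$ should have incoming slope $s^r_j[\sigma_j(i)]$ at $v_j$, where $\sigma_j$ is the permutation represented by the Demazure-type product of the letters $s_1\cdots s_{j-1}$ of $Q_D$.

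The local computation on a single loop comes from Example~\ref{Ex:Loop}. On $\gamma_j$, with the chip $\langle\xi_j(D)\rangle_j$ and incoming slope $s$ at $v_j$, we can exit $w_j$ with slope $s$ (up to the normalization $s'$) exactly when $\xi_j \not\equiv s$. When $\xi_j = s^r_j[i]$, the two slopes $s^r_j[i]$ and $s^r_j[i+1]$ are forced to swap; this swap is the transposition $\tau_i$. The $r$-genericity hypothesis ensures that no other coincidence modulo $m_j$ occurs, so each loop acts as at most one simple transposition. When a swap is applied to a pair that is already inverted, it is the "bad" case, and it costs a degree of freedom, as in the lingering lattice path arguments.

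Traversing all of $\WG$ is the same as traversing two half words. By the Prym word structure and the computation in Lemma~\ref{Lem:Containsw0}, the second half acts by conjugating by $w_0$. Requiring that the functions close up consistently around the big loop, in the parity-shifted normalization of $s^r_j$ and $s'^r_j$, is then equivalent to the half word $Q_D^1$ containing a reduced subword for $w_0$.

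\textbf{Containment $\supseteq$.} Let $[D] \in T(Q)$, where $Q$ contains a reduced word for $w_0$. Given any effective $E$ of degree $r$, we must show that $D-E$ is equivalent to an effective divisor. We plan to use the tropical module $R(D)$ generated by the $\varphi_i$, together with pointwise minima, as in \cite{tropicalBN}. Alternatively, we can reduce to the case where $E$ is supported on a single loop or on a $\beta$ edge, and peel off one point at a time. Removing a point $p$ and passing to $D-p+\overline{p}$ changes parity via Lemma~\ref{Lem:Bijection}, so the goal is to show that $[D-p+\overline{p}] \in V^{r-1}$. This requires checking that the word for $D-p+\overline{p}$ in $A_{r-1}$ still contains $w_0$. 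The mechanism is that deleting one strand of $w_0 \in S_{r+1}$ leaves $w_1$ or $\overline{w}_1$, which are longest elements of a copy of $S_r$. Combined with the induction hypothesis, this gives rank at least $r$.

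\textbf{Containment $\subseteq$.} Conversely, suppose $\rk(D)\ge r$. Choose $p$ to be a suitable point, for instance the chip position at the first letter of the half word or the point $p_0$. Then $D - p + \overline{p}$ has rank at least $r-1$, so by induction its word in $A_{r-1}$ contains a reduced subword for $w_0^{(r-1)}$. We then translate this back, via the slope bookkeeping, to show that $Q_D^1$ contains a reduced subword for $w_0^{(r)}$, again using Lemma~\ref{Lem:Containsw0} to move the starting index freely. The Serre duality of Lemma~\ref{Lem:SerreDual}, which conjugates letters by $w_0$, handles the symmetric case.

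\textbf{Main obstacle.} The hardest step is this inductive comparison between the word of $D$ in $A_r$ and the word of $D-p+\overline{p}$ in $A_{r-1}$. The coordinates $\xi_j$ shift by $\pm1$ on the $\beta$-arcs between $p$ and $\overline{p}$, and the slope sets $s^r_j$ recentre with parity. One must therefore prove an exact combinatorial lemma: $Q$ contains $w_0$ in $A_r$ if and only if, for every position of $p$, the correspondingly shifted word contains $w_0$ in $A_{r-1}$. I would first try to prove this using the Bruhat criterion $\sigma[x,y]$ of \cite[Theorem~2.1.5]{BjornerBrenti} applied to Demazure products.
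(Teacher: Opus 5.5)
Your overall architecture matches the paper's: induction on $r$ from Lemma~\ref{Lem:BaseCase}, passing to $D-p+\overline{p}$ for $\subseteq$, and minima of the $\varphi_i$ for $\supseteq$. However, the inclusion $\subseteq$ has a genuine gap, at exactly the step you flag as the main obstacle.

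A single point $p$ cannot suffice. For $p\in\beta_\ell$, passing to $D'=D-p+\overline{p}$ leaves every letter of the half word $Q^\ell_D$ unchanged, except that each $\tau_{r-1}$ becomes $\epsilon$. So the induction hypothesis only tells you that $Q^\ell_D$ contains a reduced word for $w_1$, which lies strictly below $w_0$. The paper runs this over every $\ell$ to get its ``full rank'' condition (every half word contains $w_1$), which is the hypothesis of the lemma you isolate at the end. It then needs Proposition~\ref{Prop:FullRank}.

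The Bruhat criterion on Demazure products does not prove that proposition on its own. Full rank plus the Prym symmetry puts both $w_1$ and $\overline{w}_1$ in every half word, hence $\delta(Q^j)[x,y]\geq\max\{w_1[x,y],\overline{w}_1[x,y]\}$. But on the antidiagonal $x+y=r$ with $x<r$, this maximum equals $x$ rather than $w_0[x,r-x]=x+1$. Indeed, elements above both $w_1$ and $\overline{w}_1$ need not equal $w_0$: for $r=2$, $\tau_0\tau_1$ lies above $w_1=\tau_0$ and $\overline{w}_1=\tau_1$.

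The missing idea is the argument of Lemma~\ref{Lem:FullRankNu}, which moves between \emph{different} half words:
\begin{enumerate}
\item take the last occurrence $s_{j'}$ of $\tau_{r-i}$ in $Q^j$;
\item note that the reduced word for $\nu_i\tau_{i-1}$ that Lemma~\ref{Lem:TableauCompare} provides in $Q^{j'+1}$ must lie inside $s_{g-1+j}\cdots s_{g-1+j'}$;
\item transport it back to $s_j\cdots s_{j'}$ using $s_{g-1+j}=\overline{s}_j$.
\end{enumerate}
After a small adjustment at $s_{j'}$ and one more use of the symmetry, this puts the cyclic shifts $\nu_i$ in every half word. Only then does the criterion at $(x,y)=(r-i,i)$ force $\delta(Q^j)=w_0$.

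Relatedly, your claim that closing up the $\varphi_i$ is ``equivalent'' to $Q^1_D$ containing $w_0$ gives only sufficiency. Rank at least $r$ does not hand you functions with these prescribed slopes, so it cannot replace this combinatorial step.

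For $\supseteq$, your alternative of peeling off points through $D-p+\overline{p}$ is not valid. To get $\rk(D)\geq r$ you need $\rk(D-p)\geq r-1$. But $D-p\leq D-p+\overline{p}$, so $[D-p+\overline{p}]\in V^{r-1}(\Gamma,\pi)$ yields only $\rk(D-p)\geq r-2$. The parity statement that would rescue this, Corollary~\ref{Cor:RankParity}, is deduced in the paper from the theorem itself.

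Your first route, minima of the $\varphi_i$, is the paper's, but the mechanism is missing and your $\sigma_j$ are defined incorrectly.
\begin{itemize}
\item The paper first reduces, via Lemma~\ref{Lem:Contains}, to $Q$ a reduced lingering word for $w_0$. It takes $\sigma_j$ to be the ordinary partial products of the full Prym word $Q_1$, not Demazure products.
\item Demazure products would freeze at $w_0$ in the second half. The products must instead descend from $w_0$ back to the identity; this makes the slopes on $\beta_{g-1+j}$ the negatives of those on $\beta_j$, so that $\varphi_i$ closes up at $w_{2g-2}$.
\item Because each $\sigma_j$ is a permutation, the $\varphi_i$ have pairwise distinct slopes on every $\beta_j$.
\item For $r$ points $p_i$ in the interiors of the $\beta_j$, the $r\times(r+1)$ matrix $(\varphi_k(p_i))$ has tropical rank at most $r$. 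This gives a combination $\min_k\{a_k+\varphi_k\}$ attaining its minimum twice at each $p_i$, hence vanishing there by the distinct slopes (Lemma~\ref{Lem:DistinctSlopes}).
\item Closedness extends this to the endpoints. Since the $\beta_j$ form an edge cover of a loopless model, Luo's rank-determining set theorem gives $\rk(D)\geq r$ (Corollary~\ref{Cor:HowToComputeRank}).
\end{itemize}
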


The proof of Theorem~\ref{Thm:Generic} will occupy the next two subsections.  Before proving this result, we record a few elementary observations about the sets $T(Q)$.

\begin{lemma}
\label{Lem:DimTQ}
Let $Q = s_1 \cdots s_{g-1}$ be a lingering word. Then $T(Q)$ is a real torus of dimension $\# \{ j \mid s_j = \epsilon \}$.
\end{lemma}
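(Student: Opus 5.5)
By the discussion following Corollary~\ref{Cor:2Tori} and Lemma~\ref{Lem:BaseCase}, the coordinate map
\[
[D] \mapsto (\xi_1(D), \ldots , \xi_{g-1}(D))
\]
is a homeomorphism from $P^r$ onto $\prod_{j=1}^{g-1} \R/m_j\Z$. The plan is to identify $T(Q)$ with a product of circles and points inside this torus. The reason this should work is that, by definition of $P^r$, the remaining coordinates $\xi_{g-1+j}(D) = -1 - \xi_j(D)$ for $j \in \{1,\ldots,g-1\}$ are determined by the first $g-1$. So a class in $P^r$ is freely and uniquely specified by the values $\xi_1(D), \ldots, \xi_{g-1}(D)$, each ranging over its own factor $\R/m_j\Z$.

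The main step is to translate the defining condition of $T(Q)$ into coordinates. By the second description of $T(Q)$, a class $[D] \in P^r$ lies in $T(Q)$ if and only if $\xi_j(D) = s^r_j[i]$ for every $j \in \{1, \ldots, g-1\}$ with $s_j = \tau_i$. Positions $j$ with $s_j = \epsilon$ impose no condition. Each condition involves only the single coordinate $\xi_j$, so under the homeomorphism
\[
T(Q) \cong \prod_{j=1}^{g-1} X_j ,
\]
where $X_j$ is the one point $\{ s^r_j[i] \bmod m_j \}$ if $s_j = \tau_i$, and $X_j = \R/m_j\Z$ if $s_j = \epsilon$. A product of points and circles is a real torus whose dimension is the number of circle factors, namely $\#\{ j \mid s_j = \epsilon \}$. (If $m_j = 0$, then $\R/m_j\Z$ is read as $\R/\ell\Z$ rescaled to a circle, since $\langle \xi \rangle_j$ lies on the loop $\gamma_j$; this changes nothing in the dimension count.)

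I expect essentially no obstacle. The only point needing care is checking that the description of $T(Q)$ via the lingering subword condition on $Q_D^1$ agrees with the coordinate conditions. This is immediate from the definition of $s_j$ for $Q_D$: by $r$-genericity, $\xi_j(D) = s^r_j[i]$ holds for at most one $i$. It should also be confirmed that the conditions impose no constraint on the indices $j \ge g$. That follows from Lemma~\ref{Lem:PrymWord}, but it is not even needed, since $Q_D^1$ only involves the indices $1, \ldots, g-1$.
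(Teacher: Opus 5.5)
Your proposal is correct and is essentially the paper's proof. Both use the coordinates $\xi_1,\ldots,\xi_{g-1}$ to identify $P^r$ with a product of circles, then observe that membership in $T(Q)$ fixes $\xi_j$ exactly when $s_j \neq \epsilon$ and leaves it free otherwise. Your parenthetical about $m_j = 0$ is slightly more careful than the paper's final line, which writes $\prod_{s_j=\epsilon}\R/m_j\Z$ even though $\R/0\Z$ is a line rather than a circle, while your citation of Lemma~\ref{Lem:BaseCase} is not needed.
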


\begin{proof}
If $s_j = \tau_i$, then for all $[D] \in T(Q)$, we have $\xi_j (D) = s^r_j [i]$.  In other words, the coordinate function $\xi_j$ is fixed for all $[D] \in T(Q)$.  On the other hand, if $s_j = \epsilon$, then the coordinate function $\xi_j$ may vary freely.  It follows that $T(Q)$ is homeomorphic to $\prod_{s_j = \epsilon} \R/m_j\Z$.
\end{proof}

\begin{lemma}
\label{Lem:Contains}
Let $Q$ and $Q'$ be lingering words.  Then $T(Q) \subseteq T(Q')$ if and only if $Q'$ is a lingering subword of $Q$.
\end{lemma}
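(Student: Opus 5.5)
The plan is to work directly with the description of $T(Q)$ in coordinates. By Corollary~\ref{Cor:2Tori} and the definition of $P^r$, a class $[D] \in P^r$ is determined by $(\xi_1(D), \ldots, \xi_{g-1}(D)) \in \prod_{j=1}^{g-1} \R/m_j\Z$, and these coordinates vary freely over this product. Write $Q = s_1 \cdots s_{g-1}$ and $Q' = s'_1 \cdots s'_{g-1}$. Then $T(Q)$ is the set of points satisfying $\xi_j = s^r_j[i]$ for each $j$ with $s_j = \tau_i$, and $T(Q')$ is described the same way using $Q'$. Both directions of the lemma will be read off from this description.

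For the ``if'' direction, suppose $Q'$ is a lingering subword of $Q$. Then for every $j$ we have either $s'_j = \epsilon$ or $s'_j = s_j$. Every constraint defining $T(Q')$ therefore already appears among the constraints defining $T(Q)$, and so $T(Q) \subseteq T(Q')$. Equivalently, one can use transitivity of the lingering subword relation: $Q'$ is a lingering subword of $Q$, and $Q$ is a lingering subword of $Q_D^1$ for every $[D] \in T(Q)$.

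For the ``only if'' direction, suppose $T(Q) \subseteq T(Q')$, and fix $j$ with $s'_j = \tau_i$. We must show that $s_j = \tau_i$. There are two cases.
\begin{itemize}
\item If $s_j = \epsilon$, the coordinate $\xi_j$ varies freely on $T(Q)$ (Lemma~\ref{Lem:DimTQ}). Choose a point of $T(Q)$ with $\xi_j \neq s^r_j[i]$ in $\R/m_j\Z$; this is possible because $\R/m_j\Z$ has more than one point. That point is not in $T(Q')$, which contradicts the inclusion.
\item If $s_j = \tau_{i'}$ with $i' \neq i$, then every point of $T(Q)$ has $\xi_j = s^r_j[i']$. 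The inclusion forces $s^r_j[i'] \equiv s^r_j[i]$ in $\R/m_j\Z$. By $r$-genericity, this implies $i = i'$, as noted in the well-definedness of $Q_D$.
\end{itemize}
Hence $s_j = s'_j$ whenever $s'_j \neq \epsilon$, so $Q'$ is a lingering subword of $Q$.

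The only real subtlety is that $T(Q)$ must be nonempty for the inclusion to carry any information. Nonemptiness holds because the coordinates are independent: we can fix the constrained ones and choose the others arbitrarily. The step that needs care is the distinctness of the values $s^r_j[i]$ modulo $m_j$ for $i \in \{0, \ldots, r-1\}$. This uses $m_j = 0$ or $m_j \geq r$. These values are $r$ consecutive integers, so they are distinct modulo any $m_j \geq r$.
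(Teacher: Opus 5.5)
Your proof is correct and follows the same route as the paper. The forward direction is transitivity of the lingering-subword relation (equivalently, containment of the defining coordinate constraints), and the converse exhibits a point of $T(Q)$ with $\xi_j(D) \neq s^r_j[i]$. Your split into the cases $s_j = \epsilon$ and $s_j = \tau_{i'}$ with $i' \neq i$, together with your appeal to nonemptiness of $T(Q)$ and to $r$-genericity, simply makes explicit what the paper's one-line existence claim leaves implicit.
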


\begin{proof}
Let $[D] \in T(Q)$.  By definition, $Q$ is a lingering subword of $Q^1_D$, hence if $Q'$ is a lingering subword of $Q$, then $Q'$ is a lingering subword of $Q^1_D$, so $[D] \in T(Q')$.  For the converse, let $Q = s_1 \cdots s_{g-1}$ and $Q' = s'_1 \cdots s'_{g-1}$.  By definition, if $Q'$ is not a lingering subword of $Q$, then there exists $j$ such that $s'_j \neq \epsilon$, and $s_j \neq s'_j$.  Suppose that $s'_j = \tau_i$.  Since $s_j \neq \tau_i$, there exists a divisor $[D] \in T(Q)$ such that $\xi_j (D) \neq s^r_j [i]$, hence $[D] \notin T(Q')$.
\end{proof}

\begin{example}
\label{Ex:Rank2}
When $g-1 = {{r+1}\choose{2}}$, if $Q = s_1 \cdots s_{g-1}$ is a lingering word that contains a reduced subword $Q'$ for $w_0$, then $Q$ must be equal to $Q'$.  In particular, we see that $s_j \neq \epsilon$ for all $j$, hence $T(Q)$ consists of a single divisor class.

In the case where $r=2$, there are exactly two reduced subwords for $w_0$, corresponding to the two divisor classes pictured in Figure~\ref{Fig:Rank2}.  By Theorem~\ref{Thm:Generic}, $V^2 (\Gamma,\pi)$ consists of exactly these two divisor classes.

\begin{figure}[H]
\begin{tikzpicture}

\draw (0,0) circle (2);
\draw [fill=white] (0,2) circle (0.5);
\draw [fill=white] (1.7,1) circle (0.5);
\draw [fill=white] (1.7,-1) circle (0.5);
\draw [fill=white] (0,-2) circle (0.5);
\draw [fill=white] (-1.7,-1) circle (0.5);
\draw [fill=white] (-1.7,1) circle (0.5);
\draw [ball color=black] (1.93,0.52) circle (0.15);
\draw [ball color=black] (-0.52,1.93) circle (0.15);
\draw [ball color=black] (-1.4,1.4) circle (0.15);
\draw [ball color=black] (-1.4,-1.4) circle (0.15);
\draw [ball color=black] (-0.52,-1.93) circle (0.15);
\draw [ball color=black] (1.93,-0.52) circle (0.15);

\draw (5,0) circle (2);
\draw [fill=white] (5,2) circle (0.5);
\draw [fill=white] (6.7,1) circle (0.5);
\draw [fill=white] (6.7,-1) circle (0.5);
\draw [fill=white] (5,-2) circle (0.5);
\draw [fill=white] (3.3,-1) circle (0.5);
\draw [fill=white] (3.3,1) circle (0.5);
\draw [ball color=black] (3.07,0.52) circle (0.15);
\draw [ball color=black] (5.52,1.93) circle (0.15);
\draw [ball color=black] (6.4,1.4) circle (0.15);
\draw [ball color=black] (6.4,-1.4) circle (0.15);
\draw [ball color=black] (5.52,-1.93) circle (0.15);
\draw [ball color=black] (3.07,-0.52) circle (0.15);

\end{tikzpicture}
\caption{Two Prym divisors of rank 2 on a loop of loops of genus 7, corresponding to the words $\tau_0 \tau_1 \tau_0$ (left) and $\tau_1 \tau_0 \tau_1$ (right).}
\label{Fig:Rank2}
\end{figure}
\end{example}

\subsection{Identifying Prym Divisors of High Rank}
\label{Sec:FirstContainment}

In this section, we prove half of Theorem~\ref{Thm:Generic}.  Specifically:

\begin{theorem}
\label{Thm:GenericBackwardContainment}
Let $\Gamma$ be an $r$-generic loop of loops and $\pi \colon \WG \to \Gamma$ the quotient by the antipodal involution.  Then
\[
V^r (\Gamma,\pi) \supseteq \bigcup T(Q),
\]
where the union is over all lingering words of length $g-1$ that contain a reduced subword for $w_0$.
\end{theorem}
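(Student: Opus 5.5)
We show that every $[D] \in T(Q)$, with $Q$ a lingering word of length $g-1$ containing a reduced subword for $w_0$, satisfies $\rk(D) \geq r$. The parity condition $\rk(D) \equiv r \Mod{2}$ should then follow from the parity of $P^r$; if this parity is not already available, we instead show $\rk(D) = r$ by a Serre-duality/upper-bound argument using Lemma~\ref{Lem:SerreDual}. The argument is by induction on $r$. The base case $r=0$ is Lemma~\ref{Lem:BaseCase}, reinterpreted: for $r=0$ the group is trivial, so every lingering word contains $w_0$. In that case we only need $[D] \in P^0$, which is exactly case (1) of Lemma~\ref{Lem:BaseCase}. It will be cleaner to prove $r=1$ directly as well, since the parities of $P^r$ alternate with $r$. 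For $r=1$, $w_0=\tau_0$, and a $\tau_0$ in $Q$ means $\xi_j(D) = s^1_j[0] = -1$ for some $j \le g-1$. That is case (2) of Lemma~\ref{Lem:BaseCase}.

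\textbf{Inductive step.} To show $\rk(D) \ge r$ it suffices to show that $D - p + \overline{p}$ has rank at least $r-1$ for every $p \in \WG$, and in fact that $D - p$ has rank at least $r-1$. We use the following standard reduction. Since $\rk(D) \geq r$ iff $\rk(D-p) \geq r-1$ for all $p$, and since $D - p \le D - p + \overline{p}$, we can try instead to show that $D-p$ dominates, up to equivalence, something of rank $r-1$. Adding $\overline{p}$ costs only one, so the plan is as follows. For each $p$, we will exhibit an effective divisor $E_p$ of degree one with $D - p \sim D' - \overline{p} + \ldots$. More concretely, we compute the class $F_p([D]) = [D-p+\overline{p}]$, which lies in $P^{r-1}$ by Lemma~\ref{Lem:Bijection}, and show it lies in some $T(Q')$ with $Q'$ a lingering word in $A_{r-1}$ containing a reduced word for $w_0^{(r-1)}$. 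Induction then gives $\rk(D-p+\overline{p}) \ge r-1$. This alone gives $\rk(D-p) \ge r-2$, which is too weak. The correct version is to find, for each $p$, a point $q$ (depending on $p$) such that $D - p - \overline{q} \sim$ something, or alternatively to use $D-p \ge D - p - \overline{q}+\ldots$. So we reformulate: we show that $D-p \sim E + q$ with $E + q - \overline{q}$, hmm. The cleanest route is to restrict $p$ to lie on the half of $\WG$ determined by a suitable half word (via Lemma~\ref{Lem:Containsw0} and rotational symmetry, any $p$ lies in the first half for some choice of half word $Q_D^j$). We then show directly that $D - p - \overline{q}' + \overline{q}$ behaves well, tracking the coordinates $\xi_j$ under moving chips.

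\textbf{Coordinate computation.} We compute the coordinates $\xi_j$ of the canonical representative of $[D - p + \overline{p}]$ in $\widehat{\mathbb{T}}^{r-1}$ (parity index mod 2). Moving a chip from $p$ around to $\overline{p}$ along the big cycle shifts each $\xi_j$ by $\pm 1$ on the loops passed, according to Example~\ref{Ex:Loop}. The chip passes through $\gamma_j$ with slope changes, so that $s^r_j[i]$ matches $s^{r-1}_j[i]$ or $s^{r-1}_j[i-1]$ depending on which side of $p$ the loop is. On one side of $p$, $\tau_i$ in $Q$ becomes $\tau_i$ in $A_{r-1}$; on the other side, it becomes $\tau_{i-1}$. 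When $p$ lies on a loop $\gamma_j$ itself, that loop gets special treatment.

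\textbf{Combinatorial step and main obstacle.} Given this dictionary, the statement reduces to a Coxeter-combinatorial claim. If a word contains a reduced word for $w_0 \in A_r$, then deleting an appropriate letter and relabelling $\tau_i \mapsto \tau_i$ before some position and $\tau_i \mapsto \tau_{i-1}$ after it (dropping $\tau_0$ after, or $\tau_{r-1}$ before) yields a word containing a reduced word for $w_0 \in A_{r-1}$. This is the analogue of the lingering-lattice-path argument in \cite{tropicalBN} and follows from the factorization $w_0 = w_1 \cdot (\text{cycle})$ into $w_1$ or $\overline{w}_1$ times a Coxeter element. I expect the main obstacle to be getting the bookkeeping right: the exact shift in $\xi_j$ across the cut at $p$ and its antipode, and the off-by-one conventions for odd $r$ in $s^r_j$ and $s'^r_j$, so that the resulting word really lies in $A_{r-1}$ and contains $w_0$. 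If this direct route stalls, the fallback is to build explicit functions in $R(D)$ with prescribed slopes $s^r_j[i]$ at $v_j$ and $s'^r_j[i]$ at $w_j$, as previewed for Proposition~\ref{Prop:Fns}, and verify rank $r$ via a break-divisor argument.
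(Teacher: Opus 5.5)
Your proposal has a genuine gap: the induction on $r$ runs in the wrong direction, and it never yields $\rk(D)\ge r$. You compute $F_p([D])=[D-p+\overline{p}]\in P^{r-1}$, check that its word contains $w_0\in A_{r-1}$, and apply induction. This gives only $\rk(D-p+\overline{p})\ge r-1$, hence only $\rk(D-p)\ge r-2$. You notice this yourself, but the proposed repair (``find a point $q$ such that\ldots'') is never actually formulated.

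Nor can you assert that $\rk(D-p+\overline{p})\ge r-1$ for all $p$ suffices. For this graph that implication is only obtained by passing through full rank, then Proposition~\ref{Prop:FullRank}, and then the very containment you are proving; this is exactly how Corollary~\ref{Cor:RankParity} is proved, so using it here is circular. Your combinatorial claim (from a word containing $w_0\in A_r$ to one containing $w_0\in A_{r-1}$) is the easy direction. It only reflects $F_p(V^r)\subseteq V^{r-1}$, never the converse.

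The same problem breaks your base case. For $[D]\in P^1$ with some $\xi_j(D)=-1$, Lemma~\ref{Lem:BaseCase} gives $\rk(D)\ge 0$, not $\rk(D)\ge 1$. The upgrade from $0$ to $1$ is again Corollary~\ref{Cor:RankParity}, which depends on this theorem. Your parity fallback is also false. Once $V^{r+2}(\Gamma,\pi)\neq\emptyset$, its classes lie in $P^{r+2}=P^r$ and in tori $T(Q)$ with $Q$ in $A_r$ containing $w_0$, so you cannot prove $\rk(D)=r$. The right target is just $\rk(D)\ge r$; parity is supplied afterwards by Corollary~\ref{Cor:RankParity}.

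What is missing is a direct construction of sections, which you mention only as an undeveloped fallback.
\begin{itemize}
\item First reduce, via Lemma~\ref{Lem:Contains}, to $Q$ a reduced lingering word for $w_0$, and let $\sigma_j$ be the partial products along the full Prym word $Q_1$.
\item For each $i$, build $\varphi_i\in R(D)$ with slope $s^r_j[\sigma_j(i)]$ on $\beta_j$, glued across each loop $\gamma_j$ by the unique function from Example~\ref{Ex:Loop}. Where $s_j$ does not affect the $i$th slope, the slope passes through unchanged. Where it does, the condition $\xi_j(D)=s^r_j[\cdot]$ defining $T(Q)$ is exactly the linear equivalence on $\gamma_j$ that lets the slope shift by one, leaving either no chip or $v_j+w_j$ there.
\item Containing $w_0$ is what lets $\varphi_i$ close up around the big cycle. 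Every half word represents $w_0$ (Lemma~\ref{Lem:Containsw0}), so $s^r_{g-1+j}[\sigma_{g-1+j}(i)]=-s^r_j[\sigma_j(i)]$. Together with $\xi_{g-1+j}=-1-\xi_j$ and $s_{g-1+j}=\overline{s}_j$, the increments of $\varphi_i$ over antipodal pieces cancel in pairs, so $\varphi_i$ is continuous at $w_{2g-2}$.
\item Each $\sigma_j$ is a permutation, so $\varphi_0,\ldots,\varphi_r$ have pairwise distinct slopes on every $\beta_j$. Lemma~\ref{Lem:DistinctSlopes} (an $r\times(r+1)$ tropical matrix has tropical rank at most $r$) then handles $r$ general points on the $\beta_j$.
\item A closedness argument, plus the fact that the vertices of a loopless model with edge cover $\{\beta_j\}$ form a rank-determining set, gives $\rk(D)\ge r$. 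This is Proposition~\ref{Prop:Fns} combined with Corollary~\ref{Cor:HowToComputeRank}.
\end{itemize}
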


To do this, we will require some preliminary lemmas for computing the ranks of divisors on metric graphs.

\begin{lemma}
\label{Lem:DistinctSlopes}
Let $D$ be a divisor on a metric graph $\Gamma$, and let $\varphi_0 , \ldots , \varphi_r \in R(D)$.  Let $p_1 , \ldots , p_r \in \Gamma$ be points of valence 2 such that:
\begin{enumerate}
\item  $p_i \notin \mathrm{Supp}(D)$ for all $i$,
\item  each function $\varphi_j$ is linear in a neighborhood of each point $p_i$, and
\item  for all $i$, the functions $\varphi_0 , \ldots , \varphi_r$ have distinct slopes in a neighborhood of $p_i$.
\end{enumerate}
Then there exist $a_0 , \ldots , a_r \in \R$ such that
\[
\ddiv \Big( \min_{j=0}^r \{ a_j +\varphi_j \} \Big) + D \geq p_1 + \cdots + p_r .
\]
\end{lemma}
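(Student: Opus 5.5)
We will choose the constants $a_0,\ldots,a_r$ so that each point $p_i$ lies on a place where two of the functions $a_j+\varphi_j$ simultaneously attain the minimum, with adjacent slopes. Recall that for any $\psi_0,\ldots,\psi_r \in R(D)$ the function $\psi = \min_j \psi_j$ again lies in $R(D)$, since $R(D)$ is a tropical module. Moreover, at a valence-2 point $p$ where $\psi$ is the minimum of two linear functions crossing, $\ord_p(\psi)$ is positive. Concretely, if near $p$ the two functions attaining the minimum have slopes $\sigma < \sigma'$ (in some fixed direction), and the function of larger slope is the smaller one before $p$, then $\ord_p(\psi) = \sigma' - \sigma \geq 1$. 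Since $p_i \notin \mathrm{Supp}(D)$, this yields $(\ddiv(\psi)+D)(p_i) \geq 1$. Away from the $p_i$ the condition $\ddiv(\psi)+D\geq 0$ holds automatically, because $\psi \in R(D)$.

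The plan is as follows.

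Step 1: Near each $p_i$, the slopes of $\varphi_0,\ldots,\varphi_r$ are $r+1$ distinct integers. Choosing a direction at $p_i$, we order the indices $j$ by slope. For $r+1$ affine functions with distinct slopes in a small neighbourhood, the lower envelope passes through them in order of decreasing slope as we move in that direction. A generic choice of constants makes the envelope switch at distinct, isolated points.

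Step 2: Choose the $a_j$ inductively, or by a dimension count, so that for each $i$ some breakpoint of the lower envelope $\min_j\{a_j+\varphi_j\}$ sits exactly at $p_i$. A breakpoint between $\varphi_j$ and $\varphi_k$ at $p_i$ imposes the single linear condition $a_j+\varphi_j(p_i)=a_k+\varphi_k(p_i)$. It also requires that these two be minimal among all $a_l+\varphi_l(p_i)$.

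We have $r$ points and $r+1$ constants, modulo a common translation, so $r$ free parameters. This matches $r$ conditions. The natural approach is to build a spanning tree on $\{0,\ldots,r\}$ in which each $p_i$ is assigned to an edge $\{j,k\}$. We then solve for the $a$'s along the tree.

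Step 3 is the main obstacle: guaranteeing the minimality condition, i.e.\ that the two tied functions are genuinely the minimum at $p_i$. A cleaner route may be an iterative one. Start with $\psi^{(0)}=\varphi_0$. Given $\psi$ achieving vanishing at $p_1,\ldots,p_{m}$ using functions from a set $J$ of size $m+1$, consider a remaining function $\varphi_k$. Take a point $p_{m+1}$ at which $\psi$ is linear with slope equal to that of some $\varphi_j$, $j\in J$; this slope differs from the slope of $\varphi_k$. Choose $a_k$ so that $a_k+\varphi_k(p_{m+1})=\psi(p_{m+1})$, then replace $\psi$ by $\min(\psi,a_k+\varphi_k)$. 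The difficulty is that adding $\varphi_k$ may undercut $\psi$ at earlier points $p_1,\ldots,p_m$, destroying their vanishing.

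To handle this, I would instead use a Hall-type, or "a point of largest minimizer," argument. Consider the map $a\mapsto$ (set of minimizers at each $p_i$). Among all $a\in\R^{r+1}$, choose one maximizing the number of indices $i$ at which at least two functions tie for the minimum. If fewer than $r$ points are covered, look at the graph whose vertices are $\{0,\ldots,r\}$ and whose edges are the ties at covered points. This graph has a component $J$ not containing everything. Decreasing all $a_j$, $j\in J$, uniformly preserves every tie within $J$ and every tie outside $J$. Continue decreasing until a new tie forms at an uncovered $p_i$. If instead the functions in $J$ stop being minimal at some covered point, we need an exchange argument. Distinctness of slopes (hypothesis (3)) ensures that a tie between two functions at $p_i$ produces order at least 1 there. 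I expect this combinatorial bookkeeping to be the crux; the local order computation is routine.
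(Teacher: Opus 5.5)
\paragraph{Gap: the simultaneous double minimum is never established.} Your local analysis is correct. Suppose at least two of the functions $a_j+\varphi_j$ attain the minimum at $p_i$. Their slopes there are distinct integers and $p_i\notin\mathrm{Supp}(D)$, so $\psi=\min_j\{a_j+\varphi_j\}$ has order at least $1$ at $p_i$, and $\psi\in R(D)$ handles all other points.

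The real content of the lemma, however, is that there exist $a_0,\ldots,a_r$ for which $\min_j\{a_j+\varphi_j(p_i)\}$ is attained at least twice for \emph{every} $i$ at once. Your proposal does not prove this. The spanning-tree and iterative approaches fail for the reasons you note yourself.

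The ``Hall-type'' argument is also not correct as stated. Lowering $a_j$ for $j\in J$ keeps ties inside $J$. It does not preserve a tie at a covered point $p_i$ whose minimizers lie outside $J$: once some $a_l+\varphi_l(p_i)$ with $l\in J$ drops below the common value, that tie is destroyed. If some $l\in J$ was already among the minimizers at $p_i$, this happens immediately. Recording all tied pairs instead does not help, because then the tie graph can be connected while fewer than $r$ points are covered (for example, one point with three or more tied minimizers), and there is no proper component left to move. The ``exchange argument'' you defer is exactly the missing step.

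\paragraph{The missing idea and a self-contained fix.} The paper supplies this step directly. The $r\times(r+1)$ real matrix $A_{ij}=\varphi_j(p_i)$ has tropical rank at most $r$, so its $r+1$ columns are tropically dependent. That is precisely the statement that such $a_j$ exist.

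You can prove this yourself with tropical Cramer's rule.
\begin{itemize}
\item Let $M_k$ be $A$ with column $k$ deleted, write $\mathrm{tdet}(M)=\min_\sigma\sum_i M_{i\sigma(i)}$, and set $a_k=\mathrm{tdet}(M_k)$.
\item Fix a row $i_0$, and let $\widetilde A$ be the $(r+1)\times(r+1)$ matrix obtained by placing a copy of row $i_0$ on top of $A$. Expanding along the new row gives $\mathrm{tdet}(\widetilde A)=\min_k\{A_{i_0k}+a_k\}$.
\item Take an optimal permutation $\sigma$ for $\widetilde A$. Say it sends the copied row to column $k$ and row $i_0$ to column $k'\neq k$.
\item Compose $\sigma$ with the transposition of these two identical rows. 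This gives another optimal permutation, which sends the copied row to $k'$.
\item Hence $\min_k\{A_{i_0k}+a_k\}$ is attained at both $k$ and $k'$.
\end{itemize}
Since $i_0$ was arbitrary, this holds at every $p_i$, and together with your local order computation it completes the proof.
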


\begin{proof}
Let $A$ be the $r \times (r+1)$ matrix with entries $A_{ij} = \varphi_j (p_i)$.  Since this matrix has tropical rank at most $r$, there exist $a_0 , \ldots , a_r \in \R$ such that $\min \{ a_j +\varphi_j \}$ occurs at least twice at each point $p_i$.  Let $\psi = \min \{ a_j +\varphi_j \}$.  Since the two functions that achieve the minimum at $p_i$ have distinct slopes, we see that $\ord_{p_i} (\psi) \geq 1$ for all $i$, and the result follows.
\end{proof}

Recall that an \emph{edge cover} of a finite graph $G$ is a collection $C$ of edges such that each vertex in $G$ is incident to at least one edge in $C$.

\begin{corollary}
\label{Cor:HowToComputeRank}
Let $D$ be a divisor on a metric graph $\Gamma$, and let $\varphi_0 , \ldots , \varphi_r \in R(D)$.  Let $G$ be a model for $\Gamma$ with no loops, and let $C$ be an edge cover for $G$ such that $\varphi_0 , \ldots , \varphi_r$ have distinct slopes along every tangent vector based at every point in the interior of an edge in $C$.  Then $D$ has rank at least $r$.
\end{corollary}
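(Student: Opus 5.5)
We will deduce the corollary from Lemma~\ref{Lem:DistinctSlopes}, using the definition of Baker--Norine rank. Fix an arbitrary effective divisor $E = q_1 + \cdots + q_r$ of degree $r$. It suffices to show that $D - E$ is equivalent to an effective divisor, i.e.\ that some $\psi \in \PL(\Gamma)$ satisfies $\ddiv(\psi) + D \geq E$.

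The first step is to move each $q_i$ into the interior of an edge of $C$. Since $C$ is an edge cover and $G$ has no loops, each point $q_i$ either lies in the interior of some edge of $G$ or is a vertex of $G$. If $q_i$ is a vertex, it is incident to an edge $e \in C$. If $q_i$ lies in the interior of an edge $e' \notin C$, then both endpoints of $e'$ are covered by edges of $C$. Either way, we would like to replace $q_i$ by a nearby point on an edge of $C$. This replacement is not a linear equivalence, so we argue by a limiting procedure instead.

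The key observation is that the set of $E \in \mathrm{Eff}^r(\Gamma)$ for which $D - E$ is equivalent to an effective divisor is closed. This is standard: $|D|$ is compact, and the condition says that $E$ lies in the image of a closed subset under the continuous subtraction map. It therefore suffices to treat a dense set of $E$, namely those $E$ whose points $q_i$ are distinct, lie in the interiors of edges of $C$, avoid $\mathrm{Supp}(D)$, and avoid the finitely many breakpoints of $\varphi_0, \ldots, \varphi_r$.

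This is where the cover hypothesis matters. Density in all of $\mathrm{Eff}^r(\Gamma)$ is false, since points on edges outside $C$ are not approximable by points on $C$-edges. Instead, for $q$ in the interior of $e' \notin C$, we use the following modification. Restricted to $e'$, the functions $\varphi_j$ have distinct slopes near each endpoint only along the $C$-edges, so this case needs more care. The fix is to handle points on non-$C$ edges directly. Take $\psi$ built for the modified divisor in which $q$ is replaced by an endpoint $v$ of $e'$ (approximated along a $C$-edge at $v$). Then, if needed, apply the standard chip-moving argument along $e'$, firing from $v$ toward $q$. I expect this reduction to be the main obstacle.

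Once the points are in general position on $C$-edges, every $q_i$ has valence 2, lies outside $\mathrm{Supp}(D)$, and each $\varphi_j$ is linear near $q_i$ with pairwise distinct slopes by hypothesis. Lemma~\ref{Lem:DistinctSlopes} then produces constants $a_j$ with $\ddiv(\min_j\{a_j + \varphi_j\}) + D \geq E$. Combined with the closedness argument, this gives $\rk(D) \geq r$.
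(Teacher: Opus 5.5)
Your closedness-and-density step is correct and matches the paper's first step. Together with Lemma~\ref{Lem:DistinctSlopes}, it shows that $D-E$ is equivalent to an effective divisor for every $E\in\mathrm{Eff}^r(\Gamma)$ supported on the union of the closed edges of $C$.

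\textbf{The gap.} The remaining case is points of $E$ in the interior of an edge $e'\notin C$, and your proposed fix for it does not work. Let $v$ be an endpoint of $e'$. Knowing that $D-E+q-v$ is equivalent to an effective divisor does not imply that $D-E$ is, because a chip at $v$ cannot in general be slid to $q$. Sliding it means firing a closed set that contains $v$ but not $q$. Unless $e'$ is a bridge, any such set has further outgoing boundary directions, and each of these needs its own chip, which you have no control over.

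\textbf{The missing ingredient.} Whether $D-E$ has an effective representative for such $E$ is a global question. The paper settles it with Luo's theorem \cite[Theorem~1.5]{Luo11}: the vertex set of a loopless model is a rank-determining set. Since $C$ is an edge cover, every vertex of $G$ is an endpoint of an edge of $C$. So your closedness argument already covers all $E$ supported on $V(G)$, and Luo's theorem then gives $\rk(D)\geq r$ directly. Non-$C$ edges never need to be treated by hand.

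\textbf{Where the no-loops hypothesis enters.} Your argument never really uses that $G$ has no loops; the sentence where you cite it holds for any model. Yet the corollary fails without it. Let $G$ have vertices $u\neq v$, an edge $e$ from $u$ to $v$, and a loop $e'$ at $v$. Take $C=\{e\}$, $D=v$, $\varphi_0=0$, and $\varphi_1$ linear on $e$ and constant on $e'$ with $\ddiv(\varphi_1)+v=u$. The slopes $0$ and $\pm 1$ are distinct along $e$, so the hypotheses hold with $r=1$. But $\Gamma$ has genus $1$, and $v\not\sim q$ for $q$ in the interior of $e'$: a function with divisor $\pm(v-q)$ must be constant on $e$, so this reduces to Example~\ref{Ex:Loop}. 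Hence $\rk(D)=0$. Your reduction would treat this $q$ exactly as in the loopless case, taking the chip at $v$ supplied by $\varphi_0$ and moving it along $e'$. So it cannot be valid as stated; the no-loops hypothesis is used precisely through the rank-determining-set theorem.
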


\begin{proof}
Let $p_1 , \ldots , p_r$ be distinct points, not contained in $\mathrm{Supp}(D)$, each in the interior of an edge of $C$.  By Lemma~\ref{Lem:DistinctSlopes}, there exists a function $\psi \in R(D)$ such that $\ddiv(\psi) + D \geq p_1 + \cdots + p_r$.  The set $\vert D \vert$ is compact, hence the set
\[
\{ E \in \mathrm{Eff}^r (\Gamma) \mid \ddiv (\psi) + D \geq E \text{ for some } \psi \in R(D) \}
\]
is closed.  Thus, for any effective divisor $E$ of degree $r$ supported on $C$, there exists a function $\psi \in R(D)$ with $\ddiv(\psi) +D \geq E$.  But since $C$ is an edge cover, this statement holds for all effective divisors $E$ of degree $r$ supported on the vertices of $G$.  By \cite[Theorem~1.5]{Luo11}, the vertices of $G$ are a rank-determining set, and the result follows.
\end{proof}

To show that a divisor class $[D]$ on $\WG$ has rank at least $r$, our goal is to construct a set of functions $\varphi_0 , \ldots , \varphi_r \in R(D)$ satisfying the hypotheses of Corollary~\ref{Cor:HowToComputeRank}.  These functions will also be used in Section~\ref{Sec:Lifting} to prove a lifting result for divisors in $V^r (\Gamma,\pi)$.  To construct these functions, we require some preliminary notation.  Let $Q = s_1 s_2 \cdots s_{g-1}$ be a reduced lingering word for $w_0$.  Let $Q_1 = s_1 \cdots s_{g-1} s_g \cdots s_{2g-2}$, and for any $j$ in the range $1 \leq j \leq 2g-1$, let $\sigma_j = \prod_{i=1}^{j-1} s_i$.  

For each $i \in \{ 0, \ldots , r \}$, define 
\begin{align*}
J^-(i) &= \{ j \mid \sigma_j (i) > \sigma_{j+1} (i) \} \\
J^+(i) &= \{ j \mid \sigma_j (i) < \sigma_{j+1} (i) \} \\
J(i) &= J^-(i) \cup J^+(i).
\end{align*}
Since $Q$ is reduced, $\# \{ j \in J^-(i) \mid 1 \leq j \leq g-1 \} = i$ and $\# \{ j \in J^+ (i) \mid 1 \leq j \leq g-1 \} = r-i$.  Note also that $j \in J^{\pm}(i)$ if and only if $g-1+j \in J^{\mp}(i)$, hence $\# J^- (i) = \# J^+ (i) = r$.

\begin{proposition}
\label{Prop:Fns}
Let $Q = s_1 s_2 \cdots s_{g-1}$ be a reduced lingering word for $w_0$, and let $D \in T(Q)$.  Then there exist functions $\varphi_0 , \ldots , \varphi_r \in R(D)$ such that:
\begin{enumerate}
\item  if $r$ is odd and $j \neq 1, g$, or if $r$ is even, then $\varphi_i \vert_{\beta_j}$ is linear with slope $s^r_j [\sigma_j (i)]$ in the direction from $w_{j-1}$ to $v_j$,
\item  if $r$ is odd, then $\varphi_i \vert_{\beta_1}$ has constant slope $s^r_1 [i]$ along the interval from $p_0$ to $v_1$ and constant slope $s'^r_{2g-2} [i]$ along the interval from $w_{2g-2}$ to $p_0$, and
\item if $r$ is odd, then $\varphi_i \vert_{\beta_g}$ has constant slope $s^r_g [\sigma_g (i)]$ along the interval from $\overline{p}_0$ to $v_g$ and constant slope $s'^r_{g-1} [\sigma_g (i)]$ along the interval from $w_{g-1}$ to $\overline{p}_0$.
\end{enumerate}
Moreover, setting $D_i = \ddiv(\varphi_i) +D$, we have:
\begin{enumerate}
\item  if $j \notin J(i)$, then $D_i \vert_{\gamma_j} = \langle \xi (D)_j - s^r_j [\sigma_j (i)] \rangle_j$,
\item  if $j \in J^+ (i)$, then $D_i \vert_{\gamma_j} = 0$, and
\item  if $j \in J^- (i)$, then $D_i \vert_{\gamma_j} = v_j + w_j$.
\end{enumerate}
\end{proposition}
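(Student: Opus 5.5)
The plan is to build each $\varphi_i$ by hand: prescribe its slopes on the bridges $\beta_j$ as in the statement, fill in each loop $\gamma_j$ with a function whose local divisor matches the claimed restriction $D_i\vert_{\gamma_j}$, and then check that these local pieces glue to a globally well-defined, continuous function on $\WG$. Fix $i$. I would first record the elementary consequence of $\sigma_{j+1} = \sigma_j s_j$. If $s_j = \epsilon$, or $s_j = \tau_k$ with $\{\sigma_j(i),\sigma_{j+1}(i)\}\neq\{k,k+1\}$, then $\sigma_{j+1}(i)=\sigma_j(i)$. Otherwise $\sigma_j(i)$ and $\sigma_{j+1}(i)$ are the two values $k, k+1$, so they differ by exactly one. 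Hence the slope along $\beta_{j+1}$ is either equal to, or one more or one less than, the slope along $\beta_j$. The exception is the two bridges $\beta_1,\beta_g$ when $r$ is odd, which is exactly why the modified slopes $s'^r_j$ and the half-unit jumps at $p_0,\overline{p}_0$ appear in the statement.

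Next I would construct $\varphi_i$ on each loop $\gamma_j$ separately, using Example~\ref{Ex:Loop}. Let $s=s^r_j[\sigma_j(i)]$ be the slope entering $v_j$ along $\beta_j$, and $s'$ the slope leaving $w_j$ into $\beta_{j+1}$.
\begin{itemize}
\item If $j\notin J(i)$, then $s=s'$. The restriction $D\vert_{\gamma_j}=\langle\xi_j(D)\rangle_j$ is a single point. The loop computation says that there is a function on $\gamma_j$ with these boundary slopes whose divisor plus $D$ is a single point, and that this point is $\langle \xi_j(D)-s\rangle_j$. This gives claim (1).
\item If $j\in J^+(i)$ or $J^-(i)$, the degree on $\gamma_j$ must drop to $0$ or rise to $2$ respectively. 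By the criterion $\ell(w,u)\equiv s\,\ell(v,w)$ of Example~\ref{Ex:Loop}, such a function exists precisely when $\xi_j(D)$ equals $s^r_j[k]$, up to the shift by $-1$ that converts $w_j$ into $v_j=\langle -1\rangle_j$. Here $s_j=\tau_k$.
\end{itemize}
The membership $D\in T(Q)$ is exactly this hypothesis on the first half. On the second half it follows from the Prym condition $\xi_{g-1+j}=-1-\xi_j$ together with Lemma~\ref{Lem:PrymWord}, which gives $\overline{s}_j=\tau_{r-1-k}$. The case distinctions in the definition of $s^r_j$ are arranged so that these two facts match.

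The remaining step is the global one, which I expect to be the main obstacle: the pieces must close up around the big cycle of $\WG$. This has two parts.
\begin{itemize}
\item \emph{The slopes must return to their starting values.} The full word $Q_1$ is $Q$ followed by $\overline{Q}$, and $\overline{Q}$ represents $w_0w_0w_0=w_0$. So $\sigma_{2g-1}=w_0\cdot w_0$ is the identity, and the slope on $\beta_1$ at the end agrees with the slope at the start, up to the $s'$ corrections at $p_0$ in the odd case.
\item \emph{The total increment of $\varphi_i$ around the big cycle must vanish.} I would obtain this from the antipodal symmetry. The lengths on $j$ and $g-1+j$ agree, and the slopes satisfy $s^r_{g-1+j}[r-m]=-s^r_j[m]$, with the appropriate parity shift when $r$ is odd, since $\sigma_{g-1+j}=w_0\,\overline{\sigma}_j$-type relations send $i$ to $r-\sigma_j(i)$. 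The coordinates satisfy $\xi_{g-1+j}=-1-\xi_j$. Together these should make the increment over the second half the negative of the increment over the first half, bridge by bridge and loop by loop.
\end{itemize}
I would verify the loop increments using the explicit slope picture of Figure~\ref{Fig:Loop}. This bookkeeping, especially the extra $\pm1$ slopes near $p_0$ and $\overline{p}_0$ when $r$ is odd (which account for $D\in\widehat{\mathbb{T}}^1$ containing $-p_0+\overline{p}_0$), is where the care goes. Once the function is well defined, effectivity of $D_i$ is immediate from the loopwise descriptions and the fact that $\varphi_i$ is linear on the bridges. At $p_0$ and $\overline{p}_0$, effectivity is exactly compensated by the slope changes built into $s'^r$.
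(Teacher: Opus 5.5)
Your proposal is correct and follows essentially the same route as the paper. Both build local pieces on the bridges and on the loops via Example~\ref{Ex:Loop}, with the three cases $j\notin J(i)$, $j\in J^+(i)$, $j\in J^-(i)$, and patch them together. Both then close up at the last vertex by cancelling the increment over $\beta_j,\gamma_j$ against the increment over $\beta_{g-1+j},\gamma_{g-1+j}$, using $\sigma_{g-1+j}(i)=r-\sigma_j(i)$ and $\xi_{g-1+j}=-1-\xi_j$.

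Your explicit check that $\sigma_{2g-1}=\mathrm{id}$, so that the slopes on $\beta_1$ match, is a step the paper leaves implicit. Your claim that $\sigma_j(i)$ and $\sigma_{j+1}(i)$ differ by at most one requires reading $\sigma_{j+1}$ as $s_j\circ\sigma_j$, acting on values. This is the convention that also yields $\sigma_{g-1+j}=w_0\circ\sigma_j$, so you should state it rather than writing ``$w_0\overline{\sigma}_j$-type relations''.
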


\begin{proof}
To construct the functions $\varphi_i$, we first construct a function $\varphi_{ij}$ on each edge $\beta_j$, and a function $\varphi'_{ij}$ on each loop $\gamma_j$.  Each of these functions will be defined up to translation by a constant.  We will then show that these functions can be ``patched'' together to obtain a function $\varphi_i \in R(D)$.

The functions $\varphi_{ij}$ are described in the statement of the proposition.  Specifically, if $r$ is odd and $j \neq 1, g$, or if $r$ is even, then define $\varphi_{ij}$ to be a linear function on $\beta_j$ with slope $s^r_j [\sigma_j(i)]$ in the direction from $w_{j-1}$ to $v_j$.  If $r$ is odd, then define $\varphi_{i1}$ to be a function with constant slope $s^r_1 [i]$ along the interval from $p_0$ to $v_1$, and with constant slope $s'^r_{2g-2} [i]$ along the interval from $w_{2g-2}$ to $p_0$.  Similarly, if $r$ is odd, then define $\varphi_{ig}$ to be a function with constant slope $s^r_g [\sigma_g (i)]$ along the interval from $\overline{p}_0$ to $v_g$, and with constant slope $s'^r_{g-1} [\sigma_g (i)]$ along the interval from $w_{g-1}$ to $\overline{p}_0$.

We use Example~\ref{Ex:Loop} to define the functions $\varphi'_{ij}$.  If $s_j \notin \{ \tau_{\sigma_j (i)-1}, \tau_{\sigma_j (i)} \}$, then by Example~\ref{Ex:Loop}, we see that up to translation by a constant, there is a unique function 
\[
\varphi'_{ij} \in R \Big( D_{\vert \gamma_j} + s^r_j [\sigma_j(i)] v_j -  s'^r_j [\sigma_{j+1}(i)]w_j \Big) .
\]
Similarly, if $s_j = \tau_{\sigma_j (i)}$, then by Example~\ref{Ex:Loop}, we have
\[
D_{\vert \gamma_j} + s^r_j [\sigma_j(i)] v_j \sim s'^r_j [\sigma_{j+1}(i)]w_j .
\]
Again, up to translation by a constant, there exists a unique function $\varphi'_{ij}$ such that
\[
\ddiv (\varphi'_{ij}) = s'^r_j [\sigma_{j+1}(i)]w_j - s^r_j [\sigma_j(i)] v_j - D_{\vert \gamma_j} .
\]
If $s_j =\tau_{\sigma_j (i)-1}$, then by Example~\ref{Ex:Loop}, we have
\[
D_{\vert \gamma_j} + s^r_j [\sigma_j(i)] v_j - s'^r_j [\sigma_{j+1}(i)]w_j \sim v_j + w_j.
\]
Up to translation by a constant, there exists a unique function $\varphi'_{ij}$ such that
\[
\ddiv (\varphi'_{ij}) = (s'^r_j [\sigma_{j+1}(i)] +1)w_j - (s^r_j [\sigma_j(i)] +1) v_j - D_{\vert \gamma_j} .
\]

Now, choose coefficients $a_{ij}, a'_{ij}$ such that $a_{ij} + \varphi_{ij} (v_j) = a'_{ij} + \varphi'_{ij} (v_j)$ for all $j$, and $a_{ij} + \varphi_{ij} (w_{j-1}) = a'_{i,j-1} + \varphi'_{i,j-1} (w_{j-1})$ for all $j \neq 1$.  We define $\varphi_i$ to be a function whose restriction to $\beta_j \smallsetminus \{ w_{j-1} \}$ is $a_{ij} + \varphi_{ij}$ and whose restriction to $\gamma_j \smallsetminus \{ v_j \}$ is $a'_{ij} + \varphi'_{ij}$ for all $j$.  Our choice of coefficients guarantees that the function $\varphi_i$ is continuous at all points of $\WG$ except possibly $w_{2g-2}$.  To see that $\varphi_i$ is also continuous at $w_{2g-2}$, let $L$ denote the limit of $\varphi_i$ as you approach $w_{2g-2}$ along the tangent direction in $\beta_1$.  We must show that $\varphi_i (w_{2g-2}) = L$.

Since $Q$ is a reduced lingering word for $w_0$, by Lemma~\ref{Lem:Containsw0}, the ordered product of the terms in any half word $Q_1^j$ is equal to $w_0$.  It follows that $s^r_{g-1+j} [\sigma_{g-1+j} (i)] = - s^r_j [\sigma_j(i)]$ for all $j$.  In other words, the slope of $\varphi_{ij}$ along any tangent vector in $\beta_j$ is the negative of the slope of $\varphi_{i,g-1+j}$ along the antipodal tangent vector.  Thus,
\[
\varphi_{ij} (v_j) - \varphi_{ij} (w_{j-1}) = \varphi_{i,g-1+j} (v_{g-1+j}) - \varphi_{i,g-1+j} (w_{g-2+j}) .
\]

Similarly, because $\xi_{g-1+j} (D) = -1 - \xi_j (D)$ and $s_{g-1+j} = \overline{s}_j$, we see that the slope of $\varphi'_{ij}$ at every point $\langle \xi \rangle_j \in \gamma_j$ in the clockwise direction is equal to the slope of $\varphi'_{i,g-1+j}$ at $\langle 1 - \xi \rangle_{g-1+j} \in \gamma_{g-1+j}$ in the counterclockwise direction.  Thus,
\[
\varphi'_{ij} (w_j) - \varphi'_{ij} (v_j) = \varphi'_{i,g-1+j} (w_{g-1+j}) - \varphi'_{i,g-1+j} (v_{g-1+j}) .
\]
Putting this together, we see that
\[
\varphi_i (w_{2g-2}) - L = \sum_{j=1}^{2g-2} \Big( (\varphi_{ij} (v_j) - \varphi_{ij} (w_{j-1})) + (\varphi'_{ij} (w_j) - \varphi'_{ij} (v_j)) \Big) = 0,
\]
hence $\varphi_i$ is continuous at $w_{2g-2}$.
 
We now show that $\varphi_i \in R(D)$.  By construction, we have $\ord_p (\varphi_{ij}) \geq -D(p)$ for all points $p$ in the interior of $\beta_j$ and $\ord_p (\varphi'_{ij}) \geq -D(p)$ for all points $p$ in the interior of $\gamma_j$.  Also by construction,
\begin{align*}
\ord_{v_j} (\varphi_i) &= \ord_{v_j} (\varphi_{ij}) + \ord_{v_j} (\varphi'_{ij}) = s^r_j [\sigma_j(i)]+ \ord_{v_j} (\varphi'_{ij}) \geq -D(v_j) \\
\ord_{w_j} (\varphi_i) &= \ord_{w_j} (\varphi_{i,j+1}) + \ord_{w_j} (\varphi'_{ij}) = s'^r_j [\sigma_{j+1} (i)] + \ord_{w_j} (\varphi'_{ij}) \geq -D(w_j).
\end{align*}
We therefore have $\ord_p (\varphi_i) \geq D(p)$ for all $p \in \WG$.
\end{proof}

\begin{proof}[Proof of Theorem~\ref{Thm:GenericBackwardContainment}]
Let $Q = s_1 s_2 \cdots s_{g-1}$ be a lingering word that contains a reduced subword for $w_0$, and let $[D] \in T(Q)$.  By Lemma~\ref{Lem:Contains}, we may assume that $Q$ is a reduced lingering word for $w_0$.  It suffices to check that the functions $\varphi_0 , \ldots , \varphi_r$ defined in Proposition~\ref{Prop:Fns} satisfy the hypotheses of Corollary~\ref{Cor:HowToComputeRank}.  This is because the edges $\beta_1 , \ldots , \beta_{2g-2}$ form an edge cover of a model for $\Gamma$ with no loops.  Because $\sigma_j$ is a permutation for all $j$, the values $\sigma_j (0) , \ldots , \sigma_j (r)$ are distinct, hence so are the slopes $s^r_j [\sigma_j(i)]$.  Similarly, the slopes $s'^r_j [\sigma_j(i)]$ are distinct.  Thus, the functions $\varphi_0 , \ldots , \varphi_r$ have distinct slopes along every tangent vector in $\beta_j$ for all $j$, and the result follows.
\end{proof}

\subsection{There are No Other Prym Divisors of High Rank}
\label{Sec:SecondContainment}

In this section, we complete the proof of Theorem~\ref{Thm:Generic}.  This will require some technical arguments about Prym words in the Coxeter group $A_r$.  Recall that we write $w_1$ for the maximal element, with respect to Bruhat order, in the subgroup $S_r \subset S_{r+1}$ of permutations that fix the element $r$.  We make heavy use of the following definition.

\begin{definition}
\label{Def:FullRank}
Let $r \geq 2$.  A Prym word $Q$ in the Coxeter group $A_r$ has \emph{full rank} if every half word $Q^j$ contains a reduced subword for $w_1$.
\end{definition}

We will show in Proposition~\ref{Prop:FullRank} below that, if $Q$ has full rank, then every half word $Q^j$ contains a reduced subword for $w_0$.  We first need some preliminary lemmas.

\begin{lemma}
\label{Lem:TableauCompare}
Let $Q$ be a Prym word of full rank and let $\sigma \in S_{r+1}$ be a permutation such that $\sigma^{-1} (0) < \sigma^{-1} (r)$.  Then very half word $Q^j$ contains a reduced subword for $\sigma$.
\end{lemma}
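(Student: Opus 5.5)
The plan is to reduce the statement to a comparison in Bruhat order, using the Demazure product and the tableau criterion \cite[Theorem~2.1.5]{BjornerBrenti}. Fix $j$. By the full-rank hypothesis, $Q^j$ contains a reduced subword for $w_1$. The half word $Q^{g-1+j}$ also contains a reduced subword for $w_1$. By the Prym condition $s_{g-1+j}=\overline{s}_j$, the word $Q^{g-1+j}$ is obtained from $Q^j$ by applying the involution $w\mapsto \overline{w}=w_0ww_0$ letter by letter. Conjugation by $w_0$ is a Coxeter automorphism, so it preserves reducedness and subwords. Hence $Q^j$ also contains a reduced subword for $\overline{w}_1$.

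It follows that the Demazure product $\delta(Q^j)$ satisfies $\delta(Q^j)\geq w_1$ and $\delta(Q^j)\geq \overline{w}_1$. Since $Q^j$ contains a reduced word for $\delta(Q^j)$, and (by the subword property) every $\sigma\leq\delta(Q^j)$ then has a reduced word as a subword of it, it suffices to show $\sigma\leq \delta(Q^j)$. By the tableau criterion, it is enough to prove that for all $x,y$,
\[
\sigma[x,y]\leq \max\{w_1[x,y],\overline{w}_1[x,y]\}.
\]

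Next I would compute these functions directly. For $w_0$ one has $w_0[x,y]=\max\{0,\min(x,r-y)+1\}$. For $w_1$ (with $x<r$) the count is $\max\{0,\min(x,r-1-y)+1\}$, which agrees with $w_0[x,y]$ unless $x+y\geq r$. For $\overline{w}_1$ (with $y\geq1$) the count is $\max\{0,\min(x,r+1-y)\}$, which agrees with $w_0[x,y]$ unless $x+y\leq r$. Thus the maximum of the two equals $w_0[x,y]$ except on the antidiagonal $x+y=r$ with $1\leq y$ and $x<r$, where it equals $x=w_0[x,y]-1$.

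Since $\sigma\leq w_0$ always, it remains to check that $\sigma[x,r-x]\leq x$ for $0\leq x<r$. Equality $\sigma[x,r-x]=x+1$ would mean that $\sigma$ maps $\{0,\ldots,x\}$ onto $\{r-x,\ldots,r\}$. Then $r$ is in the image of $\{0,\ldots,x\}$ and $0$ is not, i.e.\ $\sigma^{-1}(r)\leq x<\sigma^{-1}(0)$, contradicting the hypothesis.

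The main obstacle I expect is bookkeeping of conventions: whether the word acts so that the relevant condition is on $\sigma$ or on $\sigma^{-1}$, and the exact form of $\sigma[x,y]$. If the conventions turn out reversed, I would apply the same argument to $\sigma^{-1}$. This is legitimate because $w_1$ and $\overline{w}_1$ are involutions, Bruhat order is invariant under inversion, and reversing a word gives a reduced word for the inverse.
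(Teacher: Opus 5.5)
Your proposal is correct and matches the paper's proof step for step. You use the Prym symmetry to place $\overline{w}_1$ alongside $w_1$ in every half word, bound $\delta(Q^j)$ below by both, and apply the tableau criterion to reduce to $\sigma[x,r-x]\leq x$ for $x<r$, which the hypothesis $\sigma^{-1}(0)<\sigma^{-1}(r)$ rules out violating; your worry about conventions is unnecessary since $\sigma\leq\delta(Q^j)$ is a statement about group elements, and your restriction to $x<r$, $y\geq 1$ on the antidiagonal is slightly more careful than the paper's displayed formula for the maximum, which is off at the corner $(x,y)=(r,0)$.
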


\begin{proof}
Let $Q$ be a Prym word of full rank.  By definition, every half word $Q^j$ contains a reduced subword for $w_1$.  By definition of a Prym word, if $Q^j$ contains a reduced subword for $w_1$, then $Q^{g-1+j}$ contains a reduced subword for $\overline{w}_1$.  Since $j$ is arbitrary, every half word $Q^j$ contains a reduced subword for $\overline{w}_1$.  Since the Demazure product $\delta (Q^j)$ is the unique maximal element of $S_{r+1}$ for which $Q^j$ contains a reduced subword, we see that $\delta (Q^j) [x,y] \geq \max \{ w_1 [x,y], \overline{w}_1 [x,y] \}$ for all $x,y$.  It follows that, if $\sigma [x,y] \leq \max \{ w_1 [x,y], \overline{w}_1 [x,y] \}$ for all $x$ and $y$, then $Q^j$ contains a reduced subword for $\sigma$.  But
\[
\max \{ w_1 [x,y] , \overline{w}_1 [x,y] \} = \left\{ \begin{array}{ll}
w_1 [x,y] = x+1 &\text{if } x+y < r \\
\overline{w}_1 [x,y] = r-y+1 &\text{if } x+y > r \\
w_1 [x,y] = \overline{w}_1 [x,y] = x &\text{if } x+y = r.
\end{array} \right.
\]
Note that every permutation $\sigma$ trivially satisfies $\sigma [x,y] \leq \min \{ x+1, r-y+1 \}$.
Thus, the inequality $\sigma [x,y] > \max \{ w_1 [x,y], \overline{w}_1 [x,y] \}$ can only hold if $x+y=r$, in which case it reduces to $\sigma [x,r-x] > x$.

Now, if $\sigma^{-1} (0) < \sigma^{-1} (r)$, then for any $x < r$, the set $\{ i \mid \sigma (i) \geq r-x \}$ either contains $r$ or does not contain 0.  Hence
\[
\sigma [x,r-x] = \# \{ i \leq x \mid \sigma (i) \geq r-x \} \leq x .
\]
\end{proof}

\begin{lemma}
\label{Lem:FullRankNu}
If $Q$ is a Prym word of full rank, then very half word contains a reduced subword for $\nu_i$, where
\[
\nu_i (j) := \left\{ \begin{array}{ll}
j+i &\text{if } j \leq r-i \\
j+i-r-1 &\text{if } j \geq r-i+1.
\end{array} \right.
\]
\end{lemma}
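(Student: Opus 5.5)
For $i=0$ the permutation $\nu_0$ is the identity, so there is nothing to prove; assume $1 \leq i \leq r$. The plan is to deduce the statement from Lemma~\ref{Lem:TableauCompare}, combined with the ``rotation'' trick from the proof of Lemma~\ref{Lem:Containsw0}. Lemma~\ref{Lem:TableauCompare} cannot be applied to $\nu_i$ directly, since $\nu_i^{-1}(0) = r-i+1 > r-i = \nu_i^{-1}(r)$. However, $\nu_i$ is a cyclic rotation, and it factors as a product of two permutations, each of which does satisfy the hypothesis of that lemma.

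\textbf{Step 1 (factorization).} Write $\nu_i = x \cdot \overline{y}$ with $\ell(\nu_i) = \ell(x) + \ell(y)$, so that the permutation $\sigma := y x$ satisfies $\ell(\sigma) = \ell(y) + \ell(x)$ and $\sigma^{-1}(0) < \sigma^{-1}(r)$. The natural choice is the following. Take $\overline{y}$ to be the shortest permutation that carries the block moved by $\nu_i$ across the position where $0$ and $r$ become inverted. Then conjugating it by $w_0$ (so $\overline{y} \mapsto y$) and moving it to the front undoes the inversion of the pair $\{0,r\}$. I would verify the length additivity and the condition $\sigma^{-1}(0) < \sigma^{-1}(r)$ with the criterion $\sigma[x,y]$ from \cite[Theorem~2.1.5]{BjornerBrenti}, or simply by counting inversions, since $\ell(\nu_i) = i(r+1-i)$.

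\textbf{Step 2 (rotation).} Fix a half word $Q^j$ and set $m = \ell(y)$. By Lemma~\ref{Lem:TableauCompare}, every half word $Q^{j'}$ contains a reduced subword for $\sigma$. The aim is to choose $j'$ so that the letters of this subword realizing $y$ lie in the initial segment $s_{j'} \cdots s_{j-1}$ of $Q^{j'}$, and the letters realizing $x$ lie in the overlap $Q^{j'} \cap Q^j$. Since $s_{g-1+k} = \overline{s}_k$, the positions $j'+g-1, \ldots, j+g-2$, which lie at the end of $Q^j$, then spell $\overline{y}$. Hence $Q^j$ contains the subword $x\overline{y} = \nu_i$. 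This subword is reduced because its length equals $\ell(x) + \ell(y) = \ell(\nu_i)$. This is the same computation as in Lemma~\ref{Lem:Containsw0}, done for a whole block of letters rather than one.

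\textbf{Step 3 (the hard part).} The main obstacle is controlling where the $y$-letters of the subword sit: Lemma~\ref{Lem:TableauCompare} gives no information about positions. To handle this, I would work letter by letter rather than choosing $j'$ in one go. Starting from a reduced subword for $\sigma$ in some $Q^{j'}$, advance the half word one step at a time, as in Lemma~\ref{Lem:Containsw0}. Whenever the first letter $s_{j'}$ is used, replace it by $\overline{s}_{j'}$ at the end; this conjugates the represented element by $s_{j'}$ on the left and $\overline{s}_{j'}$ on the right. The claim to establish is that, choosing the reduced subword for $\sigma$ so that its $y$-part is a prefix, the successive elements obtained remain reduced (the length is tracked by inversions) and end at $\nu_i$ after $\ell(y)$ such moves. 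If direct tracking proves too messy, the fallback is an induction on $i$ using $\nu_i = \nu_1^{\,i}$: prove the case $i=1$ by the argument above, then apply the rotation step once more to pass from $\nu_{i-1}$ to $\nu_i$.
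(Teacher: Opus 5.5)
\textbf{There is a genuine gap}, and it is the one you flag yourself in Step~3: you never get control over \emph{where} the subword for $\sigma$ sits.

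Lemma~\ref{Lem:TableauCompare} gives a reduced subword for $\sigma$ in every half word, but says nothing about its position. Step~2 needs, for the \emph{prescribed} $j$, the $y$-letters to lie in $s_{j'}\cdots s_{j-1}$ and the $x$-letters to lie in the overlap. The letter-by-letter advance does not supply this. Start from a subword for $yx$ in $Q^{j'}$ with $y$ as a prefix (and the reduced word you actually find need not begin with a word for $y$ at all). After $\ell(y)$ moves you obtain $x\overline{y}=\nu_i$ in $Q^{p+1}$, where $p$ is the position of the last $y$-letter of a subword you do not control. So you only learn that \emph{some} half word contains $\nu_i$. In Lemma~\ref{Lem:Containsw0} this issue never arises, because $s w_0\overline{s}=w_0$.

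The fallback does not work either. A rotation move keeps the number of letters fixed, so it never increases the length of the represented element. But $\ell(\nu_i)-\ell(\nu_{i-1})=r+2-2i>0$ for $i\le r/2$, so rotation cannot take you from $\nu_{i-1}$ to $\nu_i$.

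Step~1 is also not available in general. If $\sigma^{-1}(0)<\sigma^{-1}(r)$, then $\ell(\sigma)\le\binom{r}{2}$: count the inversions involving $0$, those involving $r$, and those among the remaining $r-1$ symbols. You need $\ell(\sigma)=\ell(\nu_i)=i(r+1-i)$, which is impossible for $r=2$ and for $r=3$, $i=2$.

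\textbf{The missing idea.} Give up length-additivity and apply Lemma~\ref{Lem:TableauCompare} to $\sigma=\nu_i\tau$, where $\tau$ swaps the adjacent entries $r,0$ in the one-line notation of $\nu_i$. Then $\ell(\sigma)=\ell(\nu_i)-1$ and $0$ precedes $r$. Crucially, $\sigma$ inverts only one pair of consecutive values, so it has a \emph{unique} left descent, namely $\overline{\tau}$. Hence every reduced word for $\sigma$ begins with $\overline{\tau}$, and this is what pins down positions:
\begin{itemize}
\item Let $s_{j'}$ be the last occurrence of $\overline{\tau}$ in $Q^j$. Any reduced subword for $\sigma$ in $Q^{j'+1}$ must begin, and hence lie entirely, in positions $g-1+j,\ldots,g-1+j'$.
\item By the Prym symmetry, $s_j\cdots s_{j'}$ contains a reduced subword for $\overline{\sigma}$.
\item This subword cannot end at $j'$, since $\tau$ is not a right descent of $\sigma$.
\item Appending $s_{j'}=\overline{\tau}$ therefore gives a reduced subword of $Q^j$ for $\overline{\sigma}\,\overline{\tau}=\overline{\nu}_i$.
\end{itemize}
Since $j$ is arbitrary, one more application of the Prym symmetry puts $\nu_i$ in every half word. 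The single letter $s_{j'}$ does double duty: it locates the subword, and it supplies the letter that $\sigma$ is missing.
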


\begin{proof}
Let $Q$ be a Prym word of full rank.  In one-line notation, $\nu_i$ is the permutation
\[
r-i+1, r-i+2, \ldots , r-1, r, 0, 1, \ldots , r-i
\]
and $\nu_i \tau_{i-1}$ is the permutation
\[
r-i+1, r-i+2, \ldots , r-1, 0, r, 1, \ldots , r-i.
\]
Since 0 preceeds $r$ in this expression, by Lemma~\ref{Lem:TableauCompare}, $Q^j$ contains a reduced subword for the permutation $\nu_i \tau_{i-1}$.  The only consecutive symbols that are inverted by the permutation $\nu_i \tau_{i-1}$ are $r-i$ and $r-i+1$, so the first term of any reduced subword for $\nu_i \tau_{i-1}$ must be $\tau_{r-i}$.  Let $s_{j'}$ be the last term in $Q^j$ that is equal to $\tau_{r-i}$.  Then the half word $Q^{j'+1}$ contains a reduced subword for $\nu_i \tau_{i-1}$.  By construction, this reduced subword must be contained in $s_{g-1+j} s_{g+j} \cdots s_{g-1+j'}$.  Since $Q$ is a Prym word, it follows that the word $s_j s_{j+1} \cdots s_{j'}$ contains a reduced subword for $\overline{\nu}_i \overline{\tau}_{i-1} = \nu_{r-i-1} \tau_{r-i}$.  If the last term of this reduced subword is $s_{j'}$, then by removing it, we obtain a subword of $Q^j$ for $\nu_{r-i-1}$.  Similarly, if the last term of this reduced subword is not $s_{j'}$, then by appending it we obtain a subword of $Q^j$ for $\nu_{r-i-1}$.  Finally, since $Q$ is a Prym word, it follows that the half word $Q^{g-1+j}$ contains a reduced subword for $\overline{\nu}_{r-i-1} = \nu_i$.  Since $j$ is arbitrary, we see that every half word $Q^j$ contains a reduced subword for $\nu_i$.
\end{proof}

\begin{proposition}
\label{Prop:FullRank}
If $Q$ is a Prym word of full rank, then every half word $Q^j$ contains a reduced subword for $w_0$.
\end{proposition}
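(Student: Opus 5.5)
The plan is to mimic the proof of Lemma~\ref{Lem:FullRankNu}: produce a reduced word for a permutation that is ``almost'' $w_0$ inside one half word, and then use the Prym symmetry $s_{g-1+j} = \overline{s}_j$ to finish it off. The natural intermediate target is $w_0 \tau_{r-1}$, or more generally $w_0\tau_i$. In one-line notation, $w_0 = r, r-1, \ldots, 0$. Every permutation with $0$ preceding $r$ is already handled by Lemma~\ref{Lem:TableauCompare}. So the only obstruction to $w_0 \leq \delta(Q^j)$ is the single rank condition $\delta(Q^j)[x,r-x] \geq x+1$ for some (indeed every) $x$; this is exactly the pair $\{0,r\}$ being inverted.

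First, by the computation in the proof of Lemma~\ref{Lem:TableauCompare}, $\delta(Q^j)$ dominates every $\sigma$ except on the antidiagonal $x+y=r$. So it suffices to show that $Q^j$ contains a reduced subword for some permutation that inverts $0$ and $r$ and dominates the relevant rank functions. Concretely, I would aim to show that $Q^j$ contains a reduced word for $w_0$ directly, by concatenation. Lemma~\ref{Lem:FullRankNu} with $i=r$ gives $\nu_r = \nu_{r}$ (one-line $1,2,\ldots,r,0$), and with $i=1$ gives $r,0,1,\ldots,r-1$. The factorization I would use is $w_0 = \nu_1 \cdot w_1'$, where $w_1'$ reverses the block $\{1,\ldots,r\}$, i.e.\ $w_1' = \overline{w}_1$ up to the indexing convention. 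The lengths are $\ell(\nu_1) = r$ and $\ell(\overline{w}_1) = \binom{r}{2}$, which sum to $\binom{r+1}{2}$, so the product is reduced.

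The argument then proceeds in two steps. Let $s_{j'}$ be the position where a reduced subword for $\nu_1$ (or for the $\tau$-chain $\tau_{r-1}\cdots\tau_0$-type word representing it) ends, chosen as early as possible within $Q^j$; in other words, take the shortest prefix of $Q^j$ containing a reduced word for $\nu_1$. Then $Q^{j'+1}$ contains a reduced subword for $\overline{w}_1$ by full rank and Lemma~\ref{Lem:TableauCompare}. The remaining task is to show that this subword can be taken inside $s_{j'+1}\cdots s_{g-2+j}$, rather than spilling into the antipodal tail $s_{g-1+j}\cdots s_{g-1+j'}$. To control the tail, I would use the Prym symmetry as in Lemma~\ref{Lem:FullRankNu}: any letters used from the tail correspond, after applying the bar involution, to letters in $s_j\cdots s_{j'}$. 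Minimality of $j'$ should then force a contradiction, or allow the $\nu_1$-prefix to be rechosen. Alternatively, one can iterate over the $\nu_i$ for all $i$ and use the Demazure product's monotonicity, $\delta(Q^j)\geq \delta(\text{prefix})\cdot\delta(\text{suffix})$ in the Demazure sense.

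The main obstacle is this second step, namely preventing the witnessing subwords from overlapping with the antipodal tail. If the direct approach stalls, the fallback is a Demazure-product argument. Write $\delta(Q^{j+1})$ in terms of $\delta(Q^j)$: it equals $s_j \ast$-removal followed by the Demazure multiplication by $\overline{s}_j$. Suppose some $\delta(Q^j)\neq w_0$. Since $\delta(Q^j)$ dominates everything except the $\{0,r\}$ inversion, its rank function is then pinned down. Cycling $j$ around all $2g-2$ positions and comparing with the bar-symmetry $\delta(Q^{g-1+j}) = \overline{\delta(Q^j)}$ should give the contradiction, using $\nu_1$ and $\nu_{r}$ from Lemma~\ref{Lem:FullRankNu} to force the inversion of $0$ and $r$ at some step.
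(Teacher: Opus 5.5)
There is a genuine gap, and it begins with your reduction. The antidiagonal inequalities $\delta(Q^j)[x,r-x]\ge x+1$ for $x=0,\ldots,r-1$ are \emph{not} equivalent to one another, and they do not amount to $0$ and $r$ being inverted. For $r=2$, take the permutation $\sigma$ with $\sigma(0)=1$, $\sigma(1)=2$, $\sigma(2)=0$; this is $\nu_1=\tau_0\tau_1$. It lies above $w_1=\tau_0$ and $\overline{w}_1=\tau_1$, hence above every permutation in which $0$ precedes $2$, and it reverses $0$ and $2$. Yet $\sigma[1,1]=2$ while $\sigma[0,2]=0<1$, so $\sigma\neq w_0$. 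If your reduction were correct, the single inequality $\delta(Q^j)[r-1,1]\ge\nu_1[r-1,1]=r$, which Lemma~\ref{Lem:FullRankNu} gives for $i=1$, would already finish the proof.

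This also removes the premise of your fallback. When $\delta(Q^j)\neq w_0$ its rank function is not pinned down. In any case $\delta(Q^{j+1})$ is not a function of $\delta(Q^j)$ and $s_j$: compare the words $\tau_0\tau_0$ and $\tau_0$.

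The direct route has a more basic problem than the unproved tail-avoidance step. A fixed length-additive factorization $w_0=\nu_1\cdot w'$ forces, for $r=2$, the witnessing subword to be one specific reduced word: $\tau_0\tau_1\tau_0$ or $\tau_1\tau_0\tau_1$, depending on conventions. But for $g-1=3$ the Prym word $\tau_1\tau_0\tau_1\tau_0\tau_1\tau_0$ has full rank, and its half words alternate between $\tau_1\tau_0\tau_1$ and $\tau_0\tau_1\tau_0$. So for some $j$ the half word $Q^j$ contains no subword of the required shape at all. No minimality argument or re-choice of the prefix can produce one, and the proposal gives no way to select a favorable $j$.

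The missing idea is that no concatenation is needed. You mention iterating over all $\nu_i$ in passing; that is the right ingredient, but it should be fed into the rank-function criterion. Apply Lemma~\ref{Lem:FullRankNu} for every $i\in\{1,\ldots,r\}$. Since $\nu_i\le\delta(Q^j)$, you get $\delta(Q^j)[r-i,i]\ge\nu_i[r-i,i]=r-i+1$. This says that $\delta(Q^j)$ maps $\{0,\ldots,x\}$ onto $\{r-x,\ldots,r\}$ for $x=r-i$. As $x$ runs through $0,\ldots,r-1$, this forces $\delta(Q^j)(x)=r-x$ for every $x$, i.e.\ $\delta(Q^j)=w_0$. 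Each $\nu_i$ controls exactly one antidiagonal cell. The delicate bookkeeping with the antipodal tail that you were attempting for $w_0$ is carried out, once for each $i$, inside the proof of Lemma~\ref{Lem:FullRankNu}.
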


\begin{proof}
By Lemma~\ref{Lem:FullRankNu}, the half word $Q^j$ contains a reduced subword for $\nu_i$.  Since the Demazure product $\delta (Q^j)$ is the unique maximal element of $S_{r+1}$ for which $Q^j$ contains a reduced subword, we see that $\delta (Q^j) [r-i,i] \geq \nu_i [r-i,i] = r+1-i$ for all $i$.  But this implies that $\delta (Q^j) = w_0$.
\end{proof}

We now complete the proof of Theorem~\ref{Thm:Generic}.

\begin{proof}[Proof of Theorem~\ref{Thm:Generic}]
Let $[D] \in V^r (\Gamma,\pi)$.  Since $[D] \in T(Q^1_D)$, it suffices to show that every half word $Q^j_D$ contains a reduced subword for $w_0$.  We prove this by induction on $r$.  For the base case $r=1$, since $\rk (D) \geq 1$, we have $\rk (D) \geq 0$.  By Lemma~\ref{Lem:BaseCase}, we have $\xi_j (D) = -1$ for some $j \in \{ 1, 2, \ldots , g-1 \}$.  In other words, the half word $Q^1_D$ contains the subword $s_0$, which is a reduced subword for $w_0$.  By Lemma~\ref{Lem:Containsw0}, every half word $Q^j_D$ contains a reduced subword for $w_0$.

For the inductive step, let $p \in \beta_{\ell}$.  Since $\rk (D) \geq r$, we have $\rk (D-p) \geq r-1$.  Setting $D' = D-p+\overline{p}$, we have $\rk (D') \geq r-1$ as well.  Let $Q^{\ell}_D = s_1 s_2 \cdots s_{g-1}$ and $Q^{\ell}_{D'} = s'_1 s'_2 \cdots s'_{g-1}$.  Our goal is to compute $s'_j$ in terms of $s_j$.  To do this, we must compute the divisor equivalent to $D'$ in $\widehat{\mathbb{T}}^{r-1}$.  By Lemma~\ref{Lem:Bijection}, there exists a function $\varphi \in R(D')$, unique up to translation,  such that $D' + \ddiv (\varphi) \in \widehat{\mathbb{T}}^{r-1}$.  There is a unique path in $\WG$ from $\overline{p}$ to $p_0$ that does not pass through $\{ \overline{p}_0 \} \cup \mathrm{Supp} (D')$, and similarly a unique path from $\overline{p}_0$ to $p$ that does not pass through $\{ p_0 \} \cup \mathrm{Supp} (D')$.  The function $\varphi$ has constant slope 1 along the edges $\beta_j$ in both of these paths.  From this it follows that, if $\ell \in \{ 1, \ldots , g-1 \}$, then
\[
\xi_j (D') = \left\{ \begin{array}{ll}
\xi_j (D) + 1 &\text{if } j \in \{ \ell , \ldots , g-2 \} \\
\xi_j (D) &\text{if } j \in \{ g-1, \ldots , g+ \ell \} ,
\end{array} \right.
\]
and if $\ell \in \{ g, \ldots , 2g-2 \}$, then
\[
\xi_j (D') = \left\{ \begin{array}{ll}
\xi_j (D) &\text{if } j \in \{ \ell , \ldots , 2g-2 \} \\
\xi_j (D) + 1 &\text{if } j \in \{ 1, \ldots , g+ \ell \} .
\end{array} \right.
\]
By the formula for $s^r_j [i]$, we then have
\[
s'_j = \left\{ \begin{array}{ll}
s_j &\text{if } s_j \neq \tau_{r-1} \\
\epsilon &\text{if } s_j = \tau_{r-1}.
\end{array} \right.
\]

By induction, the half word $Q^{\ell}_{D'}$ contains a reduced subword for $w_0$, hence the half word $Q^{\ell}_D$ contains a reduced word for $w_1$.  Since $\ell$ is arbitrary, $Q_D$ has full rank, and the result follows from Proposition~\ref{Prop:FullRank}.
\end{proof}

Before moving on, we note the following similarity with classical algebraic geometry.

\begin{corollary}
\label{Cor:RankParity}
If $[D] \in P^r$, then $\rk (D) \equiv r \Mod{2}$.
\end{corollary}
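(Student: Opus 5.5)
The plan is to show that whenever $[D] \in P^{s+1}$ and $\rk(D) \geq s$, in fact $\rk(D) \geq s+1$. Granting this, the corollary follows at once. Let $s = \rk(D)$, which is finite since $\deg D = 2g-2$. If $s$ had the wrong parity, then $[D] \in P^{s+1}$ with $\rk(D) = s$, which the claim forbids. The case $s=-1$ is consistent with this: by Lemma~\ref{Lem:BaseCase} every class of $P^0$ has nonnegative rank, so a class of rank $-1$ lies in $P^1$, which has the correct parity.

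I will prove the claim by induction on $s \geq 0$, working throughout on an $r$-generic loop of loops, as in this section. The genericity hypothesis is used each time Theorem~\ref{Thm:Generic} or Theorem~\ref{Thm:GenericBackwardContainment} is applied with a different rank. For ranks beyond $r$ one should either assume $\Gamma$ is generic, or note that only finitely many ranks occur; I flag this as a point to check.

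\emph{Base case $s=0$.} Take $[D] \in P^1$ with $\rk(D) \geq 0$. By Lemma~\ref{Lem:BaseCase}, $\xi_j(D) = -1 = s^1_j[0]$ for some $j \in \{1,\ldots,g-1\}$. Hence $Q^1_D$ contains $\tau_0$, which is a reduced word for $w_0$ in $A_1$. By Theorem~\ref{Thm:GenericBackwardContainment} with $r=1$, we get $\rk(D) \geq 1$.

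\emph{Inductive step.} Take $[D] \in P^{s+1}$ with $\rk(D) \geq s \geq 1$. Form the lingering word $Q_D$ in $A_{s+1}$, using the slopes $s^{s+1}_j$. For a point $p \in \beta_\ell$, set $D' = D - p + \overline{p}$. By Lemma~\ref{Lem:Bijection}, $[D'] \in P^{s}$, and $\rk(D') \geq \rk(D-p) \geq s-1$. Since $D'$ lies in $P^{(s-1)+1}$, the inductive hypothesis applied with $s-1$ upgrades this to $\rk(D') \geq s$. Theorem~\ref{Thm:Generic}, applied with $r=s$, then shows that every half word of $Q_{D'}$, viewed in $A_s$, contains a reduced word for $w_0$.

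Next I transfer this back to $D$. The coordinate computation in the proof of Theorem~\ref{Thm:Generic} gives $\xi_j(D')$ as $\xi_j(D)$ or $\xi_j(D)+1$ according to the position of $j$ relative to $\ell$. Combined with the shift between $s^{s+1}_j$ and $s^s_j$, this shows that $Q^\ell_{D'}$ is obtained from $Q^\ell_D$ by replacing each $\tau_s$ with $\epsilon$. Hence $Q^\ell_D$ contains a reduced word for $w_1 \in A_{s+1}$. Since $\ell$ was arbitrary, $Q_D$ has full rank. Proposition~\ref{Prop:FullRank} then says every half word of $Q_D$ contains a reduced word for $w_0$, so $[D] \in T(Q^1_D)$ with $Q^1_D$ containing $w_0$. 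Theorem~\ref{Thm:GenericBackwardContainment} then gives $\rk(D) \geq s+1$.

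The main obstacle is the bookkeeping in this inductive step. One must check that the index shift from $D$ to $D'$, together with the parity-dependent definition of $s^r_j$, really turns $\tau_s$ into $\epsilon$ and fixes every other letter. This is the same computation as in the proof of Theorem~\ref{Thm:Generic}, so I expect to reuse it essentially verbatim.
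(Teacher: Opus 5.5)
This is essentially the paper's proof. It proves the same claim (if $[D]\in P^r$ and $\rk(D)\ge r-1$ then $\rk(D)\ge r$) by induction on $r$, with the same base case from Lemma~\ref{Lem:BaseCase} and the same inductive step: $D'=D-p+\overline{p}$, the $\tau_{r-1}\mapsto\epsilon$ computation from the proof of Theorem~\ref{Thm:Generic}, full rank, Proposition~\ref{Prop:FullRank}, and Theorem~\ref{Thm:GenericBackwardContainment}. The genericity caveat you flag for ranks above $r$ is not addressed in the paper either, whose induction only needs $s$-genericity for $s\le r$ (which $r$-genericity provides); that range is all that is used later, e.g.\ in Lemma~\ref{Lem:Theta}.
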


\begin{proof}
Let $[D] \in P^r$ and suppose that $\rk (D) \geq r-1$.  We prove, by induction on $r$, that $\rk (D) \geq r$.  For the base case $r=1$, if $[D] \in P^1$ and $\rk (D) \geq 0$, then by Lemma~\ref{Lem:BaseCase}, every half word of $Q_D$ contains $w_0$.  Thus, $[D] \in T(Q_D)$, which is contained in $V^1 (\Gamma,\pi)$ by Theorem~\ref{Thm:Generic}, hence $\rk (D) \geq 1$.

For the inductive step, let $p \in \beta_{\ell}$ and let $D' = D-p+\overline{p}$.  Then $\rk(D') \geq r-2$, hence by induction, we have $\rk (D') \geq r-1$.  By the proof of Theorem~\ref{Thm:Generic}, we see that the half word $Q^{\ell}_D$ contains a reduced word for $w_1$.  Since $\ell$ is arbitrary, $Q_D$ has full rank, and the result follows from Proposition~\ref{Prop:FullRank}.
\end{proof}

\subsection{Topological Properties}
\label{Sec:Tropology}

In this section, we discuss several topological properties of $V^r (\Gamma,\pi)$.  Everything in this section can be deduced from combinatorial properties of a certain poset, which can be defined for an arbitrary Coxeter group.

\begin{definition}
\label{Def:Poset}
Let $G=(W,S)$ be a Coxeter group, $n$ a positive integer, and $w \in W$.  The \emph{lingering subword poset} $P(G,w,n)$ is the set of lingering words in $G$ of length $n$ that contain a reduced subword for $w$, with partial ordering given by 
\[
Q \leq Q' \text{ if } Q' \text{ is a lingering subword of } Q.
\]
\end{definition}

The lingering subword poset is graded, where the rank of a lingering word $Q = s_1 \cdots s_n$ is given by $\rho (Q) := \# \{ j \mid s_j = \epsilon \}$.  For a fixed lingering word $Q$, the principal upper ideal
\[
\{ Q' \in P(G,w,n) \mid Q' \text{ is a lingering subword of } Q \}
\]
is the face poset of a simplicial complex, known as the \emph{subword complex} $\Delta (Q,w)$.  It is shown in \cite{KnutsonMiller} that $\Delta (Q,w)$ is homemorphic to a sphere if $\delta (Q) = w$ and homeomorphic to a ball otherwise.

If two lingering words $Q = s_1 \cdots s_n$ and $Q' = s'_1 \cdots s'_n \in P(G,w,n)$ have a lower bound, then they have a unique maximal lower bound $Q \wedge Q' = s''_1 \cdots s''_n$, given by
 \[
s''_j := \left\{ \begin{array}{ll}
s_j &\text{if } s_j \neq \epsilon \\
s'_j &\text{otherwise} .
\end{array} \right.
\].
Similarly, if $Q$ and $Q'$ have an upper bound, then they have a unique minimal upper bound $Q \vee Q' = s''_1 \cdots s''_n$, given by
 \[
s''_j := \left\{ \begin{array}{ll}
s_j &\text{if } s'_j = s_j \\
\epsilon &\text{otherwise} .
\end{array} \right.
\].
In other words, the poset obtained from $P(G,w,n)$ by adding a minimal element $\hat{0}$ and a maximal element $\hat{1}$ is a lattice.

By Theorem~\ref{Thm:Generic}, we have $V^r (\Gamma,\pi) = \cup T(Q)$, where the union is over elements of $P(A_r,w_0,g-1)$.  By Lemma~\ref{Lem:Contains}, one has $T(Q) \subseteq T(Q')$ if and only if $Q \leq Q'$.  The intersections of the tori $T(Q)$ are also encoded by the poset $P(A_r,w_0,g-1)$.

\begin{proposition}
\label{Prop:ElementaryFactsAboutTQ}
Let $Q = s_1 \cdots s_{g-1}$ and $Q' = s'_1 \cdots s'_{g-1}$ be lingering words.  Then $T(Q) \cap T(Q')$ is nonempty if and only if $Q$ and $Q'$ have a common lower bound in $P(A_r,w_0,g-1)$.  In this case, $T(Q) \cap T(Q') = T(Q \wedge Q')$.
\end{proposition}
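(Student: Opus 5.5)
The plan is to work directly with the coordinate description of $T(Q)$ and reduce everything to a coordinatewise comparison. By definition, $T(Q)$ is the set of $[D] \in P^r$ such that $\xi_j(D) = s^r_j[i]$ whenever $s_j = \tau_i$. On coordinates $j$ with $s_j = \epsilon$ there is no condition. By Corollary~\ref{Cor:2Tori} and the description of $P^r$, the coordinates $(\xi_1,\ldots,\xi_{g-1})$ identify $P^r$ with $\prod_{j=1}^{g-1} \R/m_j\Z$, and they can be prescribed independently. So $T(Q)\cap T(Q')$ is cut out by imposing both sets of conditions, one coordinate at a time.

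For each $j$ I would split into cases.
\begin{enumerate}
\item If $s_j=\epsilon$ or $s'_j=\epsilon$, at most one condition is imposed on $\xi_j$. It is exactly the condition coming from $s''_j$, where $s''_j$ denotes the $j$th term of the formula defining $Q\wedge Q'$ (taking $s_j$ if it is not $\epsilon$, and $s'_j$ otherwise).
\item If $s_j=s'_j=\tau_i$, the two conditions coincide, again equal to the one from $s''_j$.
\item If $s_j=\tau_i$ and $s'_j=\tau_{i'}$ with $i\ne i'$, the conditions read $\xi_j = s^r_j[i]$ and $\xi_j = s^r_j[i']$. By $r$-genericity, as noted when $Q_D$ was shown to be well defined, these are distinct in $\R/m_j\Z$. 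So the intersection is empty.
\end{enumerate}
Hence $T(Q)\cap T(Q')$ is nonempty if and only if $s_j$ and $s'_j$ never disagree as two distinct non-identity letters. When that holds, the intersection is the set of classes satisfying the conditions of the word $Q\wedge Q'$ defined by the formula above, that is, $T(Q\wedge Q')$.

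It remains to match this with the poset statement. The ``compatibility'' condition is exactly that the formula for $Q\wedge Q'$ yields a lingering word having both $Q$ and $Q'$ as lingering subwords. Conversely, any common lower bound $R$ must have $R_j = s_j$ where $s_j \neq \epsilon$ and $R_j = s'_j$ where $s'_j \neq \epsilon$, which forces compatibility.

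The one subtlety, which I expect to be the main point to check, is membership in $P(A_r,w_0,g-1)$. If $Q$ (or $Q'$) lies in the poset, then $Q\wedge Q'$ contains $Q$ as a lingering subword. It therefore contains the same reduced subword for $w_0$, so it lies in the poset as well. I would read the proposition with $Q,Q'$ in the poset, as in the surrounding discussion; then "common lower bound in $P(A_r,w_0,g-1)$" is equivalent to compatibility. Together with Lemma~\ref{Lem:Contains}, this also confirms $T(Q\wedge Q')\subseteq T(Q)\cap T(Q')$.
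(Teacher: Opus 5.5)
Your proposal is correct and takes essentially the same route as the paper: both compare coordinates $\xi_j$ one at a time, use $r$-genericity to show that two distinct letters $\tau_i \neq \tau_{i'}$ in the same position force an empty intersection, and otherwise read $T(Q\wedge Q')$ off from the combined conditions. Your remark that $Q\wedge Q'$ lies in $P(A_r,w_0,g-1)$ only if $Q$ or $Q'$ does is a point the paper's proof passes over. It matters, since the ``only if'' direction fails for arbitrary lingering words: for $r=2$, $g-1=3$ and $Q=Q'=\tau_0\tau_0\tau_0$, the intersection is a point, yet no common lower bound lies in the poset.
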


\begin{proof}
By Lemma~\ref{Lem:Contains}, if $Q''$ is a lingering word such that $Q'' \leq Q$ and $Q'' \leq Q'$, then $T(Q'') \subseteq T(Q) \cap T(Q')$.  Conversely, if no such lingering subword $Q''$ exists, then there exists a $j$ such that $s_j, s'_j \neq \epsilon$, and $s_j \neq s'_j$.  Suppose $s_j = \tau_i$ and let $[D'] \in T(Q')$.  Since $s'_j \notin \{ \tau_i , \epsilon \}$, we have $\xi_j (D') \neq s_j^r [i]$, hence $[D'] \notin T(Q)$, and $T(Q) \cap T(Q') = \emptyset$. 

If $[D] \in T(Q) \cap T(Q')$, then $\xi_j (D) = s^r_j [i]$ whenever $s_j$ or $s'_j$ is equal to $\tau_i$, hence $[D] \in T(Q \wedge Q')$.  It follows that $T(Q) \cap T(Q') = T(Q \wedge Q')$.
\end{proof}

We list some basic properties of the lingering subword poset, and the corresponding topological properties of $V^r (\Gamma,\pi)$.

\begin{lemma}
\label{Lem:PosetIsPureDimension}
If $Q \in P(G,w,n)$ is maximal, then $\rho (Q) = n-\ell(w)$.  In particular, every maximal element of the lingering subword poset has the same rank.
\end{lemma}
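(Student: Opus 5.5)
We need to show that a maximal element $Q$ of $P(G,w,n)$ has exactly $n-\ell(w)$ identity letters. Recall the ordering: $Q \leq Q'$ when $Q'$ is a lingering subword of $Q$. So $Q$ is maximal precisely when no lingering subword of $Q$, other than $Q$ itself, still contains a reduced subword for $w$.

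The plan is to prove that a maximal $Q$ is itself a reduced lingering word for $w$. By this we mean that the non-identity letters of $Q$ form a reduced word representing $w$. Granting this, the number of non-identity letters is $\ell(w)$. Hence $\rho(Q) = \#\{j \mid s_j = \epsilon\} = n - \ell(w)$, and in particular all maximal elements have the same rank.

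To prove the claim, let $Q = s_1 \cdots s_n$ be maximal. Since $Q \in P(G,w,n)$, there is a set of positions $I \subseteq [n]$ such that the letters $s_j$ with $j \in I$ are non-identity and, read in order, form a reduced word for $w$. Define $Q' = s'_1 \cdots s'_n$ by $s'_j = s_j$ for $j \in I$ and $s'_j = \epsilon$ for $j \notin I$.

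Then $Q'$ is a lingering subword of $Q$, so $Q \leq Q'$. Moreover, $Q'$ contains the same reduced subword for $w$, so $Q' \in P(G,w,n)$. By maximality $Q' = Q$. Therefore the non-identity letters of $Q$ are exactly those indexed by $I$, and there are $\#I = \ell(w)$ of them, since a reduced word for $w$ has length $\ell(w)$ by definition.

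I do not expect a real obstacle. The only care needed is with the direction of the order: "larger" means more letters replaced by $\epsilon$, so maximal elements are the minimal lingering words still containing $w$. The grading $\rho$ then counts exactly the complementary positions.
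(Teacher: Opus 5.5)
Your proposal is correct and follows the same route as the paper: a maximal element has no proper lingering subword in $P(G,w,n)$, so it must itself be a reduced lingering word for $w$, giving $\rho(Q) = n - \ell(w)$. The only difference is that you spell out the step the paper leaves implicit, namely that replacing every letter outside the chosen reduced subword by $\epsilon$ gives an element of the poset lying weakly above $Q$.
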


\begin{proof}
If $Q$ is maximal, then $Q$ contains a reduced subword for $w$, but no proper lingering subword of $Q$ contains a reduced subword for $w$.  It follows that $Q$ is a reduced lingering word for $w$, hence $\rho (Q) = \# \{ j \mid s_j = \epsilon \} = n - \ell(w)$.
\end{proof}

\begin{corollary}
\label{Cor:PureDimension}
The tropical Prym-Brill-Noether variety $V^r (\Gamma,\pi)$ is pure dimensional, of dimension $g-1-{{r+1}\choose{2}}$.
\end{corollary}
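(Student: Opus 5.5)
Since $V^r(\Gamma,\pi)$ is a finite union of tori $T(Q)$, pure dimensionality amounts to showing that every such torus is contained in one of the maximal dimension. By Theorem~\ref{Thm:Generic}, $V^r(\Gamma,\pi)=\bigcup_{Q} T(Q)$, where $Q$ ranges over the finite set $P(A_r,w_0,g-1)$. Hence $V^r(\Gamma,\pi)$ is a finite union of closed real tori, and it equals the union of the $T(Q)$ for $Q$ maximal in $P(A_r,w_0,g-1)$.

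To justify this, note that every element of the finite poset lies below some maximal element. By Lemma~\ref{Lem:Contains}, $Q\le Q'$ (that is, $Q'$ is a lingering subword of $Q$) gives $T(Q)\subseteq T(Q')$. So each $T(Q)$ is contained in $T(Q')$ for some maximal $Q'$.

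For a maximal $Q'$, Lemma~\ref{Lem:PosetIsPureDimension} gives $\rho(Q')=(g-1)-\ell(w_0)$. Lemma~\ref{Lem:DimTQ} then shows $T(Q')$ is a real torus of dimension $\rho(Q')$. The longest element of $A_r$ inverts every pair in $\{0,\ldots,r\}$, so $\ell(w_0)=\binom{r+1}{2}$. Thus $V^r(\Gamma,\pi)$ is a finite union of tori, each of dimension exactly $g-1-\binom{r+1}{2}$. The remaining point is that no one of these tori is contained in a union of the others, which would lower the local dimension. This cannot happen: distinct maximal tori are distinct closed tori of the same dimension, and their pairwise intersections $T(Q\wedge Q')$ (Proposition~\ref{Prop:ElementaryFactsAboutTQ}) have strictly smaller rank, hence strictly smaller dimension. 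So each maximal torus contains points lying in no other torus, and the local dimension there is the full dimension.

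The statement is only meaningful when $V^r$ is nonempty. This holds when $g-1\ge\binom{r+1}{2}$: pad any reduced word for $w_0$ with $\epsilon$'s to obtain an element of $P(A_r,w_0,g-1)$. If $g-1<\binom{r+1}{2}$, the poset is empty and $V^r$ is empty, so the statement is vacuous (or one adopts the convention that it has negative dimension). None of these steps is a real obstacle; the only care needed is the small point that intersections of distinct maximal tori are lower-dimensional, which follows directly from the rank function on the poset.
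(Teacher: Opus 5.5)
Your argument is correct and is essentially the paper's proof: Theorem~\ref{Thm:Generic} writes $V^r(\Gamma,\pi)$ as a union of the $T(Q)$, Lemma~\ref{Lem:Contains} reduces to maximal $Q$, and Lemmas~\ref{Lem:PosetIsPureDimension} and~\ref{Lem:DimTQ} give each maximal torus dimension $g-1-{{r+1}\choose{2}}$. Your extra check that no maximal torus is covered by the others is true but unnecessary, and your stated reason for it is off: a covered torus would not lower the local dimension, because every point already lies on a torus of the top dimension, and that is all pure dimensionality requires.
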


\begin{proof}
By Lemma~\ref{Lem:Contains}, we have $T(Q) \subseteq T(Q')$ if and only if $Q \leq Q'$ in $P(A_r,w_0,g-1)$.  Moreover, the dimension of $T(Q)$ is $\rho(Q)$.  It follows from Lemma~\ref{Lem:PosetIsPureDimension} that, if $T(Q)$ is maximal with respect to containment, then
\[
\mathrm{dim} T(Q) = \rho(Q) = g-1-{{r+1}\choose{2}}.
\]
\end{proof}

By \cite[Lemma~3.5]{KnutsonMiller}, if $\rho (Q) = n - \ell (w) -1$, then there are at most two elements of $P(G,w,n)$ that are strictly greater than $Q$.  Moreover, there are exactly two such elements if and only if $\delta (Q) = w$.  It follows that there is a graph, known as the \emph{flip graph} $\cF (G,w,n)$ whose vertices are the elements of $P(G,w,n)$ of maximal rank $n-\ell (w)$, and whose edges are the elements $Q \in P(G,w,n)$ of rank $n-\ell (w)-1$ with $\delta (Q) = w$. 

\begin{lemma}
\label{Lem:BraidRelations}
If $n > \ell(w)$, then the graph $\cF(G,w,n)$ is connected.
\end{lemma}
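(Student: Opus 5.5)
Throughout, a vertex of $\cF(G,w,n)$ is a reduced lingering word for $w$ of length $n$, i.e.\ a choice of $\ell(w)$ positions carrying a reduced word for $w$, with $\epsilon$ elsewhere. Two vertices are adjacent exactly when they differ in one position. Indeed, if $Q,Q'$ are maximal and differ in a single position, then the lingering word $Q\wedge Q'$, which agrees with them except that it is non-$\epsilon$ in both positions, has rank $n-\ell(w)-1$, lies below both, and has Demazure product $w$. The plan is to connect any two vertices through such elementary moves, in two stages: first fix the underlying reduced word and move positions, then change the reduced word by braid moves, using Matsumoto's theorem (any two reduced words for $w$ are connected by braid relations).

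Stage one (sliding). Take a vertex whose non-$\epsilon$ letters $t_1\cdots t_{\ell}$ occupy positions $p_1<\cdots<p_\ell$. Since $n>\ell(w)$, there is some $\epsilon$ position. If an $\epsilon$ sits at position $q$ adjacent to $p_k$, then moving $t_k$ into $q$ gives another reduced lingering word for $w$ differing in one position. This is therefore an edge, with middle element having $t_k$ in both $p_k$ and $q$; its Demazure product is $w$ because $t_kt_k$ contributes nothing beyond $t_k$. Repeating such moves, any placement of a fixed reduced word can be slid to any other placement, for example to the leftmost packed placement in positions $1,\dots,\ell$.

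Stage two (braid moves). This is where the main obstacle lies. Suppose the packed word contains a braid factor $stst\cdots=tsts\cdots$ of length $m=m(s,t)$ in positions $a+1,\dots,a+m$. Using the spare $\epsilon$ (slide it next to this block first), consider the window of length $m+1$ with $\epsilon$ at the end, holding $st\cdots$ in the first $m$ slots. Place $t$ into the empty slot and delete the first letter $s$: the remaining $m$ letters read $ts\cdots t$... but this is alternating starting with $t$ only when the parity works out. In general I expect the correct move to be ``append the opposite-starting letter at the end and drop the first letter''. Concretely, for $s t s\cdots$ (length $m$) followed by $\epsilon$, set the last slot to the letter $u$ that continues the alternation, and remove the first $s$. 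The new word is the alternating word of length $m$ beginning with $t$, which represents the same element. The middle word, an alternating word of length $m+1$, has Demazure product equal to the longest element of $\langle s,t\rangle$, so the total Demazure product is still $w$. Each such step changes one position, hence is an edge.

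The care needed is to check that every vertex along the way remains reduced for $w$, and that Demazure products of the intermediate rank-$(n-\ell(w)-1)$ words equal $w$, using $\delta(Q)=w$ whenever $Q$ contains a reduced word for $w$ and all its subwords are $\le w$. Then combining stage one with Matsumoto's theorem connects any two vertices.
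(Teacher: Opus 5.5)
Your proposal is correct and is essentially the paper's proof: your sliding move is the paper's shift operation ($s\epsilon \to ss \to \epsilon s$), and your braid move is its braid operation (put the next alternating letter into the spare $\epsilon$, then delete the first letter). Each move passes through a word of rank $n-\ell(w)-1$ with Demazure product $w$, and Matsumoto's theorem then connects any two vertices. Your explicit Demazure-product checks for these middle words fill in what the paper leaves implicit. One wording fix: adjacent vertices differ by relocating or exchanging a single letter, which changes two positions, not by differing in a single position.
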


\begin{proof}
If $Q$ and $Q'$ are maximal, then they are reduced lingering words for $w$.  By Matsumoto's theorem \cite[Theorem~3.3.1]{BjornerBrenti}, any reduced word for $w$ can be obtained from any other by a sequence of braid relations.  It follows that $Q'$ can be obtained from $Q$ by applying the following operations:
\begin{enumerate}
\item  shift operation:  replace $s \epsilon$ with $s s$, and then with $\epsilon s$, and
\item  braid operation:  replace $s_1 s_2 s_1 \cdots s_2 \epsilon$ with $s_1 s_2 s_1 \cdots s_2 s_1$, and then with $\epsilon s_2 s_1 \cdots s_2 s_1$.
\end{enumerate}
Since both operations first replace an instance of $\epsilon$ with a non-identity element, and then replace a non-identity element with $\epsilon$, we see that both are cover relations in the lingering subword poset $P(G,w,n)$, and the result follows.
\end{proof}

\begin{corollary}
\label{Cor:Connected}
If $g-1 >{{r+1}\choose{2}}$, then $V^r (\Gamma,\pi)$ is connected in codimension 1.
\end{corollary}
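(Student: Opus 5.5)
We deduce this from Theorem~\ref{Thm:Generic}, Proposition~\ref{Prop:ElementaryFactsAboutTQ}, and Lemma~\ref{Lem:BraidRelations}, by turning connectivity of the flip graph into connectivity in codimension~1 of the union of tori. By Theorem~\ref{Thm:Generic} and Corollary~\ref{Cor:PureDimension}, the irreducible components (maximal tori) of $V^r(\Gamma,\pi)$ are exactly the tori $T(Q)$ with $Q$ maximal in $P(A_r,w_0,g-1)$. By Lemma~\ref{Lem:PosetIsPureDimension}, these are precisely the reduced lingering words for $w_0$, that is, the vertices of the flip graph $\cF(A_r,w_0,g-1)$. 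Each has dimension $g-1-{{r+1}\choose{2}}$.

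The key step is to check that two components joined by an edge of the flip graph meet in codimension~1. Let $Q$ and $Q'$ be maximal, and suppose they are joined by an edge. Then there is an element $Q''$ of rank $g-2-{{r+1}\choose{2}}$ with $Q''\leq Q$ and $Q''\leq Q'$, so $Q$ and $Q'$ have a common lower bound. By Proposition~\ref{Prop:ElementaryFactsAboutTQ}, the intersection $T(Q)\cap T(Q')$ equals $T(Q\wedge Q')$. Since $Q\neq Q'$ and both lie above $Q''$, which covers neither strictly beyond one step, we have $Q\wedge Q' = Q''$. By Lemma~\ref{Lem:DimTQ}, $T(Q'')$ is a torus of dimension $\rho(Q'') = \dim T(Q)-1$. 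So adjacent components meet along a codimension-1 torus.

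The hypothesis $g-1>{{r+1}\choose{2}}$ is exactly the condition $n>\ell(w_0)$ required in Lemma~\ref{Lem:BraidRelations}, so the flip graph is connected. Therefore any two components can be linked by a chain of components in which each consecutive pair meets in codimension~1. This is the definition of being connected in codimension~1.

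The main point needing care is the identification $Q\wedge Q'=Q''$ in the key step. Concretely, in the shift and braid moves from the proof of Lemma~\ref{Lem:BraidRelations}, the intermediate word fills exactly one extra $\epsilon$, and it agrees with each endpoint wherever that endpoint is non-$\epsilon$. From this, the meet formula gives $Q''$ directly. If a more careful argument is preferred, it suffices to note that $T(Q'')\subseteq T(Q)\cap T(Q')$ already has dimension $\dim T(Q)-1$, and the intersection cannot be all of $T(Q)$ because $Q\neq Q'$.
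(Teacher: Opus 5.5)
Your proposal is correct and follows the same route as the paper: connectivity of the flip graph (Lemma~\ref{Lem:BraidRelations}) gives a chain of maximal words, and Proposition~\ref{Prop:ElementaryFactsAboutTQ} shows that consecutive maximal tori meet in $T(Q_{i-1}\wedge Q_i)$, which has codimension~1. Your rank argument identifying $Q\wedge Q'$ with the edge element $Q''$ makes explicit a step that the paper's one-line proof leaves implicit.
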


\begin{proof}
If $T(Q)$ and $T(Q')$ are maximal, then by Lemma~\ref{Lem:BraidRelations}, there exists a sequence
\[
Q = Q_0 , Q_1 , \ldots , Q_m = Q'
\]
where $T(Q_{i-1}) \cap T(Q_i) = T(Q_{i-1} \wedge Q_i)$ has codimension 1 in both $T(Q_{i-1})$ and $T(Q_i)$.
\end{proof}

Although we will not use it, we note the following property of the flip graph.

\begin{lemma}
\label{Lem:Bipartite}
The flip graph $\cF (G,w,n)$ is bipartite if and only if $n = \ell (w) +1$.
\end{lemma}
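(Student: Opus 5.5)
The plan is to treat the two directions separately. The ``only if'' direction comes from an explicit triangle in $\cF(G,w,n)$. The ``if'' direction needs a $2$-colouring of the vertices that is compatible with the two moves from the proof of Lemma~\ref{Lem:BraidRelations}.

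\emph{Odd cycles when $n \geq \ell(w)+2$.} The case $\ell(w)=0$ is special: the only vertex is the all-$\epsilon$ word, so the graph has no edges. I would treat that case separately, or assume $w \neq \epsilon$. Otherwise, fix a reduced word for $w$ and write its first letter as $s$. Since there are at least two $\epsilon$'s, I would build a reduced lingering word for $w$ whose first three positions are filled, in some order, by $s$ and two copies of $\epsilon$, with the rest of the reduced word after them. This gives three vertices:
\[
Q_1 = s\,\epsilon\,\epsilon\cdots,\qquad Q_2=\epsilon\, s\,\epsilon\cdots,\qquad Q_3=\epsilon\,\epsilon\, s\cdots,
\]
which agree outside the first three positions. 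Now consider the lingering words $s\,s\,\epsilon\cdots$, $\epsilon\, s\, s\cdots$ and $s\,\epsilon\, s\cdots$. Each has rank $n-\ell(w)-1$. Since $s\cdot s$ collapses to $s$ in the Demazure product, each has Demazure product $\delta = w$. Their two upper covers in $P(G,w,n)$ are exactly $\{Q_1,Q_2\}$, $\{Q_2,Q_3\}$ and $\{Q_1,Q_3\}$ respectively. By the discussion preceding Lemma~\ref{Lem:BraidRelations} these are edges, so $Q_1Q_2Q_3$ is a triangle and the flip graph is not bipartite.

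\emph{Bipartiteness when $n=\ell(w)+1$.} Each vertex is a reduced word for $w$ with one $\epsilon$ inserted at some position $p$. Each edge is a non-reduced word $R$ of length $\ell(w)+1$ with $\delta(R)=w$, and its two endpoints are obtained by deleting one of two letters of $R$. I would $2$-colour a vertex $Q$ by
\[
c(Q) = p + N(Q) \pmod 2 ,
\]
where $N(Q)$ is the parity of the number of braid moves needed to pass from the underlying reduced word of $Q$ to a fixed reference reduced word for $w$. The moves should be counted with a suitable weight: for example, a braid move of length $m$ is counted according to the parity of $m-1$, which is the number of letters it ``shifts past''.

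Then I would check that the two endpoints of every edge get different colours. By the deletion/exchange property, the two deletable letters at positions $i<j$ of $R$ are related by $s_i\, (s_{i+1}\cdots s_{j-1}) = (s_{i+1}\cdots s_{j-1})\, s_j$. So one endpoint's reduced word is obtained from the other's by moving a letter across the segment $s_{i+1}\cdots s_{j-1}$. That rewriting changes $p$ by $j-i$, and it should change $N$ by the same parity, so $c$ flips.

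\emph{Main obstacle.} The hard step is showing that $N$ is well defined modulo $2$. One must prove that every cycle in the graph of reduced words of $w$ (with braid moves as edges) has even total weight. The commuting example $w=st$, $n=3$, with the edge $sts$ joining $\epsilon\, t\, s$ to $s\, t\,\epsilon$, shows that the parity of $p$ alone does not work. My first attempt would use Tits' description of the fundamental relations among braid moves: every cycle of braid moves is generated by the cycles coming from rank-$3$ parabolic subgroups. I would then verify the parity condition for those finitely many local cycles by direct inspection.

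If this turns out to be too delicate, the fallback is a more intrinsic invariant. The idea is to view the pair consisting of $w$ and the position of the $\epsilon$ as a choice of inversion of $w$, namely the reflection $t$ associated to the letter that an edge inserts. I would then try to colour by a sign attached to that reflection.
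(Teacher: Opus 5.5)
You have the paper's architecture: the same triangle $s\epsilon\epsilon\cdots$, $\epsilon s\epsilon\cdots$, $\epsilon\epsilon s\cdots$ for $n\geq \ell(w)+2$. For $n=\ell(w)+1$ you colour by $(-1)^p$ times a sign on reduced words that changes by $(-1)^{m-1}$ under an $m$-braid move. The paper imports this sign, and the key fact about edges, from \cite[Lemma~3.6]{BCL}.

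\textbf{The gap is the edge check.} You treat it as routine and call the well-definedness of $N$ the hard part.
\begin{itemize}
\item As written it has the wrong parity. For $c=p+N$ to flip, $N$ must change by $j-i-1$, not by the parity of $j-i$. Your weighting does give $m-1=j-i-1$ for a single braid edge, so this is probably a slip.
\item More seriously, for a general edge the passage from $r_iX$ to $Xr_j$, with $X=r_{i+1}\cdots r_{j-1}$, is not one braid move but an arbitrary chain of them. For instance, in $A_3$ the word $\tau_2\tau_0\tau_1\tau_0\tau_2\tau_1\tau_0$ is an edge whose two deletable letters are in positions $1$ and $7$.
\end{itemize}
Nothing in your sketch shows that the weighted parity of such a chain is $|X|$. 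Path-independence of $N$ does not give this for free; it is exactly what the paper's citation supplies.

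\textbf{A fix by induction on $\ell(v)$.} Assuming $N$ is path-independent, claim: if $aX$ and $Xb$ ($a,b\in S$) are reduced words of the same $v$, then $N(aX)-N(Xb)\equiv \ell(v)-1$.
\begin{itemize}
\item The difference does not depend on the choice of $X$, since a braid move inside $X$ changes both words identically.
\item Let $c$ be the last letter of $X$. Then $b\neq c$ are both right descents of $v$. So $J=\{b,c\}$ generates a finite parabolic with $m=m(b,c)$, and $v=y\,w_J$ with $y$ minimal in $yW_J$.
\item Since $av=y\cdot(w_Jb)$ has $W_J$-component $w_Jb\neq w_J$, uniqueness of parabolic decompositions rules out $ay\in W^J$. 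Deodhar's lemma then forces $ay=yj$ with $j\in J$ and $\ell(ay)=\ell(y)+1<\ell(v)$.
\item Take $X=Y\beta$, with $Y$ reduced for $y$ and $\beta$ reduced for $w_Jb$. The rewriting factors as $aY\beta\to Yj\beta$, which is the inductive case for $ay$ with parity $\ell(y)$. It is followed by the single $m$-braid move $j\beta\to\beta b$, with parity $m-1$. The total is $\ell(v)-1$.
\end{itemize}

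\textbf{Avoiding Tits' theorem.} For arbitrary $G$, Tits' theorem would require checking $B_3$, $H_3$ and every $A_1\times I_2(m)$. Instead, fix a reference reduced word. Let $N(Q)$ count pairs $\{t,t'\}$ of inversions of $w$ such that:
\begin{itemize}
\item no other inversion has its root in the open cone spanned by the roots of $t$ and $t'$, and
\item $t,t'$ occur in opposite relative order in the reflection orders of $Q$ and of the reference.
\end{itemize}
An $m$-braid move reverses a contiguous block that is the full set of inversions in one rank-$2$ root subspace, so it flips exactly $m-1$ counted pairs.

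Finally, as you noticed for $w=e$, the flip graph is also edgeless when $n=\ell(w)$. These cases have to be excluded from the statement rather than treated separately.
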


\begin{proof}
First, suppose that $n > \ell (w) +1$, and let $Q = s_1 \cdots s_{n-2}$ be a reduced word for $w$.  Then the three vertices of $\cF (G,w,n)$ corresponding to $\epsilon \epsilon s_1 s_2 \cdots s_{n-2}$, $\epsilon s_1 \epsilon s_2 \cdots s_{n-2}$, and $s_1 \epsilon \epsilon s_2 \cdots s_{n-2}$ form a triangle, hence the graph is not bipartite.

Now, suppose that $n = \ell (w) + 1$.  In \cite{BCL}, the authors define a sign function on reduced words for $w$.  We use this to define a sign function on the vertices of $\cF (G,w,n)$.  Specifically, let $Q = s_1 \cdots s_n$ be a reduced lingering word for $w$ of length $\ell (w) + 1$.  Then exactly one term of $Q$ is equal to $\epsilon$, say $s_i = \epsilon$.  We define the sign of $Q$ to be $(-1)^i \mathrm{sign} (Q \smallsetminus s_i )$.  Now, suppose that $Q = s_1 \cdots s_n$ and $Q' = s'_1 \cdots s'_n$ are adjacent in the flip graph and $s_i =s'_j = \epsilon$.  We show that $Q$ and $Q'$ have opposite sign.  By \cite[Lemma~3.6]{BCL}, we have
\begin{align*}
\mathrm{sign}(Q') &= (-1)^j \mathrm{sign} (Q' \smallsetminus s'_j) = (-1)^{i-1} \mathrm{sign} (Q \smallsetminus s_i) = - \mathrm{sign} (Q) .
\end{align*}
\end{proof}

When $n = \ell (w)$, the lingering subword poset $P(G,w,n)$ is simply the set of reduced words for $w$.

\begin{lemma}
\label{Lem:Cardinality}
If $g-1= {{r+1}\choose{2}}$, then $\# V^r (\Gamma,\pi) = 2^{{r}\choose{2}} \cdot (g-1)! \cdot \prod_{i=1}^r \frac{(i-1)!}{(2i-1)!}$.
\end{lemma}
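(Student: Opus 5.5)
By Theorem~\ref{Thm:Generic}, when $g-1 = {{r+1}\choose{2}}$ we have $V^r(\Gamma,\pi) = \bigcup T(Q)$, with the union over lingering words of length $g-1$ in $A_r$ that contain a reduced subword for $w_0$. The plan has three steps: show these $T(Q)$ are single, distinct points; count the resulting words; and evaluate the count.

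First, I reduce to a count of reduced words. As in Example~\ref{Ex:Rank2}, since $\ell(w_0) = {{r+1}\choose{2}} = g-1$, any such lingering word must equal its reduced subword for $w_0$. So it has no $\epsilon$ terms and is itself a reduced word for $w_0$. By Lemma~\ref{Lem:DimTQ}, each $T(Q)$ is then a torus of dimension $0$, that is, a single divisor class. If $Q \neq Q'$ are two such words, neither is a lingering subword of the other, so Lemma~\ref{Lem:Contains} gives $T(Q) \neq T(Q')$. Distinctness also follows from Proposition~\ref{Prop:ElementaryFactsAboutTQ}: two distinct words with no $\epsilon$ have no common lower bound, so the corresponding points are disjoint. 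Hence $\# V^r(\Gamma,\pi)$ equals the number of reduced words for $w_0$ in $A_r = S_{r+1}$.

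Second, I invoke Stanley's theorem \cite{Stanley84}, or equivalently the Edelman--Greene bijection \cite{EdelmanGreene87}. It says the number of reduced words for $w_0 \in S_{r+1}$ equals $f^{\Delta_r}$, the number of standard Young tableaux of the staircase shape $\Delta_r = (r, r-1, \ldots, 1)$, which has $N = {{r+1}\choose{2}} = g-1$ boxes.

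Third, I evaluate $f^{\Delta_r}$ with the hook length formula, $f^{\Delta_r} = N!/\prod h(c)$. In the staircase, every box in the $i$th diagonal from the outer corners has hook length $2i-1$, and that diagonal contains $r+1-i$ boxes, for $i = 1, \ldots, r$. So the product of hook lengths is $\prod_{i=1}^r (2i-1)^{r+1-i}$. It remains to verify the identity
\[
\prod_{i=1}^r (2i-1)^{r+1-i} = 2^{-{{r}\choose{2}}} \prod_{i=1}^r \frac{(2i-1)!}{(i-1)!}.
\]
I will prove this by induction on $r$. Passing from $r-1$ to $r$ multiplies the left side by $\prod_{i=1}^r (2i-1) = (2r-1)!/(2^{r-1}(r-1)!)$. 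It multiplies the right side by $2^{-(r-1)} (2r-1)!/(r-1)!$, which is the same factor. Substituting $N = g-1$ then yields the stated formula. The only step needing care is this closing product manipulation; everything else is direct citation.
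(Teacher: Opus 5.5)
Your proposal is correct and follows the same route as the paper. Theorem~\ref{Thm:Generic} reduces the count to the number of reduced words for $w_0$ in $A_r$; Stanley and Edelman--Greene identify that number with the count of standard Young tableaux of staircase shape; and the hook-length formula evaluates it. You also fill in details the paper leaves implicit, and both check out: that distinct reduced words give distinct single points (via Lemmas~\ref{Lem:DimTQ} and~\ref{Lem:Contains}), and the explicit staircase hook-product identity $\prod_{i=1}^r (2i-1)^{r+1-i} = 2^{-{{r}\choose{2}}}\prod_{i=1}^r \frac{(2i-1)!}{(i-1)!}$.
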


\begin{proof}
If $g-1= {{r+1}\choose{2}}$, then $\# V^r (\Gamma,\pi)$ is equal to the number of reduced words for $w_0$.  By \cite{Stanley84, EdelmanGreene87}, this is equal to the number of standard Young tableaux on the isosceles triangular partition with side length $r$.  This number can be computed explicitly via the hook-length formula.
\end{proof}

When the dimension of $V^r (\Gamma,\pi)$ is positive, we generalize Lemma~\ref{Lem:Cardinality} as follows.

\begin{lemma}
\label{Lem:Count}
If $g-1\geq {{r+1}\choose{2}}$, then there exist divisors $F_1 , \ldots , F_{g-1-{{r+1}\choose{2}}} \in P^r - P^1$ such that
\[
X = \bigcap_{i=1}^{g-1-{{r+1}\choose{2}}} [V^1 (\Gamma,\pi) + F_i ]
\]
is finite and
\[
\# X = 2^{{r}\choose{2}} \cdot (g-1)! \cdot \prod_{i=1}^r \frac{(i-1)!}{(2i-1)!}.
\]
Moreover, if $Q^1_D$ is a reduced lingering word, then the divisors $F_i$ can be chosen so that $[D] \in X$.
\end{lemma}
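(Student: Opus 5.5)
We read the statement as including $V^r(\Gamma,\pi)$ among the intersected sets, that is,
\[
X = V^r(\Gamma,\pi) \cap \bigcap_{i=1}^{g-1-{{r+1}\choose{2}}} [V^1(\Gamma,\pi) + F_i].
\]
As literally written, the intersection of $g-1-{{r+1}\choose{2}}$ codimension-one sets would have dimension ${{r+1}\choose{2}}$ and could not be finite. The choice $F_i \in P^r - P^1$ ensures that each translate $V^1(\Gamma,\pi)+F_i$ lies in $P^r$.

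The first step is to describe $V^1(\Gamma,\pi)$ and its translates in the coordinates $\xi_j$. By Theorem~\ref{Thm:Generic} with $r=1$, $V^1(\Gamma,\pi)$ is the union over $j \in \{1,\ldots,g-1\}$ of the coordinate hypersurfaces $\{\xi_j = s^1_j[0]\}$. Next, I would check that translation by a divisor class $F \in P^r - P^1$ acts on the coordinates $\xi_1,\ldots,\xi_{g-1}$ by independent shifts $\xi_j \mapsto \xi_j + t_j(F)$, and that every vector of shifts in $\prod_j \R/m_j\Z$ is realized. This holds because $P^r$ is a torsor for the group $P^0 - P^0 \cong \prod_j \R/m_j\Z$, and on each loop the Abel--Jacobi map is additive in $\xi_j$; one can verify it with Example~\ref{Ex:Loop} and the moving-point computation used in the proof of Theorem~\ref{Thm:Generic}. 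Consequently each $V^1(\Gamma,\pi)+F_i$ is a union of coordinate hypersurfaces $\bigcup_j\{\xi_j = a_{ij}\}$, and the constants $a_{ij}$ can be prescribed freely.

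Set $N = g-1-{{r+1}\choose{2}}$. The second step is to choose the constants. Fix $N$ designated indices $e_1,\ldots,e_N$. Set $a_{i,e_i} = c_i$, where the $c_i$ are distinct values that avoid all the slope values $s^r_j[k]$. For $j \neq e_i$, set $a_{ij} = c$, a further value that is generic in the same sense. By Theorem~\ref{Thm:Generic} and Lemma~\ref{Lem:Contains}, a class $[D] \in X$ lies in some $T(Q)$ with $Q$ a reduced lingering word for $w_0$. Because the values $c_i$ and $c$ avoid every letter value, a coordinate $\xi_j(D)$ can equal one of them only when $s_j = \epsilon$. So every point of $X$ is determined by a reduced lingering word together with an assignment of its $\epsilon$-positions, each $\epsilon$-coordinate being fixed to $c$ or to some $c_i$. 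Since $\dim T(Q) = N$ equals the number of hypersurface conditions, the intersection is finite.

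The third step, which I expect to be the main obstacle, is the count. With the choice above, a naive count overshoots, since many $\epsilon$-patterns and assignments seem to survive. I would refine the choice of the $a_{ij}$, for instance by taking $a_{ij}$ for $j \ne e_i$ to be letter values $s^r_j[k]$ arranged in a staircase pattern. The aim is that the conditions force the $\epsilon$'s to sit exactly at the positions $e_1,\ldots,e_N$, so that $X$ is in bijection with the reduced words for $w_0$ filling the remaining ${{r+1}\choose{2}}$ positions. The shift operations from the proof of Lemma~\ref{Lem:BraidRelations} are the tool for showing that any other configuration violates one of the conditions. Once that bijection is in place, the cardinality follows exactly as in Lemma~\ref{Lem:Cardinality}.

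For the final assertion, suppose $Q^1_D$ is a reduced lingering word for $w_0$. I would take $e_1,\ldots,e_N$ to be its $\epsilon$-positions and set $c_i = \xi_{e_i}(D)$, choosing the $F_i$ to be translates realizing these constants. This places $[D] \in X$, and if the genericity requirement on the $c_i$ fails, a small perturbation argument should repair it.
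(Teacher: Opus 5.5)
Your reading of $X$ as $V^r(\Gamma,\pi)\cap\bigcap_i[V^1(\Gamma,\pi)+F_i]$ matches what the paper intends. Your first step also matches: each translate is $\bigcup_{j=1}^{g-1}\{\xi_j=a_{ij}\}$ with freely prescribable constants.

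\textbf{The gap: wrong target count.} Write $R=2^{\binom{r}{2}}\binom{r+1}{2}!\prod_{i=1}^r\frac{(i-1)!}{(2i-1)!}$ for the number of reduced words for $w_0$, and $N=g-1-\binom{r+1}{2}$. The claimed cardinality is
\[
2^{\binom{r}{2}}(g-1)!\prod_{i=1}^r\frac{(i-1)!}{(2i-1)!}=R\cdot\binom{g-1}{N}\cdot N!,
\]
not $R$. The formula of Lemma~\ref{Lem:Cardinality} equals $R$ only because there $g-1=\binom{r+1}{2}$. So the ``naive count'' you fear overshoots is exactly the right answer. The bijection you aim for in your third step would give the wrong number whenever $N>0$.

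That bijection is also unattainable for any choice of the $F_i$. Each translate contains a hypersurface $\{\xi_j=a_{ij}\}$ for \emph{every} $j$. Take any reduced lingering word $Q$ for $w_0$ with $\epsilon$-set $E$, and any bijection $i\mapsto j(i)$ from $\{1,\ldots,N\}$ to $E$. The point of $T(Q)$ with $\xi_{j(i)}=a_{i,j(i)}$ lies in $X$, so every $\epsilon$-pattern contributes points.

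Your second step fails as well. Suppose $a_{ij}=c$ for all $i$ and all $j\neq e_i$, and pick $Q$ whose $\epsilon$-set contains some $j\notin\{e_1,\ldots,e_N\}$. Then $T(Q)\cap\{\xi_j=c\}$ has dimension $N-1$ and lies in every translate, so $X$ is infinite once $N\ge 2$. Matching the number of conditions to $\dim T(Q)$ gives finiteness only if the translates do not share hypersurfaces.

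\textbf{The fix, which is the paper's argument.} Choose the $F_i$ general: for each $j$, the values $a_{1j},\ldots,a_{Nj}$ should be pairwise distinct, and none should equal a letter value $s^r_j[k]$.
\begin{enumerate}
\item Every point of $X$ lies in $T(Q)$ for some reduced lingering word $Q$ for $w_0$. This follows from Theorem~\ref{Thm:Generic} by passing to a reduced lingering subword.
\item On $T(Q)$ the fixed coordinates are letter values, so they never meet any $a_{ij}$. Each translate must therefore be met in a free coordinate.
\item Two translates met in the same coordinate would force that coordinate to take two distinct values. So the assignment of translates to free coordinates is a bijection onto $E$, and $T(Q)\cap X$ consists of exactly $N!$ distinct points.
\item These points satisfy $Q^1_D=Q$ exactly, because their free coordinates avoid letter values. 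Hence the contributions of different $Q$ are disjoint.
\end{enumerate}
The total is $R\binom{g-1}{N}N!$, which is the claimed cardinality.

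For the final assertion your idea works, and no perturbation is needed. Set $a_{i,e_i}=\xi_{e_i}(D)$ at the $\epsilon$-positions $e_i$ of $Q^1_D$; these are automatically not letter values. Keep all other $a_{ij}$ generic.
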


\begin{proof}
By \cite{Stanley84, EdelmanGreene87}, the number of reduced words for $w_0$ is
\[
2^{{r}\choose{2}} \cdot {{r+1}\choose{2}}! \cdot \prod_{i=1}^r \frac{(i-1)!}{(2i-1)!}.
\]
Given a lingering word for $w_0$, one obtains a lingering reduced word for $w_0$ of length $g-1$ by arbitrarily inserting $g-1-{{r+1}\choose{2}}$ copies of $\epsilon$.  It follows that the number of lingering reduced words for $w_0$ of length $g-1$ is
\[
2^{{r}\choose{2}} \cdot {{g-1}\choose{{{r+1}\choose{2}}}} \cdot {{r+1}\choose{2}}! \cdot \prod_{i=1}^r \frac{(i-1)!}{(2i-1)!}.
\]
By Theorem~\ref{Thm:Generic}, $V^r (\Gamma,\pi)$ is a union of precisely this number of translates of coordinate tori.  By the same reasoning, $V^1 (\Gamma,\pi)$ is a union of translates of the $g-1$ codimension-1 coordinate tori.  If $Q$ is a lingering reduced word for $w_0$ of length $g-1$, then $T(Q)$ intersects $g-1-{{r+1}\choose{2}}$ general translates of $V^1 (\Gamma,\pi)$ in $\Big(g-1-{{r+1}\choose{2}}\Big)!$ points.  It follows that the intersection of $V^r (\Gamma,\pi)$ with such translates is a union of
\begin{align*}
2^{{r}\choose{2}} \cdot {{g-1}\choose{{{r+1}\choose{2}}}} \cdot {{r+1}\choose{2}}! \cdot \Big(g-1-{{r+1}\choose{2}}\Big)! \cdot \prod_{i=1}^r \frac{(i-1)!}{(2i-1)!} \\
= 2^{{r}\choose{2}} \cdot (g-1)! \cdot \prod_{i=1}^r \frac{(i-1)!}{(2i-1)!}
\end{align*}
distinct points.  Furthermore, these translates can be chosen to contain a particular point $[D] \in V^r (\Gamma,\pi)$ if $V^r (\Gamma,\pi)$ is locally isomorphic to a coordinate torus in a neighborhood of $[D]$.  By Theorem~\ref{Thm:Generic}, this holds if and only if $Q^1_D$ is reduced.
\end{proof}

\section{Lifting the Prym-Brill-Noether Variety for $r$-Generic Loops of Loops}
\label{Sec:Lifting}

\subsection{Local Properties of the Tropical Prym-Brill-Noether Variety}
\label{Sec:LocalTrop}

In this section, we prove Theorem~\ref{Thm:Lifting}.  Our proof follows the same rough outline as that of the lifting result in \cite{CJP}.  In this setting, the Prym-Brill-Noether variety $V^1 (\Gamma,\pi)$ plays the role of the theta divisor on a tropical Jacobian, and we study it first.

\begin{lemma}
\label{Lem:Theta}
Let $\Gamma$ be a loop of loops (with arbitrary edge lengths), and let $\pi \colon \WG \to \Gamma$ be the antipodal involution.  Every facet of $V^1 (\Gamma,\pi)$ has multiplicity 1.
\end{lemma}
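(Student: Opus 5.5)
Our plan is to describe $V^1(\Gamma,\pi)$ explicitly, without any genericity assumption, and then to compute the multiplicity of each facet as a lattice index. Combining Lemma~\ref{Lem:BaseCase} with the parity statement (Corollary~\ref{Cor:RankParity} for $r=1$, whose base case uses only Lemma~\ref{Lem:BaseCase} and the orientation argument and so holds for arbitrary edge lengths), a class $[D] \in P^1$ has $\rk(D) \geq 1$ if and only if $\rk(D) \geq 0$. By Lemma~\ref{Lem:BaseCase}, this happens if and only if $\xi_j(D) = -1$ for some $j \in \{1, \ldots, g-1\}$. Hence
\[
V^1(\Gamma,\pi) = \bigcup_{j=1}^{g-1} H_j, \qquad H_j := \{ [D] \in P^1 \mid \xi_j(D) = -1 \}.
\]
Since $\xi_j$ is a coordinate, each $H_j$ is the coordinate subtorus $\prod_{i \neq j} \R/m_i\Z$ of $P^1 \cong \prod_i \R/m_i\Z$. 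These $H_j$ are exactly the facets.

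The key point is that the multiplicity of a facet of the tropical theta-type divisor is the index of the sublattice cut out by its defining affine function. We would work in the integral affine structure on $P^1$ inherited from $\Pic^{g-1}(\WG)$, which is the structure relevant for comparison with the algebraic Prym via Rabinoff-type lifting. The steps are as follows.

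\begin{enumerate}
\item Identify the integral structure on $P^1$ through the coordinates $\xi_j$. Moving the point $\langle \xi_j \rangle_j$ along $\gamma_j$ at unit speed changes the Abel--Jacobi image by a primitive tangent vector. The antipodal copy $\langle -1-\xi_j \rangle_{g-1+j}$ moves simultaneously, so the vector is diagonal across $\gamma_j$ and $\gamma_{g-1+j}$. This vector is primitive in the Prym lattice, that is, in the anti-invariant part of $H_1(\WG,\Z)$, suitably interpreted as in \cite{LenUlirsch21, GhoshZakharov}.
\item Show that the map $\xi_j \colon P^1 \to \R/m_j\Z$ (or $\R/\ell(\gamma_j)\Z$ in metric units) is, up to scaling by edge lengths, the pairing with a primitive integral covector.
\item Conclude that each $H_j$ is a level set of a primitive linear functional, cut out transversally. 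Its multiplicity, defined as the lattice index or equivalently as the tropical intersection weight making $V^1$ balanced and representing the Prym theta class, is therefore $1$.
\end{enumerate}

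If instead one uses the definition of multiplicity via a tropical theta function, as in \cite{CJP}, we would check directly that the slope jump of the relevant piecewise linear function across $H_j$ is $1$. This follows from the base-case computation: crossing $\xi_j = -1$ changes the orientation $\cO$ on $\gamma_j$ by a single cycle reversal.

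We expect the main obstacle to be step 1, verifying that the $\xi_j$ coordinates are integrally primitive for the Prym lattice rather than its index-$2$ overlattice. Free double covers typically create a factor of $2$ between the norm-kernel lattice and the anti-invariant homology. The plan is to pin this down with a concrete basis. The cycles $c_j = \gamma_j - \gamma_{g-1+j}$ (oriented suitably) lie in the anti-invariant homology, and $\xi_j$ pairs with $c_j$ to give $\pm 1$ times $\ell(\gamma_j)$ in natural units. The subtle cycle is the long cycle through all the $\beta$'s, and we would check that it does not obstruct primitivity of the functional $\xi_j$. As a consistency check, the count in Lemma~\ref{Lem:Count} should agree with the degree of the principal polarization.
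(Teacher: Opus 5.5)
Your description $V^1(\Gamma,\pi)=\bigcup_{j=1}^{g-1}H_j$ is correct and matches the paper. The gap is in how you then try to get the weights.

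\textbf{Why the lattice computation cannot work.} The multiplicity in this lemma is the weight that $\Trop(V^1(C,f))$ carries on each facet, for an \'etale double cover $f\colon\WC\to C$ specializing to $\pi$. This is the quantity fed into Rabinoff's theorem in the proof of Theorem~\ref{Thm:Lifting}. That weight is an invariant of the algebraic cycle. It is not determined by the polyhedral set $V^1(\Gamma,\pi)$ and its integral structure, so no primitivity computation for $\xi_j$ can decide it.

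Concretely, by Mumford the restriction $\Theta_{\WC}\vert_{P^1(C,f)}$ equals $2\,V^1(C,f)$ as divisors. Its tropicalization has the same support $\bigcup_j H_j$: tropically, $\{[D]\in P^1 : \rk(D)\geq 0\}$ is $V^1(\Gamma,\pi)$ by Corollary~\ref{Cor:RankParity}. It also has the same facets and the same lattice, but weight $2$ on every facet. Balancing gives no constraint either, since each $H_j$ is a closed subtorus and hence balanced with any positive weight.

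So step~3 is a non sequitur. The factor of $2$ you worried about in step~1 shows up as a facet weight, not as a lattice index. The slope-jump alternative has the same defect. You would first need to show that $\Trop V^1(C,f)$ is the corner locus, with slope-jump weights, of the tropicalized \emph{Prym} theta function, rather than of the Jacobian theta function restricted to $P^1$, which produces $2\Xi$. The remark about a single cycle reversal does not compute any slope jump.

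\textbf{The missing input, which the paper supplies.} What you relegated to a ``consistency check'' is in fact the proof.
\begin{enumerate}
\item $\Trop V^1(C,f)=V^1(\Gamma,\pi)$. On both sides, an odd-rank Prym class has rank $\geq 1$ if and only if it is effective, and effective divisors on $\WG$ lift.
\item By \cite{DCP}, the class of $V^1(C,f)$ is a principal polarization of $P^1(C,f)$. Hence $g-1$ general translates meet in $(g-1)!$ points counted with multiplicity.
\item Tropically, $g-1$ general translates of $\bigcup_j H_j$ meet in $(g-1)!$ points (Lemma~\ref{Lem:Count}). Each such point is $\bigcap_i (H_{\sigma(i)}+F_i)$ for a permutation $\sigma$, so it uses every facet exactly once. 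Its multiplicity is therefore $m=\prod_j w(H_j)$, given unimodularity of the coordinates; that is where your lattice question legitimately enters.
\item Rabinoff's theorem gives $m\,(g-1)!=(g-1)!$, so every weight is $1$.
\end{enumerate}
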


\begin{proof}
Note that every loop of loops is 1-generic, regardless of edge lengths.  Thus, by Theorem~\ref{Thm:Generic}, $V^1 (\Gamma,\pi)$ consists of translates of the $g-1$ coordinate codimension 1 subtori in $P^1$, and each of these carries a positive integer multiplicity.  By Corollary~\ref{Cor:RankParity}, a divisor class $[D] \in P^1$ has rank at least 1 if and only if it is equivalent to an effective divisor.  Similarly, if $f \colon \WC \to C$ is an \'{e}tale double cover of curves specializing to $\pi$, then a divisor class of odd rank $[D_C] \in P (C,f)$ has rank at least 1 if and only if it is equivalent to an effective divisor.  Since every effective divisor on $\WG$ lifts to an effective divisor on $\WC$, we see that
\[
\Trop (V^1 (C,f)) = V^1 (\Gamma,\pi).
\]

By Lemma~\ref{Lem:Count}, the intersection of $g-1$ general translates of $V^1 (\Gamma,\pi)$ consists of $(g-1)!$ distinct points, each with a tropical multiplicity $m$ equal to the product of the multiplicities of the facets.  By Rabinoff's lifting theorem \cite{Rabinoff12},  the intersection of $g-1$ general translates of $V^1 (C,f)$ consists of $m \cdot (g-1)!$ distinct points.  By the formula for the class of $V^r (C,f)$ from \cite{DCP}, however, the numerical class of $V^1 (C,f)$ is equal to that of a principal polarization $\xi$ for $P^1 (C,f)$.  We have the intersection number $\xi^{g-1} = (g-1)!$, so it follows that $m=1$, hence every facet of $V^1 (\Gamma,\pi)$ has multiplicity 1.
\end{proof}

\begin{definition}
We say that a divisor class $[D] \in V^r (\Gamma,\pi)$ is \emph{vertex avoiding} if the lingering word $Q^1_D$ is reduced and $\xi (D)_j \notin \{ s^r_j [0] -1, s^r_j [r] \}$ for all $j$.
\end{definition}

Note that the set of vertex avoiding classes is dense in $V^r (\Gamma,\pi)$.  The terminology is chosen to mirror that from \cite[Definition~2.3]{CJP}, where a dense subset of the Brill-Noether variety on a chain of loops is defined to be vertex avoiding.  Let $\varphi_0 , \ldots , \varphi_r \in R(D)$ be the functions defined in Proposition~\ref{Prop:Fns} and $D_i = \ddiv(\varphi_i) + D$.  By Proposition~\ref{Prop:Fns}, if $[D]$ is vertex avoiding, then the vertices $v_j$ and $w_j$ are contained in $\mathrm{Supp} (D_i)$ if and only if $j \in J^-(i)$.

The next proposition shows that, if $[D] \in V^r (\Gamma,\pi)$ is vertex avoiding, then in a neighborhood of $[D]$, the tropical Prym-Brill-Noether variety $V^r (\Gamma,\pi)$ is a local complete intersection of translates of $V^1 (\Gamma,\pi)$.

\begin{proposition}
\label{Prop:TropicalLocalCI}
Let $[D] \in V^r (\Gamma,\pi)$ be vertex avoiding.  Then there exists an open neighborhood $U$ of $[D]$ and effective divisors $E_1 , \ldots , E_{{r+1}\choose{2}}$ of degree $r-1$ such that
\[
V^r (\Gamma,\pi) \cap U = \bigcap_{j=1}^{{r+1}\choose{2}} [V^1 (\Gamma,\pi) + E_j - \overline{E}_j ] \cap U .
\]
\end{proposition}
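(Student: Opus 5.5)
The plan is to show that near a vertex avoiding class $[D]$ both sides are the same coordinate subtorus. Write $Q^1_D = s_1 \cdots s_{g-1}$. Since $[D]$ is vertex avoiding, $Q^1_D$ is reduced. By Theorem~\ref{Thm:Generic} it is therefore a reduced lingering word for $w_0$, with exactly ${{r+1}\choose{2}}$ non-identity letters $s_{j_1} = \tau_{i_1}, \ldots , s_{j_N} = \tau_{i_N}$, where $N = {{r+1}\choose{2}}$.

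\textbf{Step 1: the left side.} By Theorem~\ref{Thm:Generic} and Lemma~\ref{Lem:Contains}, any torus $T(Q')$ passing through $[D]$ has $Q'$ a lingering subword of $Q^1_D$ that contains a reduced word for $w_0$. Because $Q^1_D$ is reduced, this forces $Q' = Q^1_D$. Hence there is a small neighborhood $U$ of $[D]$ with
\[
V^r (\Gamma,\pi) \cap U = T(Q^1_D) \cap U = \{ \xi_{j_k} = s^r_{j_k}[i_k] \text{ for } k = 1, \ldots , N \} \cap U .
\]
So it suffices to find, for each $k$, an effective divisor $E_k$ of degree $r-1$ such that $V^1 (\Gamma,\pi) + E_k - \overline{E}_k$ meets $U$ exactly in the hyperplane $\xi_{j_k} = s^r_{j_k}[i_k]$.

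\textbf{Step 2: the inclusion $\subseteq$ is automatic.} Let $E$ be any effective divisor of degree $r-1$ and $[D'] \in V^r(\Gamma,\pi)$. Then $\rk(D'-E) \geq 1$, so $\rk(D'-E+\overline{E}) \geq 1$. The class $D'-E+\overline{E}$ is obtained from $D'$ by $r-1$ applications of the maps $F_p$ of Lemma~\ref{Lem:Bijection}, so it lies in $P^1$. Therefore $[D'] \in V^1(\Gamma,\pi) + E - \overline{E}$, and this holds for every choice of the $E_k$.

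\textbf{Step 3: choosing $E_k$ (the main obstacle).} I would take $E_k$ to be a sum of $r-1$ points in the interiors of bridges $\beta_\ell$. Each swap $p \mapsto \overline{p}$ with $p \in \beta_\ell$ shifts a contiguous block of coordinates $\xi_j$ by $+1$, by the formula in the proof of Theorem~\ref{Thm:Generic}. Composing $r-1$ such swaps makes the coordinates of $D - E_k + \overline{E}_k$ equal to $\xi_j(D) + c_j$, where $c_j \in \{0, \ldots, r-1\}$ counts how many of the chosen blocks contain $j$.

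By Theorem~\ref{Thm:Generic}, the variety $V^1$ is the union of the hyperplanes where some half-word coordinate is $\tau_0$, equivalently where some $\xi_j = -1$ (Lemma~\ref{Lem:BaseCase}), suitably reindexed. So I need to choose the bridges so that two things hold:
\begin{itemize}
\item $c_{j_k} = -1 - s^r_{j_k}[i_k]$, after normalizing for parity and half;
\item for every other $j$, $\xi_j(D) + c_j \neq -1$.
\end{itemize}
The first condition is the wanted hyperplane. For the second I would argue as follows. At a non-identity position $j \neq j_k$ with $s_j = \tau_i$, the shifted coordinate is a value $s^r_j[i] + c_j$ different from the target, because the shift pattern is dictated by the inversion structure. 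At an $\epsilon$ position, vertex avoidance ($\xi_j \notin \{ s^r_j[0]-1, s^r_j[r]\}$) together with $r$-genericity keeps the shifted coordinate away from $-1$ on a small $U$.

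To produce the bridges concretely, I would mimic the inductive step of Theorem~\ref{Thm:Generic}. There, one swap turns the letters $\tau_{r-1}$ into $\epsilon$ and leaves the other letters alone; the word is read from an arbitrary starting half $Q^\ell_D$. The letter $\tau_{i_k}$ at $j_k$ inverts some pair $(a,b)$ with $a<b$. I would place the $r-1$ points on bridges immediately before $j_k$ in suitable halves $Q^\ell$, so that after the swaps this letter becomes the last surviving letter, that is, $\tau_0$ in the rank-$1$ word. This is exactly the condition $\xi_{j_k} = -1$ after the shifts. Checking that no other letter also becomes the distinguished one is the delicate bookkeeping, and it is where I expect the work to lie.

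\textbf{Step 4: the inclusion $\supseteq$.} Each translate $V^1 + E_k - \overline{E}_k$ meets $U$ in one coordinate hyperplane, and the $N$ hyperplanes involve distinct coordinates $j_k$. Their intersection is therefore exactly $T(Q^1_D) \cap U$, which combined with Steps 1 and 2 gives the claimed equality.
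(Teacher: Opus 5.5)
Your Steps 1, 2 and 4 are correct. Working with points on bridges also loses nothing: for $p,p'$ on the same bridge, $(p-\overline{p})-(p'-\overline{p'})$ is principal, so the paper's vertices $w_{j'}$, $v_{j'}$ can be slid onto $\beta_{j'+1}$, $\beta_{j'}$ without changing $[D-E+\overline{E}]$.

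The gap is Step 3, which is the entire content of the proposition. You never specify $E_k$. Your claim that every other non-identity letter misses the target ``because the shift pattern is dictated by the inversion structure'' is exactly what has to be proved, and your concrete suggestion fails. The proof of Theorem~\ref{Thm:Generic} gives the following rule in the fixed half word $Q^1$:
\begin{itemize}
\item a point on $\beta_\ell$ with $\ell\in[g-1]$ lowers the index of every letter at positions $<\ell$ (killing $\tau_0$) and kills the top letter at positions $\geq\ell$;
\item a point on $\beta_{g-1+\ell}$ does the reverse;
\item after $r-1$ swaps, a letter $\tau_a$ survives (as $\tau_0$) exactly when $a$ of the swaps lowered it.
\end{itemize}
Take $r=2$, $g=4$, generic edge lengths, $Q^1_D=\tau_0\tau_1\tau_0$ and $j_k=1$, so a single swap is needed. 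A point on $\beta_1$ gives the rank-one word $\tau_0\epsilon\tau_0$, so the translate of $V^1(\Gamma,\pi)$ is two hyperplanes near $[D]$. A point on $\beta_4$ gives $\epsilon\tau_0\epsilon$, which is the wrong hyperplane. Points on $\beta_2,\beta_3,\beta_5$ give $\epsilon\epsilon\tau_0$, $\epsilon\tau_0\tau_0$, $\tau_0\tau_0\epsilon$. Only $\beta_6$ isolates position $1$, and nothing local to $\gamma_{j_k}$ tells you this.

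The missing idea is to let the placement follow the strand $i$ with $j_k\in J^-(i)$. The paper takes $E_k$ to be one vertex on each loop $\gamma_{j'}$, $j'\in J^-(i)\smallsetminus\{j_k\}$ (that is $r-1$ loops). It uses $w_{j'}$ if $\gamma_{j'}$ lies in the component of $\WG\smallsetminus(\gamma_{j_k}\cup\gamma_{g-1+j_k})$ not containing $p_0$, and $v_{j'}$ otherwise; in the example this gives $E_1=v_6$.

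The verification then needs no bookkeeping. By Proposition~\ref{Prop:Fns}, $\ddiv(\varphi_i)+D-E_k+\overline{E}_k$ is effective, of degree $2$ on $\gamma_{j_k}$, $0$ on $\gamma_{g-1+j_k}$ and $1$ on every other loop. Moving it into $\widehat{\mathbb{T}}^1$ shows its half word $Q^1$ is $\tau_0$ at $j_k$ and $\epsilon$ elsewhere. For $[D']\in U$, $Q^1_{D'}$ is a lingering subword of $Q^1_D$, so Lemma~\ref{Lem:BaseCase} forces $t_{j_k}=s_{j_k}$ whenever $[D']$ lies in the translate. That is what your Step 4 needs.

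If you prefer to keep your counting, this choice makes the number of swaps lowering position $p$ equal to:
\begin{itemize}
\item $\sigma_p(i)-1$ for $p<j_k$, minus one more if $p\in J^-(i)$;
\item $\sigma_{j_k}(i)-1$ at $p=j_k$;
\item $\sigma_p(i)$ for $p>j_k$, plus one if $p\in J^+(i)$.
\end{itemize}
This equals the index of the letter at $p$ only when $p=j_k$.
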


\begin{proof}
Let $Q = s_1 s_2 \cdots s_{g-1}$.  For each $j$ such that $s_j \neq \epsilon$, there exists a unique element $i_j$ such that $j \in J^-(i_j)$.  Let
\[
E_j = \sum_{j' \in J^- (i_j)} \Big( \sum_{j' \in \{ j+1, \ldots , g-2+j \}} w_{j'} +  \sum_{j' \in \{ g+j, \ldots , j-1 \} } v_{j'} \Big).
\]
In other words, $E_j$ consists of one vertex from each loop $\gamma_{j'}$ for $j' \in J^- (i_j) \smallsetminus \{ j \}$.  If $\gamma_{j'}$ is in the connected component of $\WG \smallsetminus ( \gamma_j \cup \gamma_{g-1+j} )$ that does not contain $p_0$, then the vertex of $\gamma_{j'}$ in the support of $E_j$ is equal to $w_{j'}$, and if $\gamma_{j'}$ is in the other connected component, the vertex is equal to $v_{j'}$.  For a sufficiently small neighborhood $U$ of $[D]$, we show that
\[
V^r (\Gamma,\pi) \cap U = \bigcap_{j \in [g-1], s_j \neq \epsilon} [V^1 (\Gamma,\pi) + E_j - \overline{E}_j ] \cap U .
\]

By the definition of rank, the left-hand side is contained in the right-hand side.  To see the reverse containment, let $\varphi_{i_j} \in R(D)$ be the function constructed in Proposition~\ref{Prop:Fns}, and let $D_j = \ddiv(\varphi_{i_j}) + D - E_j + \overline{E}_j$.  By Proposition~\ref{Prop:Fns}, $D_j$ is effective, and
\begin{align*}
&D_j \vert \gamma_{g-1+j} = 0 \\
&D_j \vert \gamma_{j} = v_j + w_j \\
&\deg (D_j \vert \gamma_{j'}) = 1 \text{ for all } j' \notin \{ j, g-1+j \}.
\end{align*}
Moreover, if $j' \in \{ 1, \ldots , j-1 \}$, then $v_{j'} \notin \mathrm{Supp} (D_j)$, and if $j' \in \{ j+1, \ldots, g-1 \}$, then $w_{j'} \notin \mathrm{Supp} (D_j)$.

By Lemma~\ref{Lem:Bijection}, there exists a function $\varphi \in R(D_j)$, unique up to translation,  such that $D_j + \ddiv (\varphi) \in \widehat{\mathbb{T}}^1$.  There is a unique path in $\WG$ from $p_0$ to $v_{g-1+j}$ that does not pass through $\{ \overline{p}_0 \} \cup \mathrm{Supp} (D_j)$, and similarly a unique path from $w_j$ to $\overline{p}_0$ that does not pass through $\{ p_0 \} \cup \mathrm{Supp} (D_j)$.  The function $\varphi$ has constant slope 1 along each of the edges $\beta_{\ell}$ in both of these paths.  From the above description, we see that $\mathrm{Supp}(D_j + \ddiv (\varphi))$ contains $v_j$ but none of the vertices $v_{j'}$ for $j' \in [g-1] \smallsetminus \{ j \}$.  It follows that, if $Q^1_{D_j} = s'_1 \cdots s'_{g-1}$, then $s'_{j'} = \epsilon$ for all $j' \neq j$, and $s'_j = \tau_0$.

Now, let $[D'] \in U$.  For $U$ sufficiently small, the lingering word $Q^1_{D'} = t_1 \cdots t_{g-1}$ is a lingering subword of $Q^1_D$.  Consequently, if $D'_j = D' - E_j + \overline{E}_j$, then $Q^1_{D'_j}$ is a lingering subword of $Q^1_{D_j}$.  If $[D'] \in V^1 (\Gamma,\pi) + E_j - \overline{E}_j$, then by Lemma~\ref{Lem:BaseCase}, some term of the lingering word $Q^1_{D'_j} = t'_1 \cdots t'_{g-1}$ must be equal to $\tau_0$.  But since $Q^1_{D'_j}$ is a lingering subword of $Q^1_{D_j}$, by the above, we must have $t'_j = \tau_0$, hence $t_j = s_j$.  Thus, if $[D'] \in \cap_j [V^1 (\Gamma,\pi) + E_j - \overline{E}_j ] \cap U$, we see that $Q^1_{D'} = Q^1_D$.  By Theorem~\ref{Thm:Generic}, it follows that $[D'] \in V^r (\Gamma,\pi)$. 
\end{proof}

\subsection{Local Properties of Classical Prym-Brill-Noether Varieties}
\label{Sec:Local}

Let $f \colon \WC \to C$ be an \'{e}tale double cover of curves, let $p \in \WC$, and let $\psi \colon \Pic^{2g-2} (\WC) \to \Pic^{2g-2} (\WC)$ be the map given by $\psi (D) = D - p + \overline{p}$.  One can see from the definition of rank that there is a set-theoretic inclusion $\psi (V^r (C,f)) \subseteq V^{r-1} (C,f)$.  The next proposition shows that this an inclusion of schemes.

\begin{proposition}
\label{Prop:LocalCI}
Let $C, f$ and $\psi$ be as above.  Then there is an inclusion of schemes \[\psi (V^r (C,f)) \subseteq V^{r-1} (C,f).\]
\end{proposition}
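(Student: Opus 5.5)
We will use the scheme structure on $V^r(C,f)$ coming from Mumford's description as a type-D degeneracy locus. We then compare the two degeneracy loci directly.

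First, recall the construction. Fix an effective divisor $E$ on $C$ of large degree $N$. For $\cL$ in $P(C,f)$, the space $H^0(\WC, \cL(f^*E)/\cL(-f^*E))$ carries a nondegenerate quadratic form of dimension $4N$, induced by the pairing of Mumford cited above. Inside it sit two maximal isotropic subspaces. The first is $W_1 = H^0(\cL(f^*E))$. The second is $W_2 = H^0(\cL/\cL(-f^*E))$, the principal parts. Their intersection is $H^0(\cL)$.

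These spaces globalize over $P(C,f)$, using a Poincaré bundle, to a rank-$4N$ orthogonal bundle $\mathcal{V}$ with maximal isotropic subbundles $\mathcal{W}_1$ and $\mathcal{W}_2$. The scheme $V^r(C,f)$ is then the locus where $\dim(\mathcal{W}_1 \cap \mathcal{W}_2) \geq r+1$ with the correct parity, with its natural scheme structure. Locally, this structure is given by the $r \times r$ sub-Pfaffian ideal, or equivalently the Pfaffian degeneracy locus, of a skew matrix describing $\mathcal{W}_1$ as a graph over a complement of $\mathcal{W}_2$.

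To compare with $\psi^* \mathcal{V}$, we use the identity $\cL(-p+\overline{p}) \supseteq \cL(-p-\overline{p}) = \cL \otimes f^*\cO(-f(p))$. The plan is to build the orthogonal bundle for $\psi(\cL)$ from the one for $\cL$ by isotropic reduction. Take the rank-one isotropic subbundle $\mathcal{I} \subset \mathcal{V}$ corresponding to the condition of vanishing at $p$ together with the polar condition at $\overline{p}$. Passing to $\mathcal{I}^\perp/\mathcal{I}$ gives an orthogonal bundle of rank $4N-2$. In it, the images of $\mathcal{W}_1 \cap \mathcal{I}^\perp$ and $\mathcal{W}_2 \cap \mathcal{I}^\perp$ are the two Lagrangians for $\psi(\cL)$, up to replacing $E$ by $E + f(p)$. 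Their intersection computes $H^0(\cL(-p+\overline{p}))$. This operation on Lagrangians is algebraic and defined over the whole base, and the parity flip matches Lemma~\ref{Lem:Bijection}.

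Given this, the ideal inclusion becomes linear algebra on skew matrices. Locally, write $\mathcal{W}_1$ as the graph of a skew map $A$ from $\mathcal{W}_2^*$ to $\mathcal{W}_2$. Then the reduced Lagrangian is the graph of the skew map obtained by restricting $A$ to a codimension-one subspace and projecting away from one line; this is a skew-symmetric submatrix of $A$, bordered by one row and column. The Pfaffian ideal cutting out corank $\geq r$ for the reduced matrix is generated by sub-Pfaffians of the appropriate size. Each of these is a combination of sub-Pfaffians of $A$ of the size cutting out corank $\geq r+1$, via Pfaffian expansion along a row and column. Hence the ideal of $\psi^{-1}V^{r-1}$ is contained in the ideal of $V^r$. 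This gives $V^r \subseteq \psi^{-1}(V^{r-1})$ as schemes, and since $\psi$ is an isomorphism, $\psi(V^r) \subseteq V^{r-1}$.

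The main obstacle is the middle step: identifying the reduced orthogonal data with Mumford's data for $\psi(\cL)$ compatibly in families, including the isotropy of $\mathcal{I}$ and that reduction preserves the quadratic form's parity class. I would first check this fiberwise using the residue pairing, then argue it globalizes since everything is built from pushforwards of locally free sheaves.
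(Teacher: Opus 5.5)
You correctly flag the middle step as the main obstacle, and that is exactly where the proposal breaks. It is not a routine compatibility check; it is the whole content of the proposition.

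\textbf{The rank does not match.} Whatever isotropic line $\mathcal{I}$ you use, $\mathcal{I}^\perp/\mathcal{I}$ has rank $4N-2$. Mumford's orthogonal bundle for $\psi(\cL)$ built from a divisor $E'$ on $C$ has rank $4\deg E'$, and replacing $E$ by $E+f(p)$ gives $4N+4$. So the reduced pair is never Mumford's data for $\psi(\cL)$. You would have to prove separately that it induces the same Pfaffian scheme structure on $\psi^{-1}V^{r-1}$.

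\textbf{The choice of $\mathcal{I}$ is delicate.} Reducing two Lagrangians along an isotropic line lying in one of them does not change the dimension of their intersection, unless the line lies in both. For instance, if $f(p)\in E$ and $\mathcal{I}^\perp=\cL(f^*E-p)/\cL(-f^*E)$, then $\mathcal{I}=\cL(-f^*E+\overline{p})/\cL(-f^*E)\subset\mathcal{W}_2$. For $N$ large the reduced intersection is then $(\mathcal{W}_1\cap\mathcal{W}_2+\mathcal{I})/\mathcal{I}\cong H^0(\cL)$, not $H^0(\cL(-p+\overline{p}))$. A line that does work (with $f(p)\notin E$) is the span of the principal parts of a section $t\in H^0(\cL(f^*E-p+\overline{p}))$ with a genuine pole at $\overline{p}$. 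By the residue theorem, $\mathcal{W}_1\cap\mathcal{I}^\perp=H^0(\cL(f^*E-p))$, hence $(\mathcal{W}_1\cap\mathcal{I}^\perp)+\mathcal{I}=H^0(\psi(\cL)(f^*E))$. But then your reduced pair is the reduction of Mumford's pair $(H^0(\psi(\cL)(f^*E)),\mathcal{W}_2)$ for $\psi(\cL)$ along a line in its first member, not that pair itself. The needed compatibility of scheme structures is still unproved.

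\textbf{The linear algebra needs repair.}
\begin{itemize}
\item A graph over a complement of $\mathcal{W}_2$ is transverse to $\mathcal{W}_2$, so you need $\mathcal{W}_1$ as a graph over $\mathcal{W}_2$.
\item For $r$ even, $\dim(\mathcal{W}_1\cap\mathcal{W}_2)=h^0(\cL)$ is odd while $\mathrm{rank}\,\mathcal{W}_2=2N$ is even. The two Lagrangians lie in opposite families, so $\mathcal{W}_1$ is never such a skew graph.
\item Your reduced matrix is either a principal submatrix, whose Pfaffians are already Pfaffians of $A$ (no expansion needed), or it has a genuinely new border. In the latter case, expanding along the border yields Pfaffians of $A$ two sizes smaller than the ideal you need.
\end{itemize}

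\textbf{The missing idea is that no reduction is needed.} This is what the paper uses.
\begin{itemize}
\item Choose the auxiliary divisor ($B$ in the paper) away from $f(p)$. Then $\cL$ and $\psi(\cL)=\cL(-p+\overline{p})$ agree near $f^*B$, and $\psi(\cL)\otimes\iota^*\psi(\cL)=\cL\otimes\iota^*\cL$.
\item So the orthogonal bundle $\mathcal{V}$ and the principal-parts Lagrangian $\mathcal{U}$ are literally the same for both. Only the global-sections Lagrangian changes.
\item $H^0(\cL(f^*B))$ and $H^0(\psi(\cL)(f^*B))$ share the corank-one isotropic subbundle $H^0(\cL(f^*B-p))=H^0(\psi(\cL)(f^*B-\overline{p}))$.
\item The paper then uses Tarasca's pointed degeneracy loci for the isotropic flag $\mathcal{W}_i=\pi_*(\cE(B-ip))$. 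Scheme-theoretically, $V^r$ lies in the locus where $\mathrm{rank}(\mathcal{W}_1\cap\mathcal{U})\geq r$. The map $\psi$ identifies this with the corresponding locus for $\psi(\cL)$ at $\overline{p}$, which lies in $V^{r-1}$ because $\mathcal{W}'_1\subset\mathcal{W}'_0$.
\end{itemize}

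If you prefer explicit Pfaffians, this configuration is where your idea actually works. Locally, write $\mathcal{U}$ as a skew graph over whichever of the two global-sections Lagrangians lies in its family, using a complement that contains a vector of the other one. The other intersection is then the kernel of the principal submatrix on $H^0(\cL(f^*B-p))$. The ideal containment is immediate for $r$ odd, and follows from Laplace expansion of Pfaffians for $r$ even.
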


\begin{proof}
We follow the scheme-theoretic construction of the pointed Prym-Brill-Noether variety from \cite{Tarasca}.  Let $\cL$ be a Poincar\'{e} line bundle on $\Pic^{2g-2} (\WC) \times \WC$, and let $\cE = (1 \times f)_* \cL$.  Let $\pi \colon \Pic^{2g-2} (\WC) \times C \to C$ denote the projection onto the second factor.  Fix $N$ sufficiently large, let $B$ be a sum of $N$ distinct points on $C$, none equal to $p$ or $\overline{p}$, and let
\begin{align*}
\mathcal{V} &= \pi_* (\cE (B)/\cE (-B)) \vert_{P^r (C,f)} \\
\mathcal{W}_i &= \pi_* (\cE (B-ip)) \vert_{P^r (C,f)} \\
\mathcal{U} &= \pi_* (\cE /\cE (-B)\vert_{P^r (C,f)} .
\end{align*}
Then, by \cite{Mumford71, Tarasca}, $\mathcal{V}$ is a vector bundle equipped with a nondegenerate quadratic form, $\mathcal{U}$ is a maximal isotropic subbundle, and $\mathcal{W}_r \subset \cdots \mathcal{W}_0$ is an isotropic flag of subbundles.  There is a scheme-theoretic equality
\[
V^r (C,f) = \{ L \in P^r (C,f) \mid \mathrm{rank} (\mathcal{W}_i \cap \mathcal{U})_{\vert L} \geq r+1-i \text{ for all } i \} ,
\]
where the object on the right is defined as an isotropic determinantal scheme.  From this, we see that there is an inclusion of schemes
\begin{align}
\label{eq:first}
V^r (C,f) \subseteq \{ L \in P^r (C,f) \mid \mathrm{rank} (\mathcal{W}_i \cap \mathcal{U})_{\vert L} \geq r+1-i \text{ for all } i \geq 1 \}.
\end{align}

Similarly, since $\cL' = (\psi \times \mathrm{id})^* \cL (p-\overline{p})$ is also Poincar\'{e} line bundle, we define $\cE' = (1 \times f)_* \cL'$ and
\begin{align*}
\mathcal{V}' &= \pi_* (\cE' (B)/\cE' (-B)) \vert_{P^{r-1} (C,f)} \\
\mathcal{W}'_i &= \pi_* (\cE' (B-i\overline{p})) \vert_{P^{r-1} (C,f)} \\
\mathcal{U}' &= \pi_* (\cE' /\cE' (-B)\vert_{P^{r-1} (C,f)} .
\end{align*}
And we obtain an inclusion of schemes:
\begin{align}
\label{eq:second}
V^{r-1} (C,f) \supseteq \{ L \in P^{r-1} (C,f) \mid \mathrm{rank} (\mathcal{W}'_i \cap \mathcal{U}')_{\vert L} \geq r+1-i \text{ for all } i \geq 1 \}.
\end{align}
Finally, the isomorphism $\psi$ identifies the scheme on the right in \eqref{eq:first} with that on the right in \eqref{eq:second}.
\end{proof}

\begin{corollary}
\label{Cor:LocalCI}
Let $f \colon \WC \to C$ be an \'{e}tale double cover of curves, let $E$ be an effective divisor on $\WC$ of degree $r-1$, and let $\psi \colon \Pic^{2g-2} (\WC) \to \Pic^{2g-2} (\WC)$ be the map given by $\psi (D) = D - E + \overline{E}$.  Then there is an inclusion of schemes $\psi (V^r (C,f)) \subseteq V^1 (C,f)$.
\end{corollary}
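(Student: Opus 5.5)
The natural approach is to iterate Proposition~\ref{Prop:LocalCI} $r-1$ times. Write $E = p_1 + \cdots + p_{r-1}$ with $p_k \in \WC$ (repetitions allowed), and let $\psi_k \colon \Pic^{2g-2}(\WC) \to \Pic^{2g-2}(\WC)$ denote the translation $\psi_k(D) = D - p_k + \overline{p}_k$. Then $\psi = \psi_{r-1} \circ \cdots \circ \psi_1$, since translations commute and compose additively. Each $\psi_k$ is an isomorphism of varieties. By Lemma-style reasoning analogous to Lemma~\ref{Lem:Bijection} in the classical setting, each $\psi_k$ interchanges the two components $P^0(C,f)$ and $P^1(C,f)$. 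Thus $\psi_k$ carries $P^{s}(C,f)$ onto $P^{s-1}(C,f)$, which is what makes the target parity in Proposition~\ref{Prop:LocalCI} consistent.

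The argument is then a downward induction on $s$ from $r$ to $2$. Proposition~\ref{Prop:LocalCI}, applied with the point $p_k$ and rank $s$, gives a scheme-theoretic inclusion $\psi_k(V^s(C,f)) \subseteq V^{s-1}(C,f)$. Since $\psi_k$ is an isomorphism of ambient spaces, images of closed subschemes are closed subschemes, and inclusions are preserved under further applying isomorphisms. Chaining these inclusions gives
\[
\psi(V^r(C,f)) = \psi_{r-1}\circ\cdots\circ\psi_1 (V^r(C,f)) \subseteq \psi_{r-1}\circ\cdots\circ\psi_2(V^{r-1}(C,f)) \subseteq \cdots \subseteq V^1(C,f).
\]
The $k$-th step uses $\psi_k(V^{r-k+1}) \subseteq V^{r-k}$, and $r-1$ steps bring us from rank $r$ to rank $1$.

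The one point needing care is whether Proposition~\ref{Prop:LocalCI} requires the point $p$ to be distinct from points used previously, or to avoid the auxiliary divisor $B$. In the proof of Proposition~\ref{Prop:LocalCI}, $B$ is chosen afresh for each application, and the point $p$ is arbitrary. So repeated points in $E$ cause no problem: each application is independent, with its own Poincar\'e bundle $\cL' = (\psi_k\times\mathrm{id})^*\cL(p_k - \overline{p}_k)$. I expect this bookkeeping, namely checking that the isotropic determinantal scheme structures match under pullback at each stage, to be the only potential obstacle. It is handled entirely inside Proposition~\ref{Prop:LocalCI}, so the corollary follows formally.
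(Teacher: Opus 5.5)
Your proposal is correct and follows the paper's proof. The paper simply says the corollary follows by applying Proposition~\ref{Prop:LocalCI} $r-1$ times, and your decomposition $E = p_1 + \cdots + p_{r-1}$, with $\psi = \psi_{r-1}\circ\cdots\circ\psi_1$ and the chain of inclusions through $V^{r-1}(C,f), \ldots, V^{1}(C,f)$, is exactly that iteration written out.
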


\begin{proof}
This follows by applying Proposition~\ref{Prop:LocalCI} $r-1$ times.
\end{proof}

\subsection{Proof of the Lifting Theorem}
\label{Sec:LiftingProof}

\begin{proof}[Proof of Theorem~\ref{Thm:Lifting}]
Since the set of vertex avoiding divisor classes $[D] \in V^r (\Gamma,\pi)$ is dense in $V^r (\Gamma,\pi)$ and $\Trop (V^r (C,f))$ is closed, it suffices to consider the case where $[D]$ is vertex avoiding.  By Lemma~\ref{Lem:Count}, there exists divisors $F_1 , \ldots , F_{g-1-{{r+1}\choose{2}}} \in P^r - P^1$ such that $[D]$ is contained in the intersection
\begin{align*}
X &= \bigcap_{i=1}^{g-1-{{r+1}\choose{2}}} [V^1 (\Gamma,\pi)] + F_i ], \text{ and} \\
\vert X \vert &= 2^{{r}\choose{2}} \cdot (g-1)! \cdot \prod_{i=1}^r \frac{(i-1)!}{(2i-1)!} .
\end{align*}
Let $\cF_i$ be a divisor on $C$ such that $\Trop (\cF_i) = F_i$, and define $\mathcal{X} \subseteq \Pic^{2g-2} (\WC)$ to be the intersection
\[
\mathcal{X} = \bigcap_{i=1}^{g-1-{{r+1}\choose{2}}} [V^1 (C,f) + \cF_i ] .
\]
Since $\Trop (\mathcal{X}) \subseteq X$, we see that $\mathcal{X}$ is finite.  By the formula for the class of $V^r (C,f)$ from \cite{DCP}, one sees that the length of the finite scheme $\mathcal{X}$ is exactly
\[
2^{{r}\choose{2}} \cdot (g-1)! \cdot \prod_{i=1}^r \frac{(i-1)!}{(2i-1)!},
\]
the same as the cardinality of $X$.

Now, by Proposition~\ref{Prop:TropicalLocalCI}, there exists an open neighborhood $U$ of $[D]$ and effective divisors $E_{g-{{r+1}\choose{2}}} , \ldots , E_{g-1}$ of degree $r-1$ such that
\[
V^r (\Gamma,\pi) \cap U = \bigcap_{j=g-{{r+1}\choose{2}}}^{g-1} [V^1 (\Gamma,\pi) + E_j - \overline{E}_j ] \cap U .
\]
As before, we lift each of these divisors $E_j$ to $\cE_j$ on $C$.  By Proposition~\ref{Prop:LocalCI}, $V^r (C,f)$ is contained in $V^1 (C,f) + \cE_j - \overline{\cE}_j$ for all $j$.  For $j \geq g-{{r+1}\choose{2}}$, let $F_j = E_j - \overline{E}_j$ and $\cF_j = \cE_j - \overline{\cE}_j$.

In the open neighborhood $U$, the hypersurfaces $\Trop (V^1 (C,f)) + \cF_j$ are translates of coordinate hyperplanes, all with multiplicity one by Lemma~\ref{Lem:Theta}.  It follows from \cite{Rabinoff12} that there is at most one point tropicalizing to $[D]$ in $\cap_{i=1}^{g-1} [V^1 (C,f) + \cF_i ]$.  In other words, the tropicalization map $\Trop \colon \mathcal{X} \to X$ is injective.  By the above, however, these two sets have the same cardinality, so the tropicalization map is surjective as well.
\end{proof}

\subsection{Consequences of Theorem~\ref{Thm:Lifting}}
\label{Sec:Consequences}

Theorem~\ref{Thm:Lifting} has many useful corollaries.  First, we see that, if a Prym curve specializes to $\pi \colon \WG \to \Gamma$ with $\Gamma$ $r$-generic, then the Prym-Brill-Noether variety has the expected dimension.

\begin{corollary}
\label{Cor:Dimension}
Let $\Gamma$ be an $r$-generic loop of loops and let $\pi \colon \WG \to \Gamma$ be the quotient by the antipodal involution.  If $f \colon \WC \to C$ is an \'{e}tale double cover of curves specializing to $\pi$, then
\[
\mathrm{dim} V^r (C,f) = \mathrm{dim} V^r (\Gamma,\pi) = g-1-{{r+1}\choose{2}}.
\]
\end{corollary}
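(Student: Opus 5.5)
The plan is to get the lower bound on $\mathrm{dim} V^r(C,f)$ from the lifting theorem and the upper bound from semicontinuity of dimension under tropicalization, with Corollary~\ref{Cor:PureDimension} supplying the tropical dimension $g-1-{{r+1}\choose{2}}$.

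For the lower bound, note that $V^r(C,f)$ is always nonempty of dimension at least $g-1-{{r+1}\choose{2}}$ whenever it is nonempty. This holds because it is a type-D (isotropic) degeneracy locus inside $P^r(C,f)$, as in the proof of Proposition~\ref{Prop:LocalCI}, and such loci have codimension at most ${{r+1}\choose{2}}$ in each component. I would only use nonemptiness from this argument; it follows from Theorem~\ref{Thm:Lifting}, which lifts the nonempty set $V^r(\Gamma,\pi)$, since $g-1\geq{{r+1}\choose{2}}$ is implicitly assumed for the statement to be meaningful.

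A more self-contained route to the lower bound uses only the tropical side. Theorem~\ref{Thm:Lifting} gives $\Trop(V^r(C,f)) \supseteq V^r(\Gamma,\pi)$. The specialization inequality $\rk(\Trop D) \geq \rk(D)$, together with compatibility of the norm with tropicalization, gives $\Trop(V^r(C,f)) \subseteq V^r(\Gamma,\pi)$. Hence the two sets are equal. The tropicalization of an algebraic subvariety of the abelian variety $P^r(C,f)$ (with its skeleton) is a polyhedral set of dimension equal to the algebraic dimension, by the Bieri--Groves type theorem for analytic skeleta (Gubler). Therefore $\mathrm{dim} V^r(C,f) = \mathrm{dim} \Trop(V^r(C,f)) = \mathrm{dim} V^r(\Gamma,\pi) = g-1-{{r+1}\choose{2}}$, where the last equality is Corollary~\ref{Cor:PureDimension}.

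The main obstacle is justifying the dimension-preservation statement for tropicalizations inside a Prym variety with totally degenerate (or at least degenerating) reduction. If Gubler's theorem is not available in the needed form, I would fall back on intersection theory. By Lemma~\ref{Lem:Count} and the argument in the proof of Theorem~\ref{Thm:Lifting}, intersecting $V^r(C,f)$ with $g-1-{{r+1}\choose{2}}$ general translates of $V^1(C,f)$ gives a finite nonempty scheme. This shows $\mathrm{dim} \leq g-1-{{r+1}\choose{2}}$, since any component of larger dimension would meet these ample divisors in positive dimension. Conversely, a nonempty finite intersection with that many ample divisors forces $\mathrm{dim} \geq g-1-{{r+1}\choose{2}}$.
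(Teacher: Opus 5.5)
Your ``more self-contained route'' is exactly the paper's proof: Theorem~\ref{Thm:Lifting} together with the specialization inequality gives $\Trop(V^r(C,f)) = V^r(\Gamma,\pi)$. Gubler's theorem then gives $\mathrm{dim} V^r(C,f) = \mathrm{dim}\Trop(V^r(C,f))$, and Corollary~\ref{Cor:PureDimension} supplies the value $g-1-{{r+1}\choose{2}}$.

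The degeneracy-locus lower bound and the intersection-theoretic fallback are not needed. Note also that the fallback's claim that a nonempty finite intersection forces $\mathrm{dim} \geq g-1-{{r+1}\choose{2}}$ would require general translates, not just the specific lifts $\cF_i$ used in the proof of Theorem~\ref{Thm:Lifting}.
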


\begin{proof}
By \cite[Theorem~6.9]{Gubler07}, we have $\mathrm{dim} V^r (C,f) = \mathrm{dim} \Trop (V^r (C,f))$.  By Theorem~\ref{Thm:Lifting}, we have $\Trop (V^r (C,f)) = V^r (\Gamma,\pi)$,and by Corollary~\ref{Cor:PureDimension}, we have $\mathrm{dim} V^r (\Gamma,\pi) = g-1-{{r+1}\choose{2}}$.
\end{proof}

As an immediate consequence, we obtain a new proof of Welters' theorem.

\begin{corollary}
\label{Cor:Welters}
Let $[f \colon \WC \to C] \in \mathcal{R}_g$ be a general \'{e}tale double cover of curves.  Then 
\[
\mathrm{dim} V^r (C,f) = g-1-{{r+1}\choose{2}}.
\]
\end{corollary}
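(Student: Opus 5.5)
The plan is to deduce Corollary~\ref{Cor:Welters} from Corollary~\ref{Cor:Dimension} together with upper semicontinuity of fiber dimension on the moduli space $\mathcal{R}_g$. The argument has two halves: an upper bound holding on a dense open set, and a matching lower bound holding everywhere.

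For the upper bound, I first fix an $r$-generic loop of loops $\Gamma$ of genus $g$, for instance one with all torsion orders $m_j=0$. Let $\pi\colon\WG\to\Gamma$ be the antipodal double cover. Next I choose a discretely valued (or algebraically closed nonarchimedean) field $K$ and an étale double cover $f\colon\WC\to C$ over $K$ whose skeleton is $\pi$. Such a cover can be built from a totally degenerate stable curve whose dual graph is a model of $\Gamma$, with each component rational. The étale double cover is then determined by the nontrivial class of $H^1(\Gamma,\Z/2)$ that defines $\pi$, and the edge lengths are realized by the valuations of the smoothing parameters. Corollary~\ref{Cor:Dimension} now gives $\dim V^r(C,f)=g-1-\binom{r+1}{2}$.

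To pass to the general cover, I use the relative Prym–Brill–Noether scheme $\mathcal{V}^r\to\mathcal{R}_g$, defined as a degeneracy locus via the construction in Proposition~\ref{Prop:LocalCI} carried out over the universal family. By upper semicontinuity of fiber dimension, the locus where $\dim V^r(C,f)\le g-1-\binom{r+1}{2}$ is open in $\mathcal{R}_g$. The cover $f$ above is defined over $K$ and so gives a $K$-point of $\mathcal{R}_g$, via a map $\operatorname{Spec}K\to\mathcal{R}_g$ that hits this open set. Because $\mathcal{R}_g$ is irreducible, the open set is therefore nonempty and dense, and the upper bound holds for a general cover. If needed I will argue at the geometric generic point, and use that dimension is invariant under field extension.

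For the lower bound, I use that $V^r(C,f)$ is an isotropic (type D) degeneracy locus inside $P^r(C,f)$, which has dimension $g-1$. Every component of such a locus has codimension at most $\binom{r+1}{2}$. Moreover $V^r(C,f)$ is nonempty whenever $g-1\ge\binom{r+1}{2}$, since its class computed in \cite{DCP} is a nonzero multiple of $\xi^{\binom{r+1}{2}}$. Combined with the upper bound, this gives equality. The main obstacle I expect is the realizability step: verifying that an $r$-generic $\Gamma$ with its antipodal cover really does arise as the skeleton of an étale double cover over a nonarchimedean field, so that the $K$-point lands in $\mathcal{R}_g$. I would settle this through the standard deformation theory of admissible covers, or of Prym curves, along the boundary of $\overline{\mathcal{R}}_g$.
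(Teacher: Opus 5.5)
Your proposal is correct. The upper-bound half is the paper's argument: realize an $r$-generic loop of loops by an \'{e}tale double cover over a nonarchimedean field, apply Corollary~\ref{Cor:Dimension}, and use semicontinuity plus irreducibility of $\mathcal{R}_g$. The paper handles your ``main obstacle'' simply by citing \cite[Lemma~5.9]{JensenLen18}.

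One small correction there. If you take all $m_j=0$, the edge-length ratios must be irrational, so $K$ cannot be discretely valued. Either work over an algebraically closed field with value group $\R$, which Theorem~\ref{Thm:Lifting} requires anyway, or choose rational lengths with every $m_j\geq r$.

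The lower bound is where you genuinely diverge. You use the classical Welters--Bertram argument. Since $V^r(C,f)$ is a type-D degeneracy locus, every component has codimension at most $\binom{r+1}{2}$ in $P^r(C,f)$. It is nonempty because the degeneracy class is a nonzero multiple of $\xi^{\binom{r+1}{2}}$. For that step you should use the refined class, which is supported on the locus and so vanishes if the locus is empty; the formula as the fundamental class presupposes expected dimension. Alternatively, just cite Bertram's existence theorem.

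The paper argues tropically instead. The closed locus $Y\subseteq\mathcal{R}_g$ where $\dim V^r(C,f)\geq g-1-\binom{r+1}{2}$ contains every cover specializing to an $r$-generic loop of loops. Hence its image under $\mathcal{R}_g^{\an}\to\mathcal{R}_g^{\mathrm{trop}}$ has dimension $3g-3$, which forces $\dim Y\geq 3g-3$ and so $Y=\mathcal{R}_g$.

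What each route buys:
\begin{itemize}
\item Your route gives the lower bound for every cover directly. It needs no comparison between the analytified and tropical moduli spaces, but it imports the excess-dimension bound for isotropic degeneracy loci and the existence theorem.
\item The paper's route extracts both nonemptiness and the lower bound from its own tropical computation plus irreducibility. That is what lets it present the corollary as a new proof of Welters' theorem.
\end{itemize}
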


\begin{proof}
Let $\Gamma$ be an $r$-generic loop of loops and let $\pi \colon \WG \to \Gamma$ be the quotient by the antipodal involution.  By \cite[Lemma~5.9]{JensenLen18}, the map $\pi \colon \WG \to \Gamma$ can be lifted to an \'{e}tale double cover $f \colon \WC \to C$.  By Corollary~\ref{Cor:Dimension}, we have 
\[
\mathrm{dim} V^r (C,f) = g-1-{{r+1}\choose{2}}.
\]
By semicontinuity, a general \'{e}tale double cover $[f \colon \WC \to C] \in \mathcal{R}_g$ satisfies $\mathrm{dim} V^r (C,f) \leq g-1-{{r+1}\choose{2}}$.

Now, let
\[
Y = \Big\{ [f \colon \WC \to C] \in \mathcal{R}_g \mid \mathrm{dim} V^r (C,f) \geq g-1-{{r+1}\choose{2}} \Big\} ,
\]
and let $Y^{\an}$ be the analytification of $Y$ in $\mathcal{R}_g^{\an}$.  By Corollary~\ref{Cor:Dimension}, the image of $Y^{\an}$ under the retraction $\mathcal{R}_g^{\an} \to \mathcal{R}_g^{\trop}$ contains the set of covers $\pi \colon \WG \to \Gamma$ where $\Gamma$ is $r$-generic, which has dimension $3g-3$.  Thus,
\[
\mathrm{dim} (Y) \geq \mathrm{dim} (\mathrm{im} (Y^{\an})) = 3g-3 = \mathrm{dim} (\mathcal{R}_g).
\]
Since $\mathcal{R}_g$ is irreducible, the result follows.
\end{proof}

In the case where $V^r (\Gamma,\pi)$ is finite, we see that it is in bijection with $V^r (C,f)$.

\begin{corollary}
\label{Cor:Bijection}
Let $\Gamma$ be an $r$-generic loop of loops, let $\pi \colon \WG \to \Gamma$ be the quotient by the antipodal involution, and let $f \colon \WC \to C$ be an \'{e}tale double cover of curves specializing to $\pi$.  If $g-1 = {{r+1}\choose{2}}$, then the tropicalization map $\Trop \colon V^r (C,f) \to V^r (\Gamma,\pi)$ is a bijection.
\end{corollary}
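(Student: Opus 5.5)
When $g-1 = {{r+1}\choose{2}}$, the plan is to show that $\Trop \colon V^r(C,f) \to V^r(\Gamma,\pi)$ is both surjective and injective, and that $V^r(C,f)$ is finite, by specializing the argument in the proof of Theorem~\ref{Thm:Lifting}.

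First, surjectivity is exactly Theorem~\ref{Thm:Lifting}. Next, by Corollary~\ref{Cor:Dimension}, $V^r(C,f)$ has dimension $g-1-{{r+1}\choose{2}} = 0$, so it is finite. By Lemma~\ref{Lem:Cardinality}, $V^r(\Gamma,\pi)$ has cardinality
\[
N = 2^{{r}\choose{2}} \cdot (g-1)! \cdot \prod_{i=1}^r \frac{(i-1)!}{(2i-1)!},
\]
which is the number of reduced words for $w_0$.

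For injectivity I would first note that every point of $V^r(\Gamma,\pi)$ is vertex avoiding, or at least that the local analysis applies at every point. In this case every $Q^1_D$ is a reduced word for $w_0$ with no $\epsilon$ terms, so $Q^1_D$ is reduced. Each $\xi_j(D)$ equals some $s^r_j[i]$ with $0 \le i \le r-1$, and this avoids $s^r_j[0]-1$ and $s^r_j[r]$ by $r$-genericity. Then Proposition~\ref{Prop:TropicalLocalCI} gives, around each $[D]$, a neighborhood $U$ in which $V^r(\Gamma,\pi)$ is cut out by $g-1$ translates of $V^1(\Gamma,\pi)$, namely the $V^1(\Gamma,\pi) + E_j - \overline{E}_j$. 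Lifting the $E_j$ to divisors $\cE_j$ on $\WC$, Corollary~\ref{Cor:LocalCI} shows that $V^r(C,f)$ is contained in each translate $V^1(C,f) + \cE_j - \overline{\cE}_j$. By Lemma~\ref{Lem:Theta}, each of these tropicalizes in $U$ to a coordinate hyperplane of multiplicity one. Rabinoff's theorem then gives at most one point of the intersection, and hence at most one point of $V^r(C,f)$, tropicalizing to $[D]$.

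The main obstacle is making sure Rabinoff's count applies: the tropical intersection must be transverse and isolated at $[D]$, which holds because the hyperplanes are distinct coordinate hyperplanes, one for each $j$. Alternatively, one can avoid the local argument altogether by a counting argument. By \cite{DCP}, $V^r(C,f)$ is a finite scheme of length $N$, and by Corollary~\ref{Cor:Reduced} (or the preceding argument) it is reduced. It therefore has exactly $N$ points. A surjection between finite sets of equal cardinality $N$ is a bijection, which proves the statement.
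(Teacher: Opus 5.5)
Your first route is the paper's argument unpacked. The paper's proof observes that when $g-1=\binom{r+1}{2}$, the sets $\mathcal{X}$ and $X$ in the proof of Theorem~\ref{Thm:Lifting} are $V^r(C,f)$ and $V^r(\Gamma,\pi)$. That proof shows $\Trop\colon\mathcal{X}\to X$ is injective (Proposition~\ref{Prop:TropicalLocalCI} plus Rabinoff), and then bijective because the length of $\mathcal{X}$ from \cite{DCP} equals $\#X$.

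One step you state is not justified as written. $r$-genericity allows $m_j=r$, and then $s^r_j[r-1]\equiv s^r_j[0]-1$ and $s^r_j[0]\equiv s^r_j[r]$ in $\R/m_j\Z$. So a class with $s_j\in\{\tau_0,\tau_{r-1}\}$ at such a loop is not vertex avoiding. Every reduced word for $w_0$ involves $\tau_0$, so on an $r$-uniform loop of loops (which is $r$-generic) no class of $V^r(\Gamma,\pi)$ is vertex avoiding. Your claim that vertex avoidance is automatic holds only when every $m_j$ is $0$ or at least $r+1$. The paper's proof has the same hidden dependence: it asserts that vertex-avoiding classes are dense, which in the finite case means every class is vertex avoiding. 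So this is not a departure from the paper, but you should not attribute the claim to $r$-genericity.

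Your counting route is a valid deduction from the paper's stated results, and it is arguably a cleaner way to write the corollary than pointing inside a proof. Its ingredients are:
\begin{itemize}
\item surjectivity from Theorem~\ref{Thm:Lifting};
\item finiteness from Corollary~\ref{Cor:Dimension};
\item length $2^{\binom{r}{2}}(g-1)!\prod_{i=1}^r\frac{(i-1)!}{(2i-1)!}$ from the class formula of \cite{DCP};
\item reducedness from Corollary~\ref{Cor:Reduced};
\item the same cardinality for $V^r(\Gamma,\pi)$ from Lemma~\ref{Lem:Cardinality}.
\end{itemize}
It does not, however, avoid the local argument. Corollary~\ref{Cor:Reduced} is proved in the paper exactly by the Proposition~\ref{Prop:TropicalLocalCI}/Rabinoff argument from the proof of Theorem~\ref{Thm:Lifting}. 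The two routes therefore have the same logical content and inherit the same dependence noted above.
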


\begin{proof}
In the proof of Theorem~\ref{Thm:Lifting}, it is shown that the map $\Trop \colon \mathcal{X} \to X$ is a bijection.  But in the case where $g-1 = {{r+1}\choose{2}}$, we have $\mathcal{X} = V^r (C,f)$ and $X = V^r (\Gamma,\pi)$.
\end{proof}

Another result of Welters is that, for a general \'{e}tale double cover, the singular locus of $V^r (C,f)$ is $V^{r+2} (C,f)$ \cite{Welters85}.  This is stronger than what can be deduced from Theorem~\ref{Thm:Lifting}, but we can show that $V^r (C,f)$ is smooth at the generic point.

\begin{corollary}
\label{Cor:Reduced}
Let $\Gamma$ be an $r$-generic loop of loops and let $\pi \colon \WG \to \Gamma$ be the quotient by the antipodal involution.  If $f \colon \WC \to C$ is an \'{e}tale double cover of curves specializing to $\pi$, then $V^r (C,f)$ is reduced.  In particular, if $g-1 = {{r+1}\choose{2}}$, then $V^r (C,f)$ is smooth.
\end{corollary}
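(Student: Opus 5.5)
The plan is to reduce reducedness to a multiplicity-one statement at a general point of each component, and to get that from the length count in the proof of Theorem~\ref{Thm:Lifting}. Since $V^r(C,f)$ is a degeneracy locus of the expected dimension $g-1-{{r+1}\choose{2}}$ (Corollary~\ref{Cor:Dimension}), I first argue that it is Cohen--Macaulay. The type-D description used in Proposition~\ref{Prop:LocalCI} exhibits $V^r(C,f)$ as an isotropic determinantal scheme. Such loci are Cohen--Macaulay when they have the expected codimension ${{r+1}\choose{2}}$, because the universal orthogonal degeneracy locus has Cohen--Macaulay singularities. So $V^r(C,f)$ has no embedded components, and it suffices to show it is generically reduced on every irreducible component.

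The key step uses the finite scheme $\mathcal{X}=\bigcap_i [V^1(C,f)+\cF_i]$ from the proof of Theorem~\ref{Thm:Lifting}. That proof shows two things. First, the length of $\mathcal{X}$ equals $\# X$, computed from the class formula of \cite{DCP}. Second, $\Trop\colon\mathcal{X}\to X$ is a bijection. Hence $\mathcal{X}$ is a reduced finite scheme: $\#X$ distinct points with total length $\#X$. Near a vertex avoiding point $[D]$, the scheme $V^r(C,f)$ is contained in each $V^1(C,f)+\cF_j$ for the local complete intersection translates, by Proposition~\ref{Prop:LocalCI} and Corollary~\ref{Cor:LocalCI}. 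Intersecting $V^r(C,f)$ with the remaining $g-1-{{r+1}\choose{2}}$ general translates $V^1(C,f)+\cF_i$ then gives a subscheme of $\mathcal{X}$ supported at the lifted point. A reduced point lying on a subscheme of a reduced scheme is a reduced point. Since $V^r(C,f)$ is Cohen--Macaulay and the general translates cut it properly, the local multiplicity of $V^r(C,f)$ along the component through that point is at most the length of this slice, which is $1$. So the component is generically reduced.

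Next, every irreducible component of $V^r(C,f)$ must be reached this way. By Theorem~\ref{Thm:Lifting} and Gubler's dimension result, the tropicalization of each component is a full-dimensional union of tori $T(Q)$. Vertex avoiding points are dense in each maximal torus, so every component contains a point tropicalizing to a vertex avoiding $[D]$ lying in a suitable $X$. For the final claim: when $g-1={{r+1}\choose{2}}$, $V^r(C,f)=\mathcal{X}$ is finite and reduced, hence smooth.

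The main obstacle is the Cohen--Macaulay step, that is, promoting ``the general slice is reduced'' to ``no embedded or nonreduced structure anywhere.'' If the Cohen--Macaulay property of isotropic degeneracy loci is not citable in this form, I would fall back to proving only generic reducedness. That would mean stating ``reduced'' as ``generically reduced'' for positive dimension, while the finite case $g-1={{r+1}\choose{2}}$ is unaffected.
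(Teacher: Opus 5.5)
Your proposal is correct and is essentially the paper's proof. The paper likewise gets generic reducedness (indeed smoothness) from the fact that the finite slice $\mathcal{X}$, meaning $V^r(C,f)$ intersected with the general translates of $V^1(C,f)$ in the proof of Theorem~\ref{Thm:Lifting}, consists of distinct reduced points, and then invokes the Cohen--Macaulay property of the type-D degeneracy locus to rule out embedded points. Your extra check that every irreducible component actually meets such a reduced slice is left implicit in the paper; it can also be seen directly, since each component has dimension $g-1-{{r+1}\choose{2}}$ and so must meet that many ample theta translates.
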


\begin{proof}
In the proof of Theorem~\ref{Thm:Lifting}, we show that the intersection of $V^r (C,f)$ with ${{r+1}\choose{2}}$ general translates of $V^1 (C,f)$ consists of distinct reduced points.  Thus, $V^r (C,f)$ is smooth at these points and hence is generically reduced.  Since $V^r (C,f)$ is a type-D degeneracy locus, it is Cohen-Macaulay.  It follows that it has no embedded points, so it is reduced.
\end{proof}

We briefly discuss a connection to the theory of tropical linear series, as developed in \cite{TLS}.  A subset $\Sigma \subseteq R(D)$ is a \emph{tropical submodule} if it is closed under pointwise minimum and addition of scalars.  Just as one defines the Baker-Norine rank of a divisor $D$, one can similarly define the Baker-Norine rank of any tropical submodule $\Sigma \subseteq R(D)$.  Precisely, the \emph{Baker-Norine rank} of $\Sigma$ is the largest integer $r$ such that, for every effective divisor $E$ of degree $r$, there exists $\varphi \in \Sigma$ such that $\ddiv (\varphi) + D - E$ is effective.  The \emph{independence rank} of $\Sigma$ is the size of the largest tropically independent subset of $\Sigma$.  A \emph{tropical linear series} of rank $r$ is a finitely generated tropical submodule $\Sigma \subseteq R(D)$ of Baker-Norine rank $r$ and independence rank $r+1$.  In Corollary~\ref{Cor:TLS} below, we show that, for all $[D] \in V^r (\Gamma,\pi)$, there exists a tropical linear series $\Sigma \subseteq R(D)$ of rank $r$.  Furthermore, if $[D]$ is vertex avoiding, then $\Sigma$ contains the functions $\varphi_0 , \ldots , \varphi_r$ defined in Proposition~\ref{Prop:Fns}.

The literature contains several other definitions of linear series on tropical curves.  For example, there are the \emph{combinatorial limit linear series} of Amini and Gierczak \cite{AG24} and the \emph{matroidal linear series} of \cite{TLS}.  Another definition of tropical linear series appears in \cite[Definition~6.5]{M23}; these objects are referred to as \emph{strongly recursive tropical linear series} in \cite{TLS}.  The relationships between these various definitions are explored in \cite{Burkholder}.  The proof of Corollary~\ref{Cor:TLS} only uses properties that are common to all these definitions.

\begin{corollary}
\label{Cor:TLS}
Let $\Gamma$ be an $r$-generic loop of loops and $\pi \colon \WG \to \Gamma$ the quotient by the antipodal involution.  For every divisor $D \in V^r (\Gamma,\pi)$, there exists a tropical linear series $\Sigma \subseteq R(D)$ of rank $r$.  Moreover, if $[D]$ is vertex avoiding, then $\Sigma$ contains the functions $\varphi_0 , \ldots , \varphi_r$ defined in Proposition~\ref{Prop:Fns}.
\end{corollary}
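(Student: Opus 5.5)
The plan is to derive the existence of $\Sigma$ from the lifting theorem, then handle the vertex avoiding refinement by a direct comparison with the functions of Proposition~\ref{Prop:Fns}.

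\textbf{Step 1: lift and tropicalize.}
Choose a nonarchimedean field $K$ and an \'{e}tale double cover $f \colon \WC \to C$ specializing to $\pi$; one exists by \cite[Lemma~5.9]{JensenLen18}. By Theorem~\ref{Thm:Lifting}, the class $[D]$ lifts to a divisor class $[\mathcal{D}] \in V^r(C,f)$, so $h^0(\mathcal{D}) \geq r+1$. Fix an $(r+1)$-dimensional subspace $V \subseteq H^0(\mathcal{D})$, and take $\Sigma = \trop(V) \subseteq R(D)$, the set of tropicalizations of the rational functions in $V$. It is standard that $\trop(V)$ is a finitely generated tropical submodule: it is generated by the tropicalizations of finitely many functions, for instance those with distinct divisorial orders along some model. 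It also has Baker--Norine rank at least $r$, since for any effective $E$ of degree $r$ we may lift $E$ to $\mathcal{E}$ and find $f \in V$ vanishing on $\mathcal{E}$. Finally, its independence rank is exactly $r+1$. The upper bound comes from the fact that tropicalizations of $r+2$ functions in an $(r+1)$-dimensional space are tropically dependent. The lower bound comes from \cite[Lemma~3.7]{tropicalGP}-style arguments, or simply from the rank lower bound. These properties are shared by every notion of tropical linear series in the literature (\cite{AG24, TLS, M23}), which is why this construction works uniformly for each of them.

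\textbf{Step 2: the vertex avoiding refinement.}
The goal is to show that each $\varphi_i$ from Proposition~\ref{Prop:Fns} is the tropicalization of some function in $V$. I would use the slope data at a vertex avoiding class. On each bridge $\beta_j$, the slopes of functions in $\trop(V)$ along $\beta_j$ form a set of at most $r+1$ values, namely the slopes realized by the ``minimal order'' sections. At the same time, the functions $\varphi_0, \ldots, \varphi_r$ already realize $r+1$ distinct slopes $s^r_j[\sigma_j(i)]$ there.

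I would argue inductively around the loop, in the style of \cite{CJP}. For each $i$, pick $f_i \in V$ whose tropicalization has slope $s^r_1[i]$ on $\beta_1$. Then, using that $D$ is vertex avoiding, show that $\trop(f_i)$ is forced to agree with $\varphi_i$ on every $\gamma_j$. The source of this rigidity is that the divisor $D_i$ described in Proposition~\ref{Prop:Fns} is determined, loop by loop, by the incoming slope via Example~\ref{Ex:Loop}. Once $\trop(f_i)$ and $\varphi_i$ have the same slopes everywhere, they agree up to an additive constant, and hence $\varphi_i \in \Sigma$ up to scalar.

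\textbf{Step 3: arbitrary $D$.}
For a general class $[D]$, take a limit of vertex avoiding classes. Tropical linear series of rank $r$ form a closed condition on finitely generated submodules with a bounded number of generators. Using this, a limit of the sub-series $\Sigma$ constructed above produces $\Sigma \subseteq R(D)$, or one can directly use Step 1.

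\textbf{Main obstacle.}
The hardest part is Step 2, the rigidity that forces $\trop(f_i) = \varphi_i$. The specific difficulty is ruling out slope changes on a loop $\gamma_j$ with $j \in J^-(i)$, where $D_i$ contains both $v_j$ and $w_j$. There I would exploit the vertex avoiding condition $\xi_j(D) \notin \{s^r_j[0]-1, s^r_j[r]\}$, together with the counting argument $\#J^\pm(i) = r$. Together these should show that the total slope change is exactly accounted for, leaving no freedom for $\trop(f_i)$ to deviate from $\varphi_i$.
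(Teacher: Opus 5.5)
Your Step 1 is the paper's existence argument: lift via Theorem~\ref{Thm:Lifting} and take $\Sigma = \trop(V)$. Since Step 1 already covers every $D$, Step 3 is unnecessary. The gap is in Step 2, for two reasons.

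\textbf{Step 2 is circular.} Nothing tells you that the $r+1$ slopes realized by $\trop(V)$ along $\beta_1$ are $s^r_1[0],\dots,s^r_1[r]$. That the $\varphi_i$ realize these slopes says nothing about $\Sigma$ until you know $\varphi_i \in \Sigma$, which is what you are trying to prove.

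\textbf{The rigidity you want is false.}
\begin{enumerate}
\item Slopes do not propagate loop by loop via Example~\ref{Ex:Loop}. A function in $R(D)$ can drop its slope at any loop.
\item On a loop $\gamma_j$ with $j \in J^-(i)$, where $(\ddiv\varphi_i + D)|_{\gamma_j} = v_j + w_j$, there is a genuine one-parameter family of deformations. Take $p_j$ in the interior of an edge $e$ of $\gamma_j$, and let $q_j$ be its mirror image on $e$ (as far from $w_j$ as $p_j$ is from $v_j$). The trapezoid function $\theta_j$, supported on $e$, with $\ddiv\theta_j = p_j + q_j - v_j - w_j$, gives $\varphi_i + \sum_{j \in J^-(i)} \theta_j \in R(D)$.
\item These functions have exactly the same slope as $\varphi_i$ on every bridge. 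As shown below, they all lie in $\Sigma$.
\item So an $f_i \in V$ with slope $s^r_1[i]$ on $\beta_1$ can tropicalize to one of these deformations rather than to $\varphi_i$. No count using $\#J^{\pm}(i) = r$ or the vertex avoiding condition can exclude this, because these deformations change no bridge slope.
\end{enumerate}

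\textbf{The missing idea.} Force functions into $\Sigma$ using the Baker--Norine rank of $\Sigma$ itself, against a divisor whose residual has a unique effective representative, and then pass to a limit.
\begin{enumerate}
\item Set $E_i = \sum_{j \in J^-(i)} p_j$, which has degree $r$.
\item $D - E_i$ is equivalent to $\sum q_j$, with one chip in the interior of each loop $\gamma_j$, $j \notin J^+(i)$. Vertex avoidance is used here for $j \notin J(i)$.
\item By Dhar's burning algorithm, $\sum q_j$ is $q$-reduced for every $q$, so it is the unique effective divisor in its class.
\item Hence $\psi_i = \varphi_i + \sum \theta_j$ is, up to translation, the only function with $\ddiv\psi_i + D - E_i \geq 0$.
\item Since $\Sigma$ has Baker--Norine rank $r$, it follows that $\psi_i \in \Sigma$.
\item Letting $p_j \to v_j$ gives $\psi_i \to \varphi_i$. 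Because $\{\psi \in \Sigma \mid \psi(v_1) = 0\}$ is compact ($\Sigma$ is finitely generated), $\varphi_i \in \Sigma$.
\end{enumerate}
This is the paper's argument. It needs no information about the slopes of $\trop(V)$, and it works for any tropical submodule of Baker--Norine rank $r$, not only for tropicalizations.
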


\begin{proof}
The existence of the tropical linear series $\Sigma$ follows from Theorem~\ref{Thm:Lifting} because the tropicalization of a linear series is a tropical linear series.  Now, assume that $[D]$ is vertex avoiding, and let $i \in \{ 0, \ldots , r \}$.  For each $j \in J^-(i)$, let $p_j \in \gamma_j \smallsetminus \{ v_j , w_j \}$.  Let $E_i = \sum_{j \in J^-(i)} p_j$, and let $D_i = \ddiv (\varphi_i) + D - E_i$.

If $j \notin J(i)$, then $D_i \vert \gamma_j$ is a single point $q_j \notin \{ v_j , w_j \}$.  If $j \in J^+ (i)$, then $D_i \vert \gamma_j = 0$.  If $j \in J^- (i)$, then $D_i \vert \gamma_j = v_j + w_j - p_j$, which is equivalent to a unique point $q_j \in \gamma_j \smallsetminus \{ v_j , w_j \}$.  It follows that $D_i$ is equivalent to $D'_i = \sum q_j$.  By Dhar's burning algorithm, $D'_i$ is $q$-reduced for all points $q \in \WG$, hence $D'_i$ is the unique effective divisor equivalent to $D-E_i$.  Equivalently, there exists a unique function $\psi_i$ such that $\ddiv (\psi_i) + D - E_i$ is effective and $\psi_i (v_1) = 0$.  Since $\Sigma$ has Baker-Norine rank at least $r$ and is closed under tropical scaling, the function $\psi_i$ must be contained in $\Sigma$.  Since the set $\{ \psi \in \Sigma \mid \psi (v_1) = 0 \}$ is compact, it must also contain the limit of the functions $\psi_i$ as the points $p_j$ approach $v_j$.  This limit is $\varphi_i$, hence $\varphi_i \in \Sigma$ for all $i$.
\end{proof}

\section{The Prym-Brill-Noether Variety for Arbitrary Loops of Loops}
\label{Sec:Arbitrary}

\subsection{Displacement Prym Words}
\label{Sec:DispPrym}

In this section, we identify the Prym-Brill-Noether variety of a loop of loops with arbitrary edge lengths.  We first define some helpful words in the type A Coxeter group.

\begin{definition}
Let $m \geq 2$ be an integer and $i \in \Z/m\Z$.  Define the word 
\[
t_{m,i} := \prod_{\substack{j \in \{ 0, \ldots , r-1 \} \\ j \equiv i \Mod{m}}} \tau_j.
\]
We define $\overline{t}_{m,i} := t_{m,r-1-i}$.
\end{definition}

Let $(W,S)$ be the Coxeter group $A_r$ and recall that $S^*$ denotes the set of words in $(W,S)$.  To each divisor $D \in P^r$, we associate a function $s \colon \Z/(2g-2)\Z \to S^{\ast}$ as follows:
\[
s(j) := \left\{ \begin{array}{ll}
t_{m_j,i} &\text{if } \xi_j (D) \equiv s^r_j [i] \in \R/m_j\Z \\
\epsilon &\text{otherwise.}
\end{array} \right.
\]
If $s^r_j [i] \equiv s^r_j [i']$ in $\R/m_j\Z$, then $i \equiv i' \Mod{m_j}$, hence $s$ is well defined.
We write $Q_D$ for the lingering word obtained by concatenating $Q_D = s(1) s(2) \cdots s(2g-2)$.  As in Section~\ref{Sec:Generic}, the function $s$ has an important property.

\begin{definition}
\label{Def:PrymWord}
Let $\vec{m}$ be a torsion profile.  An \emph{$\vec{m}$-displacement Prym word} $Q$ of length $2g-2$ is a function $s \colon \Z/(2g-2)\Z \to S^{\ast}$ such that, for all $j \in \Z/(2g-2)\Z$, we have:
\begin{enumerate} 
\item either $s(j) = \epsilon$ or $s(j) = t_{m_j,i}$ for some $i \in \Z/m_j\Z$, and
\item $s(g-1+j) = \overline{s(j)}$. 
\end{enumerate}
\end{definition}

\begin{lemma}
\label{Lem:DispPrymWord}
If $D \in P^r$, then $Q_D$ is an $\vec{m}$-displacement Prym word.
\end{lemma}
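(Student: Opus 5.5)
The plan mirrors the proof of Lemma~\ref{Lem:PrymWord}, now with congruences taken modulo $m_j$. Condition (1) of the definition holds by construction: $s(j)$ is either $\epsilon$ or $t_{m_j,i}$. It is well defined because $s^r_j[i] \equiv s^r_j[i'] \in \R/m_j\Z$ exactly when $i \equiv i' \Mod{m_j}$, since $s^r_j[i]-s^r_j[i'] = i-i'$.

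So the content is condition (2), namely $s(g-1+j) = \overline{s(j)}$. Two facts set this up.

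\begin{enumerate}
\item The torsion orders agree: $m_{g-1+j} = m_j$, because $\WG$ has $\ell(\gamma_{g-1+j}) = \ell(\gamma_j)$ and $\ell(v_{g-1+j},w_{g-1+j}) = \ell(v_j,w_j)$.
\item Since $[D] \in P^r$, we have $\xi_{g-1+j}(D) = -1-\xi_j(D)$ in $\R/m_j\Z$.
\end{enumerate}

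Using these, the steps are as follows.

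\begin{enumerate}
\item Verify the identity $-1 - s^r_j[i] = s^r_{g-1+j}[r-1-i]$ for all $i$, by direct case check in $r$ even and $r$ odd. For $r$ even, $-1-(i-\tfrac r2) = (r-1-i)-\tfrac r2$. For $r$ odd with $j \le g-1$, $-1-(i-\tfrac{r+1}2) = (r-1-i)-\tfrac{r-1}2$, and the reverse case is symmetric. This is the same identity used in Lemma~\ref{Lem:PrymWord}.
\item Conclude that $\xi_j(D) \equiv s^r_j[i]$ modulo $m_j$ if and only if $\xi_{g-1+j}(D) \equiv s^r_{g-1+j}[r-1-i]$ modulo $m_j$. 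Hence $s(j) = \epsilon$ exactly when $s(g-1+j) = \epsilon$.
\item Otherwise $s(j) = t_{m_j,i}$ and $s(g-1+j) = t_{m_j,r-1-i}$, and the latter equals $\overline{t}_{m_j,i}$ by definition.
\end{enumerate}

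Step 3 needs one consistency check. Conjugation by $w_0$ sends $\tau_k$ to $\tau_{r-1-k}$, so it sends the set of indices $k \equiv i \Mod{m_j}$ in $\{0,\dots,r-1\}$ onto the set of indices $k \equiv r-1-i$. The factors of $t_{m,i}$ are pairwise distant simple transpositions (indices differ by at least $m \ge 2$), so they commute. Therefore the product $\overline{t_{m_j,i}} = w_0 t_{m_j,i} w_0$, taken as a word, agrees with $t_{m_j,r-1-i}$ regardless of order. This is the only step beyond bookkeeping.

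I expect no real obstacle. The care needed is to track the odd-$r$ half-shift of $s^r_j$ between $j \le g-1$ and $j \ge g$, and to confirm that the residue $r-1-i$ is correctly interpreted in $\Z/m_j\Z$. If $m_j = 0$, congruence modulo $0$ is just equality and the generic argument of Lemma~\ref{Lem:PrymWord} applies verbatim.
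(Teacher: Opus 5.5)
Your proposal is correct and takes the same approach as the paper. The paper's proof just defers to Lemma~\ref{Lem:PrymWord}, whose content is that $\xi_j(D) \equiv s^r_j[i]$ if and only if $\xi_{g-1+j}(D) \equiv -1-s^r_j[i] = s^r_{g-1+j}[r-1-i]$, which you correctly read modulo $m_j = m_{g-1+j}$. Your extra check that conjugation by $w_0$ sends $t_{m_j,i}$ to $t_{m_j,r-1-i}$ is harmless but not needed, since the paper simply defines $\overline{t}_{m,i} := t_{m,r-1-i}$.
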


\begin{proof}
The proof is exactly the same as that of Lemma~\ref{Lem:PrymWord}.
\end{proof}

As in Section~\ref{Sec:Generic}, we define the $j$th \emph{half word} of an $\vec{m}$-displacement Prym word to be \[Q^j = s(j) s(j+1) \cdots s(g-2+j).\]  An \emph{$\vec{m}$-displacement lingering word} of length $g-1$ is a function $s \colon [g-1] \to S^{\ast}$ such that, for all $j$, we have either $s(j) = \epsilon$ or $s(j) = t_{m_j,i}$ for some $i \in \Z/m_j\Z$.  The map $Q \mapsto Q^j$ sends an $\vec{m}$-displacement Prym word of length $2g-2$ to an $\vec{m}$-displacement lingering word of length $g-1$.  As in Section~\ref{Sec:Generic}, $\vec{m}$-displacement Prym words of length $2g-2$ are in bijection with $\vec{m}$-displacement lingering words of length $g-1$.

Given a $\vec{m}$-displacement lingering word $Q$ of length $g-1$ in the Coxeter group $A_r$, we define the set
\begin{align*}
T(Q) :&= \{ [D] \in P^r \mid Q \text{ is a lingering subword of } Q_D^1 \} \\
& = \{ [D] \in P^r \mid \xi_j (D) = s^r_j [i] \text{ whenever } s(j) = t_{m_j,i} \} .
\end{align*}
Note that the tori $T(Q)$ satisfy the same containment relations as in Lemma~\ref{Lem:Contains}.  The following is the main result of this section.

\begin{theorem}
\label{Thm:Arbitrary}
Let $\Gamma$ be a loop of loops with torsion profile $\vec{m}$ and $\pi \colon \WG \to \Gamma$ the quotient by the antipodal involution.  Then
\[
V^r (\Gamma,\pi) = \bigcup T(Q),
\]
where the union is over all $\vec{m}$-displacement lingering words of length $g-1$ that contain a reduced subword for $w_0$.
\end{theorem}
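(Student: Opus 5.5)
Since Theorem~\ref{Thm:Generic} is the special case in which every $m_j$ is $0$ or at least $r$, I will follow its proof and prove the two containments separately. The only new feature is that a single loop $\gamma_j$ can now contribute a whole word $t_{m_j,i}$, the product of the commuting transpositions $\tau_{i'}$ with $i'\equiv i \Mod{m_j}$. These $\tau_{i'}$ commute because their indices differ by at least $m_j\geq 2$.

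\textbf{The containment $\supseteq$.} Let $Q$ be an $\vec m$-displacement lingering word containing a reduced subword for $w_0$, and let $[D]\in T(Q)$. By the analogue of Lemma~\ref{Lem:Contains}, I may pass to a subword. I then treat each letter $\tau_{i'}$ of each block $t_{m_j,i}$ as independently kept or dropped, so that the kept letters form a reduced word for $w_0$. With this choice I repeat the construction of Proposition~\ref{Prop:Fns}.

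On the loop $\gamma_j$, the coordinate $\xi_j(D)$ is congruent modulo $m_j$ to $s^r_j[i']$ for every $i'\equiv i$. So for each index $\sigma_j(i)$, Example~\ref{Ex:Loop} gives a function on $\gamma_j$ with the required incoming slope at $v_j$ and outgoing slope at $w_j$. The condition is again $\ell(w,u)\equiv s\,\ell(v,w)$ in $\R/\ell\Z$, and it holds whenever the relevant slope is congruent to the right value mod $m_j$. The kept letters determine whether the slope index shifts by one or stays fixed across $\gamma_j$. The continuity argument at $w_{2g-2}$ and the distinct-slopes argument of Corollary~\ref{Cor:HowToComputeRank} then go through verbatim, using the Prym symmetry $s(g-1+j)=\overline{s(j)}$ together with Lemma~\ref{Lem:Containsw0}. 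Lemma~\ref{Lem:Containsw0} still applies, since the expanded word is an ordinary Prym word in $A_r$.

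\textbf{The containment $\subseteq$.} I argue by induction on $r$ exactly as in the proof of Theorem~\ref{Thm:Generic}.

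The base case $r=1$ is Lemma~\ref{Lem:BaseCase}. There the word has at most one letter per loop, so blocks are trivial.

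For the inductive step, take $p\in\beta_\ell$ and set $D'=D-p+\overline p$. The coordinate shifts $\xi_j(D')=\xi_j(D)+1$ or $\xi_j(D)$ computed there still hold. Combined with the shift of $s^r$ from $r$ to $r-1$, they show that the block $t_{m_j,i}$ in rank $r$ becomes the block $t_{m_j,i}$ in rank $r-1$, that is, the same block with $\tau_{r-1}$ deleted. This holds because the index sets $\{0,\dots,r-1\}$ and $\{0,\dots,r-2\}$ are compatible mod $m_j$. By induction, $Q^\ell_{D'}$ contains a reduced word for $w_0\in A_{r-1}$, so $Q^\ell_D$ contains a reduced word for $w_1$. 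Hence the expanded Prym word (blocks written out as letters) has full rank, and Proposition~\ref{Prop:FullRank} yields $w_0$.

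\textbf{The main obstacle.} I must check that Proposition~\ref{Prop:FullRank}, and Lemmas~\ref{Lem:TableauCompare} and~\ref{Lem:FullRankNu}, still apply when a half word is a concatenation of blocks. The risk is that the Prym symmetry acts blockwise, and a half-word shift moves a whole block rather than a single letter. I would handle this by noting that $\overline{t_{m,i}}=t_{m,r-1-i}$ is literally the letterwise bar of the block. The expanded word is then a genuine Prym word of length $\sum|s(j)|$, and its half words, which begin and end at block boundaries, are among the half words used in those lemmas. The one point needing care is the step in Lemma~\ref{Lem:FullRankNu} that chooses "the last term equal to $\tau_{r-i}$". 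That position may sit inside a block, and I would verify that cutting at the end of that block does no harm, because the other letters of the block commute with $\tau_{r-i}$.
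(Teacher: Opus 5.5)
Your proposal is correct and follows the paper's proof. For $\supseteq$ you build the functions of Proposition~\ref{Prop:Fns} from a choice of kept letters inside the blocks forming a reduced word for $w_0$; for $\subseteq$ you run the same induction on $r$ (each block loses only $\tau_{r-1}$) to get $w_1$ in every block-boundary half word, and then invoke Proposition~\ref{Prop:FullRank}.

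You also handle the one step where the argument leaves block boundaries, namely the choice of the last $\tau_{r-i}$ in Lemma~\ref{Lem:FullRankNu}: cutting at the end of its block works because the other letters of that block commute with it. This fills in a detail the paper's brief proof passes over. Note that it means ``full rank'' should be claimed only for half words starting at block boundaries, not for every position of the expanded word as your inductive step currently says.
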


\begin{proof}
The proof is nearly identical to that of Theorem~\ref{Thm:Generic}, and we highlight the differences.  First, let $Q$ be an $\vec{m}$-displacement lingering word of length $g-1$ that contains a reduced subword $Q'$ for $w_0$, and let $[D] \in T(Q)$.  For each $j$, let $s'(j)$ be the subword of $s(j)$ that is contained in $Q'$.  Let $Q'_1 = s'(1) \cdots s'(g-1) s'(g) \cdots s'(2g-2)$, and for any $j$ in the range $1 \leq j \leq 2g-2$, let $\sigma_j = \prod_{i=1}^{j-1} s' (i)$.  We then construct functions $\varphi_0 , \ldots , \varphi_r \in R(D)$ such that the incoming slope of $\varphi_i$ at $v_j$ is $s^r_j [\sigma_j (i)]$ and the outgoing slope of $\varphi_i$ at $w_j$ is $s'^r_j[\sigma_j(i)]$.  The existence of such functions follows exactly as in Proposition~\ref{Prop:Fns}.  Because $\sigma_j$ is a permutation for all $j$, these functions satisfy the hypotheses of Corollary~\ref{Cor:HowToComputeRank}, hence $D$ has rank at least $r$.

For the reverse containment, let $[D] \in V^r (\Gamma,\pi)$.  Since $[D] \in T(Q^1_D)$, it suffices to show that every half word $Q^j_D$ contains a reduced subword for $w_0$.  By the same argument as in the proof of Theorem~\ref{Thm:Generic}, the word $Q_D$ has full rank, and the result then follows from Proposition~\ref{Prop:FullRank}.
\end{proof}

\begin{remark}
\label{Rmk:Contains}
Note that an $\vec{m}$-displacement lingering word $Q$ may contain a reduced subword for $w_0$ that is not itself an $\vec{m}$-displacement lingering word.  For example, fix an integer $k \geq 2$ and suppose that $m_j = k$ for all $j$.  If $k$ divides $r$ and $r/k$ is even, then every $\vec{m}$-displacement lingering word represents an even permutation.  If $r \equiv 2 \Mod{4}$, however, then $w_0$ is an odd permutation, so it cannot be represented by an $\vec{m}$-displacement lingering word.

For example, if $k=3$ and $r=6$, then the word
\[
Q = \Big( t_{3,1} t_{3,0} t_{3,2} \Big)^4 = \Big( \tau_1 \tau_4 \tau_0 \tau_3 \tau_2 \tau_5 \Big)^4 
\]
is an $\vec{m}$-displacement lingering word of length 12.  It contains the subword
\[
Q' = \Big( \tau_1 \tau_4 \tau_0 \tau_3 \tau_2 \tau_5 \Big)^3 \tau_4 \tau_3 \tau_5 ,
\]
which is a reduced subword for $w_0$, but the subword $Q'$ is not an $\vec{m}$-displacement lingering word.
\end{remark}

\subsection{The Prym-Brill-Noether Variety on a $k$-Uniform Loop of Loops}
\label{Sec:kUniform}

Let $k \geq 2$ be an integer.  Recall that a loop of loops $\Gamma$ is $k$-\emph{uniform} if $m_j = k$ for all $j$.  In this case, we say that an $\vec{m}$-displacement lingering word is a \emph{$k$-uniform lingering word}.  In this section, we will prove the following theorem.

\begin{theorem}
\label{Thm:kUniform}
Let $\Gamma$ be a $k$-uniform loop of loops and $\pi \colon \WG \to \Gamma$ the quotient by the antipodal involution.  Then
\[
\mathrm{dim} V^r (\Gamma,\pi) \leq \left\{ \begin{array}{ll} 
g-1-\frac{k(r+1)}{2} &\text{if } k<r\\
g-1-\frac{r(r+1)}{2} &\text{if } k \geq r,
\end{array} \right.
\]
with equality when $k$ is even.
\end{theorem}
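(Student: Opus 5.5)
By Theorem~\ref{Thm:Arbitrary}, $V^r(\Gamma,\pi)$ is the union of the tori $T(Q)$ over $k$-uniform lingering words $Q$ of length $g-1$ that contain a reduced subword for $w_0$. By the analogue of Lemma~\ref{Lem:DimTQ}, $\dim T(Q)$ equals the number of $j$ with $s(j)=\epsilon$. So the dimension of $V^r(\Gamma,\pi)$ is $g-1$ minus the minimum number $N$ of non-identity letters $t_{k,i}$ in a $k$-uniform word whose concatenation contains a reduced word for $w_0$. The plan is to show $N \ge \min\{k(r+1)/2,\ r(r+1)/2\}$ in general, and to exhibit a word attaining this when $k$ is even.

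\textbf{Case $k\ge r$.} Each $t_{k,i}$ is then a single generator $\tau_i$ or empty, so each non-identity letter contributes at most one letter to the reduced subword. Hence $N\ge \ell(w_0)=\binom{r+1}{2}$. For equality I would take any reduced word for $w_0$ padded with $\epsilon$'s, which is a valid $k$-uniform word. Parity of $k$ plays no role here, but the theorem only claims equality for $k$ even.

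\textbf{Case $k<r$: the lower bound.} A letter $t_{k,i}$ is a product of commuting generators $\tau_j$ with $j\equiv i \pmod k$. Every position contributes at most $\lceil r/k\rceil$ letters, which is too crude, so I need a sharper count. The idea is to track the sequence of permutations $\sigma_j$ as in Proposition~\ref{Prop:Fns}. I would use a potential function such as $\Phi(\sigma)=\sum_{x}|\sigma(x)-x|$, which is $0$ at the identity. Multiplying by $t_{k,i}$ moves each symbol by at most one position, and only symbols in positions congruent to $i$ or $i+1 \pmod k$ move at all. Those residue classes contain about $2(r+1)/k$ symbols. Meanwhile $\Phi(w_0)=\sum_x |r-2x|\approx (r+1)^2/2$. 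Dividing gives $N\ge k(r+1)/4$, which is off by a factor of $2$.

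To recover the factor of $2$, I would instead track, for each symbol $x$, the distance $|r-2x|$ it must travel and its displacement modulo $k$. Each letter $t_{k,i}$ moves a given symbol only if that symbol currently sits at a position $\equiv i$ or $i+1 \pmod k$, and then by exactly one step. Hence a symbol needing to travel distance $d$ requires at least $d$ letters, and at most about $2(r+1)/k$ symbols move per letter. Moreover, consecutive letters that move the same symbol in the same direction must have residues increasing by one. I would try to turn this into a sharp averaged count, possibly by choosing the cleaner potential $\Phi(\sigma)=$ number of inversions weighted per residue class. I expect this refined counting to be the main obstacle. A promising alternative is to reduce to the bielliptic picture: project $A_r$ onto a dihedral quotient by grouping residues. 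Letters alternating between the two reflections of $I_2(\cdot)$ suggests the bound $N\ge k(r+1)/2$ via reduced words for the longest element there.

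\textbf{Case $k<r$: equality for $k$ even.} I would exhibit an explicit word that cycles through residues in the order $t_{k,0}t_{k,1}\cdots t_{k,k-1}$. This is the "wave" pattern, like odd-even transposition sort, which uses only two alternating letters when $k=2$. Repeated $(r+1)/2$ times, it gives $k(r+1)/2$ letters, appropriately truncated when $k\nmid r+1$. One then checks that it sorts $0,\dots,r$ into reverse order by contains-reduced-subword arguments, as in Remark~\ref{Rmk:Contains}. Evenness of $k$ is needed so that the final half-word is compatible with the Prym symmetry $s(g-1+j)=\overline{s(j)}$ under $\overline{t}_{k,i}=t_{k,r-1-i}$.
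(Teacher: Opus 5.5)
Your reduction via Theorem~\ref{Thm:Arbitrary} to bounding the minimal number $N$ of non-$\epsilon$ letters, and your treatment of $k\ge r$, are fine; the paper handles $k \ge r$ by noting that $\Gamma$ is then $r$-generic. For $k<r$, however, the proposal has two genuine gaps.

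\textbf{The lower bound is never proved.} Your displacement potential only gives $N\ge k(r+1)/4$. The ``refined counting'' and the dihedral projection are not arguments, and the dihedral picture is special to $k=2$ (Theorem~\ref{Thm:Bielliptic}). You lose the factor of $2$ because you count net displacement $|r-2x|$ rather than swaps. In a reduced word for $w_0$, every symbol $a$ is swapped exactly $r$ times (once with each other symbol) on its way from position $a$ to position $r-a$. With this fact, the length count you dismissed already settles $k\mid r$: every $t_{k,i}$ then has exactly $r/k$ generators, so $N\ge\binom{r+1}{2}/(r/k)=k(r+1)/2$.

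For $k\nmid r$, the missing idea is the paper's Lemma~\ref{Lem:EndCount}. Write $r=qk+\rho$ and count only generators $\tau_i$ in a band $n\le i\le r-1-n$ of $qk$ consecutive indices, taking $n=\rho/2$ when $\rho$ is even. When $\rho$ is odd, use one of the two off-centre bands of width $qk$: their counts sum to those of the symmetric bands of widths $qk\pm1$, so one of them satisfies the same bound. Each symbol makes at least $r-2n$ of its $r$ swaps inside the band: either it never leaves the band, or it must run from $a$ to an edge of the band and back to $r-a$. Summing over the $r+1$ symbols, with each swap involving two symbols, gives at least $(\frac r2-n)(r+1)$ band generators in the reduced subword. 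Each letter $t_{k,i}$ contains exactly $q$ band generators, so $N\ge\frac{(r-\rho)(r+1)}{2q}=\frac{k(r+1)}{2}$.

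\textbf{Your equality construction fails for $k\ge4$.} Take $k=4$, $r=5$. Then $t_{4,0}=\tau_0\tau_4$ and $t_{4,i}=\tau_i$ for $i=1,2,3$. The word $(t_{4,0}t_{4,1}t_{4,2}t_{4,3})^3$ has $12=k(r+1)/2$ letters but exactly $15=\ell(w_0)$ generators, so it would itself have to be a reduced word for $w_0$. Yet the symbols $0$ and $1$, exchanged by the first $\tau_0$, are exchanged back by the $\tau_4$ of the third block.

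The correct ordering groups residues by parity. Set $u_0=\prod_{i\text{ even}}t_{k,i}$ and $u_1=\prod_{i\text{ odd}}t_{k,i}$. Because $k$ is even, $u_0$ and $u_1$ are the products of all even-indexed and all odd-indexed generators respectively. Hence $(u_0u_1)^{(r+1)/2}$ for $r$ odd, or $(u_0u_1)^{r/2}u_0$ for $r$ even, is odd--even transposition sort: a reduced word for $w_0$ with exactly $k(r+1)/2$ letters. This is where evenness actually enters: the residue classes mod $k$ must refine the parity classes. It has nothing to do with the Prym symmetry $s(g-1+j)=\overline{s(j)}$. That symmetry is automatic, since every $k$-uniform lingering word of length $g-1$ extends uniquely to a displacement Prym word.
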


In Section~\ref{Sec:Bielliptic} below, we will consider the special case where $k=2$.  In this case, we will show that $V^r (\Gamma,\pi)$ is pure dimensional, of dimension $r+1$.  When $k > 2$, we do not know that $V^r (\Gamma,\pi)$ is pure dimensional, and when $k$ is odd, we do not know that the above bound on $\mathrm{dim} V^r (\Gamma,\pi)$ is optimal.

When $k \geq r$, the $k$-uniform loop of loops is $r$-generic, and Theorem~\ref{Thm:kUniform} follows from Corollary~\ref{Cor:PureDimension}.  When $k<r$, by Theorem~\ref{Thm:Arbitrary}, it suffices to show that every $k$-uniform lingering word that contains a reduced subword for $w_0$ has length at least $\frac{k(r+1)}{2}$.  To prove this, we first require the following technical lemma.

\begin{lemma}
\label{Lem:EndCount}
Let $Q = s_1 s_2 \cdots s_{\ell}$ be a reduced word in $A_r$ for $w_0$.  Then, for any nonnegative integer $n < \frac{r-1}{2}$, we have
\[
\# \{ j \mid s_j = \tau_i \text{ such that } n \leq i \leq r-1-n \} \geq (\frac{r}{2}-n)(r+1).
\]
\end{lemma}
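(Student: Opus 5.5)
The plan is to count the letters $\tau_i$ with $n \leq i \leq r-1-n$ through the inversions of $w_0$, since a reduced word for $w_0$ inverts every pair exactly once. Each letter $s_j = \tau_i$ in a reduced word swaps the entries in positions $i$ and $i+1$. So I would track positions: the letter $\tau_i$ inverts the pair of values currently sitting in positions $i,i+1$, and this pair is never inverted again. Thus the number of letters with index in the window $[n, r-1-n]$ equals the number of pairs $\{a,b\}$ of values whose unique swap occurs while both values sit in positions inside the block $B = \{n, \ldots, r-n\}$. We do not need all of them, only a lower bound.

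The key step is to estimate how often each value must be swapped inside $B$. In $w_0$, the value starting at position $x$ ends at position $r-x$. Every value moves by adjacent transpositions, one step per letter involving it. Consider a value starting at position $x$. Its trajectory from $x$ to $r-x$ must traverse every position strictly between, and each step between positions $p$ and $p+1$ uses the letter $\tau_p$.

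For a value with $x \leq n$ (so $r-x \geq r-n$), the path crosses all edges $\{p,p+1\}$ for $p \in [n, r-1-n]$. That is at least $r-2n$ steps inside the window. The same holds symmetrically for $x \geq r-n$. For a value with $n < x < r-n$, the path from $x$ to $r-x$ stays inside $B$ and has at least $|r-2x|$ steps in the window.

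Next, sum over all $r+1$ values. Each letter is counted exactly twice, once for each value it moves. There are $2(n+1)$ values outside the interior, contributing at least $2(n+1)(r-2n)$. The interior values contribute $\sum_{x=n+1}^{r-n-1} |r-2x|$. I would then verify by a routine computation that
\[
\tfrac12\Big(2(n+1)(r-2n) + \textstyle\sum_{x=n+1}^{r-n-1}|r-2x|\Big) \;\geq\; (\tfrac r2 - n)(r+1).
\]

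The main obstacle is that this bound may be too weak. Displacement alone gives roughly $\tfrac14 r^2$ for the interior values, while the target grows like $\tfrac12 r(r+1)$. So I expect to need the inversion count rather than displacement. The refinement I would try first: every pair $\{a,b\}$ must be swapped exactly once, and a swap of $a,b$ happens at an edge $p$ with $a$ and $b$ adjacent there.

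Any pair where one value starts at position $\leq n$ and the other ends at position $\leq n$ need not be swapped in the window. I would therefore count the pairs forced to swap inside the window. These are the pairs whose relative crossing cannot occur among positions $<n$ or positions $>r-n$, which hold only $n$ values each.

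The count I would aim for is:
\[
\binom{r+1}{2} - 2\binom{n+1}{2} - \text{(crossings outside)} .
\]
Here the crossings at edges $p<n$ number at most those allowed for a permutation of the first $n+1$ positions over time. I would bound the letters $\tau_p$ with $p<n$ and $p>r-1-n$ by $\binom{r+1}{2} - (\tfrac r2 - n)(r+1)$, perhaps by a symmetric argument using $\overline{\tau}_i = \tau_{r-1-i}$. Making this outside-window bound sharp is the crux.
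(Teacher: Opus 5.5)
\textbf{There is a genuine gap: the proposal does not yet prove the lemma.}

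Your per-value count is correct for the $2(n+1)$ values starting at positions $x\le n$ or $x\ge r-n$. For the interior values, however, the displacement bound $|r-2x|$ is too weak, as you noticed. Already for $r=2$, $n=0$ it gives $2$ instead of $3$.

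The fallback does not close this. Since $Q$ has exactly $\binom{r+1}{2}$ letters, showing that at most $\binom{r+1}{2}-(\frac r2-n)(r+1)$ of them are $\tau_p$ with $p<n$ or $p>r-1-n$ is a verbatim restatement of the lemma. So the ``crux'' you name is the whole content of the statement, and no argument for it is given. A minor point as well: the trajectory of an interior value need not stay inside $B$; only its net crossings are forced.

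The missing idea is one you already wrote down but did not use per value: every pair of values is swapped exactly once. Read value by value, this says each value $a$ is moved \emph{exactly} $r$ times, not merely $|r-2a|$ times. Keep your framework and replace displacement by the following dichotomy for an interior value $n<a<r-n$.
\begin{enumerate}
\item If none of its $r$ moves uses a letter outside the window, it contributes $r\ge r-2n$ window moves.
\item If some move uses $\tau_p$ with $p<n$, then at that step $a$ sits at position $p$ or $p+1$, both $\le n$.
\begin{enumerate}
\item Before that step, on its way down from position $a$, it must cross each window edge $n,\dots,a-1$.
\item After that step, on its way up to position $r-a>n$, it must cross each window edge $n,\dots,r-a-1$.
\item This gives at least $(a-n)+(r-a-n)=r-2n$ window moves.
\end{enumerate}
\item The case $p>r-1-n$ is symmetric.
\end{enumerate}
For $a\le n$ or $a\ge r-n$, your displacement argument already gives $r-2n$.

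Thus every one of the $r+1$ values is moved at least $r-2n$ times by window letters. Each letter moves exactly two values, so the number of window letters is at least $\frac{(r+1)(r-2n)}{2}=(\frac r2-n)(r+1)$. This is exactly the paper's proof: it shows $\#B(n,a)\ge r-2n$ for every $a$, where $B(n,a)\subseteq J(a)$ is the set of window letters moving $a$, and then sums over $a$.
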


\begin{proof}
For $a \in \{ 0, \ldots , r \}$, let
\[
B(n,a) = \{ j \in J(a) \mid s_j  = \tau_i \text{ such that } n \leq i \leq r-1-n \}.
\]
We claim that $\# B(n,a) \geq r-2n$.  The result follows from this by summing over all $a$ because, for every $j$, there are exactly two values of $a$ such that $j \in J(a)$.

To see the claim, first note that if $B(n,a) = J(a)$, then $\# B(n,a) = \# J(a) = r$, and the result follows.  Otherwise, there exists a $j \in J(a)$ such that either $\sigma_j (a) \leq n$ or $\sigma_j (a) \geq r-n$.  If $\sigma_j (a) \leq n$, then the subword of $Q$ consisting of the terms $s_{j'}$ with $j' \in B(n,a)$ and $j' \leq j$ must contain the subword $\tau_{a-1} \tau_{a-2} \cdots \tau_n$, which has length $a-n$.  The subword of $Q$ consisting of the terms $s_{j'}$ with $j' \in B(n,a)$ and $j' > j$ must contain the subword $\tau_n \tau_{n+1} \cdots \tau_{r-a-1}$, which has length $r-a-n$.  Thus, $\# B(n,a) \geq (a-n) + (r-a-n) = r-2n$.

Similarly, if $\sigma_j (a) \geq n$, then the subword of $Q$ consisting of the terms $s_{j'}$ with $j' \in B(n,a)$ and $j' \leq j$ must contain the subword $\tau_a \tau_{a+1} \cdots \tau_{r-1-n}$, which has length $r-a-n$.  The subword of $Q$ consisting of the terms $s_{j'}$ with $j' \in B(n,a)$ and $j' > j$ must contain the subword $\tau_{r-1-n} \tau_{r-2-n} \cdots \tau_{r-a}$, which has length $a-n$.  Thus again, $\# B(n,a) \geq (a-n) + (r-a-n) = r-2n$.
\end{proof}

\begin{proof}[Proof of Theorem~\ref{Thm:kUniform}]
Let $Q$ be a $k$-uniform word that contains a reduced subword $Q'$ for $w_0$.  For $k < r$, it suffices to prove that $Q$ has length at least $\frac{k(r+1)}{2}$.  When $r$ is divisible by $k$, this is straightforward -- each word $t_{k,i}$ has length $r/k$, and $Q'$ has length $\frac{r(r+1)}{2}$, so $Q$ must have length at least $\frac{k(r+1)}{2}$.  More generally, by the division algorithm, we may write $r = qk + \rho$, where $0 \leq \rho \leq k-1$.  For an integer $i$ in the range $0 \leq i \leq k-1$, the word $t_{k,i}$ has length $q+1$ if $i \leq \rho$ and length $q$ if $i > \rho$.

Let $Q' = s_1 s_2 \cdots s_{\ell}$, where $\ell = {{r+1}\choose{2}}$.  First, suppose that $\rho$ is even.  By Lemma~\ref{Lem:EndCount}, we have
\[
\# \{ i \mid s_i = \tau_j \text{ such that } \frac{\rho}{2} \leq j \leq r-1-\frac{\rho}{2} \} \geq \frac{r-\rho}{2}(r+1).
\]
For each $i$, the number of reflections $\tau_j$ occuring in $t_{k,i}$ such that $\frac{\rho}{2} \leq j \leq r-1-\frac{\rho}{2}$ is exactly $q$.  It follows that the length of $Q$ must be at least
\[
\frac{(r-\rho)(r+1)}{2q} = \frac{k(r+1)}{2}.
\]

Similarly, if $\rho$ is odd, then by Lemma~\ref{Lem:EndCount} we have either
\begin{align*}
\# \{ i \mid s_i = \tau_j \text{ such that } \frac{\rho+1}{2} \leq j \leq r-1-\frac{\rho-1}{2} \} &\geq \frac{r-\rho}{2}(r+1)\text{ or } \\
\# \{ i \mid s_i = \tau_j \text{ such that } \frac{\rho-1}{2} \leq j \leq r-1-\frac{\rho+1}{2} \} &\geq \frac{r-\rho}{2}(r+1).
\end{align*}
In either case, as above, the number of reflections $\tau_j$ occuring in $t_{k,i}$ with $j$ in the appropriate range is exactly $q$.  So again, the length of $Q$ must be at least 
\[
\frac{(r-\rho)(r+1)}{2q} = \frac{k(r+1)}{2}.
\]

Finally, assume that $k$ is even.  We show that there exists a $k$-uniform word representing $w_0$ of length $\frac{k(r+1)}{2}$.  For $j \in \Z/2\Z$, define the $k$-uniform word $u_j = \prod_{i \equiv j \Mod{2}} t_{k,i}$.  Then, if $r$ is odd, the word $(u_0 u_1)^{\frac{r+1}{2}}$ is a $k$-uniform word representing $w_0$.  This word has length $\frac{k(r+1)}{2}$ if $k \leq r$ and length $\frac{r(r+1)}{2}$ if $k \geq r$.  Similarly, if $r$ is even, the word $(u_0 u_1)^{\frac{r}{2}} u_0$ is a $k$-uniform word representing $w_0$.  This word has length $\frac{k(r+1)}{2}$ if $k \leq r$ and length $\frac{r(r+1)}{2}$ if $k \geq r$.  Thus, when $k$ is even, the bound on $\mathrm{dim} V^r (\Gamma, \pi)$ is an equality.
\end{proof}

\begin{proof}[Proof of Theorem~\ref{Thm:MainThm}]
Let $\Gamma$ be a $k$-uniform loop of loops, and assume further that $k \cdot \ell(v_j,w_j) = \ell (\gamma_j)$.  Let $\Sigma$ be a loop with vertices $x_j , y_j$ for $j \in \Z/(g-1)\Z$, where $x_j$ is adjacent to $y_{j-1}$ and $y_j$ for all $j$.  Define the length of the edge connecting $y_{j-1}$ to $x_j$ to be $\frac{\ell(\beta_j)}{k}$, and the length of the edge connecting $x_j$ to $y_j$ to be $\frac{\ell(\gamma_j)}{k}$.  Define a finite harmonic morphism of metric graphs $\psi \colon \Gamma \to \Sigma$ by $\psi (v_j) = x_j$, $\psi (w_j) = y_j$, with expansion factor $k$ along each edge $\beta_j$, and expansion factors $1, k-1$ on the two edges comprising $\gamma_j$ (see Figure~\ref{Fig:MapToElliptic}).  This morphism has degree $k$.  If the residue field has characteristic zero or relatively prime to $2k$, then this morphism if tame.  By \cite[Lemma~7.15]{ABBR15a}, there is a map of smooth curves $C \to E$ specializing to $\psi$, where $C$ has genus $g$ and $E$ has genus 1.  By \cite[Lemma~5.9]{JensenLen18}, the map $\pi \colon \WG \to \Gamma$ can be lifted to an unramified double cover $f \colon \WC \to C$.

\begin{figure}[H]
\begin{tikzpicture}

\draw (0,0) circle (2);
\draw [fill=white] (0,2) circle (0.5);
\draw [fill=white] (1.7,1) circle (0.5);
\draw [fill=white] (1.7,-1) circle (0.5);
\draw [fill=white] (0,-2) circle (0.5);
\draw [fill=white] (-1.7,-1) circle (0.5);
\draw [fill=white] (-1.7,1) circle (0.5);
\draw (0,0) circle (0.75);
\draw (1.75,0) edge[->] (1,0);
\draw (0.87,1.48) edge[->] (0.5,0.85);
\draw (-0.87,1.48) edge[->] (-0.5,0.85);
\draw (-1.75,0) edge[->] (-1,0);
\draw (-0.87,-1.48) edge[->] (-0.5,-0.85);
\draw (0.87,-1.48) edge[->] (0.5,-0.85);

\end{tikzpicture}
\caption{The harmonic morpshim $\psi \colon \Gamma \to \Sigma$.}
\label{Fig:MapToElliptic}
\end{figure}

We show that the map $f \colon C \to E$ does not factor through a nontrivial isogeny.  Suppose there exists a curve $E'$ of genus 1, a map $f' \colon C \to E'$, and an isogeny $\iota \colon E' \to E$ of degree $d$ such that $f = \iota \circ f'$.  Then there exists a skeleton $\Sigma'$ of $E'$ and maps $\psi' \colon \Gamma \to E'$, $\nu \colon \Sigma' \to \Sigma$ such that $f'$ specializes to $\psi'$ and $\iota$ specializes to $\nu$.  Now, let $p \in \gamma_j$ be a point in the interior of the longer edge.  Then there exists a point $q \in \gamma_j$ on the shorter edge such that $\psi^* (\psi(p)) = p + (k-1)q$.  It follows that $\iota^{-1} (\psi (p))$ consists of at most 2 points.  If $\iota^{-1} (\psi (p))$ is a single point $p'$, then $\iota^* (\psi (p)) = dp'$, hence $\psi^* (\psi(p))$ is divisible by $d$.  But this implies that $d=1$, so $\iota$ is trivial.  Otherwise, suppose that $\iota^{-1} (\psi (p)) = \{ p',q' \}$, where $\psi' (p) = p'$ and $\psi' (q) = q'$.  Then $\psi^* (\psi(p))$ is divisible by $\deg (\psi')$, so $\psi'$ has degree 1, which is impossible.

By Baker's specialization lemma \cite[Corollary~2.11]{Baker08}, we have
\[
\Trop V^r (\mathcal{X},f) \subseteq V^r (\Gamma,\pi).
\]
If $k < r$, then by Gubler's Bieri-Groves theorem for maximally degenerate abelian varieties, we have
\[
\mathrm{dim} V^r (\mathcal{X},f) \leq \mathrm{dim} V^r (\Gamma,\pi) \leq g-1-\frac{k(r+1)}{2},
\]
where the second inequality holds by Theorem~\ref{Thm:kUniform}.  If $k \geq r$, then $\Gamma$ is $r$-generic, and the result holds by Corollary~\ref{Cor:Dimension}.
\end{proof}

\subsection{The Bielliptic Case}
\label{Sec:Bielliptic}

In the special case where $k=2$, we can say more.

\begin{theorem}
\label{Thm:Bielliptic}
Let $\Gamma$ be a 2-uniform loop of loops and $\pi \colon \WG \to \Gamma$ the quotient by the antipodal involution.  Then
\[
V^r (\Gamma,\pi) = \bigcup T(Q),
\]
where the union is over all lingering words of length $g-1$ in the Coxeter group $I_2 (r+1)$ that contain a reduced subword for $w_0$.
\end{theorem}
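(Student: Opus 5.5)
The plan is to deduce this from Theorem~\ref{Thm:Arbitrary} by identifying $2$-uniform lingering words in $A_r$ with lingering words in the dihedral group $I_2(r+1)$. When $m_j=2$ for all $j$, each nonidentity letter of a $2$-uniform lingering word is one of two words, $t_{2,0}=\prod_{i \text{ even}}\tau_i$ or $t_{2,1}=\prod_{i\text{ odd}}\tau_i$. Each is a product of pairwise commuting simple transpositions, so each represents an involution $a,b\in S_{r+1}$. The subgroup $\langle a,b\rangle$ is the standard embedding of $I_2(r+1)$ into $A_r$ (the ``bipartite Coxeter element'' picture): the product $ab$ is a Coxeter element of $A_r$, which has order $r+1$. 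Hence $(\langle a,b\rangle,\{a,b\})$ is a Coxeter system of type $I_2(r+1)$.

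The words $t_{2,0},t_{2,1}$ have been defined in $A_r$ as $t_{m,i}$, with $\overline{t}_{2,i}=t_{2,r-1-i}$. This involution on $\{a,b\}$ should agree with conjugation by the longest element of $I_2(r+1)$: it swaps $a$ and $b$ when $r+1$ is odd, and fixes them when $r+1$ is even. The Prym-word structure therefore transfers.

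So I would first establish a dictionary $Q\mapsto \hat Q$ sending a $2$-uniform lingering word to a lingering word in $I_2(r+1)$, via $t_{2,0}\mapsto a$, $t_{2,1}\mapsto b$ and $\epsilon\mapsto\epsilon$. This is a bijection, and it preserves the lingering subword relation and the sets $T(Q)$.

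The key step is the claim that a $2$-uniform lingering word $Q$ contains a reduced subword (in $A_r$) for $w_0$ if and only if $\hat Q$ contains a reduced subword for the longest element $w_0'$ of $I_2(r+1)$. That element is the alternating product of $a$ and $b$ of length $r+1$.

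For the ``if'' direction, I would check the following. The alternating product of $r+1$ factors $t_{2,0}t_{2,1}t_{2,0}\cdots$ is a reduced word for $w_0$ in $A_r$. It has total length $\binom{r+1}{2}$ and represents $w_0$, by the classical fact that $(ab)^{(r+1)/2}$, or the corresponding alternating product, equals $w_0$ (as in the even-$k$ construction in the proof of Theorem~\ref{Thm:kUniform}).

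For the ``only if'' direction, suppose $Q$ contains a reduced subword $Q'$ for $w_0$. I would use the Demazure product. The Demazure product of $Q$ in $A_r$ is $w_0$. I would show that the Demazure product of a word in $\{t_{2,0},t_{2,1}\}$ equals the Demazure product, taken in $I_2(r+1)$, of the corresponding word. Reducing to this, one would show that if $x\in\langle a,b\rangle$ and $x t_{2,i}$ drops in length, then $x$ has $t_{2,i}$ as a right factor in the dihedral sense. Equivalently, the Demazure product with a commuting product of simple reflections is determined by the right descent set. This is an instance of the known fact that $\langle a,b\rangle$ with length $\ell_{A}$ restricted behaves like a parabolic-type ``folding'', as in the theory of diagram foldings/admissible subgroups. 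The dihedral Demazure product of $\hat Q$ must then be $w_0'$, so $\hat Q$ contains an alternating subword of length $r+1$.

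Remark~\ref{Rmk:Contains} warns that reduced subwords of $Q$ need not themselves be $2$-uniform, which is exactly why I would argue through Demazure products rather than directly.

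Given the equivalence, the theorem follows immediately from Theorem~\ref{Thm:Arbitrary} and the dictionary. I expect the main obstacle to be the Demazure compatibility. The identity $\ell_A(x t_{2,i})=\ell_A(x)\pm\ell(t_{2,i})$ for $x\in\langle a,b\rangle$ needs care. I would first try to prove it through inversion counts, since the elements of $\langle a,b\rangle$ have explicit one-line notation (a ``zigzag'' permutation). Failing that, I would invoke Lusztig's theory of admissible foldings, under which $\langle a,b\rangle$ is a Coxeter group whose length is additive with respect to $\ell_A$.
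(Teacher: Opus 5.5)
Your proposal is correct, but it takes a different route from the paper. The paper proves only the direction it needs: a $2$-uniform $Q$ containing a reduced subword $Q'$ for $w_0$ yields an alternating subword of length $r+1$ in $\hat Q$. It does this by direct counting. It cuts $Q'$ into maximal blocks of same-parity generators. Letters within a block commute, so they are distinct because $Q'$ is reduced, and each block therefore has at most $\lceil r/2\rceil$ or $\lfloor r/2\rfloor$ letters. Since $\ell(w_0)=\binom{r+1}{2}$, there must be at least $r+1$ blocks, and choosing one letter $t_{2,i}$ of $Q$ per block gives the alternating subword. This handles the issue in Remark~\ref{Rmk:Contains} without any structure theory of $\langle a,b\rangle$, and the same block-counting idea drives the $k$-uniform bound (Lemma~\ref{Lem:EndCount}). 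Your Demazure-product argument is special to $k=2$ but yields more. It shows that the $A_r$-Demazure product of any $2$-uniform word lies in $\langle a,b\rangle$ and equals the dihedral Demazure product of $\hat Q$, which gives both directions of the equivalence at once.

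The step you flag as the main obstacle does not need inversion counts or foldings; it follows from the fact you already use in the ``if'' direction. Both expanded alternating words with $r+1$ factors are reduced words for $w_0$. Hence all their prefixes are reduced, and $\ell_A$ is additive along every alternating product of at most $r+1$ factors. So if $x\in\langle a,b\rangle\smallsetminus\{e,w_0\}$ has its (unique) dihedral reduced expression ending in $t_{2,i}$, then $\ell_A(xt_{2,i})=\ell_A(x)-\ell(t_{2,i})$ and $\ell_A(xt_{2,1-i})=\ell_A(x)+\ell(t_{2,1-i})$. The letters of $t_{2,i}$ commute, so a total change of $\pm\ell(t_{2,i})$ forces every letter to be a right ascent (resp.\ descent) of $x$. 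The Demazure product therefore processes the block to $xt_{2,i}$ (resp.\ $x$), exactly as in $I_2(r+1)$.
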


\begin{proof}
We write $r_0$ for the element of $S_{r+1}$ represented by the word $t_{2,0}$, and $r_1$ for the element represented by $t_{2,1}$.  We first show that the subgroup of $S_{r+1}$ generated by $r_0$ and $r_1$ is isomorphic to the dihedral group of order $2r+2$.  To see this, consider a regular polygon with $r+1$ vertices.  Label the vertices of the polygon with the integers $0, \ldots , r$ by first choosing a vertex to label 0, and then, starting with the vertex labeled 0 and proceeding clockwise, label the vertices in order with even integers, and proceeding counterclockwise, label the vertices in order with odd integers, as in Figure~\ref{Fig:Dihedral}.  The action of the dihedral group on these vertices induces an embedding of the dihedral group into $S_{r+1}$.  Under this embedding, reflection across the apothem through $\overline{01}$ yields the permutation $r_0$, whereas reflection across the radius through 0 yields the permutation $r_1$.  (Again, see Figure~\ref{Fig:Dihedral}.)  Since these two reflections generate the dihedral group, the result follows.

\begin{figure}[H]
\begin{tikzpicture}

\draw (2,0)--(1.4,1.4);
\draw (1.4,1.4)--(0,2);
\draw (0,2)--(-1.4,1.4);
\draw (-1.4,1.4)--(-2,0);
\draw (-2,0)--(-1.4,-1.4);
\draw (-1.4,-1.4)--(0,-2);
\draw (0,-2)--(1.4,-1.4);
\draw (1.4,-1.4)--(2,0);
\draw (2.2,0) node {$0$};
\draw (1.6,1.6) node {$1$};
\draw (0,2.2) node {$3$};
\draw (-1.6,1.6) node {$5$};
\draw (-2.2,0) node {$7$};
\draw (-1.6,-1.6) node {$6$};
\draw (0,-2.2) node {$4$};
\draw (1.6,-1.6) node {$2$};
\draw[dashed] (2.24,1.16)--(-2.24,-1.16);
\draw[dashed] (2.4,0)--(-2.4,0);

\end{tikzpicture}
\caption{An octagon labeled with the integers $0, \ldots 7$, as described in the paragraph above.  The permuatations $r_0$ and $r_1$ correspond to reflection across the two dashed lines, and the permutation $w_0$ corresponds to the unique rotation of order 2.}
\label{Fig:Dihedral}
\end{figure}

Now, let $W$ be the dihedral group and $S = \{ r_0 , r_1 \}$.  Then $(W,S)$ is the Coxeter group $I_2 (r+1)$.  If $r$ is even, then reflection across the radius through $\frac{r}{2}$ is both the permuation $w_0$ of maximal length in $A_r$ and the element of maximal length in $I_2 (r+1)$.  Similarly, if $r$ is odd, then the unique rotation of order 2 is the element of maximal length in both Coxeter groups.

Let $Q$ be a lingering 2-uniform word that contains a reduced subword $Q'$ for $w_0$.  By Theorem~\ref{Thm:Arbitrary}, it suffices to show that the corresponding word in $I_2 (r+1)$ also contains a reduced subword for $w_0$.  Without loss of generality, assume that the first term of $Q'$ is $\tau_i$ with $i$ odd.  (The case where $i$ is even follows by an essentially identical argument.)  We may then write $Q'$ uniquely as a concatenation of subwords
\[
Q' = Q'_1 Q'_2 \cdots Q'_{\ell}
\]
where $Q'_i$ is a word in the set $\{ \tau_j \mid j \equiv i \Mod{2} \}$.  Since $Q'$ is a subword of $Q$ and $Q$ is 2-uniform, it follows that $Q$ contains the subword $(\tau_{2,1} \tau_{2,0})^{\frac{\ell}{2}}$ if $\ell$ is even or the subword $(\tau_{2,1} \tau_{2,0})^{\frac{\ell-1}{2}} \tau_{2,1}$ if $\ell$ is odd.  Now, note that if $i \equiv j \Mod{2}$, then $\tau_i$ and $\tau_j$ commute.  Since $Q'$ is reduced, it follows that the terms appearing in each subword $Q'_i$ are distinct.  In other words, the length of $Q'_i$ is at most $\lceil \frac{r}{2} \rceil$ if $i$ is even and $\lfloor \frac{r}{2} \rfloor$ if $i$ is odd.  Since $Q'$ has length ${{r+1}\choose{2}}$, it follows that $\ell \geq r+1$.  Thus, the corresponding word in $I_2 (r+1)$ contains the subword $(r_1 r_0)^{\frac{r+1}{2}}$ if $r$ is odd or the subword $(r_1 r_0)^{\frac{r}{2}}r_1$ if $r$ is even.  But these are exactly the reduced words for $w_0$ in $I_2 (r+1)$ whose first term is $r_1$.
\end{proof}

Theorem~\ref{Thm:Bielliptic} implies that, when $\Gamma$ is 2-uniform, the Prym-Brill-Noether variety $V^r (\Gamma,\pi)$ exhibits many of the same topological properties as when $\Gamma$ is $r$-generic.

\begin{proposition}
\label{Prop:BiellipticPureDimension}
Let $\Gamma$ be a 2-uniform loop of loops and $\pi \colon \WG \to \Gamma$ the quotient by the antipodal involution.  Then $V^r (\Gamma,\pi)$ is pure dimensional, of dimension $g-1-(r+1)$.
\end{proposition}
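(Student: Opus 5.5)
The plan is to mirror the proof of Corollary~\ref{Cor:PureDimension}, with the poset $P(A_r,w_0,g-1)$ replaced by the lingering subword poset $P(I_2(r+1),w_0,g-1)$. By Theorem~\ref{Thm:Bielliptic}, $V^r(\Gamma,\pi)=\bigcup T(Q)$, where $Q$ ranges over lingering words of length $g-1$ in $I_2(r+1)$ that contain a reduced subword for $w_0$. Here a lingering word in $I_2(r+1)$ is identified with a $2$-uniform lingering word via $r_0\leftrightarrow t_{2,0}$ and $r_1\leftrightarrow t_{2,1}$. For such words we have already recorded the following facts:
\begin{enumerate}
\item each $T(Q)$ is a real torus of dimension $\rho(Q)=\#\{j\mid s(j)=\epsilon\}$, by the same argument as Lemma~\ref{Lem:DimTQ};
\item $T(Q)\subseteq T(Q')$ if and only if $Q'$ is a lingering subword of $Q$, by the analogue of Lemma~\ref{Lem:Contains} noted after the definition of displacement words.
\end{enumerate}
Consequently, the maximal tori in this union are exactly the $T(Q)$ with $Q$ maximal in $P(I_2(r+1),w_0,g-1)$.

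Next we apply Lemma~\ref{Lem:PosetIsPureDimension}, which holds for an arbitrary Coxeter group. It says that every maximal element $Q$ is a reduced lingering word for $w_0$, so $\rho(Q)=g-1-\ell(w_0)$. In the dihedral group $I_2(r+1)$, the longest element has length $r+1$. Hence every maximal torus has dimension $g-1-(r+1)$.

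It remains to confirm that the dimension in the statement matches this, and to address nonemptiness.
\begin{itemize}
\item The identification $\ell(w_0)=r+1$ is consistent with the proof of Theorem~\ref{Thm:Bielliptic}: the reduced words for $w_0$ there are the alternating words $(r_1r_0)^{(r+1)/2}$ or $(r_1r_0)^{r/2}r_1$ and their versions starting with $r_0$, all of length $r+1$.
\item When $g-1\ge r+1$, a maximal element exists, namely such an alternating word padded with $\epsilon$'s.
\item When $g-1<r+1$, the variety is empty and the claim is vacuous.
\end{itemize}
Finally, a finite union of tori in which every maximal member has dimension $g-1-(r+1)$ is pure of that dimension. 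I expect no step to be a real obstacle.
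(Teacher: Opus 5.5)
Your proposal is correct and is essentially the paper's own proof. The paper also uses Theorem~\ref{Thm:Bielliptic} to write $V^r(\Gamma,\pi)$ as a union of tori $T(Q)$ indexed by $P(I_2(r+1),w_0,g-1)$, governs containment by the lingering-subword order as in Lemma~\ref{Lem:Contains}, uses $\dim T(Q)=\rho(Q)$, and applies Lemma~\ref{Lem:PosetIsPureDimension} with $\ell(w_0)=r+1$; your remarks on nonemptiness and on passing from maximal tori to purity only make explicit what the paper leaves implicit. One caveat you share with the paper is that the argument, like Theorem~\ref{Thm:Bielliptic} itself, tacitly needs $r\geq 2$: for $r=1$ the word $t_{2,1}$ is empty, $r_0,r_1$ do not generate a copy of $I_2(2)$, and in fact $V^1(\Gamma,\pi)$ has dimension $g-2$ by Corollary~\ref{Cor:PureDimension}, since every loop of loops is $1$-generic.
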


\begin{proof}
By Theorem~\ref{Thm:Bielliptic}, we have $V^r (\Gamma,\pi) = \cup T(Q)$, where the union is over all elements of $P(I_2(r+1),w_0,g-1)$.  By Lemma~\ref{Lem:Contains}, we have $T(Q) \subseteq T(Q')$ if and only if $Q \leq Q'$ in $P(I_2(r+1),w_0,g-1)$.  Moreover, the dimension of $T(Q)$ is $\rho(Q)$.  It follows from Lemma~\ref{Lem:PosetIsPureDimension} that, if $T(Q)$ is maximal with respect to containment, then
\[
\mathrm{dim} T(Q) = \rho(Q) = g-1-(r+1).
\]
\end{proof}

\begin{proposition}
\label{Prop:BiellipticConnected}
Let $\Gamma$ be a 2-uniform loop of loops and $\pi \colon \WG \to \Gamma$ the quotient by the antipodal involution.  If $g-1 > r+1$, then $V^r (\Gamma,\pi)$ is connected in codimension 1.
\end{proposition}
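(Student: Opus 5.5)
The plan is to mirror the proof of Corollary~\ref{Cor:Connected}, replacing the type A poset with the dihedral one. By Theorem~\ref{Thm:Bielliptic}, $V^r(\Gamma,\pi)=\bigcup T(Q)$, where $Q$ ranges over $P(I_2(r+1),w_0,g-1)$. I will take for granted that Lemma~\ref{Lem:Contains} and Proposition~\ref{Prop:ElementaryFactsAboutTQ} transfer verbatim to $2$-uniform lingering words: the tori $T(Q)$ are coordinate tori, with coordinate $\xi_j$ fixed exactly when $s(j)\neq\epsilon$. Under that assumption, $T(Q)\subseteq T(Q')$ if and only if $Q\le Q'$, and $T(Q)\cap T(Q')=T(Q\wedge Q')$ whenever a common lower bound exists. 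By Proposition~\ref{Prop:BiellipticPureDimension}, the maximal tori are the $T(Q)$ with $Q$ a reduced lingering word for $w_0$ in $I_2(r+1)$ of length $g-1$, and each has dimension $g-1-(r+1)$.

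Given two maximal tori $T(Q)$ and $T(Q')$, I will invoke Lemma~\ref{Lem:BraidRelations} for the Coxeter group $G=I_2(r+1)$, $w=w_0$, $n=g-1$. The hypothesis $n>\ell(w_0)=r+1$ is exactly the assumption $g-1>r+1$, so the flip graph $\cF(I_2(r+1),w_0,g-1)$ is connected. The lemma was stated for an arbitrary Coxeter group; it only uses Matsumoto's theorem together with the shift and braid moves. Connectivity gives a chain $Q=Q_0,Q_1,\ldots,Q_m=Q'$ of maximal elements in which consecutive terms $Q_{i-1},Q_i$ are joined by an edge of rank $g-1-(r+1)-1$ that is a common lower bound of both. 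Hence $T(Q_{i-1})\cap T(Q_i)=T(Q_{i-1}\wedge Q_i)$ is a torus of codimension one in each. This is precisely connectedness in codimension 1.

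The one point to check carefully is the transfer of the containment and intersection statements to the $2$-uniform setting. Here the entries are the words $t_{2,0},t_{2,1}$, identified with the generators $r_0,r_1$ of $I_2(r+1)$. Two lingering words conflict at position $j$ exactly when they prescribe different values of $\xi_j$ modulo $m_j=2$, and the values $s^r_j[i]$ with $i$ of different parity are distinct modulo $2$. The proofs of Lemma~\ref{Lem:Contains} and Proposition~\ref{Prop:ElementaryFactsAboutTQ} then go through unchanged, and I expect no genuine obstacle beyond this bookkeeping.
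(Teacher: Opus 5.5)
Your proposal is correct and is essentially the paper's proof. Both reduce via Theorem~\ref{Thm:Bielliptic} to the poset $P(I_2(r+1),w_0,g-1)$, apply Lemma~\ref{Lem:BraidRelations} with $\ell(w_0)=r+1$, and use $T(Q_{i-1})\cap T(Q_i)=T(Q_{i-1}\wedge Q_i)$ to get codimension-one intersections along the chain.

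Your check that Lemma~\ref{Lem:Contains} and Proposition~\ref{Prop:ElementaryFactsAboutTQ} transfer to $2$-uniform words (the values $s^r_j[i]$ for $i$ of different parity differ modulo $2$) is a detail the paper leaves implicit. The case $r=1$ is degenerate for both your argument and the paper's, because $t_{2,1}$ is empty and the identification with $I_2(r+1)$ in Theorem~\ref{Thm:Bielliptic} breaks down. There the conclusion still follows directly from Corollary~\ref{Cor:Connected}, since every loop of loops is $1$-generic.
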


\begin{proof}
If $T(Q)$ and $T(Q')$ are maximal, then by Lemma~\ref{Lem:BraidRelations}, there exists a sequence
\[
Q = Q_0 , Q_1 , \ldots , Q_m = Q'
\]
where, $T(Q_{i-1}) \cap T(Q_i) = T(Q_{i-1} \wedge Q_i)$ has codimension 1 in both $T(Q_{i-1}$ and $T(Q_i)$.
\end{proof}

\begin{lemma}
\label{Lem:BiellipticCardinality}
Let $\Gamma$ be a 2-uniform loop of loops and $\pi \colon \WG \to \Gamma$ the quotient by the antipodal involution.  If $g-1= r+1$, then $\# V^r (\Gamma,\pi) = 2$.
\end{lemma}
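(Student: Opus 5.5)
By Theorem~\ref{Thm:Bielliptic}, $V^r (\Gamma,\pi)$ is the union of the tori $T(Q)$, where $Q$ ranges over the lingering words of length $g-1$ in $I_2(r+1)$ that contain a reduced subword for $w_0$. The plan is to count these words and check that each gives exactly one point.

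First, the length of $w_0$ in $I_2(r+1)$ is $r+1$. This is the standard fact that the longest element of a dihedral group of order $2(r+1)$ has length $r+1$; it also matches Proposition~\ref{Prop:BiellipticPureDimension}. Since $g-1 = r+1$, a lingering word of length $g-1$ that contains a reduced subword for $w_0$ must coincide with that subword, exactly as in Example~\ref{Ex:Rank2}. So every such $Q$ has no $\epsilon$ terms, and by Lemma~\ref{Lem:DimTQ}, $T(Q)$ is a single point.

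Second, we count the reduced words for $w_0$ in $I_2(r+1)$. Consecutive letters of a reduced word in a dihedral group must alternate, since $r_i^2$ is the identity. So a word of length $r+1$ is one of the two alternating words $r_0 r_1 r_0 \cdots$ and $r_1 r_0 r_1 \cdots$. Both represent $w_0$, by the braid relation $(r_0r_1)^{m/2}\cdots = (r_1r_0)^{m/2}\cdots$ with $m = r+1$. They are the only reduced words for $w_0$, and they are distinct as words. Hence $V^r(\Gamma,\pi)$ is the union of exactly two singleton sets $T(Q_a)$ and $T(Q_b)$.

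The remaining step, and the main obstacle, is to show these two points are distinct. This is a question about the divisor classes themselves, because in the 2-uniform setting a single letter $r_i$ corresponds to the string $t_{2,i}$ and the coordinate conditions are taken modulo $m_j = 2$. By Lemma~\ref{Lem:Contains} (or Proposition~\ref{Prop:ElementaryFactsAboutTQ}), $T(Q_a) \cap T(Q_b)$ is nonempty only if $Q_a$ and $Q_b$ agree wherever both are non-identity. They disagree at every position, since one has $r_0$ where the other has $r_1$. Concretely, on the first loop the coordinate $\xi_1$ must be $\equiv s^r_1[0]$ for one word and $\equiv s^r_1[1]$ for the other in $\R/2\Z$, and these residues differ. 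So the two points are distinct, and $\#V^r(\Gamma,\pi) = 2$.
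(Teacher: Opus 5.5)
Your proposal is correct and follows the paper's proof. The paper likewise uses Theorem~\ref{Thm:Bielliptic} to reduce to counting the reduced words for $w_0$ in $I_2(r+1)$, and observes that there are exactly the two alternating words; you additionally make explicit what the paper leaves implicit, namely that each $T(Q)$ is a single point and that the two points differ (already in the coordinate $\xi_1$ modulo $2$).
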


\begin{proof}
If $g-1= r+1$, then $\# V^r (\Gamma,\pi)$ is equal to the number of reduced words in $I_2(r+1)$ for $w_0$.  There are exactly two of these.  Specifically, if $r$ is odd, then the 2 reduced words for $w_0$ are $(r_1 r_0)^{\frac{r+1}{2}}$ and $(r_0 r_1)^{\frac{r+1}{2}}$.  If $r$ is even then the 2 reduced words for $w_0$ are $(r_1 r_0)^{\frac{r}{2}}r_1$ and $(r_0 r_1)^{\frac{r}{2}}r_0$.
\end{proof}

\subsection{The $k$-Gonal Case}
\label{Sec:kGonal}

\begin{proof}[Proof of Theorem~\ref{Thm:KGonal}]
By \cite{LenUlirsch21, CLRW20}, we know that for a general \'{e}tale double cover $f \colon \WC \to C$, we have
\[
\mathrm{dim} V^r (C,f) \leq g-1-n(r,k).
\]
A straightforward calculation shows that $n(r,k) \leq \frac{(k-1)(r+1)}{2}$ when $r+1 < \ell (\ell +1)$, so it suffices to show that, if $k < r+1$, then 
\[
\mathrm{dim} V^r (C,f) \leq g-1-\frac{(k-1)(r+1)}{2}.
\]

To see this, let $\Gamma$ be a $(k-1)$-uniform loop of loops, with edge lengths as in the proof of Theorem~\ref{Thm:MainThm}.  Assume further that the length of $\beta_1$ is greater than the sum of all other edge lengths combined.  Then the divisor $kv_1$ has rank at least 1.  Moreover, this divisor is the pullback of a point via a harmonic morphism to an interval.  This harmonic morphism has dilation factor $k-1$ along each edge $\beta_j$ for $j \neq 1$, dilation factors $1, k-2$ along the two edges comprising $\gamma_j$, dilation factor 1 along a subinterval of $\beta_1$ with endpoint $v_1$, and dilation factor $k-1$ along the remainder of $\beta_1$.  (See Figure~\ref{Fig:MapToInterval}.)  If the residue field has characteristic zero or relatively prime to $2k$, then this morphism if tame.  By \cite[Lemma~7.15]{ABBR15a}, there is a map of smooth curves $C \to \PP^1$ specializing to this moprhism, where $C$ has genus $g$.  As in the proof of Theorem~\ref{Thm:MainThm}, by \cite[Lemma~5.9]{JensenLen18}, the map $\pi \colon \WG \to \Gamma$ can be lifted to an unramified double cover $f \colon \WC \to C$.

\begin{figure}[H]
\begin{tikzpicture}

\draw (0,0) circle (0.5);
\draw (2,0) circle (0.5);
\draw (4,0) circle (0.5);
\draw (6,0) circle (0.5);
\draw (6.5,0)--(7.5,0);
\draw (0.5,0)--(1.5,0);
\draw (2.5,0)--(3.5,0);
\draw (4.5,0)--(5.5,0);
\draw (-0.5,0) to [out=-90, in=240] (7.5,0);

\draw (-0.5,-3.5)--(7.5,-3.5);
\draw (3,-2.25) edge[->] (3,-3.25);

\end{tikzpicture}
\caption{A harmonic morpshim from $\Gamma$ to an interval.}
\label{Fig:MapToInterval}
\end{figure}

As in the proof of Theorem~\ref{Thm:MainThm}, by \cite[Corollary~2.11]{Baker08}, we have
\[
\Trop V^r (C,f) \subseteq V^r (\Gamma,\pi).
\]
If $k < r+1$, then by Gubler's Bieri-Groves theorem for maximally degenerate abelian varieties, we have
\[
\mathrm{dim} V^r (C,f) \leq \mathrm{dim} V^r (\Gamma,\pi) \leq g-1-\frac{(k-1)(r+1)}{2},
\]
where the second inequality holds by Theorem~\ref{Thm:kUniform}.  If $k \geq r+1$, then $\Gamma$ is $r$-generic, and the result holds by Corollary~\ref{Cor:Dimension}.
\end{proof}

\bibliography{math}

\end{document}